\documentclass[a4paper,11pt]{article}

\usepackage{amssymb}
\usepackage{amsfonts,dsfont}
\usepackage{amsmath}
\usepackage{amsthm}
\usepackage{cite}
 \usepackage{enumitem}
\numberwithin{equation}{section}

\usepackage{xcolor}

\newcommand{\R}{\mathbb{R}}

\newcommand{\N}{\mathbb{N}}

\newcommand{\cuad}{{\sqcap\kern-.68em\sqcup}}

\newcommand{\norm}[1]{\|#1\|}

\newtheorem{definition}{Definition}[section]
\newtheorem{theorem}[definition]{Theorem}
\newtheorem{proposition}[definition]{Proposition}

\newtheorem{lemma}[definition]{Lemma}
\newtheorem{corollary}[definition]{Corollary}
\newtheorem{remark}{Remark}[section]

\renewcommand{\phi}{\varphi}

\newcommand{\cD}{{\mathcal D}}

\newcommand{\cH}{{\mathcal H}}
\newcommand{\cI}{{\mathcal I}}
\newcommand{\cJ}{{\mathcal J}}
\newcommand{\cK}{{\mathcal K}}

\newcommand{\cM}{{\mathcal M}}

\newcommand{\cO}{{\mathcal O}}
\newcommand{\cP}{{\mathcal P}}
\newcommand{\cQ}{{\mathcal Q}}

\newcommand{\cS}{{\mathcal S}}
\newcommand{\cT}{{\mathcal T}}

\newcommand{\bX}{{\mathbb X}}
\newcommand{\bL}{{\mathbb L}}

\newcommand{\dist}{{\rm dist}}

\newcommand{\loc}{{\rm loc}}

 \allowdisplaybreaks

\begin{document}

\begin{center}
{\bf    Maximal  hypersurfaces with prescribed   light-like cones
\\[2mm]   in  Lorentz-Minkowski space 
  }

 \bigskip
 \bigskip

  {\small  Huyuan Chen\footnote{chenhuyuan@yeah.net, chenhuyuan@simis.cn} \qquad   Ying Wang\footnote{yingwang00@126.com} \qquad  Feng Zhou\footnote{fzhou@math.ecnu.edu.cn } 
 \bigskip

  $ ^1$   Center for Mathematics and Interdisciplinary Sciences, Fudan University, \\ 
Shanghai 200433, China\\[2pt]
Shanghai Institute for Mathematics and Interdisciplinary Sciences,\\ 
 Shanghai 200433, China\\[10pt] 
   $ ^2$ School of Information Management and Mathematics, Jiangxi University \\
of  Finance and Economics,  Nanchang, Jiangxi 330032,  PR China\\[10pt]

 $^3$ {\small Center for PDEs, School of Mathematical Sciences, East China Normal University,\\
Shanghai Key Laboratory of PMMP, Shanghai 200062, PR China }  \\[16pt]

   }

\end{center}

 \begin{abstract}
  \small
 
 The purpose in this paper is to study the maximal  hypersurfaces with multiple  light-cones
  in  Lorentz-Minkowski space by considering the weak  solutions to the mean curvature equation with multiple Dirac masses in 
$N$-dimensional Lorentz–Minkowski space
$$-\nabla\cdot\Big(\frac{\nabla u }{\sqrt{1-|\nabla u |^2}} \Big)=\sum^{m_0}_{j=1}\alpha_j\delta_{p_j}\quad {\rm in}\ \cD'(\R^N)$$
 for  $N\geq 2$ and $m_0\geq 2$. 
  Such solutions are constructed via an approximation procedure, using regular solutions with smooth sources that converge weakly to the Dirac measures. 
  
   When $N \geq 3$, a light-cone singular solution with decaying at infinity can also be viewed as a critical point of the associated energy functional.    However, this variational characterization fails for $N = 2$, as the energy functional diverges in this case.

   For $N \geq 3$, we conduct a comprehensive analysis of equations involving positive Dirac mass sources and resolve two open questions raised in \cite{BAP}: (i) whether the variational solution coincides with a weak solution, and (ii) how to strengthen the regularity assumptions to ensure the solution is classical. Furthermore, when both positive and negative Dirac masses are present, we establish a sharper sufficient condition for $C^2$ regularity.
   
   Finally, we extend the construction to include maximal hypersurfaces with infinitely many light-cones.

\end{abstract}

 {\footnotesize  \tableofcontents} \vspace{1mm}
 {\small
  \noindent {\small {\bf Keywords}:    Mean curvature equation; Maximal hypersurface;  Light-cone singularity.}\vspace{1mm}

\noindent {\small {\bf MSC2010}:   53A10,  35A08, 35B40. }}

\setcounter{equation}{0}
\section{Introduction}

Denote  $\bL^{N+1}$ the Minkowski space, which is $\R^{N+1}$ equipped with the Lorentzian  metric $\displaystyle ds^2=\sum_{i=1}^N dx_i^2-dx_{N+1}^2$ and the inner product by $\langle\cdot,\cdot\rangle$. The light cone at $\xi_0=(x_0,t_0)\in \bL^{N+1}$ is defined by 
 \begin{equation}\label{cone-sin}
 C_{\xi_0}=\Big\{\xi\in \bL^{n+1}:\ \langle \xi-\xi_0,\xi-\xi_0\rangle=0\Big\}. 
   \end{equation}
Let $\cS$ be an $N$-dimensional hypersurface in $\bL^{N+1}$, always represented as the graph of a function $u\in C^{0,1}(\Omega)$, where $\Omega$ is a domain of $\R^N$. The hypersurface $\cS$ is called \smallskip

{\it weakly spacelike }\quad  if $|Du|\leq 1$ a.e. in $\Omega$;\smallskip

 {\it spacelike } \quad  if $|u(x)-u(y)|<|x-y|$ whenever  $x,y\in\Omega,\ x\not=y$ and the line segment $\overline{xy}\subset \Omega$;\smallskip
 
 {\it strictly spacelike } \quad if $\cS$ is spacelike, $u\in C^1(\Omega)$ and $|Du|<1$ in $\Omega$.  
\medskip

Maximal hypersurfaces occupy a fertile intersection of elliptic partial differential equations--despite their Lorentzian origins--geometric analysis, and mathematical relativity--the Born-infeld model. They provide a setting in which many techniques from minimal surface theory remain applicable, yet they exhibit striking differences: the core constraint $|\nabla u|<1$, the presence of a hyperbolic Gauss map, and their significance in the context of initial data sets for the Einstein equations. These features make maximal hypersurfaces a fundamental object of study in both differential geometry and general relativity.

A central problem is the construction of maximal hypersurfaces in Lorentz--Minkowski space, either by studying the area functional $\int_{\Omega} \sqrt{1-|\nabla u|^2}\,dx$ or by solving the associated Euler--Lagrange equation, namely the type of the mean curvature equation
\begin{align}\label{eq 1.0-0}
\cM_0u(x):= -\nabla\cdot\Big(\frac{\nabla u(x)}{\sqrt{1-|\nabla u(x)|^2}} \Big)=0\quad {\rm for}\ x\in   \R^N.   
\end{align}
 Calabi \cite{C68} and Cheng-Yau \cite{CY76} provided a fundamental result that 
 $$\text{ \it the only entire maximal hypersurfaces in  $\bL^{N+1}$ are spacelike hyperplanes.} $$
 Later on, Bartnik and Simon \cite{BS82} established basic results on the boundary-value problem   
\begin{align}\label{eq 1.0-1}
\cM_0 u=H\quad {\rm in}\ \,\Omega, \qquad u=\psi\quad {\rm on}\ \, \partial \Omega, 
\end{align}
 and provides necessary and sufficient conditions of $H, \psi$  for   the existence of regular strictly spacelike solution.  Moreover,  the  principle method is to consider the critical point of the energy functional, which may generates the hyper plane with slope 1 is possible under the suitable assumptions. 
   Bartnik et.al. \cite{B84,B84-1,B88} established qualitative properties of solutions to the mean curvature equations through analysis of the associated energy functional.    The  mean curvature equations have been of considerable interest in last few years.       Bonheure-Iacopetti  \cite{BI23} studied gradient estimates for related Poisson problem.  Further properties  could see \cite{SSY75,HY21,RWX24,H20,H16,AD26,W25} and reference therein.
 
 Maximal hypersurfaces in Minkowski space exhibiting cone-like singularities have attracted considerable attention over the past several decades.   Kobayashi \cite{K84} classified isolated singularities in dimension two as cone-like.
 Kiessling in \cite{K12} tried to consider the cone-like singular solution of 
\begin{align}\label{eq 1.0-2}
\cM_0u=4\pi \sum _{j=1}^{m_0}\alpha_j\delta_{p_j}\quad {\rm in}\ \, \cD'(\R^3),\qquad \lim_{|x|\to+\infty}u(x)=0
\end{align}
via the variational method by employing a Taylor expansion decomposition technique. 
In 2016, Bonheure-d'Avenia-Pomponio  \cite{BAP} 
studied  the Born-Infeld-type electrostatic equation
\begin{equation}\label{eq 1.1-fund3}
 \left\{
\begin{array}{lll}
\displaystyle \  \cM_0 u = \sum^{m_0}_{j=1}\alpha_j\delta_{p_j}  \quad
  {\rm in}\  \, \cD'(\R^N), 
\\[5mm]
 \phantom{    }
 \displaystyle \lim_{ |x|\to+\infty}u(x)=0.        
 \end{array}
 \right.
 \end{equation}
where $N\geq 3$ and   $\delta_{p_j}$ is the Dirac mass concentrated on $p_j\in\R^N$. We refer to \cite{BS23,D13,T82}  for more studies on the entire solutions of the mean curvature equations.  Recently, the Dirichlet problems
with singular Lorentzian mean curvature in bounded regular domain has been studied in \cite{BI24} and also see the references  \cite{BA19,BA20,BI23,BI19} for the study of  the Born-Infeld-type electrostatic equation. 
 From \cite[Theorem 1.3]{BAP}, they proved that Eq.(\ref{eq 1.1-fund3}) has a unique minimum point of the energy functional $\cJ_N$, where 
  \begin{equation}\label{eq1-ind-1}
\cJ_N(w)= \int_{\R^N}\big(1- \sqrt{1-|\nabla w|^2}\big)\, dx -\sum_{j=1}^{m_0}\alpha_j w(p_j)  \quad {for\ }\, w\in \bX_\infty(\R^N)
 \end{equation} 
and
$$\bX_\infty (\R^N)=\{v\in C^{0,1}(\R^N): |\nabla v|\leq 1\ \, a.e.\text{ in}\ \R^N , \ \  \int_{\R^N}\big(1- \sqrt{1-|\nabla v|^2}\big)dx<+\infty \big\}. $$
They demonstrated that the solution is classical in $\R^N\setminus \cP_{m_0}$ under either of the following two conditions: either (i) the singular set $\cP_{m_0}$ consists of points that are mutually well-separated, i.e. far away each other or (ii) the coefficients of the underlying equation are sufficiently close to zero. These assumptions serve to exclude the presence of lightlike segments connecting any pair of singular points. A lightlike segment with endpoints $x_0$ and $y_0$ is defined as $\bL_{x_0,y_0}=\big\{tx_0+(1-t)y_0, t\in [0,1]\big\}$, and along such a segment, the solution satisfies $u(x)=u(y)+|x-y|$ for any $x,y\in \bL_{x_0,y_0}$, thereby exhibiting singular behavior on $\bL_{x_0,y_0}$. As shown in \cite{BS82}, when the problem is posed in a bounded domain $\Omega\subset \R^N$, any lightlike segment connecting two singular points can be extended to an entire straight line that traverses $\Omega$ without intersecting the boundary $\partial\Omega$. This geometric property constitutes a key ingredient in establishing improved interior regularity of solutions.

Thanks to the existence singular sets,  the authors in \cite{BAP} also proposed a conjecture that 
$$
\text{\it whether the  maximizer of $\cJ_N$ is a weak solution of the related Euler-Lagrangian Eq.(\ref{eq 1.1-fund3}). } $$
Similar conjectures could see \cite{BI24} for bounded domains. 
Furthermore, several fundamental questions remain open regarding Eq.\eqref{eq 1.1-fund3}: \smallskip 

{\it 
  Does every maximizer of $\mathcal{J}_N$ exhibit regularity in $\mathbb{R}^N \setminus \cP_{m_0}$?  
  
 Can such solutions be approximated—in an appropriate functional sense—by solutions corresponding to smooth approximations of the Dirac masses concentrated at $\cP_{m_0}$?
 }

\smallskip

The aim of this paper is to prove the existence of maximal hypersurfaces with multiple light-cone singularities—points where $|\nabla u(x)|=1$ at a prescribed finite set in the entire space, by solving the mean curvature equation directly.

To this end, let introduce the basic notations. Denote  $\cP_{m_0}$  the set of  the light-cone   vertices   with $1\leq m_0\in \N$ 
 \begin{equation}\label{sing set-1}
 \cP_{m_0}=\Big\{p_j\in\R^N\!: j=1,\cdots, m_0,\  \, p_{j_1}\not=p_{j_2}\ {\rm for}\ j_1\not=j_2\ {\rm if}\ m_0\geq 2 \Big\}
  \end{equation}
  and   the light cone singularity of the hypersurface as following: a graph function $u\in C^2(\R^N\setminus \cP_{m_0})\cap C^{0,1}(\R^N)$ is said to be light-cone singular
at  $\cP_{m_0}$    if 
$$|\nabla u (x)|<1\ \ {\rm in}\ \, \R^N \setminus \cP_{m_0},\qquad\  |\nabla u (x)|\to1 \ \ {\rm as}\ \, |x-p|\to0^+\ \ {\rm for\ any}\ p\in \cP_{m_0}.   $$
Now we involve  the N-dimensional  mean curvature operator  (MC opoerator)
$$\cM_0u(x)= -\nabla\cdot\Big(\frac{\nabla u(x)}{\sqrt{1-|\nabla u(x)|^2}} \Big)\quad \text{for\ $u\in C^2$ at $x\in\R^N$ and  $|\nabla u(x)|< 1.$}   $$ 
Note that 
$\cM_0$ is strictly elliptic operator at the domain $\big\{x\in\R^N\!:\, |\nabla u(x)|<1\big\}$ and degenerates 
 at $\{x\in\R^N\!:\, |\nabla u(x)|=1\}$. It is the mean curvature operator in Lorentz–Minkowski space for a spacelike hypersurface given by a graph $(x,u(x))$ in $\R^{N+1}$.

\medskip


Our first purpose in this article is to investigate the light-cone singular solutions  of  Eq.\eqref{eq 1.1-fund3} 
with $N\geq 3$ involving multiple positive Dirac masses. 
  
{\it Here a function $u$ is said to be a weak solution of (\ref{eq 1.1-fund3}) if 
$u\in C^{0,1}_{\loc}(\R^N)\cap C^2_{\loc}(\R^N\setminus \cP_{m_0})$ such that 
$\frac{|\nabla u|}{\sqrt{1-|\nabla u|^2}}\in L^1_{\loc}(\R^N)$, $\displaystyle \lim_{ |x|\to+\infty}u(x)=0$ and 
 \begin{equation}\label{fun 1 weak form-N}
\int_{\R^N} \frac{\nabla u(x) \cdot \nabla \phi(x)}{\sqrt{1-|\nabla u(x)|^2}} dx=\sum^{m_0}_{j=1} \alpha_j\phi(p_j)
\quad{\rm for\ any}\ \, \phi\in C^{0,1}_c(\R^N).  
\end{equation}}
For $\alpha>0$ and $N\geq 3$, denote
 \begin{equation}\label{fun 1}
 \Phi_{N, \alpha}(x)
 =   c_{N,\alpha} \int_{|x|}^{\infty}  \big( s^{2(N-1)}+c_{N,\alpha}^2 \big)^{-\frac12}\, ds  
    \quad {\rm for}\ \, x\not=0, 
\end{equation}
where $c_{N,\alpha}=\frac{\alpha}{|\partial B_1(0)|}$.
When $m_0=1$, direct computation shows that    (\ref{eq 1.1-fund3}) has a unique solution $\Phi_{N, \alpha_1}(\cdot-p_1)$. 
 and 
 $$ 
\Phi_{N, \alpha_1}(x-p_1) \sim c_{N,\alpha} |x|^{2-N} \quad {\rm as}\ \, |x|\to +\infty.  
 $$
  For $m_0\geq2$, we have following light-cone singularities. 
   
 \begin{theorem}\label{teo 1-fund3}
 Let   $N\geq 3$, $\cP_{m_0}$ be given in (\ref{sing set-1}) with    $m_0\geq2$ and 
 $$   \alpha_j>0 \quad{\rm and}\quad   \alpha_0=\sum^{m_0}_{j=1} \alpha_j, $$
 then  Eq.(\ref{eq 1.1-fund3}) has a unique weak solution $ u_{N,\alpha_0}\in C^2(\R^N\setminus \cP_{m_0})\cap C^{0,1}(\R^N)$  satisfying that 
 $\cP_{m_0}$ is the set of  light-cone singularities of $ u_{N,\alpha_0}$ and 
 \begin{equation}\label{be hv-1}
     u_{N,  \alpha_0}(x)  = c_N \alpha_0   |x|^{2-N}+O( |x|^{1-N})\quad {\rm as}\ \, |x|\to+\infty
\end{equation}
and
$$  u_{N, \alpha _0}(x)\geq  \Phi_{N, \alpha_j}(x-p_j), \ \, j=1,\cdots,m_0, \qquad  \max_{x\in\R^N}  u_{N,\alpha_0}(x)\leq \Phi_{N,  \alpha_0}(0), $$
where 
$c_N=\frac{\Gamma(\frac{N}{2})}{2\pi^{\frac N2}}=\frac1{|\partial B_1(0)|}$.

 \smallskip

Furthermore,   
$(a)$ there exist $\lambda_j\in\R$ with $j=1,\cdots, m_0$ such that 
$$\lim_{|x-p_j|\to0^+} u_{N,\alpha_0}(x)=\lambda_j$$
and
$$ |\lambda_j-\lambda_{j'}|< |  p_j-p_{j'}| \quad{\rm for}\ j\not=j'. $$
 
 
 $(b)$ The function $ u_{N,\alpha_0}$ verifies the equation 
 \begin{equation}\label{eq 1.1-fund-cal3}
 \left\{
\begin{array}{lll}
\displaystyle \quad \cM_0 u =0  \quad
  {\rm in}\  \, \R^N\setminus \cP_{m_0}, 
\\[3mm]
 \phantom{    }
 \displaystyle \lim_{ |x|\to+\infty}u(x)=0.      
 \end{array}
 \right.
 \end{equation}
 
 $(c)$  $ u_{N,\alpha_0}$  
 is   the minimizer of the energy functional  
$$
\cJ_{N}(w)= \int_{\R^N}\big(1- \sqrt{1-|\nabla w|^2}\big)\, dx - \sum^{m_0}_{j=1}\alpha_j w(p_j) \quad {for\ }\, w\in \bX_\infty (\R^N).   
$$

\end{theorem}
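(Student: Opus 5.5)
The plan is to build the solution by approximation, as the abstract describes. Fix a radially symmetric mollifier $\rho_\varepsilon$ supported in $B_\varepsilon(0)$. Set
$$f_\varepsilon=\sum_{j=1}^{m_0}\alpha_j\rho_\varepsilon(\cdot-p_j)\geq 0 .$$

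\textbf{Step 1: regular approximate solutions.} For the smooth source $f_\varepsilon$, I first solve the Dirichlet problems $\cM_0 u=f_\varepsilon$ in $B_R$ with $u=0$ on $\partial B_R$, using the Bartnik--Simon theory \cite{BS82}. Since $f_\varepsilon\geq0$, the maximum principle gives $u\geq0$.

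Comparison with radial solutions gives two-sided bounds, uniform in $R$:
\begin{itemize}
\item \emph{Lower bound.} $u\geq$ the radial solution with source $\alpha_j\rho_\varepsilon(\cdot-p_j)$, for each $j$.
\item \emph{Upper bound.} $u\leq\Phi_{N,\alpha_0}$ centred suitably. More precisely, I would rearrange the total mass: the supremum is controlled by $\Phi_{N,\alpha_0}(0)$. This follows by comparing flux through spheres, since $\int_{B_r}f_\varepsilon\leq\alpha_0$.
\end{itemize}
Letting $R\to\infty$ gives entire solutions $u_\varepsilon$. They satisfy $|\nabla u_\varepsilon|<1$, $u_\varepsilon\to0$ at infinity, and the same barriers.

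\textbf{Step 2: pass to the limit $\varepsilon\to0$.} The family is uniformly $1$-Lipschitz and bounded, so Arzel\`a--Ascoli gives $u_\varepsilon\to u$ locally uniformly. The key estimate is local strict spacelikeness of $u$ on compact sets $K\subset\R^N\setminus\cP_{m_0}$. By Bartnik--Simon \cite{BS82}, the only obstruction to interior regularity is a light-like segment, and such a segment must extend either to infinity or to singular points.

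\emph{Light-like segments reaching infinity.} These are excluded by the decay $u\to0$ together with the barriers. Along a light-like line $u$ grows linearly, which contradicts $0\leq u\leq \Phi_{N,\alpha_0}(0)$.

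\emph{Segments joining $p_j$ and $p_{j'}$.} These must be excluded using positivity of all $\alpha_j$. My first attempt is an argument in the spirit of \cite{BS82}: along such a segment $u(x)=u(y)+|x-y|$, so $u$ would be monotone increasing from $p_j$ to $p_{j'}$. Near $p_{j'}$, however, $u$ behaves like the upward cone $\lambda_{j'}-|x-p_{j'}|$ (positive mass produces a local maximum, as the lower barrier $\Phi_{N,\alpha_{j'}}$ shows). A segment that is light-like increasing into a vertex is compatible with this, but by symmetry it would also have to be decreasing out of $p_j$. The gradient near each vertex points toward it, and reconciling both directions forces a contradiction. I expect to make this precise via the strict inequality $|\lambda_j-\lambda_{j'}|<|p_j-p_{j'}|$, proved by a strong maximum principle / comparison with cones near each vertex.

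Once light-like segments are excluded, \cite{BS82} gives $|\nabla u_\varepsilon|\leq\theta_K<1$ uniformly on $K$. Elliptic Schauder estimates then upgrade the convergence to $C^2_{\loc}(\R^N\setminus\cP_{m_0})$, which yields (b).

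\textbf{Step 3: the remaining properties.}
\begin{itemize}
\item \emph{Weak formulation.} Test the equation for $u_\varepsilon$ with $\phi$. To control the flux $\nabla u/\sqrt{1-|\nabla u|^2}$ near $p_j$, use that the flux through small spheres equals $\alpha_j$ together with the cone comparison. This gives $L^1_{\loc}$ integrability and \eqref{fun 1 weak form-N}.
\item \emph{Light-cone singularity.} This comes from the barriers $\Phi_{N,\alpha_j}(\cdot-p_j)\leq u$ and the limits $\lambda_j$, which proves (a).
\item \emph{Asymptotics \eqref{be hv-1}.} Outside a large ball $|\nabla u|$ is small, so the equation is a small perturbation of the Laplacian with total flux $\alpha_0$. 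Expanding via the Newtonian kernel gives $c_N\alpha_0|x|^{2-N}+O(|x|^{1-N})$.
\item \emph{Minimization (c).} The functional $\cJ_N$ is strictly convex in $\nabla w$. For the weak solution $u$ and any $w\in\bX_\infty$, convexity gives
$$\cJ_N(w)-\cJ_N(u)\geq\int\frac{\nabla u\cdot\nabla(w-u)}{\sqrt{1-|\nabla u|^2}}-\sum_j\alpha_j(w-u)(p_j)=0 .$$
Justifying this requires a cutoff argument with \eqref{fun 1 weak form-N} and decay at infinity. Uniqueness of the minimizer, from \cite{BAP}, then gives uniqueness of the weak solution.
\end{itemize}

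The main obstacle is Step 2's exclusion of light-like segments between vertices for arbitrary positions and sizes of the $\alpha_j$. This is exactly what \cite{BAP} could only handle under separation or smallness assumptions.
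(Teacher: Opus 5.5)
\textbf{The gap: light-like segments between two vertices.} Your architecture is the paper's: mollified sources, Dirichlet problems on $B_R$, a rearrangement bound, the limits $R\to\infty$ and then $\varepsilon\to0$, and the Bartnik--Simon light-segment analysis. But the step that carries the theorem is not proved. That step is excluding a light-like segment between two vertices, equivalently the strict inequality $|\lambda_j-\lambda_{j'}|<|p_j-p_{j'}|$ in (a), and the heuristic you offer would not work.

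\begin{itemize}
\item It is circular. ``Near $p_{j'}$, $u$ behaves like $\lambda_{j'}-|x-p_{j'}|$'' and ``the gradient near each vertex points toward it'' presuppose that $u$ is smooth in a punctured neighbourhood of the vertex; this is what Ecker-type cone asymptotics require. That is exactly what fails if a light segment emanates from the vertex.
\item The lower barrier $\Phi_{N,\alpha_{j'}}(\cdot-p_{j'})\le u$ is not pinned at $u(p_{j'})$. It therefore says nothing about $p_{j'}$ being a local maximum.
\item You look for the contradiction at the lower endpoint, where $u$ rises at rate $1$ out of $p_j$. No lower barrier can ever exclude that.
\item ``Proving'' the strict inequality by ``maximum principle / comparison with cones'' only restates the claim. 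By \cite[Theorem 3.2]{BS82}, equality $|u(p_j)-u(p_{j'})|=|p_j-p_{j'}|$ is the same thing as the segment being light-like.
\end{itemize}

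\textbf{The missing idea: a cone barrier pinned at the higher endpoint.} This is what the paper uses. Suppose $u(\bar x_1)=u(\bar x_2)+|\bar x_1-\bar x_2|$, and set $w(x)=\Phi_{N,\bar\alpha}(x-\bar x_1)-\Phi_{N,\bar\alpha}(0)+u_\varepsilon(\bar x_1)$.
\begin{itemize}
\item Choose $\bar R$ and $\bar\alpha$ large so that $w<0<u_\varepsilon$ on $\partial B_{\bar R}$, uniformly in $\varepsilon$. This is possible since $u_\varepsilon\le\Phi_{N,\alpha_0}(0)$ and $\Phi_{N,\bar\alpha}(0)-\Phi_{N,\bar\alpha}(r)\to r$ as $\bar\alpha\to\infty$.
\item On the open set $\{w>u_\varepsilon+\delta\}$ both functions are $C^2$ with gradients bounded away from $1$. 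Its closure avoids $\partial B_{\bar R}$, and avoids $\bar x_1$ because $w=u_\varepsilon$ there.
\item On that set $\cM_0w=0\le f_\varepsilon=\cM_0u_\varepsilon$. The classical comparison principle then shows the set is empty, so $u_\varepsilon\ge w$. This is exactly where positivity of \emph{all} $\alpha_j$ enters, since the set may contain other vertices.
\item Letting $\varepsilon\to0$ gives $u(\bar x_1)-u(x)\le\Phi_{N,\bar\alpha}(0)-\Phi_{N,\bar\alpha}(|x-\bar x_1|)<|x-\bar x_1|$ for $x\ne\bar x_1$. Taking $x=\bar x_2$ kills the segment.
\end{itemize}

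\textbf{Smaller omissions.}
\begin{itemize}
\item Your only upper barrier is the constant $\Phi_{N,\alpha_0}(0)$, so nothing forces the limit $u$ to decay. (Centring $\Phi_{N,\alpha_0}$ at a single point fails for well-separated vertices.) You need a uniform decaying barrier such as $\Phi_{N,T_0\alpha_0}$, obtained by comparison on $B_R\setminus B_{R_0}$. Only then do (b) and \eqref{be hv-1} make sense.
\item $|\nabla u|\to1$ at $p_j$ does not follow from the lower barrier. Flux $\alpha_j>0$ only gives $\sup_{\partial B_r(p_j)}|\nabla u|\to1$. The full light-cone behaviour needs Ecker's classification of isolated singularities.
\item The $L^1_{\rm loc}$ bound on the flux is best obtained by testing with $(u_\varepsilon-\rho)_+$, which bounds $\int_{\{u_\varepsilon>\rho\}}\frac{|\nabla u_\varepsilon|^2}{\sqrt{1-|\nabla u_\varepsilon|^2}}\le\alpha_0\Phi_{N,\alpha_0}(0)$ uniformly. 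Flux through spheres controls only the average of the normal component.
\end{itemize}
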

\medskip


For given positive Dirac masses, Theorem \ref{teo 1-fund3} provides a complete characterization of the solution to Equation (\ref{eq 1.1-fund3}). In particular, it affirms the conjecture by extending the admissible test function space to $C^{0,1}_c(\mathbb{R}^N)$—the largest natural space for weak solutions involving Dirac measures. Moreover, our theorem imposes no restrictions on either the locations of the Dirac points or the magnitudes of their coefficients. Finally, the asymptotic behavior at infinity (\ref{be hv-1}) is established by invoking the results of \cite{HY21}, where the authors classified all possible asymptotic behaviors of maximal hypersurfaces in exterior domains.

 \smallskip
  
Next,  we consider  the   light-cone singular solutions  of  elliptic equations involving multiple positive Dirac masses 
\begin{equation}\label{eq 1.1-fund}
 \left\{
\begin{array}{lll}
\displaystyle \  \cM_0 u = \sum^{m_0}_{j=1}\alpha_j\delta_{p_j}  \quad
  {\rm in}\  \, \cD'( \R^2), 
\\[5mm]
 \phantom{    }
 \displaystyle \lim_{ |x|\to+\infty}u(x)=-\infty,        
 \end{array}
 \right.
 \end{equation}
where $\alpha_j>0$.

The study of maximal hypersurfaces in two-dimensional spacetime proceeds fundamentally differently from higher-dimensional cases: complex-analytic techniques become applicable, and the first model-Born’s field equations-was already formulated by Pryce \cite{P35} in 1935. This early framework yields explicit solutions featuring a singular lightlike segment.  Subsequently, the authors \cite{FL05} combined tools from complex analysis, Riemann surface theory, and algebraic geometry to construct families of maximal hypersurfaces with finitely many isolated singularities. More recently, Umehara and Yamada \cite{UY19} constructed examples admitting an entire singular lightlike line. Additional foundational contributions include works by   \cite{K84,FS08,UY06,AUY19,UY19}. 

Our aim  is  to provide a complete classification of maximal hypersurfaces in two dimensions possessing a finite, positive number of singularities, via a approximation method.

{\it Here a function $u$ is said to be a weak solution of (\ref{eq 1.1-fund}) if 
$u\in C^{0,1}_{\loc}(\R^2)\cap C^2_{\loc}(\R^2\setminus \cP_{m_0})$ such that 
$\frac{|\nabla u|}{\sqrt{1-|\nabla u|^2}}\in L^1_{\loc}(\R^2)$, $\displaystyle \lim_{ |x|\to+\infty}u(x)=-\infty$ and 
$$\int_{\R^2} \frac{\nabla u(x)\cdot \nabla \phi(x)}{\sqrt{1-|\nabla u(x)|^2}} dx=\sum^{m_0}_{j=1} \alpha_j\phi(p_j)
\quad{\rm for\ any }\ \, \phi\in C^{0,1}_c(\R^2).  $$}

It is well-known that  the single Dirac mass can be  obtained directly by the ODE method:
when  $m_0=1$, $\alpha\not=0$  and $\cP_{1}=\{0\}$,
  problem (\ref{eq 1.1-fund}) has a unique solution 
\begin{equation}\label{sol-fund-20}
\Phi_{2,\alpha}(x)=-  \frac{\alpha}{2\pi} 
\Big( \ln \Big(  r+ \sqrt{ \big(\frac{\alpha}{2\pi}\big)^2+r^2}\, \Big)-\ln  \big(\frac{\alpha}{2\pi}\big) \Big)   \quad {\rm for}\ \, r=|x|>0.  
 \end{equation}
 Due to the quasilinear nature of the operator $\cM_0$, the fundamental solution of (\ref{eq 1.1-fund}) involving multiple Dirac masses cannot be obtained either by the ODE method or by superposing individual fundamental solutions corresponding to single Dirac masses.

 \begin{theorem}\label{teo 1-fund}
 
   Let $\cP_{m_0}$ be given in (\ref{sing set-1}) with $m_0\geq2$ and 
 $$    \alpha_j>0,\qquad \alpha_0=\sum^{m_0}_{j=1}\alpha_j.  $$
 Then Eq.(\ref{eq 1.1-fund}) has a weak solution $ u_{2,\alpha_0}\in C^2(\R^2\setminus \cP_{m_0})\cap C^{0,1}(\R^2)$  satisfying that 
 $\cP_{m_0}$ is the set of  light-cone singularities of $ u_{2,\alpha_0}$ and 
 $$   u_{2,\alpha_0}(x)   =-\frac{\alpha_0}{2\pi} (\ln |x|)+c+o(1)\quad  {\rm as}\ \, |x|\to+\infty$$ 
 for some $c\in\R$. 
 
 The solution $ u_{2,\alpha_0}$ is unique under the constraint at infinity that 
 $$u(x)=- \frac{\alpha_0}{2\pi} (\ln |x|)+c+o(1)\quad  {\rm as}\ \, |x|\to+\infty $$
 for a given $c\in\R$. 
  \smallskip

Furthermore,   
$(a)$ there exist $\lambda_j\in\R$ with $j=1,\cdots, m_0$ such that 
 \begin{equation}\label{fund-hei-2}
 \lim_{|x-p_j|\to0^+} u_{2,\alpha_0}(x)=\lambda_j
  \end{equation}
and
$$ |\lambda_j-\lambda_{j'}|< |  p_j-p_{j'}| \quad{\rm for}\ j\not=j'. $$


  $(b)$ The function $ u_{2,\alpha_0}$ is a classical solution of  the equation 
 \begin{equation}\label{eq 1.1-fund-cal}
 \left\{
\begin{array}{lll}
\displaystyle \quad \cM_0 u =0  \quad
  {\rm in}\  \, \R^2\setminus \cP_{m_0}, 
\\[3mm]
 \phantom{    }
 \displaystyle \lim_{ |x|\to+\infty}u(x)=-\infty.      
 \end{array}
 \right.
 \end{equation}
 
\end{theorem}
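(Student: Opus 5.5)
We construct $u_{2,\alpha_0}$ by approximation, as the abstract announces. Fix a mollifier and set $f_n=\sum_j\alpha_j\rho_n(\cdot-p_j)$, with $\rho_n\ge0$ supported in $B_{1/n}(0)$ and of unit mass.

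\textbf{Approximate solutions.} On balls $B_R$ we solve
\[
\cM_0 u=f_n \ \text{in } B_R,\qquad u=\Phi_{2,\alpha_0}\ \text{on } \partial B_R .
\]
The Bartnik--Simon theory \cite{BS82} gives a strictly spacelike solution, because the boundary datum is spacelike and $f_n$ is bounded.

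By the comparison principle for $\cM_0$, the solutions are pinched between two explicit radial barriers. On the lower side we use $\Phi_{2,\alpha_j}(\cdot-p_j)$ plus constants, together with the radial solution associated with $\alpha_0 f_n/\alpha_j$-type data. On the upper side we use the radial solution of total mass $\alpha_0$ centered suitably, shifted by $\pm\max_j|p_j|$, since $|\nabla u|\le1$. These bounds are uniform in $n$ and $R$ on compact sets.

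Letting $R\to\infty$ (diagonal argument, Arzel\`a--Ascoli using $|\nabla u|\le 1$) gives entire solutions $u_n$ with $u_n+\frac{\alpha_0}{2\pi}\ln|x|$ bounded at infinity. We normalize so that the limiting constant at infinity is $c$.

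\textbf{Passing $n\to\infty$.} Away from $\cP_{m_0}$ we need uniform interior $C^{2,\beta}$ bounds, i.e.\ a uniform bound $|\nabla u_n|\le 1-\theta_K$ on compacts $K\subset\R^2\setminus\cP_{m_0}$.

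This is the main obstacle. Following \cite{BS82}, a loss of strict spacelikeness at a point of $K$ would force a lightlike segment of the limit $u$ through that point. Such a segment must extend until it hits a singularity or infinity. A segment running to infinity is impossible because $u\sim-\frac{\alpha_0}{2\pi}\ln|x|$ grows sublinearly. So the segment joins two points $p_j,p_{j'}$ with $|u(p_j)-u(p_{j'})|=|p_j-p_{j'}|$.

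To exclude this we use the positivity of the masses. The limit $u$ is superharmonic-like: the flux $\int_{\partial B_r(p_j)}\frac{\partial_\nu u}{\sqrt{1-|\nabla u|^2}}=-\alpha_j<0$. By the lower barrier $u\ge\Phi_{2,\alpha_j}(\cdot-p_j)+C$, $u$ has a strict local cone maximum at each $p_j$. Along the segment, $u$ is affine with slope $1$, ascending from $p_{j'}$ to $p_j$, and at $p_{j'}$ this contradicts the cone maximum: near $p_{j'}$, $u$ must decrease in every direction. This gives (a), with strict inequality $|\lambda_j-\lambda_{j'}|<|p_j-p_{j'}|$. It also gives the local gradient bound, hence $C^2$ convergence on $\R^2\setminus\cP_{m_0}$ and part (b).

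\textbf{Weak formulation and behavior near the singularities and at infinity.} The weak formulation with $C^{0,1}_c$ test functions passes to the limit as follows. Split $\phi$ by a cutoff near each $p_j$, and use the fluxes of $u_n$ through $\partial B_r(p_j)$, which equal $\alpha_j$ for $r>1/n$. Integrability of $|\nabla u|/\sqrt{1-|\nabla u|^2}$ near $p_j$ follows by comparison with the explicit profile of $\Phi_{2,\alpha_j}$, where $|\nabla\Phi|(1-|\nabla\Phi|^2)^{-1/2}=\frac{\alpha_j}{2\pi r}$ is integrable in 2D. The light-cone condition $|\nabla u|\to1$ at $p_j$ follows from the flux identity on shrinking circles.

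The asymptotics $u=-\frac{\alpha_0}{2\pi}\ln|x|+c+o(1)$ come from the exterior classification of \cite{HY21} (in 2D the expansion is log plus constant). Uniqueness given $c$ follows from the comparison principle applied to two solutions on $\R^2\setminus\bigcup_j B_\epsilon(p_j)$: their difference tends to $0$ at infinity, and the equal fluxes at each $p_j$ force equality of the limiting values $\lambda_j$. One argues via the maximum of the difference, which is attained at some $p_j$, combined with a Hopf-type flux comparison.
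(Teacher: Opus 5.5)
\textbf{There is a genuine gap at the decisive step: excluding a lightlike segment that joins two vertices,} say $u(p_j)-u(p_{j'})=|p_j-p_{j'}|$. Part (a) and your local gradient bound both rest on this step. It is also the configuration that Bonheure--d'Avenia--Pomponio excluded only under separation or smallness assumptions.

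Your argument is that the lower barrier $u\ge\Phi_{2,\alpha_j}(\cdot-p_j)+C$ makes every $p_j$ a strict cone maximum, so $u$ cannot ascend from $p_{j'}$. This does not follow.
\begin{itemize}
\item A lower bound can never produce a maximum. Even a barrier touching at $p_{j'}$ would only give $u(x)\ge u(p_{j'})-|x-p_{j'}|$, which is already the Lipschitz bound.
\item A touching \emph{upper} barrier at $p_{j'}$ would require knowing in advance that $u<u(p_{j'})$ on small circles around $p_{j'}$. That is the claim itself.
\item The Ecker-type expansion $u(x)=\lambda_{j'}-|x-p_{j'}|+o(|x-p_{j'}|)$ presupposes that $u$ is a strictly spacelike classical solution in a punctured neighbourhood of $p_{j'}$. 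A null segment ending at $p_{j'}$ is exactly what would violate this.
\item The negative total flux through $\partial B_r(p_{j'})$ controls only an average over the circle. It is compatible with $u$ increasing at slope $1$ along one ray.
\end{itemize}
So this step is circular as written.

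\textbf{The missing idea, which is how the paper argues, is to work at the \emph{upper} endpoint with a barrier that touches $u$ there.}
\begin{itemize}
\item Take $w=\Phi_{2,\bar\alpha}(\cdot-p_j)+u(p_j)$, with $\bar\alpha$ large enough that $w\le u$ on the boundary of a large bounded superlevel domain $\Omega\ni p_j$. This is possible because $\Phi_{2,\bar\alpha}(r)\to-r$ locally uniformly as $\bar\alpha\to\infty$, while $u$ decays only logarithmically.
\item On $\Omega\setminus\{p_j\}$, $w$ is a smooth strictly spacelike solution of $\cM_0w=0$. Meanwhile $u$ (more safely, its approximations with sources $g_n\ge0$) maximizes the area functional plus a \emph{nonnegative} source term.
\item Competitors are Lipschitz, so prescribing the value at the single point $p_j$ is a genuine boundary condition. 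The comparison argument of Lemma \ref{lm cp} therefore runs on the punctured domain, with $w=u$ at $p_j$ and $w\le u$ on $\partial\Omega$, and gives $u\ge w$.
\item Hence $|p_j-p_{j'}|=u(p_j)-u(p_{j'})\le w(p_j)-w(p_{j'})<|p_j-p_{j'}|$, because $|\nabla\Phi_{2,\bar\alpha}|<1$ away from its vertex.
\end{itemize}
What is really used is the positivity of the masses other than the one at $p_j$.

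Two smaller points.
\begin{itemize}
\item With your normalization $u=\Phi_{2,\alpha_0}$ on $\partial B_R$, the lower barriers $\Phi_{2,\alpha_j}(\cdot-p_j)+C$ with $\alpha_j<\alpha_0$ cannot stay below the boundary datum as $R\to\infty$ unless $C\to-\infty$. So your $R$-uniform bounds are not justified. The paper instead subtracts the maximum. It identifies the rate $-\frac{\alpha_0}{2\pi}\ln|x|$ only at the end, via $\mathrm{Res}=-\alpha_0$ and the exterior classification.
\item $|\nabla u|\to1$ at $p_j$ does not follow from the flux identity alone.
\end{itemize}
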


 
 



In Theorem \ref{teo 1-fund}, which involves the multiple Dirac mass model, solutions can be constructed when the coefficients $\{\alpha_j\}_j$ associated with the Dirac masses at points $\cP_{m_0}$ are prescribed. The coefficient governing the leading-order behavior at infinity is then determined by the combined effect of these Dirac masses, despite $\cM_0$ being a quasilinear elliptic differential operator. Similarly, due to the additivity inherent in the quasilinear operator, the heights $\{\lambda_j\}_j$ depend on the coefficients $\{\alpha_j\}_j$. {\it However, establishing an explicit and precise relationship between the heights $\{\lambda_j\}_j$ and the coefficients $\{\alpha_j\}_j$ remains challenging. Furthermore, it is still an open question whether the heights $\{\lambda_j\}_j$ of the conical singularities can be independently prescribed.} \smallskip

 Finally,  we proceed to construct hyper-surfaces containing singularities with downward and upward openings.  
To this end, we consider the weak solution of mean curvature equation involving the positive and negative Dirac masses
\begin{equation}\label{eq 1.1-fund+-}
    \left\{
\begin{array}{lll}
\displaystyle\quad  \cM_0 u = \sum^{m_1}_{j=1}\alpha_j\delta_{p_j}- \sum^{m_0}_{j=m_1+1}\beta_j\delta_{p_j}   \quad
 {\rm in}\  \, \cD'( \R^N), 
\\[3mm]
 \phantom{    }
 \displaystyle \lim_{ |x|\to+\infty}u(x)=0.      
 \end{array}
 \right.
 \end{equation}
where $\alpha_j,\beta_j>0$, $\{p_j\}_j \subset \R^N$, $p_j\not=p_{j'}$ for $j\not=j'$  and integers $m_1,m_2\geq 1$. In this section, we use the following notations: 
$$\cP_{m_1,+}=\{p_j,\, j=1,\cdots,m_1\},\quad \cP_{m_2,-}=\{p_j,\, j= m_1+1,\cdots, m_1+m_2\}$$
$$\cP_{m_0}:=\big(\cP_{m_1,+}\cup \cP_{m_2,-} \big)\subset B_{\frac12R_0}(0)\quad \text{ with $m_0=m_1+m_2$,\ $R_0\geq 1$}, $$ 
   and  
 $$   \alpha_0=\sum^{m_1}_{j=1}\alpha_j,\quad  \beta_0=\sum^{m_0}_{j=m_1+1}\beta_j.  $$
{\it Here a function $u$ is said to be a weak solution of (\ref{eq 1.1-fund+-}) if 
$u\in C^{0,1}_{\loc}(\R^N)\cap C^2_{\loc}(\R^N\setminus \cP_{m_0})$ such that 
$\frac{|\nabla u|}{\sqrt{1-|\nabla u|^2}}\in L^1_{\loc}(\R^N)$, $\displaystyle\lim_{ |x|\to+\infty}u(x)=0$  and 
$$\int_{\R^N} \frac{\nabla u(x)\cdot \nabla \phi(x)}{\sqrt{1-|\nabla u(x)|^2}} dx=\sum^{m_1}_{j=1} \alpha_j\phi(p_j)-\sum^{m_0}_{j=m_1+1} \beta_j\phi(p_j)
\quad{\rm for\ any }\ \, \phi\in C^{0,1}_c(\R^N).  $$}

The  results on light-cone singular solutions are stated as follows. 

\begin{theorem}\label{teo 1-fund+-}
 Assume that  $N\geq 3$, 
 \begin{equation}\label{con-classic-1}
 \Phi_{N,\alpha_0}(0)+\Phi_{N,\beta_0}(0) <l_0, 
    \end{equation}
    where
  $$l_0=\dist(\cP_{m_1,+},\cP_{m_2,-}):=\min\{|p_j-p_i|, j=1,\cdots, m_1, i=m_1+1,\cdots,m_0 \}.  $$
 
Then  Eq.(\ref{eq 1.1-fund+-}) has a weak solution $u_{N}\in C^2\big(\R^N\setminus (\cP_{m_1,+}\cup \cP_{m_2,-})\big)\cap C^{0,1}(\R^N)$  satisfying that 
 $\cP_{m_0}$ is the set of  light-cone singularities of $u_{N}$ and 
   \begin{equation}\label{fund-bah+-2}
    u_{N}(x) =    
  c_N ( \alpha_0-\beta_0) |x|^{2-N}+O(|x|^{1-N})\quad  {\rm as}\ \, |x|\to+\infty \quad {\rm for} \ \, N\geq 3.  
  \end{equation}

Furthermore,   
$(a)$ there exist $\lambda_j\in\R$ with $j=1,\cdots, m_0$ such that 
 \begin{equation}\label{fund-hei-2+-}
 \lim_{|x-p_j|\to0^+}u_{N}(x)=\lambda_j 
  \end{equation}
and
$$ |\lambda_j-\lambda_{j'}|< |  p_j-p_{j'}| \quad{\rm for}\ j\not=j'. $$


  $(b)$ The function $u_{N}$ is a classical solution of  the equation 
 \begin{equation}\label{eq 1.1-fund-cal+-}
  \cM_0 u =0  \quad
  {\rm in}\  \, \R^N\setminus  \cP_{m_0 }.
  \end{equation}
  
   \end{theorem}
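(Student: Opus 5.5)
The proof goes by approximation, as in Theorems \ref{teo 1-fund3} and \ref{teo 1-fund}, with the sign-changing data controlled by barriers.

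\textbf{Approximate problems.} Fix mollifiers $\delta^\epsilon_{p_j}\ge0$ supported in $B_\epsilon(p_j)$ with unit mass, where $\epsilon<\frac14\min_{i\ne j}|p_i-p_j|$. Set
$$f_\epsilon=\sum_{j\le m_1}\alpha_j\delta^\epsilon_{p_j}-\sum_{j>m_1}\beta_j\delta^\epsilon_{p_j}.$$
On balls $B_R$ (zero Dirichlet data) we solve $\cM_0 u=f_\epsilon$ by Bartnik--Simon \cite{BS82}. This gives strictly spacelike classical solutions $u_{\epsilon,R}$, provided we exclude the degenerate case.

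\textbf{Barriers.} Write $f_\epsilon=f_\epsilon^+-f_\epsilon^-$. Let $w^\pm_{\epsilon,R}$ be the solutions with data $\pm f^\pm_\epsilon$; by the analysis behind Theorem \ref{teo 1-fund3} these are radially bounded. The comparison principle for $\cM_0$ gives
$$-\Phi_{N,\beta_0}(0)\le w^-_{\epsilon,R}\le u_{\epsilon,R}\le w^+_{\epsilon,R}\le\Phi_{N,\alpha_0}(0).$$
The bound $\max w^+\le\Phi_{N,\alpha_0}(0)$ is the estimate from Theorem \ref{teo 1-fund3}, uniform in $\epsilon,R$. The sup bound also holds with decay: $|u_{\epsilon,R}(x)|\le c(\alpha_0+\beta_0)|x|^{2-N}$ for large $|x|$, which follows from the same comparison with $\Phi$'s centered at the origin.

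\textbf{Excluding light-like segments.} This is the main obstacle. Fix $p\in\cP_{m_1,+}$ and $q\in\cP_{m_2,-}$. Since $\mathrm{osc}\,u_{\epsilon,R}\le\Phi_{N,\alpha_0}(0)+\Phi_{N,\beta_0}(0)<l_0\le|p-q|$ by (\ref{con-classic-1}), no segment joining a positive source to a negative one can be light-like. Segments between two sources of the same sign, or from a source to infinity, must be ruled out separately.
\begin{itemize}
\item For infinite light rays: by Bartnik--Simon, a light-like segment in the interior extends to a full line, and $u$ is then unbounded along it. This contradicts the bound $|u|\le\Phi_{N,\alpha_0}(0)+\Phi_{N,\beta_0}(0)$.
\item For segments between two positive points: such a segment must extend to a whole line or end at singular points. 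Beyond the positive points, it must continue, pass through a negative point, or reach the boundary. The length constraint then yields a segment of length at least $l_0$, or an infinite one, along which $u$ varies by its length, again exceeding the oscillation bound.
\end{itemize}
I would make this precise via the Bartnik \cite{B88} structure theorem for weakly spacelike maximizers, namely that the singular set of $u$ is a union of light segments with endpoints in the support of the data or on $\partial\Omega$. Uniform local $C^{2,\gamma}$ estimates away from $\cP_{m_0}$ then follow from Bartnik's gradient estimate, since strict spacelikeness is quantified through the absence of light segments by a compactness argument.

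\textbf{Passage to the limit.} Let $R\to\infty$ and $\epsilon\to0$. Arzel\`a--Ascoli gives a limit $u_N$ that is $C^2$ away from $\cP_{m_0}$, solves (\ref{eq 1.1-fund-cal+-}), and tends to $0$ at infinity; this proves (b). For the weak form we test $\phi\in C^{0,1}_c$: the flux $\frac{\nabla u}{\sqrt{1-|\nabla u|^2}}$ is handled by cutting out small balls around each $p_j$. Near $p_j$ the limit is compared with $\Phi_{N,\alpha_j}(\cdot-p_j)+$const, or with its negative. The boundary flux through $\partial B_r(p_j)$ equals $\pm\alpha_j$ (resp. $\beta_j$) by the divergence theorem for the approximations, and it is preserved in the limit because the flux integrals converge on spheres avoiding the sources. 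This also yields the $L^1_{\loc}$ bound and the light-cone property $|\nabla u|\to1$ at each $p_j$.

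\textbf{Remaining claims.} The existence of $\lambda_j$ follows from the Lipschitz bound. The strict inequality $|\lambda_j-\lambda_{j'}|<|p_j-p_{j'}|$ follows from the absence of light segments: equality would force $u$ to be linear with slope one along $[p_j,p_{j'}]$. Finally, (\ref{fund-bah+-2}) follows from the exterior classification of \cite{HY21}, the coefficient being fixed by the total flux $\alpha_0-\beta_0$ at infinity.
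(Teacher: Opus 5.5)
Your overall architecture is the paper's: mollified data on $B_R$, the barriers $-\Phi_{N,\beta_0}(0)\le u\le\Phi_{N,\alpha_0}(0)$, the oscillation bound against $l_0$ for segments from $\cP_{m_1,+}$ to $\cP_{m_2,-}$, passage to the limit, and \cite{HY21} at infinity. However, the step you flag as the main obstacle is not closed.

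\textbf{The gap: same-sign segments.} After the oscillation bound, what remains are light segments joining two charges of the \emph{same} sign, and neither tool you invoke reaches them.
\begin{itemize}
\item Condition (\ref{con-classic-1}) says nothing about the distance between two positive charges, which may be far smaller than $\Phi_{N,\alpha_0}(0)$. So no length-versus-oscillation argument applies to $\overline{p_1p_2}$ itself.
\item The unsigned structure theorem you quote (light segments end in the support of the data or on $\partial\Omega$) explicitly permits a maximal light segment with both endpoints in $\cP_{m_1,+}$.
\end{itemize}
Your sentence ``beyond the positive points, it must continue'' is therefore exactly the statement that needs proof, and the proposal supplies none.

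\textbf{The missing ingredient: a barrier at the upper endpoint.} This is the comparison the paper uses for segments between two charges (the function $w_{\bar\alpha}$ in Claim 2 of the proof of Theorem \ref{pr 2.1}, to which the proof of Theorem \ref{teo 1-fund+-} refers back), adapted to signed data.

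Suppose $\bar x_1\in\cP_{m_1,+}$ were the top of a light segment. Take $w=\Phi_{N,\bar\alpha}(\cdot-\bar x_1)-\Phi_{N,\bar\alpha}(0)+u(\bar x_1)$ with $\bar\alpha$ large. On $B_R\setminus(\{\bar x_1\}\cup\cP_{m_2,-})$ you have $\cM_0u\ge0=\cM_0w$. On the boundary of this punctured domain:
\begin{itemize}
\item $u=w$ at $\bar x_1$;
\item $u>w$ on $\partial B_R$ for $R$ large;
\item $u>w$ at each $q\in\cP_{m_2,-}$, because $w(q)\to u(\bar x_1)-|q-\bar x_1|\le\Phi_{N,\alpha_0}(0)-l_0<-\Phi_{N,\beta_0}(0)\le u(q)$ as $\bar\alpha\to\infty$. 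This is a second use of (\ref{con-classic-1}), needed to keep the negative charges out of the comparison domain.
\end{itemize}
The variational argument of Lemma \ref{lm cp} then gives $u\ge w$: use competitors $\max\{u,w-\epsilon\}$ and $\min\{w,u+\epsilon\}$ together with strict concavity of $\sqrt{1-|p|^2}$. Since $|\nabla w|<1$ away from $\bar x_1$, this yields $u(x)>u(\bar x_1)-|x-\bar x_1|$ for all $x\ne\bar x_1$. Hence no positive charge is the top or an interior point of a light segment. Applying the same argument to $-u$, no negative charge is the bottom or an interior point.

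Combined with the structure theorem and the bound on $|u|$, any surviving segment would have to run from a negative top to a positive bottom. That is exactly the case your oscillation bound excludes.

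\textbf{Two smaller points.}
\begin{itemize}
\item The $L^1_{\loc}$ bound on $|\nabla u|/\sqrt{1-|\nabla u|^2}$ near $p_j$ does not come from the flux. It comes from testing the approximate equations with $u_{\epsilon,R}$ itself and using lower semicontinuity.
\item The flux identity at $p_j$ only gives $\sup_{\partial B_r(p_j)}|\nabla u|\to1$. The full limit $|\nabla u(x)|\to1$ needs Ecker's classification \cite{E86}, as the paper uses, once strict spacelikeness in a punctured neighbourhood of $p_j$ is established.
\end{itemize}
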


In our analysis, we approximation the weak solutions of Eq.\eqref{eq 1.1-fund3}  for $N\geq 3$ by the classical solutions $u_{n}$ of 
\begin{equation}\label{eq 1.1-R--}
  \left\{
\begin{array}{lll}
\displaystyle \cM_0 u= g_n  \quad
  {\rm in}\  \, \cD'\big(\R^N\big), 
\\[3mm]
 \phantom{    }
 \displaystyle \lim_{|x|\to+\infty} u(x)=0   
 \end{array}
 \right.
 \end{equation}
as $n\to+\infty$, while the solution $u_{n}$ is approximated by the regular solutions $u_{n,R}$ of 
 \begin{equation}\label{eq 1.1-R--n}
  \left\{
\begin{array}{lll}
\displaystyle \cM_0 u= g_n   \quad   {\rm in}\  \, B_R(0), 
\\[2mm]
 \phantom{  --   }
 \displaystyle  u=0 \quad {\rm on}\ \partial B_R(0),        
 \end{array}
 \right.
 \end{equation}
where $\{g_n\}_n$ is a sequence of functions that converge to $\displaystyle\sum_{j=1}^{m_0} \alpha_j  \delta_{p_j}$
as $n\to+\infty$. One of the main difficulties  in this approximation is the convergence of
$\frac{|\nabla u_{n}|}{\sqrt{1-|\nabla u_{n}|^2}}$  in $L^1_{\loc}(\R^N)$.   Another difficulty is to get a uniform bound, which could provide the decaying at infinity.
To overcome this,  we employ the method of rearrangement, which allows for a comparison with single isolated singular solutions, then we make use of the classification of the isolated singularities by the Schwartz theorem, which plays the most important role in the dealing with Dirac masses. We then take the limit of $u_{n}$ as $n \to +\infty$ to derive a weak solution $u_{N,\alpha_0}$ defined on $\mathbb{R}^N$. 
Also, we show that $u_{N,\alpha_0}$ is the minimizer of energy functional
$$
\cJ_{N}(w)= \int_{\R^N}\big( 1-\sqrt{1-|\nabla w|^2}\big)\, dx -\sum_{j=1}^{m_0}\alpha_j w(p_j)  \quad {for\ }\, w\in \bX_\infty(\R^N).  
 $$
Furthermore, under the condition of decay at infinity, we can show that $ u_{N, {a}_0}$ is the unique critical point of $\mathcal{J}_{N}$.
  As we have established that $u_{N,\alpha_0}$ is a weak solution, we are in a position to address the conjecture posed in \cite{BAP}. 

 
\vskip2mm
However, when $N=2$,  we can't pass to the limit of $u_{n}$ of (\ref{eq 1.1-R--n}) as $n\to+\infty$ directly, because it  blows up 
wholly in $\R^2$. In fact,  the weak solution $\Phi_{2,\alpha}$ of  problem (\ref{eq 1.1-fund}) with single Dirac mass could be obtained with form (\ref{sol-fund-20}) by ODE method. It is no longer a critical point of $\cJ_{2}$,  defined by (\ref{eq1-ind-1}), thanks to
  \begin{align*}
 \int_{B_R}\Big(1- \sqrt{1-|\nabla \Phi_{2,\alpha}|^2}\Big)\, dx& \geq 
  \frac12 \int_{B_R}  |\nabla \Phi_{2,\alpha}|^2  dx 
  \\[0.5mm]& =\frac12  \big(\frac{\alpha}{2\pi} \big)^2 \int_{B_R}  \frac1{(\frac{\alpha}{2\pi})^2+|x|^2}  dx   \to+\infty\quad {\rm as}\ \, R\to+\infty,
 \end{align*}
 since 
\begin{align*}
 |\nabla \Phi_{2,\alpha} (x)|= \frac{\alpha}{2\pi}\frac{1}{\sqrt{(\frac{\alpha}{2\pi})^2+|x|^2}}.  
\end{align*}
For this reason, the variational method fails.   
The weak solution of problem (\ref{eq 1.1-fund}) is obtained by normalization via an adjustment of the maximum, specifically by setting
$$\tilde u_{n,R}=u_{n,R}-\max_{z\in B_R(0)} u_{n,R}(z),$$
which ensures that the maximum value is zero.
 The sequence $\tilde u_{n,R}$ keeps locally uniformly bounded and the same maximum point   as $R\to+\infty$, taking a subsequence if necessary. 
 
 \smallskip

The remainder of this paper is organized as follows. In Section 2, we recall the basic properties of mean curvature operators, build the Symmetric Decreasing Rearrangement, show the basic regularity theory for the Poisson problem, and prove the classification of isolated singularities. Section 3 is devoted to constructing light-cone solutions of (\ref{eq 1.1-R--}), which are approximated by classical solutions to (\ref{eq 1.1-R--n}). Sections 4  present the analysis of solutions to (\ref{eq 1.1-fund}) in dimension 2 and to (\ref{eq 1.1-fund3}) in dimensions $N \geq 3$ and show the existence solution of Eq.(\ref{eq 1.1-fund+-}). 
Finally, we construct hypersurfaces with infinitely many light-cones by 
considering the problem
 $$ \cM_0 u =  \sum^{\infty}_{j=1} \alpha_j  \delta_{p_j}    \quad
  {\rm in}\  \, \cD'(\R^N), 
$$
  under the hypothesis that $N\geq3$,  $\alpha_j>0$ and $\displaystyle \sum^{\infty}_{j=1}\alpha_j<+\infty$.  

 \medskip

\setcounter{equation}{0}
\section{Preliminary}

   Let $R_0\geq 1$ be  such that 
$$\cP_{m_0}:=\{p_j: j=1,\cdots,m_0\}\subset B_{\frac12 R_0}(0). $$
 Let $C^{0,1}_0 (B_R(0))=\{\zeta\in C^{0,1} (B_R(0)):\, \zeta=0\ \ {\rm on}\ \, \partial B_R(0) \}$ for $R<+\infty$ and  when $R=+\infty$,    $C^{0,1}_c (\R^N)=\{\zeta\in C^{0,1} (\R^N):\, \zeta\ \ \text{ has compact support}\}$. {\bf  For simplicity, we still use the notations }
 $$   C^{0,1}_c (\R^N)=C^{0,1}_0 (B_R(0))\quad \text{ when $R=+\infty$.}$$

 \subsection{Properties of the MC operator}

 We first introduce the classical comparison principle. 

 \begin{lemma}\label{lm cp-cla}\cite[Lemma 1.2]{BS82}
 Let $\Omega$ be a  bounded $C^2$ domain in $\R^N$ with $N\geq 2$, functions $u,v\in C^2(\bar \Omega)$ be such that $|\nabla u|, |\nabla v|<1$ in $\Omega$ and 
 $$\cM_0u\geq \cM_0v \quad {\rm in}\ \, \Omega, \qquad u=\psi_1,\ \  v=\psi_2\ \ {\rm on}\ \, \partial \Omega, $$
 then 
 $$u\geq v+\inf_{x\in\partial\Omega}( \psi_1-\psi_2)\quad\text{ in $\Omega$}.$$ 
 \end{lemma}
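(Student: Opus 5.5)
The plan is to prove first the version with zero boundary offset and then add the constant. Put $m=\inf_{\partial\Omega}(\psi_1-\psi_2)$ and $w=v+m$. Then $\cM_0 w=\cM_0 v$, $|\nabla w|<1$, and $u\ge w$ on $\partial\Omega$. It therefore suffices to show $u\ge w$ in $\Omega$. I would argue by contradiction with the standard linearization trick for divergence-form quasilinear operators.

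Write $a(\xi)=\xi/\sqrt{1-|\xi|^2}$ on the open unit ball $\{|\xi|<1\}$. Since $\bar\Omega$ is compact and $u,v\in C^2(\bar\Omega)$, I will interpret $|\nabla u|,|\nabla v|<1$ as holding on $\bar\Omega$, or else restrict to compact subdomains and pass to a limit. Then there is $\theta<1$ with $|\nabla u|,|\nabla w|\le\theta$. By the mean value theorem,
$$a(\nabla u)-a(\nabla w)=A(x)\nabla(u-w),\qquad A(x)=\int_0^1 Da\big(t\nabla u+(1-t)\nabla w\big)\,dt .$$
Here $Da(\xi)=(1-|\xi|^2)^{-1/2}\big(I+\xi\otimes\xi/(1-|\xi|^2)\big)$ is symmetric and positive definite, with eigenvalues between $(1-|\xi|^2)^{-1/2}$ and $(1-|\xi|^2)^{-3/2}$. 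Segments between points of the ball $|\xi|\le\theta$ stay inside it, so $A$ is uniformly elliptic and bounded, with continuous coefficients.

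Set $z=w-u$. Then $-\nabla\cdot(A\nabla z)=\cM_0 w-\cM_0 u\le 0$ in $\Omega$, and $z\le0$ on $\partial\Omega$.

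Weak maximum principle step: suppose $\max z=M>0$. For $0<\epsilon<M$, test with $\phi=(z-\epsilon)^+$, which is Lipschitz and vanishes on $\partial\Omega$. Multiplying the inequality by $\phi$ and integrating by parts gives
$$\int_\Omega A\nabla z\cdot\nabla(z-\epsilon)^+\,dx=\int_\Omega\big(\cM_0 w-\cM_0 u\big)\phi\,dx\le0 .$$
Ellipticity then forces $\nabla(z-\epsilon)^+=0$, so $(z-\epsilon)^+$ is constant on each component. Because it vanishes on the boundary, it is identically zero, i.e. $z\le\epsilon$. Letting $\epsilon\to0$ gives the contradiction, so $u\ge v+m$.

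The only delicate point is the uniform ellipticity of $A$. This requires the gradients to stay uniformly below $1$, which is where the strict inequality $|\nabla u|,|\nabla v|<1$ and compactness of $\bar\Omega$ enter. If they are only assumed in the open set $\Omega$, I would run the argument on $\{z>\epsilon\}$. That set is compactly contained in $\Omega$ by continuity and the boundary inequality, so on it the gradients are bounded by some $\theta<1$, and the same test-function argument applies verbatim.
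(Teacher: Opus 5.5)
The paper gives no proof of this lemma; it is quoted from \cite[Lemma 1.2]{BS82}, so there is no in-paper argument to compare against. Your proof is correct and self-contained.

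It is the divergence-form weak maximum principle. You use only the monotonicity of $a(\xi)=\xi/\sqrt{1-|\xi|^2}$, tested against $(z-\epsilon)^+$, whose support lies in the compact set $\{z\ge\epsilon\}\subset\Omega$. The other classical route uses the nondivergence form $-a_{ij}(\nabla u)D_{ij}u$, which the paper itself writes down in its Hopf lemma. There one linearizes $\cM_0u-\cM_0v$ into $-a_{ij}D_{ij}(u-v)+b_kD_k(u-v)$, with $b_k$ containing $D^2v$, and then applies the linear maximum principle on compact subdomains.

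Your argument keeps second derivatives out of the coefficients entirely, and this buys extra generality:
\begin{itemize}
\item It never uses the $C^2$ boundary or regularity of $u,v$ up to $\partial\Omega$, so it proves the statement for $u,v\in C^2(\Omega)\cap C^0(\bar\Omega)$.
\item It goes through essentially verbatim for Lipschitz $u,v$ that satisfy $\cM_0u\ge\cM_0v$ only weakly, provided $|\nabla u|,|\nabla v|<1$ a.e.\ and $a(\nabla u),a(\nabla v)\in L^1_{\loc}(\Omega)$. The nondivergence route cannot do this.
\end{itemize}

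One correction to your own assessment: the uniform ellipticity you call the delicate point is automatic. For every $|\xi|<1$,
$\eta^{T}Da(\xi)\eta=(1-|\xi|^2)^{-1/2}|\eta|^2+(1-|\xi|^2)^{-3/2}(\xi\cdot\eta)^2\ge|\eta|^2$.
Hence $A\nabla z\cdot\nabla z\ge|\nabla z|^2$ holds pointwise in $\Omega$ with no gradient bound at all. Localizing to $\{z\ge\epsilon\}$ is needed only to make the integration by parts legitimate, since the integrands are then continuous on a compact set, and it supplies that automatically. You can therefore drop the preliminary detour of assuming $|\nabla u|,|\nabla v|<1$ on $\bar\Omega$ or passing to a limit over subdomains.
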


This principle could be extended to weak source in following setting. 
 $$\cI_{w,H}(u)=\int_{\Omega} \big(\sqrt{1-|\nabla u|^2}+H(x)u(x)\big)dx,\quad \forall\, u\in \bX_w(\Omega) $$
 with 
$$\bX_w(\Omega):=\Big\{v\in C^{0,1}(\bar \Omega):\,  v= w\ {\rm on}\ \partial \Omega,\ \, |\nabla v|\leq 1\ \ {\rm a.e.\ in\ }  \Omega\Big\},   $$
where $\Omega$ be a  bounded $C^2$ domain in $\R^N$ with $N\geq 2$.

 \begin{lemma}\label{lm cp}
 Let  $H_1,H_2$ be two bounded Radon measures such that 
 $$\int_{\Omega} H_1 \xi dx \geq \int_{\Omega} H_2 \xi dx\quad \text{ for all }\, \xi \in C(\bar \Omega), \ \xi \geq 0.$$ 
Let $w_1,w_2\in C^{0,1}(\bar \Omega)$  satisfy  $w_1\geq w_2$, 
and   $u_i\in C^{0,1}(\bar \Omega)$ be the critical points of $\cI_{w_i,H_i}$ with $i=1,2$, 
 then 
 $$u_1\geq u_2 \quad\text{ in $\Omega$}.$$ 
 \end{lemma}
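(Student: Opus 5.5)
Since $u_1$ and $u_2$ are critical points of the convex functionals $\cI_{w_1,H_1}$ and $\cI_{w_2,H_2}$, the plan is a variational comparison on the set where $u_2>u_1$. We test the first variation of each functional against a cut-off of $(u_2-u_1)^+$ and then use the strict monotonicity of the vector field $a(p)=p/\sqrt{1-|p|^2}$ on the unit ball.

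First I fix what "critical point" means for these constrained, possibly non-smooth functionals. Following \cite{BS82}, I take it as the variational inequality obtained by comparison with competitors in $\bX_{w_i}(\Omega)$. For every $v\in\bX_{w_i}(\Omega)$ the one-sided derivative gives
$$\int_\Omega \frac{\nabla u_i\cdot\nabla(v-u_i)}{\sqrt{1-|\nabla u_i|^2}}\,dx\;\geq\;\int_\Omega H_i\,(v-u_i)\,dx ,$$
with the sign fixed by the form of $\cI$; when $H_i$ is a measure, the right-hand side is understood as the pairing with the continuous function $v-u_i$. This step also requires $|\nabla u_i|/\sqrt{1-|\nabla u_i|^2}\in L^1(\Omega)$, at least along the competitor directions used; in the degenerate case I would read the inequality as a statement about one-sided derivatives.

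Next I choose the competitors $v_1=\max(u_1,u_2)$ and $v_2=\min(u_1,u_2)$. Both are Lipschitz and satisfy $|\nabla v_i|\le1$ a.e. On $\partial\Omega$ we have $u_2=w_2\le w_1=u_1$, so $v_1=w_1$ and $v_2=w_2$ there, and the competitors are admissible. With $\eta=(u_2-u_1)^+\ge0$, which vanishes on $\partial\Omega$, we have $v_1-u_1=\eta$ and $v_2-u_2=-\eta$. Substituting into the two inequalities and subtracting yields
$$\int_{\{u_2>u_1\}}\big(a(\nabla u_2)-a(\nabla u_1)\big)\cdot(\nabla u_2-\nabla u_1)\,dx\;\le\;\int_\Omega (H_2-H_1)\,\eta\;\le\;0,$$
where the last inequality is the hypothesis on $H_1,H_2$ applied to $\xi=\eta\in C(\bar\Omega)$. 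The sign conventions have to be aligned with those of $\cI$ so that this is the direction that comes out; this follows the bookkeeping in \cite{BS82}.

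Since $a=\nabla F$ with $F(p)=-\sqrt{1-|p|^2}$ strictly convex on the closed unit ball, the integrand is nonnegative, and it is strictly positive wherever $\nabla u_1\ne\nabla u_2$. Therefore $\nabla\eta=0$ a.e., so $\eta$ is constant on each component of $\Omega$. Because $\eta$ vanishes on $\partial\Omega$, it is identically zero, which gives $u_1\ge u_2$.

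The main obstacle is the degeneracy where $|\nabla u_i|=1$. There $a$ blows up, and the first-variation inequality may not hold as an integral identity. If needed, I would replace the derivative computation by the convexity inequality $\cI(u_i)\le\cI(v_i)$ together with the pointwise convex splitting
$$F(\nabla\max(u_1,u_2))+F(\nabla\min(u_1,u_2))=F(\nabla u_1)+F(\nabla u_2).$$
Adding the two minimality inequalities then gives $\int(H_2-H_1)\eta\ge0$ with equality forced. To conclude, I would apply the same argument to $u_1$ and to $u_2$ replaced by $\min(u_1,u_2)$, combined with uniqueness of minimizers of the strictly convex functional, which gives $\min(u_1,u_2)=u_2$.
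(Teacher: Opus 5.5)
Your proof is correct, but the argument that actually carries it is the lattice fallback in your last paragraph, not the monotonicity computation. The first-variation inequality with $\nabla u_i/\sqrt{1-|\nabla u_i|^2}$ is not available for general elements of $\bX_{w_i}(\Omega)$, since the class only imposes $|\nabla v|\le 1$. On $\{|\nabla u_i|=1\}$ the integrand $\big(a(\nabla u_2)-a(\nabla u_1)\big)\cdot\nabla\eta$ is of type $\infty\cdot 0$, exactly as you suspected.

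The fallback needs none of this. Take $\max(u_1,u_2)\in\bX_{w_1}(\Omega)$ and $\min(u_1,u_2)\in\bX_{w_2}(\Omega)$, and use the a.e.\ identity for the area integrands together with $\max+\min=u_1+u_2$. Optimality of $u_1,u_2$ then gives $0\le\int_\Omega(H_2-H_1)\eta\le 0$. The signs come out right once your ``convex/minimizer'' language is read as referring to $-\cI_{w_i,H_i}$, since the paper's $\cI$ is concave and the $u_i$ are maximizers. Equality is therefore forced in both inequalities, so $\min(u_1,u_2)$ is also a maximizer of $\cI_{w_2,H_2}$. Strict concavity of $\sqrt{1-|p|^2}$ on the closed unit ball then gives $\min(u_1,u_2)=u_2$.

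The paper argues differently. It shifts $u_1$ by a constant $\epsilon>0$ and looks at $\Omega_+=\{u_2>u_1+\epsilon\}$. There, $u_1+\epsilon$ and $u_2$ restricted to $\Omega_+$ maximize $\int_{\Omega_+}\big(\sqrt{1-|\nabla v|^2}+H_iv\big)$ with the same boundary values. Uniqueness on $\Omega_+$ gives the strict inequality $\cI_2(u_1+\epsilon)<\cI_2(u_2)$ over $\Omega_+$. Using $H_1\ge H_2$ and $u_1+\epsilon<u_2$ on $\Omega_+$, this becomes $\cI_1(u_1+\epsilon)<\cI_1(u_2)$, contradicting maximality.

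Both proofs rest on the same three ingredients: optimality, uniqueness from strict concavity, and the sign of $H_1-H_2$ tested on a nonnegative continuous function. Yours uses only globally admissible competitors on $\Omega$ itself. It therefore avoids restricting maximizers to the possibly irregular open set $\Omega_+$, invoking uniqueness on such a set, and the $\epsilon$-shift. The paper's localization is the classical Bartnik--Simon device, and it produces the strict inequality directly on the set where the ordering fails.
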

 \noindent{\bf Proof. } Let $C=\sup_{\partial \Omega} (w_1-w_2)$ and $\tilde u_1=u_1+C+\epsilon$ with $\epsilon>0$. 
 Set $\Omega_+=\{x\in\Omega:\, u_2>\tilde u_1 \}$.  
 
 If $\Omega_+$ is non-empty, the function $(u_2-\tilde u_1)_+:=\max\{0, u_2-\tilde u_1\}\in C^{0,1}(\Omega)$ vanishes on $\partial \Omega$. Define 
 $$\cI_i(u)=\int_{\Omega_+} \big(\sqrt{1-|\nabla u|^2}+uH_i\big)dx,\quad i=1,2. $$
 Then $\tilde u_1$ maximizes $\cI_1$ with respect to $\tilde u_1\big|_{\partial \Omega_+}$ and $u_2$ maximizes $\cI_2$ with the same boundary values by the definition of $\Omega_+$. By the uniqueness \cite[Proposition 1.1]{BS82},  there holds 
 $\cI_2(\tilde u_1)<\cI_2(  u_2)$ and then by the fact that $H_1\geq H_2$ and  $ \tilde u_1<u_2$ in $\Omega_+$ 
 \begin{align*}
 \int_{\Omega_+}  \sqrt{1-|\nabla \tilde u_1|^2}dx&<\int_{\Omega_+} \big(\sqrt{1-|\nabla u_2|^2}- (\tilde u_1-u_2)H_2\big)dx
 \\[1mm]&= \int_{\Omega_+} \big(\sqrt{1-|\nabla u_2|^2}- (\tilde u_1-u_2)H_1- (\tilde u_1-u_2)(H_2-H_1)\big)dx
 \\[1mm]&\leq \int_{\Omega_+} \big(\sqrt{1-|\nabla u_2|^2}- (\tilde u_1-u_2)H_1 \big) dx,
 \end{align*}
then we obtain   that 
  \begin{align*}
 \int_{\Omega_+} \big(\sqrt{1-|\nabla \tilde u_1|^2}dx+\tilde u_1 H_1\big)dx <\int_{\Omega_+} \big(\sqrt{1-|\nabla u_2|^2}+  u_2 H_1 \big)dx,
 \end{align*}
 which contradits the maximality of $\cI_1(\tilde u_1)$.  Therefore, $\Omega_+=\emptyset$ and by the arbitrary of $\epsilon>0$, 
 we have that $u_1\geq u_2$ in $\Omega$. \hfill$\Box$  
 \medskip


The Hopf's Lemma is stated as following. 

 \begin{lemma}\label{lm hopf}
 Let $\Omega$ be a  bounded $C^2$ domain in $\R^N$ with $N\geq 2$,  function $u\in C^2(\Omega)\cap C^1(\bar\Omega)$ be such that $|\nabla u|\leq \theta<1$ in $\Omega$ and 
 $$\cM_0u=0 \quad {\rm in}\ \, \Omega.  $$
 If $x_0\in \partial \Omega$ such that $u(x_0)>u(x)$   ($u(x_0)<u(x)$ resp.) for all $x\in\Omega$ , then $\nabla u(x)\cdot \nu>0$ ($\nabla u(x)\cdot \nu<0$ resp.), where $\nu$ is the normal vector pointing outside of $\Omega$.

 \end{lemma}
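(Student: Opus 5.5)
The plan is to reduce the statement to the classical Hopf boundary point lemma for linear uniformly elliptic operators. The key observation is that, because $|\nabla u|\leq\theta<1$ and $u\in C^2(\Omega)$, the equation $\cM_0u=0$ can be expanded in nondivergence form:
$$
\sum_{i,k=1}^N a_{ik}(x)\,\partial_{ik}u=0,\qquad a_{ik}(x)=\delta_{ik}+\frac{\partial_iu(x)\,\partial_ku(x)}{1-|\nabla u(x)|^2}.
$$
This follows by expanding the divergence and multiplying by $\sqrt{1-|\nabla u|^2}$. The coefficient matrix has eigenvalues $1$ (on $\nabla u^\perp$) and $(1-|\nabla u|^2)^{-1}$ (in the direction of $\nabla u$). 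Hence
$$
|\xi|^2\leq a_{ik}\xi_i\xi_k\leq (1-\theta^2)^{-1}|\xi|^2,
$$
so freezing the coefficients at the given $u$ yields a linear operator $Lv=a_{ik}\partial_{ik}v$ with bounded measurable (indeed continuous) coefficients that is uniformly elliptic on $\Omega$, with no zero-order term, and $Lu=0$.

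Next I apply the classical Hopf lemma (e.g.\ Gilbarg--Trudinger, Lemma 3.4). Since $\Omega$ is $C^2$, it satisfies the interior sphere condition at $x_0$: there is a ball $B=B_\rho(y)\subset\Omega$ with $x_0\in\partial B$. Consider the case $u(x_0)>u(x)$ for all $x\in\Omega$. On $B$, the function $u$ is $C^2$ and satisfies $Lu=0\geq 0$, and $x_0$ is a strict boundary maximum, so Hopf's lemma gives that the outer normal derivative of $u$ at $x_0$ is strictly positive. For a self-contained argument one uses the barrier $w(x)=e^{-\gamma|x-y|^2}-e^{-\gamma\rho^2}$ on the annulus $B\setminus B_{\rho/2}(y)$. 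Uniform ellipticity with constants depending only on $\theta$ and $N$ allows a choice of $\gamma$ large so that $Lw>0$. Comparing $u-u(x_0)+\epsilon w$ by the weak maximum principle then yields
$$
\partial_\nu u(x_0)\geq -\epsilon\,\partial_\nu w(x_0)>0.
$$
Since $u\in C^1(\bar\Omega)$, the normal derivative exists and equals $\nabla u(x_0)\cdot\nu$, and the exterior normals of $B$ and $\Omega$ coincide at $x_0$. The case $u(x_0)<u(x)$ follows by applying the first case to $-u$, since $\cM_0(-u)=-\cM_0u=0$ and $|\nabla(-u)|\leq\theta$.

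The only delicate point is the regularity of the coefficients up to the boundary. The functions $a_{ik}$ are continuous in $\Omega$, and bounded on $\bar\Omega$ because $u\in C^1(\bar\Omega)$ with $|\nabla u|\leq\theta$. The barrier argument only needs the weak maximum principle for $L$ on the annulus with $u\in C^2$ in the open annulus and continuous up to its closure, which holds. This is where the hypothesis $\theta<1$ is essential, since without it the ellipticity degenerates. I also note that the statement writes $\nabla u(x)$; this should be read at $x=x_0$.
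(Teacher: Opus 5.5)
Your proposal is correct and follows the same route as the paper. Both arguments freeze the coefficients so that $\cM_0u=0$ becomes a linear, uniformly elliptic equation with no zero-order term. The paper's $a_{ij}=\frac{(1-|p|^2)\delta_{ij}+p_ip_j}{(1-|p|^2)^{3/2}}$ is just your matrix divided by $\sqrt{1-|\nabla u|^2}$. Both then invoke Hopf's lemma, Gilbarg--Trudinger Lemma 3.4. Your version is more careful than the paper's on four points: the sign conventions, the explicit ellipticity constants, reducing the minimum case to $-u$, and reading the conclusion at $x_0$.
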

\noindent{\bf Proof. } Since $|\nabla u| <1$ in $\Omega$, then $\cM_0$ is uniformly elliptic with respect to $u$.  Note that 
\begin{align*}
\cM_0 w(x)&=  -  \frac{\Delta w(x)}{ (1-|\nabla w(x)|^2)^{\frac12}} - \sum^{N}_{i,j=1} \frac{D_iw(x)D_jw(x) D_{ij}w(x)}{ (1-|\nabla w(x)|^2)^{\frac32}}
\\[1mm]&=\sum^{N}_{i,j=1}  a_{ij} D_{ij}w(x) \quad {\rm for}\ \, x\in\Omega,    
\end{align*}
 then 
 $$a_{ij}= \frac{(1-|p|^2)\delta_{ij} +p_ip_j }{ (1-|p|^2)^{\frac32}},$$
 which is independent of $z$ and continuous differentiable respect to the $p=D w(x) (=\nabla w(x))$ variable. 

Then it is a uniformly  elliptic operator if  $|\nabla u|\leq \theta<1$ in $\Omega$,
and our statement follows by the Hopf's Lemma \cite[Lemma 3.4]{GT83}. \hfill$\Box$

 \begin{corollary}\label{cor 2.1}
 Let $u\in C^2(\Omega)\cap C^1(\bar \Omega)$ satisfy $|\nabla u|<1$ and 
 $$\cM_0u=0\ \ \text{in  $\Omega$},$$
  then $u$ has no local maximum point or no local maximum point  in $\Omega$.   
 \end{corollary}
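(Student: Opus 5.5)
The corollary is misprinted: it should say that $u$ has no local maximum point and no local minimum point in $\Omega$. The plan is to prove this by the strong maximum principle applied to the linearized form of $\cM_0$ written out in the proof of Lemma \ref{lm hopf}. Take a local maximum; the minimum case follows by applying the same argument to $-u$, which satisfies $\cM_0(-u)=-\cM_0 u=0$ with $|\nabla(-u)|=|\nabla u|$.

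\textbf{Step 1 (localization).} Suppose $x_0\in\Omega$ is a local maximum point of $u$. Choose a ball $B=B_r(x_0)$ with $\bar B\subset\Omega$ and $u\le u(x_0)$ on $\bar B$. Since $u\in C^1(\bar\Omega)$ and $|\nabla u|<1$, continuity and compactness of $\bar B$ give a $\theta<1$ with $|\nabla u|\le\theta$ on $\bar B$. On $B$ we then have
$$\cM_0u=\sum_{i,j=1}^N a_{ij}(\nabla u)D_{ij}u=0,\qquad a_{ij}(p)=\frac{(1-|p|^2)\delta_{ij}+p_ip_j}{(1-|p|^2)^{\frac32}},$$
as computed in the proof of Lemma \ref{lm hopf}. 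Freezing $p=\nabla u(x)$ turns this into a linear operator $L=\sum_{i,j}a_{ij}(x)D_{ij}$. Its coefficients are continuous and bounded on $\bar B$, and it is uniformly elliptic there because the eigenvalues of $a_{ij}(p)$ lie between $(1-|p|^2)^{-1/2}$ and $(1-|p|^2)^{-3/2}$. It has no zeroth-order term.

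\textbf{Step 2 (strong maximum principle).} Since $-L u=0$, $u$ is an $L$-subsolution that attains its maximum over $B$ at the interior point $x_0$. By the strong maximum principle \cite[Theorem 3.5]{GT83}, $u\equiv u(x_0)$ on $B$. A connectedness argument, on the set where $u=u(x_0)$ inside the component of $\Omega$ containing $x_0$, then shows that $u$ is constant on that whole component. The same conclusion can be reached through Lemma \ref{lm hopf}. If $u$ were not constant on $B$, pick a smaller ball $B'\subset B$ with $u<u(x_0)$ in $B'$ and some boundary point $y\in\partial B'$ where $u(y)=u(x_0)$. Lemma \ref{lm hopf} then gives $\nabla u(y)\cdot\nu>0$, which contradicts $\nabla u(y)=0$ at an interior maximum.

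\textbf{Conclusion and main obstacle.} The corollary is therefore to be read as: $u$ has no \emph{strict} local extremum in $\Omega$, and more generally $u$ has an interior local extremum only if it is constant on the corresponding component. The only delicate point is the uniform ellipticity needed in Step 1. The hypothesis $|\nabla u|<1$ holds only pointwise, so it does not by itself give a bound $\theta<1$ on all of $\Omega$. I handle this by working on compact sub-balls, where continuity of $\nabla u$ supplies the bound. This is enough because the maximum principle and the Hopf lemma are both local statements.
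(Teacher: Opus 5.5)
Your proof is correct, and you are right that the corollary can only hold for \emph{strict} local extrema, or more precisely in the form ``at any interior local extremum, $u$ is locally constant''. Every constant function satisfies the hypotheses and has a local maximum and minimum at each point.

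Your route differs from the paper's in where Hopf's lemma is applied. The paper applies Lemma~\ref{lm hopf} at the extremum $x_0$ itself, on a $C^2$ domain $\cO_0\subset\{u<u(x_0)\}$ with $x_0\in\partial\cO_0$, and contradicts $\nabla u(x_0)=0$. That is a one-line argument, but the existence of $\cO_0$ is only asserted. It is available when $x_0$ is a strict extremum: take a small ball tangent at $x_0$ lying in a punctured neighbourhood of $x_0$ with closure in $\Omega$, which also supplies the uniform bound $|\nabla u|\le\theta<1$ that Lemma~\ref{lm hopf} needs. It is not available in general, e.g.\ for constants.

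You instead use the strong maximum principle on a ball around $x_0$. Its proof applies Hopf's lemma at a touching point $y$ of a maximal ball in $\{u<u(x_0)\}$, rather than at $x_0$. This covers non-strict extrema too, and gives the sharp conclusion that $u$ is constant near any interior local extremum, which is exactly the case the paper's argument silently excludes. Your explicit localisation to compact balls to get uniform ellipticity is also a point the paper passes over.

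One caveat concerns your closing claim that $u$ is then constant on the whole component. This does not follow from a connectedness argument on $\{u=u(x_0)\}$. A boundary point of the open region where $u\equiv u(x_0)$ need not be a local maximum, so the strong maximum principle cannot be reapplied there. The claim is true, but it needs an extra ingredient: unique continuation for $\sum_{i,j}a_{ij}(\nabla u)D_{ij}$, whose coefficients are $C^1$ since $u\in C^2$, or the real analyticity of solutions of $\cM_0u=0$ where $|\nabla u|<1$. None of this is needed for the corollary itself.
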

\noindent{\bf Proof. } If there exists a local  maximum point, a local  minimal point   $x_0\in \Omega$, then $\nabla u(x_0)=0$ 
and there is a $C^2$ domain  $\cO_0\subset \{u(x)<u(x_0)\}$ such that $x_0\in \partial \cO_0$. By Hopf's Lemma 
there holds 
$$D_\nu u>0,\quad \nu\text{ is a normal vector at $x_0$ pointing outside of $\cO_0$},  $$
which contradicts the fact that $\nabla u(x_0)=0$. \hfill$\Box$\medskip

 \subsection{Dirichlet problems }

 We first recall the notion of Symmetric Decreasing Rearrangement.  For a function $w: \Omega \to \mathbb{R}^+$,  its symmetric decreasing rearrangement 
 $w^*: \Omega^* \to \mathbb{R}^+$ is a radially symmetric, decreasing function that has the same distribution function as $w$, 
 where $\Omega^*$ is the ball centered at the origin with the same volume as $\Omega$. 
 The level sets of $w^*$ are balls whose volume equals the volume of the corresponding level sets of $w$. The rearrangement preserves $L^p$ norms: 
 $$\|w\|_{L^p(\Omega)} = \|w^*\|_{L^p(\Omega^*)}\quad\text{ for $p\in[ 1,+\infty]$.}$$
 Moreover, we have that
 $$w^*(0)=\max_{x\in \Omega} w(x). $$
 
Note that  P\'{o}lya-Szeg\H{o} inequality 
 $$\norm{\nabla w}_{L^1(\Omega)}\ge \norm{\nabla w^*}_{L^1(B_r(0))}\quad{\rm for}\ \ r>0\ {\rm s.t.}\ |B_r|=|\Omega|. $$
Generally,  let $\cH: \mathbb{R}^+ \to \mathbb{R}^+$ be a non-decreasing, convex function with $\cH(0) = 0$. Let   $w \in W^{1,1}_0(\Omega)$ with $w \ge 0$, then 
  $$\int_{\Omega^*} \cH(|\nabla w^*|) dx\leq  \int_{\Omega} \cH(|\nabla w|) dx. $$

 \begin{lemma}\label{lm 2.2-rea}
 Let $f\in C^1(B_R(0))$ be non-negative and non-trivial with $R>0$ and  $u_f$ be the positive solution of 
  \begin{equation}\label{eq 2.2}
 \left\{
\begin{array}{lll}
\displaystyle  \cM_0u =f  \quad
  &{\rm in}\  \ B_R(0), 
\\[2mm]
 \phantom{\quad \,  }
 \displaystyle u=0 \quad & {\rm on}\  \, \partial B_R(0).    
 \end{array}
 \right.
 \end{equation}
Then the rearrangement $u^*_f$ verifies that 
  \begin{equation}\label{ineq reag}
  u^*_f\leq u_{f^*}\quad {\rm in}\ B_R(0),
   \end{equation}
where $u_{f^*}$ is the radial symmetric  solution of 
  \begin{equation}\label{eq 2.2*}
  \left\{
\begin{array}{lll}
\displaystyle  \cM_0u= f^*  \quad
  &{\rm in}\  \ B_R(0), 
\\[2mm]
 \phantom{\quad \,  }
 \displaystyle u=0 \quad & {\rm on}\  \, \partial B_R(0).    
 \end{array}
 \right.
 \end{equation}
 \end{lemma}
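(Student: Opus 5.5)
We prove the comparison $u_f^*\le u_{f^*}$ by the Talenti-type level-set argument, adapted to the flux function $\Lambda(s)=s/\sqrt{1-s^2}$. First note $u_f>0$ in $B_R(0)$ by Lemma \ref{lm cp} (compare with $0$, since $f\ge0$). The comparison function $u_{f^*}$ is radial and decreasing. Integrating the equation over $B_r$ gives
\[
\Lambda(|u_{f^*}'(r)|)\,|\partial B_r|=\int_{B_r}f^*\,dx .
\]
Hence, writing $\mu$ for volume and $\rho(\mu)$ for the radius of the ball of volume $\mu$, $u_{f^*}$ is determined by
\[
-\frac{d u_{f^*}}{dr}=\Lambda^{-1}\Big(\frac{\int_0^{\mu}f^{*}}{N\omega_N^{1/N}\mu^{(N-1)/N}}\Big).
\]
We aim to show that $u_f^*$ satisfies the corresponding differential inequality. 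Both functions vanish at $r=R$, so integrating from $R$ inward then gives \eqref{ineq reag}.

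\textbf{Step 1 (level-set identity).} Set $\mu(t)=|\{u_f>t\}|$ for $0<t<\max u_f$. Testing \eqref{eq 2.2} with $\phi=(u_f-t)_+\wedge h$, dividing by $h$ and letting $h\to0$ gives, for a.e.\ $t$,
\[
\int_{\{u_f=t\}}\Lambda(|\nabla u_f|)\,d\mathcal H^{N-1}=\int_{\{u_f>t\}}f\,dx\le\int_0^{\mu(t)}f^*(s)\,ds .
\]
The last bound is Hardy--Littlewood. Since $u_f\in C^2$ with $|\nabla u_f|<1$ (classical solution on the ball, Bartnik--Simon), the coarea formula applies. By Sard, a.e.\ level set is regular.

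\textbf{Step 2 (isoperimetric plus Jensen/Cauchy--Schwarz).} We have $-\mu'(t)\ge\int_{\{u_f=t\}}|\nabla u_f|^{-1}d\mathcal H^{N-1}$. The isoperimetric inequality gives $P(\{u_f>t\})\ge N\omega_N^{1/N}\mu(t)^{(N-1)/N}$. We need to combine these with Step 1 to obtain
\[
1\le(-\mu'(t))\,\Lambda^{-1}\Big(\frac{\int_0^{\mu(t)}f^*}{N\omega_N^{1/N}\mu(t)^{(N-1)/N}}\Big)\Big/\big(\text{radius-derivative factor}\big),
\]
that is, $-\frac{d}{dt}\rho(\mu(t))\ge 1/\Lambda^{-1}(\cdots)$.

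This is the main obstacle. In the Laplacian case one uses Cauchy--Schwarz, $P^2\le(\int|\nabla u|^{-1})(\int|\nabla u|)$, but here $\Lambda$ is nonlinear. We plan to use Jensen on the level set with the probability measure $d\nu=|\nabla u|^{-1}d\mathcal H^{N-1}\big/\!\int|\nabla u|^{-1}$. The function $g(\sigma)=\sigma\Lambda(\sigma)$ is convex and increasing on $[0,1)$. Then
\[
\frac{\int\Lambda(|\nabla u|)}{\int|\nabla u|^{-1}}=\int g(|\nabla u|)\,d\nu\ \ge\ g\Big(\int|\nabla u|\,d\nu\Big)=g\Big(\frac{P}{\int|\nabla u|^{-1}}\Big).
\]
Setting $A=-\mu'(t)\ge\int|\nabla u|^{-1}$ and $a=P/A$, this yields $A\,g(a)\le\int_0^{\mu}f^*$. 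Since $g(a)=a\Lambda(a)$, this reads $P\,\Lambda(P/A)\le\int_0^\mu f^*$. Because $\Lambda$ is increasing and $P\ge P^*:=N\omega_N^{1/N}\mu^{(N-1)/N}$, we get $P/A\le\Lambda^{-1}\big(\int_0^\mu f^*/P^*\big)$, hence $P^*/(-\mu')\le\Lambda^{-1}(\cdots)$. Recalling $-\frac{d}{dt}\rho(\mu(t))=-\mu'/P^*$, this is exactly the desired inequality.

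\textbf{Step 3 (integration).} Invert to the distribution function: $u_f^*(r)=\sup\{t:\mu(t)>|B_r|\}$. Since $\mu$ is monotone and its singular part only helps, Step 2 gives
\[
-\frac{d}{dr}u_f^*(r)\le\Lambda^{-1}\Big(\frac{\int_{B_r}f^*}{|\partial B_r|}\Big)=-\frac{d}{dr}u_{f^*}(r)\quad\text{a.e.}
\]
This is rigorous because $u_f^*$ is Lipschitz. Integrating from $r$ to $R$, with $u_f^*(R)=u_{f^*}(R)=0$, gives $u_f^*\le u_{f^*}$ in $B_R(0)$.
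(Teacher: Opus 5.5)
Your proof is correct, and at the decisive step it takes a different route from the paper. The paper carries out your Step~1, integrating $\cM_0u_f=f$ over $\{u_f>t\}$. It then discards the nonlinearity via $\Lambda(s)\ge s$, with $\Lambda(s)=s/\sqrt{1-s^2}$, keeps only $\int_{\{u_f=t\}}|\nabla u_f|\,d\mathcal H^{N-1}\le\int_{\{u_f>t\}}f\,dx$, and refers to Talenti's Theorem I for the rest. That is shorter, but Talenti's Cauchy--Schwarz argument applied to this linearized inequality only yields $u_f^*\le v$, where $-\Delta v=f^*$ in $B_R(0)$ and $v=0$ on $\partial B_R(0)$. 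Since $\Lambda^{-1}(y)=y/\sqrt{1+y^2}<y$, the radial formulas give $u_{f^*}<v$, so this is strictly weaker than \eqref{ineq reag}. The retained inequality alone cannot give more: for a small radially decreasing $f$, $v$ itself satisfies it with equality and has $|\nabla v|<1$.

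Your route keeps $\Lambda$, applies Jensen to the convex increasing function $g(\sigma)=\sigma\Lambda(\sigma)$, and uses that $x\mapsto P\Lambda(P/x)$ is decreasing to pass from $\int_{\{u_f=t\}}|\nabla u_f|^{-1}$ to $-\mu'(t)$. This recovers exactly the radial flux relation of $\cM_0$, which is what the statement asserts. The sharp form is also what the paper needs later. The bound $v_{\alpha_0,n,R}(0)<\Phi_{N,\alpha_0}(0)$ of Lemma \ref{lm 3.2} is uniform in $n$, whereas the Laplacian comparison function for the source $\alpha_0\eta_n$ blows up at the origin as $n\to\infty$. In addition, your argument uses $f$ only through $\int_{\{u_f>t\}}f\le\int_0^{\mu(t)}f^*$. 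So it compares $u_f^*$ with the $\cM_0$-radial solution for any radially decreasing source whose ball integrals dominate those of $f^*$. That is the form actually required in Lemma \ref{lm 3.1}: for $m_0\ge2$ the rearrangement $g_n^*$ is not $\alpha_0\eta_n$, but it is dominated by it in this integrated sense.

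Two small repairs. First, Morse--Sard is not available for $C^{2,\gamma}$ functions when $N\ge3$, but you do not need it. The coarea formula already gives $\mathcal H^{N-1}(\{u_f=t\}\cap\{\nabla u_f=0\})=0$ for a.e.\ $t$, which is all Steps~1--2 use. Note also that Jensen produces $\mathcal H^{N-1}(\{u_f=t\})$, which dominates the perimeter of $\{u_f>t\}$, so the isoperimetric step still goes through. Second, in Step~3, integrate in $t$ rather than asserting a pointwise bound on the derivative of $u_f^*$. The map $t\mapsto u_{f^*}(\rho(\mu(t)))$ is nondecreasing, vanishes at $t=0$, and has a.e.\ derivative at least $1$, hence $u_{f^*}(\rho(\mu(t)))\ge t$. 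If $\mu(t)>|B_r|$ then $\rho(\mu(t))>r$, so $t\le u_{f^*}(r)$; taking the supremum over such $t$ gives $u_f^*(r)\le u_{f^*}(r)$.
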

\noindent{\bf Proof. } Here we can apply the method in \cite{T76} to show the bounds, where the author proved 
the same results for the Laplacian case. 
Use the notations: $\{u(x)>t\}=\{x\in B_R(0):\, u(x)>t\}$ for $t>0$. By integrate the equation (\ref{eq 2.2}) over $\{u(x)>t\}$, we derive that 
\begin{align*}
\int_{\{u(x)>t\}} f(x)dx&=\int_{\{u(x)>t\}} \cM_0u(x) dx
\\[1mm]&=\int_{\{u(x)=t\}} \frac{\nabla u}{\sqrt{1-|\nabla u|^2}} \cdot \frac{\nabla u}{|\nabla u|}dH_{N-1}(x)
\\[1mm]&=\int_{\{u(x)=t\}} \frac{|\nabla u|}{\sqrt{1-|\nabla u|^2}} dH_{N-1}(x)
\\[1mm]&\geq \int_{\{u(x)=t\}}  |\nabla u| dH_{N-1}(x), 
\end{align*}
i.e. 
$$\int_{\{u(x)=t\}}  |\nabla u| dH_{N-1}(x)\leq \int_{\{u(x)>t\}} f(x)dx\quad {\rm for\ a.e.}\ t>0. $$
The remainder proof is nothing with the form of the equation and it follows the proof  of  \cite[Theorem I]{T76}
directly to obtain  the inequality (\ref{ineq reag}). 
\hfill$\Box$\medskip

 Next  we recall the previous  results on Poisson problems involving the mean curvature operator
\begin{equation}\label{eq 2.1-fg}
 \left\{
\begin{array}{lll}
\displaystyle   \cM_0u = f\quad
  &{\rm in}\  \, \Omega, 
\\[2mm]
 \phantom{- \ \,  }
 \displaystyle u=g\quad &{\rm on}\ \, \partial \Omega, 
 \end{array}
 \right.
 \end{equation}
 where $\Omega$ is a  bounded, connected $C^2$ domain of $\R^N$ with $N\geq 2$.  
 Here  a function $u\in \bX_g(\Omega)$ is a  solution of  (\ref{eq 2.1-fg}) if  there holds  
   \begin{align*}
 \int_{\Omega }   \frac{\nabla u\cdot \nabla  \phi}{\sqrt{1-|\nabla u|^2}}\,  dx=   \int_{\Omega } f\phi dx, \quad \text{ for $\phi\in C^1_0(\Omega)$},  
    \end{align*}
where
$$\bX_g(\Omega)=\Big\{w\in C^{0,1}(\bar \Omega):\,w=g\ {\rm on}\ \partial\Omega,\quad  |\nabla w|\leq 1\ {\rm a.e.\ in}\ \Omega,\quad  \frac{|\nabla w|}{\sqrt{1-|\nabla w|^2}}  \in L^1 (\Omega)\Big\}. $$

\begin{lemma}\label{lm 2.1-Existence}\cite[Corollary 4.3]{BS82}
Let 
$f\in L^\infty(\Omega)$ and $g\in C^{0,1}(\bar \Omega)$  satisfy 
$$\sup_{x,y\in \partial \Omega,x\not=y} \frac{g(x)-g(y)}{|x-y|}<1,  $$
 then problem (\ref{eq 2.1-fg})
 has a unique weak solution.  
\end{lemma}

\begin{lemma}\label{lm 2.1-reg}\cite[Theorem 3.6]{BS82}
Assume that  $\Omega$ is a bounded,  $C^{2,\alpha}$ domain in $\R^N$ with $N\geq 2$,   
$f\in C^{0,\alpha}(\Omega),\ g\in C^{2,\alpha}(\bar \Omega)$  with $\alpha\in(0,1)$ satisfy that 
$$|f|\leq \Lambda_0\ \  \text{in $\Omega$}, \qquad  |\nabla g|\leq 1-\theta_0 \ \  \text{in $\bar\Omega$} $$
for some $\Lambda_0>0,\ \theta_0\in(0,1)$. 
Then problem (\ref{eq 2.1-fg}) has strictly spacelike solution $u_{f,g}\in C^{2,\alpha}(\bar \Omega)$.  

 Furthermore, there exists $\theta=\theta(\Lambda_0,\theta_0,\Omega,g)\in(0,1)$ such that  
  $|\nabla u_{f,g}|\leq 1-\theta$ in $\Omega$.

\end{lemma}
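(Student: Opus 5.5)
We would prove Lemma \ref{lm 2.1-reg} by the method of continuity (equivalently, the Leray--Schauder fixed point theorem) applied to the family
$$\cM_0 u=\sigma f\ \ {\rm in}\ \Omega,\qquad u=\sigma g\ \ {\rm on}\ \partial\Omega,\qquad \sigma\in[0,1].$$
Here $\sigma=0$ has the trivial solution $u\equiv0$, and $\sigma=1$ is problem (\ref{eq 2.1-fg}). The whole argument reduces to a single a priori estimate: any $C^{2,\alpha}$ solution $u_\sigma$ of the family satisfies $|\nabla u_\sigma|\leq 1-\theta$ in $\bar\Omega$, with $\theta=\theta(\Lambda_0,\theta_0,\Omega,g)\in(0,1)$ independent of $\sigma$.

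Once this gradient bound is available, the coefficients $a_{ij}(\nabla u)$ written in the proof of Lemma \ref{lm hopf} are uniformly elliptic with constants depending only on $\theta$. The Ladyzhenskaya--Ural'tseva / De Giorgi theory for divergence-form quasilinear equations (see \cite{GT83}) then gives a $C^{1,\beta}(\bar\Omega)$ bound. Schauder theory, using $f\in C^{0,\alpha}$, $g\in C^{2,\alpha}$ and $\partial\Omega\in C^{2,\alpha}$, upgrades this to a $C^{2,\alpha}(\bar\Omega)$ bound. Closedness and openness of the set of admissible $\sigma$ follow in the standard way; openness uses the implicit function theorem, since the linearization is uniformly elliptic with zero Dirichlet data and therefore invertible by the maximum principle. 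Uniqueness comes from Lemma \ref{lm cp-cla}. This yields $u_{f,g}$ together with the stated bound on $|\nabla u_{f,g}|$.

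The a priori gradient bound is obtained in three steps.
\begin{enumerate}[label=(\roman*)]
\item \textbf{$C^0$ bound.} Since $|\nabla u|<1$ and $\Omega$ is bounded, $|u|\leq \max|g|+{\rm diam}\,\Omega$ trivially.
\item \textbf{Boundary gradient bound.} We construct local barriers $g\pm \psi(d(x))$ near $\partial\Omega$, where $d$ is the distance to $\partial\Omega$ and $\psi(d)=kd-\mu d^2$. Because $|\nabla g|\leq 1-\theta_0$, we can choose $k$ small enough that the barriers stay spacelike. Because $|f|\leq\Lambda_0$, the constant $\mu$ can be chosen so that the barriers are super- and subsolutions in a thin collar. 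Lemma \ref{lm cp-cla} then bounds the normal derivative, so $|\nabla u|\leq 1-\theta_1$ on $\partial\Omega$.
\item \textbf{Interior gradient bound.} We must propagate the estimate into $\Omega$, and here lies the difficulty.
\end{enumerate}

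The main obstacle is step (iii), since $\cM_0$ degenerates exactly when $|\nabla u|\to1$. Our first attempt would follow Bartnik--Simon and apply the maximum principle to $\nu=(1-|\nabla u|^2)^{-1/2}$, multiplied by a cutoff built from a time-like distance such as $\big((u(x)-u(y)+\tau)^2-|x-y|^2\big)_+$. Differentiating the equation shows that $\log\nu$ satisfies a differential inequality in the induced Lorentzian metric, with error terms controlled by $\Lambda_0$ and $|\nabla f|$. We would prefer to avoid $\nabla f$, so we may need to mollify $f$ and argue via a compactness argument instead.

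As a fallback, we would argue by contradiction. Take the weak solution from Lemma \ref{lm 2.1-Existence} as a limit of the approximate solutions. If $\sup|\nabla u_\sigma|\to1$, the limit would contain a lightlike segment. As recalled in the introduction from \cite{BS82}, such a segment extends to a line reaching $\partial\Omega$, where the slope is at most $1-\theta_0<1$, which gives a contradiction. This compactness route produces a $\theta$ that is non-explicit but depends only on $(\Lambda_0,\theta_0,\Omega,g)$, which is all the lemma requires.
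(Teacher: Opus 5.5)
The paper gives no argument of its own here; it simply quotes \cite[Theorem 3.6]{BS82}. Your outer scheme is the standard one and is fine: continuity method, implicit function theorem for openness, De Giorgi/Schauder once $|\nabla u_\sigma|\le 1-\theta$ is known, and uniqueness from Lemma \ref{lm cp-cla}. The problem is that the two a priori estimates carrying all the content are not established, and the boundary one fails as you set it up.

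\textbf{Step (ii) fails.} To apply Lemma \ref{lm cp-cla} on the collar $\{0<d<\delta\}$ you must also know $u\le g+\psi(\delta)$ on the inner face $\{d=\delta\}$, where $\psi(\delta)=k\delta-\mu\delta^2<k\delta$.
\begin{itemize}
\item Spacelikeness of $g+\psi(d)$ at $y\in\partial\Omega$ forces $k<\sqrt{1-|\nabla_T g(y)|^2}-\partial_\nu g(y)$, with $\nu$ the inner normal.
\item At $y+\delta\nu$ the only information on $u$ is the light-cone bound $u-g\le \delta\big(\sqrt{1-|\nabla_T g(y)|^2}-\partial_\nu g(y)\big)+O(\delta^2)$. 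To first order this is attained by the maximal $1$-Lipschitz extension of $g$.
\end{itemize}
So the inequality on $\{d=\delta\}$ can never be verified; it is itself an integrated form of the boundary gradient bound you are trying to prove. The usual remedy in \cite[Chapter 14]{GT83} is to take $\psi'(0)$ large so that $\psi(\delta)\ge\sup|u-g|$. That is unavailable here, because the barrier must stay spacelike for $\cM_0$ to be elliptic and for Lemma \ref{lm cp-cla} to apply.

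What you need is a \emph{global} barrier that is strictly spacelike at $y$ but almost null on the rest of $\bar\Omega$. One option is an inverted, tilted version of the radial solutions of Section 2: $W(x)=g(y)+\phi(|x-z|)-\phi(\rho)$, with
\begin{itemize}
\item $r^{N-1}\phi'(1-\phi'^2)^{-1/2}=t-\Lambda_0(r^N-\rho^N)/N$, so that $\cM_0 W=\Lambda_0$;
\item vertex $z\notin\bar\Omega$, $|y-z|=\rho$ small, and $\phi'(\rho)(y-z)_T/\rho=\nabla_T g(y)$. This is possible with $(y-z)\cdot\nu\ge c(\theta_0)\rho$ because $|\nabla_T g|\le 1-\theta_0$;
\item $t$ so large that $\phi'$ is close to $1$ on $[\rho,\rho+{\rm diam}\,\Omega]$.
\end{itemize}
For $\rho$ small this $W$ dominates $g$ on $\partial\Omega$, and Lemma \ref{lm cp-cla} applies on all of $\Omega$. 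Together with the mirror subsolution this gives $|\nabla u(y)|\le\phi'(\rho)<1$.

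\textbf{Step (iii) is not closed by either route.}
\begin{itemize}
\item Route (a) is the Bartnik--Simon maximum principle for $\nu$ times a time-function cutoff. It requires differentiating the equation, so its constant involves $\sup|\nabla f|$. For $f\in C^{0,\alpha}$ the computation is not even available, since $u$ is only $C^{2,\alpha}$. Mollifying $f$ makes that constant blow up, so nothing uniform survives.
\item Route (b) is circular. If $u_k\to u$ uniformly and $u$ has no lightlike segment, \cite[Theorem 3.2]{BS82} gives that $u$ is strictly spacelike. But uniform convergence gives no control of $\sup|\nabla u_k|$, so $|\nabla u_k(x_k)|\to 1$ is not contradicted.
\end{itemize}
To exclude that, you need a local estimate for $C^2$ solutions of the form $\nu(x_0)\le C$. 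Here $C$ must depend only on $\sup|f|$ and on a $C^0$ quantity, such as the size of a time-like cutoff region, that is stable under uniform convergence. You also need the boundary estimate above for the case $x_k\to\partial\Omega$. That estimate is exactly what the paper imports from \cite{BS82}; your proposal assumes it at the decisive step rather than proving it.
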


 We recall also the interior gradient estimate and higher regularity result as follows. 
\begin{lemma}\label{lm 2.1-reg-int}  
Let  $B_r(0)$ be a ball in $\R^N$ with $N\geq 2$, $r>0$.   Let $u\in C^2(B_r(0))$ satisfy
that 
$$  \cM_0 u=0\quad {\rm in}\ \, B_r(0)\quad{\rm and}\quad |\nabla u|\leq  \theta\ \ \text{in $B_{\frac r2}(0) $}  $$
for some $\theta\in(0,1)$. 
Then  there exist $C=C(N,r,\theta)>0$  and $\gamma\in(0,1)$ independent of $u$ such that   
$$   \|u\|_{C^{2,\gamma}(B_{\frac r4}(0))}\leq C. $$

\end{lemma}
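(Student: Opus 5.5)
The plan is to reduce Lemma~\ref{lm 2.1-reg-int} to classical theory for uniformly elliptic equations in divergence form. On $B_{\frac r2}(0)$ the gradient satisfies $|\nabla u|\le\theta<1$, so the operator behaves like a uniformly elliptic one there. Write the equation as $\nabla\cdot A(\nabla u)=0$ with $A(p)=p/\sqrt{1-|p|^2}$. The Jacobian of $A$ is
$$D_pA(p)=\frac{(1-|p|^2)\delta_{ij}+p_ip_j}{(1-|p|^2)^{\frac32}},$$
which is the matrix $a_{ij}$ from the proof of Lemma~\ref{lm hopf}. For $|p|\le\theta$ its eigenvalues lie in $[(1-\theta^2)^{-1/2},(1-\theta^2)^{-3/2}]$. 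So on the set where $|p|\le \theta$, $A$ is smooth with ellipticity constants depending only on $\theta$. We may also normalize $u(0)=0$, since the equation sees only $\nabla u$; the statement implicitly bounds $u$ modulo constants, or else uses $|u-u(0)|\le r$, which follows from $|\nabla u|\le 1$.

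\textbf{Step 1: $C^{1,\gamma}$ via De Giorgi--Nash.} Differentiate the equation in $x_k$; this is legitimate after a difference-quotient argument, since $u\in C^2$. Then $v=D_ku$ solves the linear divergence-form equation $D_i\big(a_{ij}(\nabla u(x))D_jv\big)=0$ in $B_{\frac r2}(0)$. The coefficients are merely bounded and measurable, but they are uniformly elliptic with constants depending only on $\theta$. De Giorgi--Nash--Moser \cite[Theorem 8.24]{GT83} then gives a H\"older estimate for $v$ on $B_{\frac{3r}{8}}(0)$. It depends only on $N,\theta,r$ and on $\|v\|_{L^\infty}\le \theta$, and yields $\|\nabla u\|_{C^{0,\gamma_1}(B_{3r/8})}\le C(N,r,\theta)$ for some $\gamma_1=\gamma_1(N,\theta)\in(0,1)$.

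\textbf{Step 2: Schauder bootstrap.} With $\nabla u$ H\"older, the coefficients $a_{ij}(\nabla u(x))$ are $C^{0,\gamma_1}$ with controlled norm, because $a_{ij}$ is smooth in $p$ on $\{|p|\le\theta\}$. Now view $u$ as a solution of the nondivergence linear equation $a_{ij}(\nabla u)D_{ij}u=0$, which is the expanded form in Lemma~\ref{lm hopf}. Interior Schauder estimates \cite[Theorem 6.2]{GT83} give $\|u\|_{C^{2,\gamma_1}(B_{r/4})}\le C(\|u\|_{C^0(B_{3r/8})})$. Combined with the normalization, this gives the uniform bound $C=C(N,r,\theta)$ with $\gamma=\gamma_1$. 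Equivalently, one could apply the divergence-form Schauder estimate to $v=D_ku$ to get $C^{1,\gamma}$ for $\nabla u$.

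The main obstacle is Step~1: obtaining a H\"older modulus for $\nabla u$ that is independent of $u$. The hypothesis gives only a gradient bound, not a higher-regularity bound, so the argument must go through an estimate for bounded measurable coefficients. The key point is that the ellipticity constants of the differentiated equation depend only on $\theta$, and this is exactly what the hypothesis $|\nabla u|\le\theta$ on $B_{\frac r2}(0)$ supplies. The rest is standard, along the lines of \cite[Chapter 13]{GT83} or the a priori estimates in \cite{BS82}.
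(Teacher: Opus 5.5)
Your proposal is correct and follows essentially the same route as the paper: normalize $u(0)=0$, get a uniform H\"older bound on $\nabla u$ from the De Giorgi--Nash theory of \cite[Chapter 8]{GT83}, then apply the interior Schauder estimate \cite[Theorem 6.2]{GT83} to $a_{ij}(\nabla u)D_{ij}u=0$; your explicit step of applying \cite[Theorem 8.24]{GT83} to the differentiated equation for $v=D_ku$ is exactly what the paper's terse citation of Theorems 8.24 and 8.32 needs in order to work. There are two harmless slips: the smallest eigenvalue of $D_pA(p)$ is $(1-|p|^2)^{-1/2}$, which equals $1$ at $p=0$, so over $|p|\le\theta$ the eigenvalues lie in $[1,(1-\theta^2)^{-3/2}]$ (only the ratio matters); and \cite[Theorem 6.2]{GT83} presupposes $u\in C^{2,\gamma}$, which you first obtain from \cite[Theorem 6.17]{GT83} once the coefficients are H\"older.
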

 \noindent{\bf Proof. } 
As shown previously, 
  \begin{align*}
\cM_0 w(x) =-  \sum^{N}_{i,j=1}  a_{ij} D_{ij}w(x) \quad {\rm for}\ \, x\in\Omega,    
\end{align*}
 with 
 $$a_{ij}= \frac{(1-|p|^2)\delta_{ij} +p_ip_j }{ (1-|p|^2)^{\frac32}},$$
which is uniformly elliptic in $B_{\frac r2}(0)$ by the gradient bound. 
Precisely, we have that 
$$\lambda|\zeta|^2 \leq a_{ij}\zeta_i\zeta_j\leq \Lambda|\zeta|^2\quad {\rm for}\ \zeta\in\R^N, $$
where 
$$\lambda=(1-\theta)^{-\frac12}\quad{\rm and}\quad  \Lambda=(1-\theta)^{-\frac32}. $$
Since $u-t$ with $t\in\R$ verifies  $\cM_0 (u-t)=0$ in $B_r(0)$, so we can assume 
$u(0)=0$. In this case, $\|u\|_{L^\infty(B_{\frac r2}(0))}<\frac r2$ by the fact $|\nabla u|<1$,

It follows by \cite[Theorem 8.24,\ Theorem 8.32]{GT83} that for some $\gamma\in(0,1), \ C>0$ independent of $u$, 
$$ \|\nabla u\|_{C^\gamma(B_{\frac r2}(0))}\leq C, $$
which implies that $a_{ij}\in C^{\gamma}(B_{\frac r2}(0))$, 
Now we apply \cite[Theorem 6.2]{GT83} to obtain the bound
  $$
   \|u\|_{C^{2,\gamma}B_{\frac r4}(0)}\leq C'\|u\|_{L^\infty(B_{\frac r2}(0))}\leq C. 
  $$
The proof ends.\hfill$\Box$\medskip

The following classification of the behaviors at infinity of maximal hypersurfaces in exterior domain plays an important 
role in our analysis of the ones with light-cones in the whole space. 
\begin{theorem}\label{theo 2.1-inf}
\cite[Theorem 1.1]{HY21} 
Let 
\begin{align}\label{eq 2.1-ext}
\cM_0u=0\quad {\rm in}\ \,  \R^N\setminus A
 \end{align} 
 and for $R>r_0$
 $${\rm Res}[u]=\int_{\partial B_R} \frac{\nabla u (x)}{\sqrt{1-|\nabla u(x)|^2}}\cdot \frac{x}{R}\,d\omega_{1}(x),  $$
 where $A$ is a compact set in $\R^N$ and  $A\subset B_{r_0}(0)$ for some $r_0>0$. 
Then there exist $c\in\R$ and $\vec{a}\in B_1(0)$ such that when $N=2$
\begin{align}
 u(x)=&\vec{a}\cdot x +\frac1{2\pi}(1-|\vec{a}|){\rm Res}[u]\ln\sqrt{|x|^2-(\vec{a}\cdot x )^2}  +c\nonumber
\\[1mm] &\qquad  +{\rm Res}[u]\,  |\vec{a}| \, \frac{|x|(a\cdot x)}{|x|^2-(a\cdot x)^2}\cdot \frac{\ln |x|}{|x|}+o(|x|^{-1})\quad {\rm as}\ \, |x|\to+\infty \label{bb1-1} 
 \end{align} 
  and when $N\geq 3$ 
\begin{align}\label{bb1-2} 
u(x)=\vec{a}\cdot x+c - \frac1{|\partial B_1(0)|} (1-|\vec{a}|) {\rm Res}[ u]\big(\sqrt{|x|^2-(\vec{a}\cdot x )^2} \big)^{2-N} +O(|x|^{1-N})\quad {\rm as}\ \, |x|\to+\infty.
 \end{align}

\end{theorem}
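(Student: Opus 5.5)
The plan is a blow-down and Lorentz-boost reduction, followed by a linearization at infinity.

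\emph{Step 1: gradient control and blow-down.} First I would show that $u$ is strictly spacelike uniformly outside a large ball, i.e.\ $|\nabla u|\le 1-\theta$ in $\R^N\setminus B_{2r_0}(0)$. For this I would use the Bartnik--Simon interior gradient estimates, together with the fact that a lightlike segment cannot sit inside an exterior maximal graph without extending to a full line. Such a full line would force $u$ to contain a null line and contradict the Calabi--Cheng--Yau rigidity, since the line would have to escape to infinity in both directions while avoiding $A$. Next I would consider the rescalings $u_R(x)=R^{-1}u(Rx)$. They are $1$-Lipschitz and maximal in $\R^N\setminus B_{r_0/R}$, so Arzel\`a--Ascoli and Lemma \ref{lm 2.1-reg-int} give a subsequential limit that is an entire maximal graph on $\R^N\setminus\{0\}$. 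A point is removable for bounded-gradient solutions when $N\ge2$, because its capacity is zero in the flux sense. The Calabi \cite{C68} and Cheng--Yau \cite{CY76} theorem then makes the limit linear, of the form $\vec a\cdot x$ with $|\vec a|<1$. Uniqueness of $\vec a$, independently of the subsequence, would follow from the flux/monotonicity of $\int_{\partial B_R}$ terms. This gives $u(x)=\vec a\cdot x+o(|x|)$.

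\emph{Step 2: Lorentz boost.} I would apply the Lorentz boost of $\bL^{N+1}$ that maps the spacelike hyperplane $\{t=\vec a\cdot x\}$ to $\{t=0\}$. Boosts preserve maximality, and because the graph is uniformly spacelike it stays a graph $\tilde u$ over an exterior domain with $\tilde u=o(|\tilde x|)$. The flux ${\rm Res}$ is the integral of a divergence-free (conserved) current, namely the boost-invariant charge of the Born--Infeld field. Computing how the boost rescales it should produce the factor $(1-|\vec a|)$ and turn $|\tilde x|$ into $\sqrt{|x|^2-(\vec a\cdot x)^2}$ to leading order. This is the bookkeeping that yields the precise forms \eqref{bb1-1} and \eqref{bb1-2}.

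\emph{Step 3: asymptotic expansion for $\vec a=0$.} With $\vec a=0$, a second blow-down combined with the $C^{2,\gamma}$ bounds of Lemma \ref{lm 2.1-reg-int} on annuli gives $|\nabla\tilde u|=o(1)$, with $\tilde u$ bounded when $N\ge3$ and $\tilde u=O(\ln|x|)$ when $N=2$. Rewriting the equation as $\Delta\tilde u=F$, where $F=O(|\nabla\tilde u|^2|D^2\tilde u|)$ is quadratic, I would bootstrap decay through a Kelvin transform or the representation with the Newtonian kernel on exterior domains. The leading coefficient $-\frac{1}{|\partial B_1|}{\rm Res}$ comes from the conserved flux through large spheres. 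In dimension $N\ge3$ the remainder is $O(|x|^{1-N})$, since the nonlinear error is $O(|x|^{2-2N}\cdot|x|^{-N})$ and hence integrable. In dimension $2$ it gives $\frac{{\rm Res}}{2\pi}\ln|x|+c+o(|x|^{-1})$, after which I would undo the boost.

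I expect the main obstacle to be Step 2 in dimension $2$: tracking exactly how the boost distorts the logarithmic term. The cross term involving $\ln|x|/|x|$ in \eqref{bb1-1} arises from expanding the inverse boost map to second order, and I would verify it by direct Taylor expansion of the implicit change of variables $\tilde x=x+O(\tilde u)$.
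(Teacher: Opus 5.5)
The paper gives no proof of this statement. It imports it from \cite{HY21}, and quotes separately, also from \cite{HY21}, the ingredient your Step~1 is meant to supply, as Proposition~\ref{pr grad-ext-2.1}. Your architecture (uniform ellipticity at infinity, blow-down, boost, linear asymptotics) is sensible. But Step~1 is where the real difficulty lies, and the argument you give for it does not work.

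\textbf{Step 1.} Since $u$ is a classical solution, it contains no lightlike segments at all, so the segment-extension property is vacuous. What has to be excluded is asymptotic degeneration, $|\nabla u(x_k)|\to 1$ with $|x_k|\to\infty$. The Bartnik--Simon interior estimate at far-away points needs a spacelike condition uniform in the base point, which is exactly what is in question.

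Limits of the translates $u(x_k+\cdot)-u(x_k)$ are entire weakly spacelike maximizers. This class contains the null hyperplanes $e\cdot x$ with $|e|=1$, and Calabi--Cheng--Yau concerns strictly spacelike entire graphs, so it says nothing against them. Indeed, suppose such a limit $v$ contains a full null line $\{x+te\}$. Letting $t\to\pm\infty$ in $|v(y)-v(x+te)|\le |y-x-te|$ gives $v(y)=v(x)+e\cdot(y-x)$ for all $y$. So the alternative is precisely ``spacelike hyperplane or null hyperplane'', and the whole point is to rule out the second.

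The same issue affects your blow-down. Without a prior gradient bound, $R^{-1}u(Rx)$ only converges to a weakly spacelike maximizer on $\R^N\setminus\{0\}$. A priori this could be a null hyperplane or a light cone $\pm|x|$, both of which are maximizers there. Hence Lemma~\ref{lm 2.1-reg-int} cannot be applied to the rescalings, and $|\vec a|<1$ does not follow. The missing idea is a global argument excluding these null limits for exterior solutions.

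\textbf{Step 3.} There is a smaller gap of the same kind. $|\nabla\tilde u|=o(1)$ does not make $\tilde u$ bounded. Your cubic-error bootstrap needs an initial power decay of $\nabla\tilde u$ before it can start.

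\textbf{Step 2.} This step would not deliver what you claim, because the flux is not boost invariant. Write $-{\rm Res}[u]$ as the flux of the Killing field $\partial_t$. The boosted end (with $\vec a=0$) has vanishing momentum fluxes, i.e.\ the fluxes of $\partial_{x_i}$ vanish. From this one finds ${\rm Res}[\tilde u]=\sqrt{1-|\vec a|^2}\,{\rm Res}[u]$, while $u(x)-\vec a\cdot x=\sqrt{1-|\vec a|^2}\,\tilde u(\tilde x)$. Equivalently, the fundamental solution of the linearization of $\cM_0$ at slope $\vec a$ gives, for $N\ge 3$,
\[
u(x)=\vec a\cdot x+c-\frac{(1-|\vec a|^2)\,{\rm Res}[u]}{(N-2)\,|\partial B_1(0)|}\big(|x|^2-(\vec a\cdot x)^2\big)^{\frac{2-N}{2}}+O(|x|^{1-N}).
\]
This differs from (\ref{bb1-2}). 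Already for $\vec a=0$ and $N\ge4$, the radial solution satisfies $\Phi_{N,\alpha}(x)\sim \frac{\alpha}{(N-2)|\partial B_1(0)|}|x|^{2-N}$, which contradicts the printed constant.

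For $N=2$ there are two further problems with (\ref{bb1-1}):
\begin{itemize}
\item The translate $\Phi_{2,\alpha}(x-p)$ with $p\neq 0$ solves the equation off $A=\{p\}$ and contains the term $\frac{\alpha}{2\pi}\frac{p\cdot x}{|x|^2}$. So the remainder in (\ref{bb1-1}) can only be $O(|x|^{-1})$, not $o(|x|^{-1})$.
\item The $\ln|x|/|x|$ term produced by the second-order expansion of the boost is quadratic in ${\rm Res}[u]$, not linear. The scaling $u\mapsto \lambda u(\cdot/\lambda)$ forces this.
\end{itemize}
So the bookkeeping you defer cannot reproduce (\ref{bb1-1})--(\ref{bb1-2}) as printed. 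Carried out correctly, it yields the corrected constants above.
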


We suse also the following

\begin{proposition}\label{pr grad-ext-2.1}\cite[Theorem 5.3]{HY21} Let $u$ be an classical  solution of 
(\ref{theo 2.1-inf}) in an exterior domain $\R^N\setminus A$. For any open set $U\supset A$, there exists $\theta\in(0,1)$ such that
$$|\nabla u|\leq \theta\quad {\rm in}\ \, \R^N\setminus U. $$
Moreover, 
$$\lim_{|x|\to+\infty} \nabla u(x)=\vec{a}$$
for some $\vec{a}\in B_1(0)$. 

\end{proposition}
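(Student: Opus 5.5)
The approach is to split $\R^N\setminus U$ into a compact part, where $|\nabla u|<1$ plus continuity suffices, and a neighbourhood of infinity, which we rescale. Near infinity we use the asymptotic expansion of Theorem \ref{theo 2.1-inf} to show that $u$ stays uniformly away from the light cone at the scale $|x|$. Scale invariance of $\cM_0$ then turns this into a uniform gradient bound and, via Lemma \ref{lm 2.1-reg-int}, into convergence of $\nabla u$. We may shrink $U$ to an open set with $A\subset U$ and $\bar U$ compact, since enlarging $\R^N\setminus U$ only makes the claim stronger.

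\emph{Compact part.} Fix $R_1>r_0$ large, with $U\subset B_{R_1}(0)$. On the compact set $K=\overline{B_{2R_1}(0)}\setminus U\subset \R^N\setminus A$, the function $u$ is a classical strictly spacelike solution. So $|\nabla u|$ is continuous and $<1$ on $K$, and hence $\max_K|\nabla u|=:\theta_1<1$. It remains to bound $|\nabla u|$ on $\{|x|\ge 2R_1\}$ and to identify the limit there.

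\emph{Rescaling near infinity.} For $|x_0|=\rho\ge 2R_1$ set
$$v_\rho(y)=\frac{u(x_0+\rho y)-u(x_0)}{\rho},\qquad y\in B_{1/2}(0).$$
Since $\cM_0$ is invariant under $u\mapsto \rho^{-1}u(x_0+\rho\,\cdot)$, each $v_\rho$ solves $\cM_0v_\rho=0$ in $B_{1/2}(0)$, because $B_{\rho/2}(x_0)$ avoids $B_{r_0}(0)\supset A$. By Theorem \ref{theo 2.1-inf}, $u(x)=\vec a\cdot x+c+o(1)$ when $N\ge3$, with $|\vec a|<1$. For $N=2$ the expansion \eqref{bb1-1} gives $u(x)=\vec a\cdot x+O(\ln|x|)$. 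In both cases
$$\sup_{B_{1/2}(0)}\big|v_\rho(y)-\vec a\cdot y\big|\longrightarrow 0\quad\text{as }\rho\to\infty .$$
Consequently, for $y,z\in B_{1/2}(0)$,
$$|v_\rho(y)-v_\rho(z)|\le |\vec a|\,|y-z|+o(1),$$
uniformly in the direction of $x_0$. So on $B_{1/2}(0)$ the graph of $v_\rho$ is uniformly strictly spacelike at unit scale.

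\emph{Gradient bound — the key step.} We now invoke the interior gradient estimate of Bartnik--Simon \cite{BS82}. For a spacelike solution of $\cM_0v=0$ in a ball that is uniformly separated from the light cone of its centre, $|\nabla v|$ is bounded away from $1$ at the centre, by a constant depending only on the separation. In our setting the relevant separation is the defect $|y|-|v_\rho(y)-v_\rho(0)|$ on $\partial B_{1/4}(0)$. By the previous display this is at least $\frac14(1-|\vec a|)-o(1)$. Hence $|\nabla v_\rho(0)|\le\theta_2<1$ for $\rho$ large, and by translating the centre, $|\nabla v_\rho|\le\theta_2$ on $B_{1/8}(0)$. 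Since $\nabla v_\rho(0)=\nabla u(x_0)$, together with the compact part this yields the desired $\theta$. This is the main obstacle: the expansion controls only $u$, not $\nabla u$, and some a priori estimate of Bartnik type is needed to pass from height control to gradient control. I would first try to quote \cite[Theorem 3.5]{BS82}, or the analogous estimate in \cite{B84}, in this form. Failing that, I would rederive it with a barrier built from the hyperbolic solution $\sqrt{|y|^2+\epsilon^2}$ tilted by a Lorentz boost sending $\vec a$ to $0$.

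\emph{Convergence of $\nabla u$.} Once $|\nabla v_\rho|\le\theta_2$ on $B_{1/8}(0)$, Lemma \ref{lm 2.1-reg-int} gives $\|v_\rho\|_{C^{2,\gamma}(B_{1/32}(0))}\le C$ uniformly in $\rho$ and in the direction of $x_0$. By Arzel\`a--Ascoli, every sequence $\rho_k\to\infty$ has a subsequence along which $v_{\rho_k}\to v$ in $C^2$. By the uniform convergence established above, $v(y)=\vec a\cdot y$. Hence $\nabla u(x_0)=\nabla v_\rho(0)\to\vec a$ along the full family, i.e.
$$\lim_{|x|\to+\infty}\nabla u(x)=\vec a\in B_1(0).$$
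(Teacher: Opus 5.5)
The paper gives no proof of this statement; it imports \cite[Theorem 5.3]{HY21}. You instead derive it inside the paper from Theorem \ref{theo 2.1-inf} by a blow-down, and the argument is correct. The steps are these.
\begin{enumerate}
\item The expansion gives $u(x)=\vec a\cdot x+O(\ln|x|)$, and even $\vec a\cdot x+c+o(1)$ for $N\ge 3$. Hence $v_\rho\to\vec a\cdot y$ uniformly on $B_{1/2}(0)$, uniformly in the direction of $x_0$.
\item Since $|\vec a|<1$, the rescaled graph lies a definite distance inside the light cone at unit scale.
\item A local gradient estimate of Bartnik--Simon/Cheng--Yau type turns this into $|\nabla u|\le\theta_2$ for $|x|$ large.
\item Lemma \ref{lm 2.1-reg-int} with Arzel\`a--Ascoli turns the uniform convergence of $v_\rho$ into $\nabla u(x)\to\vec a$.
\end{enumerate}
This shows that, once the height expansion is granted, the gradient statement costs only scaling and a standard local estimate.

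It is not, however, independent of \cite{HY21}. Theorem \ref{theo 2.1-inf} comes from the same source. If the expansion there is obtained after controlling $\nabla u$ at infinity, your argument cannot replace the citation as a proof from scratch. Within this paper, where Theorem \ref{theo 2.1-inf} is taken as given, it is a legitimate derivation.

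Three points to tighten.

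\emph{State the local estimate in the form you use.} If $v\in C^2$ is strictly spacelike with $\cM_0v=0$ in $B_{1/4}(y_0)$, and for some $\epsilon>0$ the set $\{y\in B_{1/4}(y_0):\ v(y)-v(y_0)+\epsilon>|y-y_0|\}$ has closure inside $B_{1/4}(y_0)$, then $(1-|\nabla v(y_0)|^2)^{-1/2}$ is bounded by a constant depending only on $N$ and $\epsilon$. Your bound $|v_\rho(y)-v_\rho(y_0)|\le\frac14|\vec a|+o(1)$ on $\partial B_{1/4}(y_0)$ gives exactly this hypothesis with $\epsilon=\frac18(1-|\vec a|)$, once $\rho$ is large.

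\emph{Your fallback would not work as described.} The estimate above is proved by a maximum-principle argument for $(1-|\nabla v|^2)^{-1/2}$, weighted by a function of the Lorentzian distance to $(y_0,v(y_0)-\epsilon)$ \cite{CY76,BS82}. A comparison argument with tilted hyperboloids alone bounds heights, not slopes.

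\emph{Adjust the compact part.} The blow-down gives the bound only for $|x|\ge\rho_0$, with $\rho_0$ determined afterwards. So the compact set in your first step should be $\overline{B_{\rho_0}(0)}\setminus U$ rather than $\overline{B_{2R_1}(0)}\setminus U$; continuity handles it in the same way.
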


  \subsection{Isolated singularities}

Let $u$ be a  classical solution of 
\begin{equation}\label{eq 1.1-R}
 \left\{
\begin{array}{lll}
\cM_0 u= 0  \quad
  {\rm in}\  \, B_R(0)  \setminus\cP_{m_0}, 
\\[3mm]
 \phantom{    }
 \displaystyle  u(x)=0 \quad {\rm on}\ \partial B_R(0)       
 \end{array}
 \right.
 \end{equation}
 or 
\begin{equation}\label{eq 1.1-infty}
 \cM_0 u = 0  \quad
  {\rm in}\  \, \R^N\setminus\cP_{m_0}.  
  \end{equation}

  \begin{proposition}\label{cr 2.1-iso}
Let $N\geq 2$, $R>R_0$ and  $u$ be a nonnegative classical solution of (\ref{eq 1.1-R}) or (\ref{eq 1.1-infty}) satisfying 
\begin{equation}\label{eq 1.1-req}
\frac{|\nabla u|}{\sqrt{1-|\nabla u|^2}}\in L^1(B_R(0)),\ \,  \big( \in L^1_{\loc}(\R^N)\ \text{when $R=+\infty$}\big).  \end{equation} 
 Then $u$  is a weak solution of
\begin{equation}\label{eq 1.1-R-w}
 \left\{
\begin{array}{lll}
\displaystyle \cM_0 u= \sum_{j=1}^{m_0} k_{p_j}  \delta_{p_j}   \quad
  {\rm in}\  \, \cD'\big(B_R(0)\big), 
\\[3mm]
 \phantom{    }
 \displaystyle  u(x)=0 \quad {\rm on}\ \partial B_R(0)       
 \end{array}
 \right.
 \end{equation}
 or 
 \begin{equation}\label{eq 1.1-infty-w}
  \cM_0 u = \sum_{j=1}^{m_0} k_{p_j}  \delta_{p_j} \quad
  {\rm in}\  \,  \cD'\big(\R^N\big)
  \end{equation}
for some $k_{p_j}\in\R$ with $j=1,\cdots,m_0$.

\end{proposition}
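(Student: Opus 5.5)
We localize at each singular point $p_j$ and show that $\cM_0 u$, as a distribution, is supported on $\cP_{m_0}$ and has order zero there; a Schwartz-type structure theorem then forces it to be a combination of Dirac masses. Let $\vec{F}:=\frac{\nabla u}{\sqrt{1-|\nabla u|^2}}$. By (\ref{eq 1.1-req}) we have $\vec F\in L^1(B_R(0))$ (resp. $L^1_{\loc}(\R^N)$), so the distribution $T(\phi):=\int \vec F\cdot\nabla\phi\,dx$ is well defined for every $\phi\in C^{0,1}_c$. Since $u$ is classical and $\cM_0u=0$ in $B_R(0)\setminus\cP_{m_0}$, integration by parts shows $T(\phi)=0$ whenever ${\rm supp}\,\phi\cap\cP_{m_0}=\emptyset$. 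Hence ${\rm supp}\,T\subset\cP_{m_0}$.

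Next we identify $T$ near a fixed $p_j$. Choose $r_0>0$ so that the balls $B_{2r_0}(p_j)$ are disjoint and contained in $B_R(0)$. For $r<r_0$, define the flux
$$k_{p_j}(r):=-\int_{\partial B_r(p_j)}\vec F\cdot\nu\,dH_{N-1},$$
where $\nu$ is the outer normal to $B_r(p_j)$. By the divergence theorem on the annulus $B_{r_0}(p_j)\setminus B_r(p_j)$, where $u$ is classical and $\nabla\cdot\vec F=0$, this flux is independent of $r$; call it $k_{p_j}$. (For a.e. $r$ the trace is meaningful, and by smoothness away from $p_j$ it is in fact meaningful for every $r$.)

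Now take $\phi\in C^{0,1}_c(B_R(0))$ and a cutoff $\eta_\epsilon(x)=\eta(|x-p_j|/\epsilon)$ with $\eta=1$ on $[0,1]$, $\eta=0$ on $[2,\infty)$, and $|\eta'|\le 2$. Write
$$\phi=\phi(p_j)\eta_\epsilon+(\phi-\phi(p_j))\eta_\epsilon+\phi(1-\eta_\epsilon)$$
near $p_j$, and decompose around each point of $\cP_{m_0}$ in the same way.

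The main obstacle is controlling the middle term
$$\int\vec F\cdot\nabla\big((\phi-\phi(p_j))\eta_\epsilon\big)\,dx.$$
Its gradient is bounded by $\|\nabla\phi\|_\infty\eta_\epsilon+|\phi-\phi(p_j)|\,|\nabla\eta_\epsilon|\le C\|\phi\|_{C^{0,1}}\mathbf 1_{B_{2\epsilon}(p_j)}$, since $|\phi-\phi(p_j)|\le 2\epsilon\|\nabla\phi\|_\infty$ on the support of $\nabla\eta_\epsilon$. Therefore the term is at most $C\int_{B_{2\epsilon}(p_j)}|\vec F|\,dx$, which tends to $0$ by the integrability (\ref{eq 1.1-req}). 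This estimate is exactly where (\ref{eq 1.1-req}) is essential: it shows $T$ is of order zero and rules out derivatives of $\delta_{p_j}$.

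The first term is computed by the coarea/divergence formula, because $\phi(p_j)$ is constant:
$$\phi(p_j)\int\vec F\cdot\nabla\eta_\epsilon\,dx=\phi(p_j)\int_\epsilon^{2\epsilon}\eta'(r/\epsilon)\epsilon^{-1}\Big(\int_{\partial B_r(p_j)}\vec F\cdot\nu\Big)dr.$$
The inner integral equals $-k_{p_j}$ for every $r$, and $\int_\epsilon^{2\epsilon}\eta'(r/\epsilon)\epsilon^{-1}dr=-1$, so the term equals $k_{p_j}\phi(p_j)$.

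The third term vanishes, since its test function is supported away from $\cP_{m_0}$ (after performing the same decomposition at every $p_i$). Letting $\epsilon\to0$ gives
$$T(\phi)=\sum_j k_{p_j}\phi(p_j),$$
which is (\ref{eq 1.1-R-w}). The boundary condition $u=0$ on $\partial B_R(0)$ is part of the hypothesis. When $R=+\infty$ the same argument applies to $\phi\in C^{0,1}_c(\R^N)$, giving (\ref{eq 1.1-infty-w}).

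Nonnegativity of $u$ is not needed for this identification. It only serves later, to show $k_{p_j}\ge0$ via the sign of the flux.
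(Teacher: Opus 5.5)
Your proof is correct, and it takes a genuinely different route from the paper's.

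The paper argues structurally. Once $\cT_{u,R}$ vanishes on test functions supported away from $\cP_{m_0}$, it invokes Schwartz's theorem on point-supported distributions to write $\cT_{u,R}=\sum_j\sum_{|a|\le N_j}k_{p_j,a}D^a\delta_{p_j}$. It then kills every coefficient with $|a|\ge1$ by testing with rescaled monomials $\xi_\epsilon=\zeta_a((\cdot-p_j)/\epsilon)$. The comparison is between $D^a\xi_\epsilon(p_j)\sim\epsilon^{-|a|}$ and the bound $|\cT_{u,R}(\xi_\epsilon)|\le C\epsilon^{-1}\int_{B_{\epsilon}(p_j)}|\vec F|\,dx=o(\epsilon^{-1})$. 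Finally it asserts, without proof, that the identity passes from $C^\infty_c$ to Lipschitz test functions.

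You compute $T(\phi)$ directly instead. The same smallness of $\int_{B_{2\epsilon}(p_j)}|\vec F|$, combined with the $O(\epsilon)$ oscillation of a Lipschitz $\phi$ near $p_j$, disposes of the non-constant part. The constant part is evaluated exactly through the flux, which is independent of the radius by the divergence theorem on annuli.

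So both proofs rest on the same order-zero control furnished by (\ref{eq 1.1-req}). Your version, however:
\begin{itemize}
\item avoids the structure theorem;
\item works with $C^{0,1}$ test functions from the outset, so the density step the paper merely states is not needed;
\item identifies the coefficient explicitly as $k_{p_j}=-\int_{\partial B_r(p_j)}\vec F\cdot\nu\,dH_{N-1}$.
\end{itemize}
That last formula is precisely the link between Dirac coefficients and fluxes through $\partial B_\epsilon(p_j)$ that the paper relies on, without proof, when it computes ${\rm Res}[u_\infty]=-\alpha_0$ in the proof of Theorem \ref{teo 1-fund3}. The paper's route is shorter only modulo Schwartz's theorem, and it yields no formula for $k_{p_j}$.

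One small addendum. The paper later uses the identity for every $\phi\in C^{0,1}_0(B_R(0))$, i.e.\ functions vanishing on $\partial B_R(0)$ but not compactly supported, in (\ref{eq 1.1-R-w-sense}). For such $\phi$ your third term needs an extra cutoff near $\partial B_R(0)$. It is handled by the same device, because $|\phi(x)|\le\|\nabla\phi\|_\infty\,\dist(x,\partial B_R(0))$ and $\vec F\in L^1(B_R(0))$.
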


Let $u\in C^{0,1}(B_R(0)),\ \big(\,C^{0,1}(\R^N)\, {\rm resp.}\big)$  satisfy
$$|\nabla u|< 1 \ \ {\rm a.e.},\quad  \frac{|\nabla u|}{\sqrt{1-|\nabla u|^2}}\in L^1(B_R(0)), \quad\Big( \frac{|\nabla u|}{\sqrt{1-|\nabla u|^2}}\in L^1_{loc}(\R^N)\ {\rm resp.} \Big),  $$
where  $R\in(R_0,+\infty)$.  

\vskip2mm
Denote
 \begin{equation}\label{operator 1-R}
\cT_{u,R}(\xi):=
 \left\{
\begin{array}{lll}
\displaystyle \int_{B_R(0)}  \frac{\nabla u\cdot\nabla \xi}{\sqrt{1-|\nabla u|^2}} \,dx \quad &{\rm for}\ \, \forall\,\xi \in  C^{0,1}_0 (B_R(0))\quad {\rm if}\ \, R\in(0,+\infty),\\[6mm]
 \phantom{    }
\displaystyle  \int_{\R^N}  \frac{\nabla u\cdot\nabla \xi}{\sqrt{1-|\nabla u|^2}} \,dx\quad &{\rm for}\ \, \forall\, \xi\in C^{0,1}_c (\R^N)\qquad {\rm if}\ \, R=+\infty, 
 \end{array}
 \right.
 \end{equation}

 Observe that by assumption (\ref{eq 1.1-req}), for any $\xi\in C^{0,1}_0 (B_R(0))$,
$$\Big|\int_{B_R(0)}  \frac{\nabla u\cdot\nabla \xi}{\sqrt{1-|\nabla u|^2}} \,dx\Big| <+\infty,$$
then 
 $\cT_{u,R}$ 
 is a    bounded functionals of $C^{0,1}_0 (B_R(0))$.
Assume more that  for any $\xi\in C^{0,1}_0 (B_R(0))$ with
the compact support in $B_R(0)\setminus\cP_{m_0}$, then
\begin{equation}\label{supp-1}
\cT_{u,R}(\xi)=0.
\end{equation}

This means that the support of $\cT_{u,R}$ is an isolated set $\cP_{m_0}$, a set of finite points, by Theorem XXXV in \cite{S} (see also Theorem 6.25 in \cite{R}), it implies that
\begin{equation}\label{S}
\cT_{u,R}=\sum_{j=1}^{m_0}\Big(\sum_{|a|=0}^{N_j} k_{p_j, a} D^{a}\delta_{p_j}\Big),
\end{equation}
for $N_j \geq 1$, $a=(a_1, \cdots, a_N)$, which is a multiple index with $a_i\in\N$, 
where
 $\displaystyle |a|= \sum_{i=1}^N a_i$,   $D^0\delta_{p_j}=\delta_{p_j}$
and
$$\langle D^{a}\delta_{p_j},\xi\rangle= \frac{\partial^{|a|} \xi(0)}{\partial^{a}  x}.$$

Then we have that
\begin{equation}\label{3.3}
\cT_{u,R}(\xi)= \int_{B_R(0)}  \frac{\nabla u\cdot\nabla \xi}{\sqrt{1-|\nabla u|^2}} \,dx =\sum_{j=1}^{m_0}\Big(\sum_{|a|=0}^{N_j} k_{p_j, a} D^{a}\xi(p_j)\Big),\quad\ \ \forall\, \xi\in C^\infty_0 (B_R).
\end{equation}

\begin{lemma}\label{pr 2.1-iso}
Under the assumption of Proposition \ref{cr 2.1-iso}, let $\cT_{u,R}$ be given in (\ref{operator 1-R}) with $u$ being the solution form Proposition \ref{cr 2.1-iso}.
Then
\begin{equation}\label{3.0}
 k_{p_j,a}=0\quad{\rm for\ any}\quad |a|\ge 1.
\end{equation}
\end{lemma}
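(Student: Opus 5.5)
The plan is the standard one for removing higher-order terms in a distribution supported at a point, as done for isolated singularities of Laplace-type equations. I work near one point $p_j$ and only test with functions supported in a small ball $B_{r_0}(p_j)$, with $r_0<\frac12\min\{|p_i-p_j|:\ i\neq j\}$ and $B_{r_0}(p_j)\subset B_R(0)$. Then only the $j$-th block of (\ref{3.3}) contributes, so
$$\cT_{u,R}(\xi)=\sum_{|a|\le N_j}k_{p_j,a}D^a\xi(p_j).$$

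Fix a multi-index $a_0$ with $|a_0|\ge1$. I want to show $k_{p_j,a_0}=0$, and I would argue by downward induction on $|a_0|$, starting from the top order $N_j$. Choose a test polynomial $\xi_0(x)=\frac{(x-p_j)^{a_0}}{a_0!}$. It satisfies $D^{a}\xi_0(p_j)=\delta_{a,a_0}$ for all $|a|\le|a_0|$, and all of its derivatives of order greater than $|a_0|$ are zero unless $a\ge a_0$ componentwise, in which case they are also zero except at $a=a_0$. Thus $D^a\xi_0(p_j)=\delta_{a,a_0}$ for every $a$.

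Take a cutoff $\eta\in C^\infty_c(B_1(0))$ with $\eta\equiv1$ on $B_{1/2}(0)$, and set
$$\xi_\epsilon(x)=\xi_0(x)\,\eta\Big(\frac{x-p_j}{\epsilon}\Big),\qquad \epsilon\in(0,r_0).$$
Since $\eta\equiv1$ near $p_j$, all derivatives of $\xi_\epsilon$ at $p_j$ agree with those of $\xi_0$. Hence $\cT_{u,R}(\xi_\epsilon)=k_{p_j,a_0}$ for every small $\epsilon$.

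The estimate on the left side is the key step. On $B_\epsilon(p_j)$ we have $|\xi_0|\le C\epsilon^{|a_0|}$ and $|\nabla\xi_0|\le C\epsilon^{|a_0|-1}$. The cutoff contributes $|\nabla\eta(\cdot/\epsilon)|\le C/\epsilon$ times $|\xi_0|$, which gives the same bound. Therefore $|\nabla\xi_\epsilon|\le C\epsilon^{|a_0|-1}\le C$ on $B_\epsilon(p_j)$, because $|a_0|\ge1$ and $\epsilon<1$. This yields
$$|k_{p_j,a_0}|=|\cT_{u,R}(\xi_\epsilon)|\le C\int_{B_\epsilon(p_j)}\frac{|\nabla u|}{\sqrt{1-|\nabla u|^2}}\,dx\longrightarrow0\quad\text{as }\epsilon\to0^+,$$
by assumption (\ref{eq 1.1-req}) and absolute continuity of the integral. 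So $k_{p_j,a_0}=0$.

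Because the test function isolates one coefficient directly, the induction is not even needed; each $a_0$ with $|a_0|\ge1$ is handled independently. The case $R=+\infty$ is identical, using $L^1_{\loc}$.

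The only delicate point is the gradient bound of $\xi_\epsilon$ when $|a_0|=1$. There no power of $\epsilon$ is gained, and the whole argument rests on the $L^1$ integrability of the flux over shrinking balls. I also need to justify that (\ref{3.3}), stated for $C^\infty_0$, can be applied to $\xi_\epsilon$. This is immediate since $\xi_\epsilon$ is smooth with compact support in $B_R(0)$.
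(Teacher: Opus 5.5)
Your proposal is correct and is essentially the paper's argument. A smooth test function that equals a monomial in $x-p_j$ near $p_j$ and is supported in $B_\epsilon(p_j)$ isolates the single coefficient $k_{p_j,a}$, and the integrability of $\frac{|\nabla u|}{\sqrt{1-|\nabla u|^2}}$ on shrinking balls then gives $|k_{p_j,a}|\le C\epsilon^{|a|-1}o(1)\to 0$.

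The differences are cosmetic. The paper rescales $\zeta_a(x/\epsilon)$ and builds the factor $k_a$ into $\zeta_a$, obtaining $k_a^2\le c\,\epsilon^{|a|-1}o(1)$. Your explicit cutoff $\eta$ has the advantage of making the test function genuinely smooth, which the paper's $\zeta_a$, as literally defined, is not.
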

\noindent {\bf Proof. } Without loss of generality,   we only need to consider one singular point $p_j$ and set $p_j=0$, $k_{p_j,a}=k_a$. 
 
For any multiple index $a=(a_1,\cdots,a_N)$, let $\zeta_a$ be a    $C^\infty$ function such that
\begin{equation}\label{3.2}
{\rm supp}(\zeta_a)\subset \overline{B_1(0)}\quad{\rm and}\quad \zeta_a(x)=k_{a} \prod_{i=1}^N x_i^{a_i} \quad {\rm for}\  \ x\in B_1(0).
\end{equation}
Now we use the test function
$\xi_\epsilon(x):=\zeta_a(\epsilon^{-1}x)$ for $ x\in\R^N$
in  (\ref{3.3}),
we have that
$$\sum_{|a|\le q} k_{a} D^{a}\xi_\epsilon(0)=\frac{k_{a}^2}{\epsilon^{|a|}} \prod^{N}_{i=1}a_i! ,$$
where  $ a_i!=a_i\cdot (a_i-1)\cdots1>0$ and  $0!=1$.

\vskip2mm
Let $r>0$, we obtain that
\begin{align*}
\Big|\int_{B_R(0)}  \frac{\nabla u\cdot\nabla \xi_\epsilon}{\sqrt{1-|\nabla u|^2}} dx\Big|  &=  \frac1{\epsilon} \Big|\int_{B_R(0)}  \frac{\nabla u(x)\cdot\nabla \xi_a(\epsilon^{-1}x)}{\sqrt{1-|\nabla u|^2}}\,dx\Big|   \\
    &\le   \frac1{\epsilon}  \left[\int_{B_R(0)\setminus B_r(0)} \frac{\nabla u(x)\cdot\nabla \xi_a(\epsilon^{-1}x)}{\sqrt{1-|\nabla u|^2}} |\, dx
    +\int_{B_r(0)} \frac{|\nabla u(x)|  |\nabla \xi_a(\epsilon^{-1}x)|}{\sqrt{1-|\nabla u|^2}} \, dx\right].
\end{align*}
For fixed $r>0$, we see that
$$| \nabla \zeta_a(\epsilon^{-1} x)|\to 0\quad{\rm as}\quad \epsilon\to0\quad {\rm uniformly\ in}\quad  B_R(0)\setminus B_r(0),$$
then
\begin{eqnarray*}
\int_{B_R(0)\setminus B_r(0)} \frac{\nabla u(x)\cdot\nabla \xi_a(\epsilon^{-1}x)}{\sqrt{1-|\nabla u|^2}}   dx  \to  0 \quad{\rm as}\quad \epsilon\to0.
\end{eqnarray*}
Furthermore,
\begin{align*}
\int_{B_r(0)}  \frac{|\nabla u(x)|  |\nabla \xi_a(\epsilon^{-1}x)|}{\sqrt{1-|\nabla u|^2}} \, dx &\le  \norm{ \xi_a}_{C^1(\R^N)}  \int_{B_r(0)} \frac{|\nabla u(x)| }{\sqrt{1-|\nabla u|^2}}\,dx  \to  0\ \ {\rm as}\  r\to0.
\end{align*}
Then we have that
\begin{equation}\label{3.4}
\Big|\int_{B_R(0)}  \frac{\nabla u\cdot\nabla \xi_\epsilon}{\sqrt{1-|\nabla u|^2}} \,dx\Big|=\epsilon^{-1} o(1).
\end{equation}

For $|a|\ge 1$, we have that
$$k_{a}^2\le c_7\epsilon^{|a|-1}  o(1) \to 0\quad{\rm as}\quad \epsilon\to0,$$
then we have $k_{a}=0$ by arbitrary of $\epsilon$ in (\ref{3.2}). Thus, (\ref{3.0}) holds.\hfill$\Box$\medskip

\noindent {\bf Proof of Proposition \ref{cr 2.1-iso}. } From Lemma \ref{pr 2.1-iso},  it implies that  the expression (\ref{S}) reduces to
\begin{equation}\label{3.5}
\cT_{u,R} =\sum_{j=1}^{m_0} k_{p_j}  \delta_{p_j}  \quad {\rm in}\ \cD'(B_R(0)),
\end{equation}
where $\langle \delta_{p_j},\xi\rangle=  \xi(p_j)$. The test function's space  could reduces from
$C^\infty_c(B_R(0))$ to $C^{0,1}_0(B_R(0))$ by the identity  (\ref{3.5}). 
\hfill$\Box$\medskip

 \subsection{Radial light-cone singular solution for $N\geq 3$   }

 When $m=1$, we deal with the fundamental solution of 
 \begin{equation}\label{eq 2.1-N}
 \left\{
\begin{array}{lll}
\displaystyle \  \cM_0u = \alpha  \delta_{0}\quad
  {\rm in}\  \, \R^N, 
\\[2mm]
 \phantom{  }
 \displaystyle \lim_{|x|\to+\infty}u(x)=0,    
 \end{array}
 \right.
 \end{equation}
where $\alpha  >0$ and $N\geq 3$. 
 \begin{proposition}\label{pr 2.10-N}
 Let  
 \begin{equation}\label{fun 1-N}
\Phi_{N, \alpha }(x)=   c_{N} \int_{|x|}^{\infty}   \frac{\alpha }{ \sqrt{s^{2(N-1)}+c_{N}^2 \alpha^2}}\, ds  \quad {\rm for}\ \,  x\in\R^N,
\end{equation}
where $c_{N  }=\frac{1 }{|\partial B_1(0)|}$. 
Then $\Phi_{N,\alpha}$ is a  solution of (\ref{eq 2.1-N}).  
Moreover,  we have 
$$\lim_{|x|\to+\infty} \Phi_{N,\alpha}(x)|x|^{N-2}=c_N\alpha,$$
$$\lim_{\alpha\to +\infty}  \Phi_{N,\alpha }=+\infty,  \;\; \lim_{\alpha\to+\infty} |\nabla \Phi_{N,\alpha}(x)|\, =1
 \;\;
{\rm and}\quad  \lim_{\alpha\to 0^+ } \Phi_{N,\alpha}=0\quad   \text{ uniformly locally in  $\R^N$}. $$
\end{proposition}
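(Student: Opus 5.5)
Since $\Phi_{N,\alpha}$ is given explicitly by (\ref{fun 1-N}), everything reduces to computation. Write $r=|x|$ and $\Phi_{N,\alpha}(x)=\phi(r)$ with
$$\phi'(r)=-\frac{c_N\alpha}{\sqrt{r^{2(N-1)}+c_N^2\alpha^2}},$$
so that $|\nabla\Phi_{N,\alpha}|<1$ for $r>0$, $|\nabla\Phi_{N,\alpha}(x)|\to1$ as $r\to0^+$, and $\Phi_{N,\alpha}$ is Lipschitz with constant $1$. The integral defining $\Phi_{N,\alpha}$ converges at infinity because the integrand is $O(s^{1-N})$ and $N\ge3$.

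Next I compute the flux. From $1-\phi'^2=r^{2(N-1)}/(r^{2(N-1)}+c_N^2\alpha^2)$ we get
$$\frac{\phi'}{\sqrt{1-\phi'^2}}=-c_N\alpha\, r^{1-N}.$$
Hence
$$\frac{\nabla\Phi_{N,\alpha}}{\sqrt{1-|\nabla\Phi_{N,\alpha}|^2}}=-c_N\alpha\frac{x}{|x|^N},$$
which is $-\alpha$ times the gradient field of the Newtonian kernel. Two consequences follow directly.
\begin{itemize}
\item The field is divergence free on $\R^N\setminus\{0\}$, so $\cM_0\Phi_{N,\alpha}=0$ classically there.
\item Its modulus is $c_N\alpha r^{1-N}\in L^1_{\loc}(\R^N)$.
\end{itemize}

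For the weak formulation, take $\varphi\in C^{0,1}_c(\R^N)$ and integrate over $\R^N\setminus B_\epsilon(0)$. By the divergence theorem, or by a Lipschitz approximation if one wants to avoid smoothness of $\varphi$, the integral equals the boundary term
$$c_N\alpha\int_{\partial B_\epsilon}\varphi\,\epsilon^{1-N}\,dH_{N-1}.$$
This tends to $\alpha\varphi(0)$ because $c_N|\partial B_1|=1$. The piece on $B_\epsilon$ vanishes as $\epsilon\to0$ by the $L^1_{\loc}$ bound. Alternatively, Proposition \ref{cr 2.1-iso} already identifies the weak equation up to the constant $k_0$, and the flux computation gives $k_0=\alpha$. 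Finally, $\Phi_{N,\alpha}\to0$ at infinity since it is a tail integral.

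For the asymptotics, $\Phi_{N,\alpha}(x)|x|^{N-2}\to c_N\alpha$ by l'Hôpital, or by comparing the integrand with $c_N\alpha s^{1-N}$, whose tail integral is $c_N\alpha r^{2-N}/(N-2)$.

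For the limits in $\alpha$:
\begin{itemize}
\item As $\alpha\to+\infty$, the integrand increases to $1$ pointwise, so $\Phi_{N,\alpha}(x)\ge\int_r^{R}(\cdots)\,ds\to R-r$ for every $R$. This gives $\Phi_{N,\alpha}\to+\infty$ locally uniformly, and $|\phi'|\to1$ locally uniformly on compacts away from the origin (at the origin it is already $1$).
\item As $\alpha\to0^+$, I use dominated convergence. A dominating function is $\min\{1, c_N\alpha_1 s^{1-N}\}$ for $\alpha\le\alpha_1$, which is integrable and gives uniformity in $x$.
\end{itemize}

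The only mildly delicate point is that the factor $1/(N-2)$ in the $|x|^{N-2}$ limit has to be reconciled with the stated constant, presumably by the paper's normalization. Otherwise the proof is routine.
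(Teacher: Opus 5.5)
Your argument is correct and is essentially the paper's proof. The paper derives the first integral $r^{N-1}u'/\sqrt{1-u'^2}=\mathrm{const}$, which is your identity $\nabla\Phi_{N,\alpha}/\sqrt{1-|\nabla\Phi_{N,\alpha}|^2}=-c_N\alpha\,x/|x|^N$, and fixes the constant by integrating by parts on $\R^N\setminus B_\epsilon(0)$ exactly as you do. For the limits in $\alpha$ it uses explicit estimates where you use monotone and dominated convergence.

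Your closing remark should be replaced by a definite correction rather than a hedge. The statement itself fixes $c_N=1/|\partial B_1(0)|$, so there is no normalization left to appeal to. Your computation shows $\Phi_{N,\alpha}(x)|x|^{N-2}\to c_N\alpha/(N-2)$. The claimed limit $c_N\alpha$ therefore holds only for $N=3$ and is false for $N\ge 4$; the paper's proof never actually checks this limit.

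A small point: you need not restrict the gradient limit to compacts away from the origin. The bound $0\le 1-|\nabla\Phi_{N,\alpha}(x)|\le |x|^{N-1}/(c_N\alpha)$ gives uniform convergence on every punctured ball $B_\rho(0)\setminus\{0\}$ directly.
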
 

\noindent {\bf Proof. }  
 For the radial function $u(r)=u(x)$ with $r=|x|>0$, 
 $$-\cM_0u(x)= \nabla\cdot\Big(\frac{\nabla u}{\sqrt{1-|\nabla u|^2}} \Big)=\frac1{r^{N-1}} \Big(\frac{r^{N-1} u'(r)}{\sqrt{1-| u'(r)|^2}} \Big)' . $$
If $\cM_0u =0$, then  for some $t\in\R\setminus \{0\}$
 $$\frac{r^{N-1} u'(r)}{\sqrt{1-| u'(r)|^2}}=t\quad {\rm for}\ r>0, $$
 and we get that 
 $$u'(r)^2=\frac{t^2}{r^{2(N-1)}+t^2}\quad {\rm for}\ r>0.  $$

By the decay $\displaystyle \lim_{|x|\to+\infty}u(x)=0$,  one has the solution form    
\begin{equation}\label{fun 1-N}
u_t(x):=t \int_{|x|}^{\infty}   \frac{1}{ \sqrt{s^{2(N-1)}+t^2}}\, ds
\end{equation}
and   for $\phi\in C_c^1(\R^2)$
\begin{align*}
0&=\lim_{\epsilon\to0^+} \int_{\R^N \setminus B_\epsilon(0)} \cM_0 u_t (x) \phi(x) dx
\\[2mm]&=\lim_{\epsilon\to0^+} \int_{\R^N\setminus B_\epsilon(0)} \big(\frac{\nabla u_t}{\sqrt{1-|\nabla u_t|^2}} \big)  \cdot \nabla  \phi dx
 -\lim_{\epsilon\to0^+}\int_{ \partial B_\epsilon(0)} \big( \frac{\nabla u_t}{\sqrt{1-|\nabla u_t|^2}} \big)  \cdot \nu    \phi d\omega(x) 
\\[2mm]&= \int_{\R^N } \big(\frac{\nabla u_t}{\sqrt{1-|\nabla u_t|^2}} \big)  \cdot \nabla  \phi dx
 -|\partial B_1(0)|  t\, \phi(0), 
  \end{align*}
 where $\nu=-\frac{x}{|x|}$. 
That is,    for $\phi\in C_c^1(\R^N)$, 
 \begin{align*}
 \int_{\R^N } \big(\frac{\nabla u_t}{\sqrt{1-|\nabla u_t|^2}} \big)  \cdot \nabla  \phi\, dx=|\partial B_1(0)| t \, \phi(0), 
    \end{align*}
which implies that 
$$t= c_{N,\alpha}:=\frac{\alpha}{|\partial B_1(0)|}=c_N \alpha\quad{\rm and}\quad \Phi_{N, \alpha}= u_{c_{N,\alpha}}. $$  

Note that for any $R$, $c_{N,\alpha}>R>1$ if $\alpha$ is large. 
 For any $x\in B_R(0)$ and $\alpha_1>c_N R$ 
 \begin{align*}
 \Phi_{N, \alpha}(x)&\geq  c_{N,\alpha} \Big( \int_{R}^{c_{N,\alpha} }   \frac{1}{ \sqrt{s^{2(N-1)}+c_{N,\alpha}^2}}\, ds+\int_{c_{N,\alpha}}^{\infty}   \frac{1}{ \sqrt{s^{2(N-1)}+c_{N,\alpha}^2}}\, ds\Big)
 \\[1mm] & \geq  \frac1{\sqrt{2}}    \int_{R}^{c_{N,\alpha}} 1 ds =  \frac1{\sqrt{2}}   (c_{N}\alpha-R) 
    \\[1mm] & \to+\infty\quad{\rm as}\ \,  \alpha\to+\infty
     \end{align*}
     and
      for any $x\in \R^N$  and $ c_{N,\alpha}<1$, letting $r_1=c_{N,\alpha}^{\frac{1}{2(N-2)}}$,
 \begin{align*}
 \Phi_{N, \alpha}(x)&\leq  c_{N,\alpha} \Big( \int_{ r_1 }^{\infty}   s^{1-N} \, ds   + \int_{0}^{r_1}  c_{N,\alpha}^{-1}\, ds\Big)
 \\[1mm] &=  \frac1{N-2}c_{N,\alpha}^{\frac12}+r_1
 \\[1mm] &  \to0^+\quad{\rm as}\ \,  \alpha\to0^+. 
     \end{align*}
Finally, since 
 \begin{align*}
 |\nabla \Phi_{N,\alpha}(x)|=\frac{c_N \alpha}{\sqrt{|x|^{2(N-1)}+c_N^2\alpha^2}}, 
      \end{align*}
we see that for any $|x|$ bounded and $\alpha > \frac{|x|^{N-1}}{c_N }$,
 \begin{align*}
0 \leq  1- |\nabla \Phi_{N,\alpha}(x) | \leq \frac{|x|^{N-1}}{c_N \alpha} \to 0, \quad {\rm as}\ \, \alpha\to+\infty, 
      \end{align*}
that is 
$$\lim_{\alpha\to+\infty} |\nabla \Phi_{N,\alpha}(x)|\, =1\quad \text{ uniformly locally in $\R^N$}.$$
   \hfill$\Box$\medskip

\begin{corollary}\label{pr 2.10-N-1}
 When $N\geq 3$,  fix $\bar \alpha>0$ and for $\alpha\geq \bar \alpha$, let  
 \begin{equation}\label{fun 1-N}
\tilde \Phi_{N, \alpha}(x)=\Phi_{N, \alpha }(x) +\Phi_{N, \bar \alpha }(0) -\Phi_{N, \alpha }(0)  \quad {\rm for}\ \,  x\in\R^N. 
\end{equation}
Then  $\tilde \Phi_{N,\alpha}$ is a  solution of 
 \begin{equation}\label{eq 2.1-N0}
 \left\{
\begin{array}{lll}
\displaystyle \quad \cM_0u = \alpha  \delta_{0}\quad
  {\rm in}\  \, \R^N, 
\\[2mm]
 \phantom{  }
 \displaystyle \lim_{|x|\to+\infty}u(x)=  \Phi_{N, \bar \alpha }(0) -\Phi_{N, \alpha }(0)  
 \end{array}
 \right.
 \end{equation}
and 
$$\tilde \Phi_{N, \alpha }(0)=\Phi_{N, \bar \alpha }(0),\qquad  \tilde \Phi_{N, \alpha }<\Phi_{N,\bar \alpha }\ \ {\rm in}\ \, \R^N\setminus\{0\}.  $$

\end{corollary}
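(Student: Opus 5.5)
Since $\tilde \Phi_{N,\alpha}$ differs from $\Phi_{N,\alpha}$ only by the additive constant $\Phi_{N,\bar\alpha}(0)-\Phi_{N,\alpha}(0)$, the equation part of the corollary follows at once from Proposition \ref{pr 2.10-N}. The operator $\cM_0$ and the weak formulation only involve $\nabla u$, so $\tilde\Phi_{N,\alpha}$ satisfies $\cM_0 u=\alpha\delta_0$ in $\cD'(\R^N)$ with the same test functions. Since $\Phi_{N,\alpha}(x)\to 0$ as $|x|\to+\infty$, we get $\tilde\Phi_{N,\alpha}(x)\to \Phi_{N,\bar\alpha}(0)-\Phi_{N,\alpha}(0)$, which is the boundary behaviour in (\ref{eq 2.1-N0}). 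The identity $\tilde\Phi_{N,\alpha}(0)=\Phi_{N,\bar\alpha}(0)$ is immediate from the definition. All that is needed here is that $\Phi_{N,\alpha}(0)$ is finite, which holds because the integrand in (\ref{fun 1-N}) is bounded by $1$ near $0$ and by $c_N\alpha s^{1-N}$ at infinity, with $N\geq 3$.

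For the strict inequality, write both functions as integrals from the origin. For $x\neq 0$ with $r=|x|$,
$$\tilde\Phi_{N,\alpha}(x)=\Phi_{N,\bar\alpha}(0)-\int_0^{r}\frac{c_N\alpha}{\sqrt{s^{2(N-1)}+c_N^2\alpha^2}}\,ds,\qquad \Phi_{N,\bar\alpha}(x)=\Phi_{N,\bar\alpha}(0)-\int_0^{r}\frac{c_N\bar\alpha}{\sqrt{s^{2(N-1)}+c_N^2\bar\alpha^2}}\,ds .$$
This uses $\Phi_{N,\alpha}(0)-\Phi_{N,\alpha}(x)=\int_0^{|x|}|\Phi_{N,\alpha}'(s)|\,ds$. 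It therefore suffices to show that the radial slope $t\mapsto \frac{t}{\sqrt{s^{2(N-1)}+t^2}}$ is strictly increasing in $t>0$ for each fixed $s>0$. Its derivative in $t$ is $s^{2(N-1)}(s^{2(N-1)}+t^2)^{-3/2}>0$. Hence for $\alpha>\bar\alpha$ the integrand for $\alpha$ strictly dominates the one for $\bar\alpha$ on $(0,r)$, which gives $\tilde\Phi_{N,\alpha}(x)<\Phi_{N,\bar\alpha}(x)$ for every $x\neq0$.

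There is one caveat. When $\alpha=\bar\alpha$ the two functions coincide, so the strict inequality can only hold for $\alpha>\bar\alpha$. I would read the statement as requiring $\alpha>\bar\alpha$ for the strict comparison, with equality in the case $\alpha=\bar\alpha$. None of the steps is a real obstacle. The only point needing care is justifying the integral representation from $0$, that is, the absolute integrability of $\Phi'_{N,\alpha}$ near the origin and at infinity, and that is exactly the bound noted in the first paragraph.
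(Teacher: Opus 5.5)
Your proposal is correct and follows the paper's argument: the paper also compares the radial derivatives $\tilde\Phi_{N,\alpha}'(r)<\Phi_{N,\bar\alpha}'(r)$ and uses the equal values at the origin. That is exactly your monotonicity-in-$t$ computation integrated from $0$. Your caveat is well founded, since the paper's proof explicitly invokes $\alpha>\bar\alpha$ although the statement says $\alpha\geq\bar\alpha$, and your remarks on the equation, the limit at infinity, and the finiteness of $\Phi_{N,\alpha}(0)$ fill in points the paper leaves implicit.
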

\noindent{\bf Proof. } Since $\tilde \Phi_{N, \alpha }$ and $  \Phi_{N, \alpha }$ are radially symmetric, we use the
notation 
$$   \Phi_{N, \alpha }(r)=  \Phi_{N, \alpha }(x),\qquad \tilde \Phi_{N, \alpha }(r)= \tilde \Phi_{N, \alpha }(x)\quad {\rm for}\ \, r=|x|,\ \,  x\in\R^N.  $$ 
Note that by the assumption $\alpha>\bar \alpha$, 
\begin{align*}
\tilde \Phi_{N, \alpha }'(r)&= - c_{N}    \frac{\alpha }{ \sqrt{|x|^{2(N-1)}+c_{N}^2 \alpha^2}}
\\[1mm]&<- c_{N}  \frac{\bar \alpha }{ \sqrt{|x|^{2(N-1)}+c_{N}^2\bar \alpha^2}} =  \Phi_{N, \bar \alpha }'(r)
\end{align*}
  and 
 $ \tilde \Phi_{N, \alpha }(0)=\Phi_{N, \bar \alpha }(0)$,
 then 
  $$ \tilde \Phi_{N, \alpha }<\Phi_{N,\bar \alpha }\ \ {\rm in}\ \, \R^N\setminus\{0\}.$$
  We complete the proof. \hfill$\Box$\medskip

    \subsection{Radial singular solution for $N=2$   }

We deal with the fundamental solution of
 \begin{equation}\label{eq 2.1}
 \left\{
\begin{array}{lll}
\displaystyle \cM_0u = \alpha \delta_{0}\quad
  {\rm in}\  \, \R^2, 
\\[2mm]
 \phantom{\  }
 \displaystyle u(0)=0,    
 \end{array}
 \right.
 \end{equation}
where $\alpha>0$. 
 \begin{proposition}\label{pr 2.10}
 Let  
 \begin{equation}\label{fun 1}
\Phi_{2,\alpha}(x)=-  \frac{\alpha}{2\pi} 
\Big( \ln \Big(  r+ \sqrt{ \big(\frac{\alpha}{2\pi}\big)^2+r^2}\, \Big)-\ln  \big(\frac{\alpha}{2\pi}\big) \Big)   \quad {\rm for}\ \, r=|x|>0, 
\end{equation}
 then $\Phi_{2,\alpha}$ is a  solution of (\ref{eq 2.1}).
 Furthermore, we have that 
 \begin{equation}\label{fun 1-2-0}
 |\nabla \Phi_{2,\alpha}(x)|\to 1\quad {\rm as}\ |x|\to0^+
 \end{equation}
and 
 \begin{equation}\label{fun 1-2-00}
 \nabla \Phi_{2,\alpha}(x) \cdot \frac{x}{|x|}\to -1\quad {\rm as}\ |x|\to0^+. 
  \end{equation}
\end{proposition}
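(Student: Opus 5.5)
The approach mirrors the proof of Proposition \ref{pr 2.10-N}: reduce to the radial ODE, solve it explicitly, and read off the limits. For a radial $u(x)=u(r)$ with $r=|x|>0$, the equation $\cM_0u=0$ in $\R^2\setminus\{0\}$ reduces to
$$\frac{1}{r}\Big(\frac{r\,u'(r)}{\sqrt{1-u'(r)^2}}\Big)'=0,$$
so that $\frac{r u'}{\sqrt{1-u'^2}}=-t$ for some constant $t>0$. Taking the sign that gives a downward opening cone, this yields
$$u'(r)=-\frac{t}{\sqrt{t^2+r^2}}.$$
Integrating from $0$ with $u(0)=0$ gives
$$u(r)=-t\big(\ln(r+\sqrt{t^2+r^2})-\ln t\big),$$
since $\frac{d}{dr}\ln(r+\sqrt{t^2+r^2})=\frac1{\sqrt{t^2+r^2}}$. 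This is exactly the form (\ref{fun 1}) once $t$ is identified with $\frac{\alpha}{2\pi}$. We also see directly that $u$ is continuous at $0$ with $u(0)=0$, is Lipschitz with $|u'|<1$ for $r>0$, and that $\frac{|\nabla u|}{\sqrt{1-|\nabla u|^2}}=\frac{t}{r}\in L^1_{\loc}(\R^2)$.

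Next I fix $t$ via the weak formulation. For $\phi\in C^1_c(\R^2)$, I integrate by parts on $\R^2\setminus B_\epsilon(0)$, exactly as in the proof of Proposition \ref{pr 2.10-N}. The vector field
$$\frac{\nabla u}{\sqrt{1-|\nabla u|^2}}=-\frac{t\,x}{|x|^2}$$
is divergence free away from $0$. The boundary term on $\partial B_\epsilon(0)$, with outward normal $\nu=-x/|x|$ for the region outside the ball, equals $\int_{\partial B_\epsilon}\frac{t}{\epsilon}\,\phi\,d\omega\to 2\pi t\,\phi(0)$. The volume integral converges by the $L^1_{\loc}$ bound above. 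This gives
$$\int_{\R^2}\frac{\nabla u\cdot\nabla\phi}{\sqrt{1-|\nabla u|^2}}\,dx=2\pi t\,\phi(0),$$
so $t=\frac{\alpha}{2\pi}$. Extending the identity from $C^1_c$ to $C^{0,1}_c$ test functions follows by mollification, since the flux density is in $L^1_{\loc}$ and $\phi(0)$ passes to the limit under uniform convergence.

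Finally, $\nabla\Phi_{2,\alpha}(x)=-\frac{\alpha}{2\pi}\frac{1}{\sqrt{(\frac{\alpha}{2\pi})^2+|x|^2}}\frac{x}{|x|}$, so $|\nabla\Phi_{2,\alpha}(x)|\to1$ and $\nabla\Phi_{2,\alpha}(x)\cdot\frac{x}{|x|}\to-1$ as $|x|\to0^+$, which gives (\ref{fun 1-2-0}) and (\ref{fun 1-2-00}). The only delicate step is the signs and orientation in the boundary flux computation, which I expect to be the main point to check. Everything else is routine calculus.
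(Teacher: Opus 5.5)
Your proposal is correct and follows essentially the same route as the paper. Both reduce to the radial ODE in which $\frac{ru'}{\sqrt{1-u'^2}}$ is constant, integrate with $u(0)=0$, and fix the constant through the flux across $\partial B_\epsilon(0)$; the paper keeps a general constant $c$ and obtains $c=-\frac{\alpha}{2\pi}$, matching your $t=\frac{\alpha}{2\pi}$ after your sign choice, and then reads off the gradient limits. Your additional remarks are correct and supply details the paper leaves implicit: that $\frac{|\nabla u|}{\sqrt{1-|\nabla u|^2}}=\frac{t}{r}\in L^1_{\loc}(\R^2)$, and that mollification extends the identity to test functions in $C^{0,1}_c$.
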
 

\noindent {\bf Proof. }  
 For the radial function, we use the notation: $u(x) =u(r)$ with $r=|x|>0$, 
$\cM_0u =0$ becomes to, for some $c\in\R\setminus \{0\},$
 $$\frac{r u'(r)}{\sqrt{1-| u'(r)|^2}}=c\quad {\rm for}\ r>0, $$
 which is equivalent to
 $$u'(r)^2=\frac{c^2}{r^2+c^2}\quad {\rm for}\ r>0.$$
Under the assumption $u(0)=0$,  we can get
the following:  for some $c\in \R\setminus \{0\}$
\begin{equation}\label{fun 1}
u_c(r)=   c\big( \ln\big(  r+ \sqrt{c^2+r^2}\big)-\ln |c|\big) \quad {\rm for}\ \, r>0
\end{equation}
and 
\begin{equation}\label{fun 2}
 \nabla u_c(x)= c  \frac{1}{\sqrt{|x|^2+c^2}}\,  \frac{x}{|x|},\qquad \frac{\nabla u_c}{\sqrt{1-|\nabla u_c|^2}} = c\frac{x}{|x|^2}. 
 \end{equation}
As for the case of $N\geq 3$, we have 
for $\phi\in C_c^1(\R^2)$
\begin{align*}
0&=\lim_{\epsilon\to0^+} \int_{\R^2\setminus B_\epsilon(0)} \cM_0 u_c (x) \phi(x) dx
\\[2mm]&=  \int_{\R^2 } \big(\frac{\nabla u_c}{\sqrt{1-|\nabla u_c|^2}} \big)  \cdot \nabla  \phi dx
 - \phi(0)\lim_{\epsilon\to0^+} \int_{ \partial B_\epsilon(0)} \big( \frac{\nabla u_c}{\sqrt{1-|\nabla u_c|^2}} \big)  \cdot \nu  \,  d\omega(x) 
\\[2mm]&= \int_{\R^2 } \big(\frac{\nabla u_c}{\sqrt{1-|\nabla u_c|^2}} \big)  \cdot \nabla  \phi dx
 +2\pi c\, \phi(0), 
  \end{align*}
 where $\nu=-\frac{x}{|x|}$. 
That is,    for $\phi\in C_c^1(\R^2)$, 
 \begin{align*}
 \int_{\R^2 } \big(\frac{\nabla u_c}{\sqrt{1-|\nabla u_c|^2}} \big)  \cdot \nabla  \phi\, dx=-2\pi c \, \phi(0), 
    \end{align*}
which implies that 
$$c=-\frac{\alpha }{2\pi}\quad{\rm and}\quad \Phi_{2,\alpha }= u_{-\frac{\alpha}{2\pi}}. $$ 
 
Therefore we obtain  the fundamental solution of $\cM_0$ with a single Dirac mass and the estimates of (\ref{fun 1-2-0}) and (\ref{fun 1-2-00}) follow by (\ref{fun 2}). 
 \hfill$\Box$\medskip

\begin{corollary}\label{pr 2.10-N-1}
 When $N=2$, let $\alpha>\bar \alpha$, then 
 \begin{equation}\label{fun 1-N-2}
 \Phi_{2, \bar \alpha }(x)  <\Phi_{2, \alpha  }(x)  \quad {\rm for}\ \,  x\in\R^N. 
\end{equation}

\end{corollary}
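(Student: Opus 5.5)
The plan is to reduce everything to a one-variable computation in the parameter $c=\frac{\alpha}{2\pi}>0$. By Proposition \ref{pr 2.10}, for $r=|x|$ one can write
$$\Phi_{2,\alpha}(x)=-c\,\ln\Big(\frac{r+\sqrt{c^2+r^2}}{c}\Big)=-c\,{\rm arcsinh}\Big(\frac rc\Big).$$
Fix $r>0$ and set $F(c):=c\,{\rm arcsinh}(r/c)$, so that $\Phi_{2,\alpha}(x)=-F(c)$. With $t=r/c$, differentiation in $c$ gives
$$F'(c)={\rm arcsinh}(t)-\frac{t}{\sqrt{1+t^2}}=:g(t).$$
Here $g(0)=0$ and
$$g'(t)=\frac{1}{\sqrt{1+t^2}}-\frac{1}{(1+t^2)^{3/2}}=\frac{t^2}{(1+t^2)^{3/2}}>0 \quad (t>0),$$
so $F$ is strictly increasing in $c$ for every fixed $r>0$. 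Since $\alpha>\bar\alpha$ means $c>\bar c$, integrating $F'$ from $\bar c$ to $c$ yields the strict comparison at every $x\neq 0$. At $x=0$ both functions equal $0$ by the normalization $u(0)=0$ in (\ref{eq 2.1}).

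Carrying out this computation, I expect it to deliver
$$\Phi_{2,\alpha}(x)<\Phi_{2,\bar\alpha}(x)<0\quad\text{for }x\neq0,$$
that is, $|\Phi_{2,\bar\alpha}|<|\Phi_{2,\alpha}|$. This is the reverse of the literal inequality (\ref{fun 1-N-2}), and it also forces equality, not strict inequality, at the origin. The main obstacle is therefore not analytic but the sign and normalization of the claim. The inequality as printed cannot hold at $x=0$, and the monotonicity in $c$ points the other way for $x\neq0$. So I would present the result as the strict inequality (\ref{fun 1-N-2}) for the magnitudes $|\Phi_{2,\cdot}|$ on $\R^2\setminus\{0\}$, equivalently $-\Phi_{2,\bar\alpha}<-\Phi_{2,\alpha}$ there, with equality at $0$.

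As a consistency check independent of the calculus, I would also argue via gradients, mirroring the proof of the $N\ge3$ corollary. From (\ref{fun 2}) one has
$$|\Phi_{2,\alpha}'(r)|=\frac{c}{\sqrt{r^2+c^2}},$$
which is strictly increasing in $c$ for each $r>0$. Since both functions vanish at $r=0$, integrating from $0$ to $r$ gives the same strict ordering of magnitudes for every $r>0$.
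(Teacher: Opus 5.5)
Your diagnosis is correct: the inequality (\ref{fun 1-N-2}) as printed is false. With $c=\frac{\alpha}{2\pi}$ one has $\Phi_{2,\alpha}(x)=-c\,{\rm arcsinh}(|x|/c)$. Your computation $\partial_c\big(c\,{\rm arcsinh}(r/c)\big)=g(r/c)$, with $g(0)=0$ and $g'(t)=t^2(1+t^2)^{-3/2}>0$, is right. It gives $\Phi_{2,\alpha}<\Phi_{2,\bar\alpha}<0$ on $\R^2\setminus\{0\}$, with both functions vanishing at the origin. For example, $c=1$, $\bar c=\frac12$, $r=1$ gives $-0.881<-0.722$.

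The paper gives no proof of this corollary. The natural template is the proof of the $N\ge 3$ corollary in the previous subsection, the one on $\tilde\Phi_{N,\alpha}$. That proof compares radial derivatives of two profiles that agree at the origin, which is exactly your second argument. Run for $N=2$, where $\Phi_{2,\alpha}(0)=\Phi_{2,\bar\alpha}(0)=0$ already holds and no shift is needed, it produces your reversed inequality. So the intended statement is presumably $\Phi_{2,\alpha}<\Phi_{2,\bar\alpha}$ in $\R^2\setminus\{0\}$, with equality at $0$.

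Your two routes compare as follows:
\begin{itemize}
\item The derivative comparison is shorter. It uses only the first integral $r u'/\sqrt{1-(u')^2}={\rm const}$, not the closed form, and mirrors the $N\ge3$ proof line by line.
\item Differentiating in the parameter needs the explicit formula. In exchange it gives the quantitative rate $\partial_\alpha\Phi_{2,\alpha}(x)=-\frac{1}{2\pi}\,g\big(\frac{2\pi|x|}{\alpha}\big)$.
\end{itemize}

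One downstream consequence is worth keeping in mind. With the correct ordering and $\alpha_j<\alpha_0$, you get $\Phi_{2,\alpha_j}\ge\Phi_{2,\alpha_0}$. So the upper bound $\Phi_{2,\alpha_j}+R_0$ obtained in (\ref{uni-1-2}) is weaker than, and does not imply, the upper bound $\Phi_{2,\alpha_0}+R_0$ stated in (\ref{uppp-1-2}). Only the printed, false direction would justify that step.
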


Since $\Phi_{N,\alpha}$ is radially symmetric, we use the notations 
$\Phi_{\alpha,N}(r)=\Phi_{\alpha,N}(x)$ for $x\in\R^N$ and $r=|x|$ in the sequel. 

 \setcounter{equation}{0}
 
 \section{ Multiple Dirac masses in bounded domain}
 
For the multiple Dirac masses,  we first consider  the related problem in bounded problem 
\begin{equation}\label{eq 3.1}
 \left\{
\begin{array}{lll}
\displaystyle  \cM_0 u= \sum^{m_0}_{j=1} \alpha_j \delta_{p_j}\quad
 & {\rm in}\  \   B_R(0), 
\\[4mm]
 \phantom{ -\ \ \,   }
 \displaystyle u=0 \quad
 & {\rm on}\  \, \partial B_R(0),    
 \end{array}
 \right.
 \end{equation}
 where $R>R_0$,  $B_R\subset \R^N$ with $N\geq 2$ and 
 $$\cP_{m_0}\subset B_{\frac12R_0}(0). $$

 \begin{theorem}\label{pr 2.1}
 Let $N\geq 2$, 
 $$\alpha_j>0\ \ {\rm for}\ j=1,\cdots, m_0, \quad{\rm and}\quad  \alpha_0=\sum^{m_0}_{j=1} \alpha_j, $$ 
 then there exist $\theta_0 \geq 1$ such that for $R\geq \theta_0R_0$, problem (\ref{eq 3.1}) has unique weak solution $u_{R}\in C^{2,\gamma}_{\loc}(B_{R}(0)\setminus \cP_{m_0})\cap C^{0,1}(B_{R}(0))\cap C_0(B_{R}(0))$, which is positive in $B_{R}(0)$ and is a classical solution of 
 \begin{equation}\label{eq 3.1-cla-R}
 \left\{
\begin{array}{lll}
\displaystyle  \cM_0 u= 0\ \ 
 & {\rm in}\  \, B_R(0)\setminus\cP_{m_0}, 
\\[2mm]
 \phantom{ -\ \,   }
 \displaystyle u=0 \quad
 & {\rm on}\  \, \partial B_R(0).  
 \end{array}
 \right.
 \end{equation}
  
 Moreover, 
 
 $(i)$   $u_R$ is   the minimizer of the energy functional  
  \begin{equation}\label{eq 3.1-do 2}
\cJ_{N,R}(w)= \int_{B_R(0)}\big(1-\sqrt{1-|\nabla w|^2}\big)\, dx -\sum_{j=1}^{m_0}\alpha_j w(p_j)  \quad {for\ }\, w\in \bX_0(B_R(0)),  
 \end{equation} 
where 
$$\bX_0(B_R(0)):=\big\{v\in C^{0,1}(\overline B_R(0))\!:\, v=0\ {\rm on}\ \partial B_R(0),\ |\nabla v|\leq 1\ \ {\rm a.e.\ in\ }B_R(0)\big\}.  $$

 $(ii)$  $0<|\nabla u_{R}(x)|<1$ for $x\in B_{R}(0)\setminus \cP_{m_0}$ and
 \begin{equation}\label{e 3.1}
u_{R} \geq \max_{j=1,\cdot,m_0}u_{R,j}\ \ {\rm in} \ B_R(0), \qquad   \max_{x\in B_{R}(0)}u_{R}(x)\leq   v_R(0),  
  \end{equation}
  where 
  $$v_R(x)= \Phi_{N,\alpha_0}(x)-\Phi_{N,\alpha_0}(R)\quad {\rm for} \ x\in B_R(0)$$ 
and 
$$u_{R,j} (x)=\Phi_{N,\alpha_j}(x-p_j)-\Phi_{N,\alpha_j}(R_j)\quad {\rm for} \ x\in B_{R_j}(p_j),
$$  which is the radially symmetric weak solution of 
   \begin{equation}\label{eq 3.1-j}
  \left\{
 \begin{array}{lll}
 \displaystyle  \cM_0 u=  \alpha_j\delta_{p_j}\quad
 & {\rm in}\  \   B_{R_j}(p_j), 
 \\[2mm]
  \phantom{ -\ \,   }
  \displaystyle u=0 \quad
 & {\rm on}\  \, \partial B_{R_j}(p_j),    
  \end{array}
  \right.
 \end{equation}
 where $R_j=R-|p_j|>R_0$.

$(iii)$ For $R_2>R_1>\theta_0R_0$,  there holds 
$$u_{R_2}>u_{R_1}\ \ {\rm in}\ B_{R_1}(0).   $$

 \end{theorem}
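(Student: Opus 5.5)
We construct $u_R$ as the limit of regular solutions with smoothed sources, and use the comparison principle, rearrangement and the isolated-singularity classification to identify the limit.

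\textbf{Step 1: regularized problems.} Fix mollifiers and set $g_n=\sum_{j}\alpha_j\rho_{n}(\cdot-p_j)\ge0$, where $\rho_n\in C^\infty_c(B_{1/n}(0))$ is radial with unit mass. This gives $g_n\in C^{1}$ and $g_n\rightharpoonup\sum_j\alpha_j\delta_{p_j}$. By Lemma \ref{lm 2.1-reg} with $g=0$, the problem $\cM_0u=g_n$ in $B_R(0)$, $u=0$ on $\partial B_R(0)$ has a strictly spacelike solution $u_{n,R}\in C^{2,\alpha}(\overline B_R(0))$. This solution is the unique maximizer of $\cI_{0,g_n}$ and hence the minimizer of $\cJ$ with source $g_n$. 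Lemma \ref{lm cp} gives $u_{n,R}>0$.

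\textbf{Step 2: two-sided bounds uniform in $n$.}
\begin{itemize}
\item \emph{Upper bound.} Apply Lemma \ref{lm 2.2-rea}: $u_{n,R}^*\le u_{g_n^*}$, where $g_n^*$ is radial with total mass $\alpha_0$. The radial solution with source $g_n^*$ can be computed through the ODE, as in Proposition \ref{pr 2.10-N}; its flux through $\partial B_r$ is at most $\alpha_0$, so it is bounded by $v_R$. This yields $\max u_{n,R}\le v_R(0)$ in the limit. For $N=2$ we use $\Phi_{2,\alpha_0}$ shifted in place of $v_R$.
\item \emph{Lower bound.} Since $g_n\ge\alpha_j\rho_n(\cdot-p_j)$, Lemma \ref{lm cp} on $B_{R_j}(p_j)\subset B_R(0)$ compares $u_{n,R}$ with the regularized single-mass solution, whose boundary value $0$ lies below $u_{n,R}\ge0$. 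Passing $n\to\infty$ gives $u_R\ge u_{R,j}$.
\end{itemize}

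\textbf{Step 3: compactness away from the singularities.} Since $|\nabla u_{n,R}|\le1$, Arzel\`a--Ascoli gives a subsequence converging uniformly to some $u_R\in\bX_0(B_R(0))$. The main obstacle is a uniform gradient bound $|\nabla u_{n,R}|\le\theta<1$ on compact sets $K\subset B_R(0)\setminus\cP_{m_0}$, which we need to invoke Lemma \ref{lm 2.1-reg-int} and get $C^{2,\gamma}_{\loc}$ convergence.
\begin{itemize}
\item \emph{Plan.} By Bartnik--Simon, loss of strict spacelikeness can occur only along a lightlike segment whose two ends lie either in $\cP_{m_0}$ or on $\partial B_R$. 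A segment reaching the boundary is excluded by the bound $u\le v_R(0)$, because $u=0$ on $\partial B_R$ and the boundary is at distance $\ge R-R_0/2$. Choosing $\theta_0$ so large that $v_R(0)<R-\tfrac12R_0$ rules this out; this is presumably the origin of the condition $R\ge\theta_0R_0$.
\item \emph{Segments between two $p_j$'s.} These are ruled out using the comparison $u\ge u_{R,j}$ near each $p_j$: it forces strict inequality $|u(p_j)-u(p_i)|<|p_i-p_j|$. Making this precise is the delicate point.
\item \emph{Fallback.} If this argument fails, I would first try to pass to the limit in the variational inequality. Bartnik--Simon regularity for minimizers then gives $C^2$ away from lightlike segments, and the segment exclusion is argued directly for the minimizer $u_R$.
\end{itemize}
Once we have $C^{2,\gamma}_{\loc}$ convergence, $u_R$ is a classical solution of (\ref{eq 3.1-cla-R}). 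Corollary \ref{cor 2.1} and Hopf's Lemma \ref{lm hopf} then give $\nabla u_R\neq0$ away from $\cP_{m_0}$.

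\textbf{Step 4: the weak formulation.} For each $n$, testing the equation with $u_{n,R}$ gives the uniform bound
\[
\int_{B_R}\frac{|\nabla u_{n,R}|^2}{\sqrt{1-|\nabla u_{n,R}|^2}}\,dx=\int_{B_R} g_n u_{n,R}\,dx\le\alpha_0v_R(0).
\]
Together with the $L^1$ bound where $|\nabla u|$ is small, this yields uniform $L^1$ bounds on $\frac{|\nabla u_{n,R}|}{\sqrt{1-|\nabla u_{n,R}|^2}}$. Fatou's lemma then gives (\ref{eq 1.1-req}). Proposition \ref{cr 2.1-iso} shows $\cM_0u_R=\sum_j k_{p_j}\delta_{p_j}$. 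To identify $k_{p_j}=\alpha_j$, we compute the flux through small spheres around each $p_j$, which is preserved in the limit by the local $C^2$ convergence on the spheres.

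\textbf{Step 5: minimality, uniqueness and monotonicity.}
\begin{itemize}
\item \emph{Minimality (i).} This follows by passing to the limit in $\cJ$ with sources $g_n$: the energy term is lower semicontinuous, the source terms converge, and every competitor can be compared.
\item \emph{Uniqueness.} Strict convexity of $w\mapsto\int(1-\sqrt{1-|\nabla w|^2})$ on $\bX_0$ gives uniqueness among minimizers. For uniqueness among weak solutions, a weak solution tested against $w-u$ satisfies the convexity inequality, so it is a minimizer.
\item \emph{Monotonicity (iii).} Apply Lemma \ref{lm cp} on $B_{R_1}(0)$ with boundary data $u_{R_2}>0=u_{R_1}$. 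The inequality is strict by the strong maximum principle (Hopf) applied to the difference on $B_{R_1}\setminus\cP_{m_0}$.
\end{itemize}
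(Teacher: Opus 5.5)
Your outline follows the paper's structure: smoothed sources, rearrangement and comparison bounds, a uniform limit, Bartnik--Simon segment analysis, Proposition~\ref{cr 2.1-iso}, and flux identification. However, the step that carries the whole theorem is not supplied.

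\textbf{The missing step.} You must exclude a lightlike segment $\overline{\bar x_1\bar x_2}$ with $\bar x_1,\bar x_2\in\cP_{m_0}$ and $u_R(\bar x_1)=u_R(\bar x_2)+|\bar x_1-\bar x_2|$. Without this you have neither $|\nabla u_R|<1$ off $\cP_{m_0}$, nor $C^{2,\gamma}_{\loc}$ regularity, nor the flux identification of Step~4.

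The mechanism you suggest cannot work. The radial barriers $u_{R,j}$ are not anchored at the value $u_R(p_j)$. Combined with $\max u_R\le v_R(0)$, they only yield $u_R(p_i)-u_R(p_j)\le v_R(0)-u_{R,j}(p_j)$. This is smaller than $|p_i-p_j|$ only under separation or smallness hypotheses of the Bonheure--d'Avenia--Pomponio type. Your fallback merely restates the problem.

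What is needed is a bound on how fast $u_R$ can decrease away from the \emph{upper} endpoint, i.e.\ a lower barrier touching $u_R$ at $\bar x_1$. The paper uses $w_{\bar\alpha,n}(x)=\Phi_{N,\bar\alpha}(x-\bar x_1)-\Phi_{N,\bar\alpha}(0)+u_{n,R}(\bar x_1)$. Here $\bar\alpha$ is chosen so large that $w_{\bar\alpha,n}<0$ on $\partial B_R(0)$. This is possible uniformly in $n$, since $u_{n,R}(\bar x_1)\le v_R(0)<R-\frac12R_0$ and $\Phi_{N,\bar\alpha}(0)-\Phi_{N,\bar\alpha}(r)\to r$ as $\bar\alpha\to\infty$. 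The paper then asserts $u_{n,R}\ge w_{\bar\alpha,n}$.

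This comparison is legitimate even though $w_{\bar\alpha,n}$ carries the mass $\bar\alpha\delta_{\bar x_1}$. Since $|\nabla u_{n,R}|\le\theta_n<1$ while $\Phi_{N,\bar\alpha}$ drops with slope tending to $1$ at its vertex, $w_{\bar\alpha,n}<u_{n,R}$ on $\partial B_\delta(\bar x_1)$ for small $\delta$. Lemma~\ref{lm cp-cla} then applies on $B_R(0)\setminus\overline{B_\delta(\bar x_1)}$, where $\cM_0u_{n,R}=g_n\ge0=\cM_0w_{\bar\alpha,n}$. Letting $n\to\infty$ gives $u_R(\bar x_1)-u_R(x)\le\Phi_{N,\bar\alpha}(0)-\Phi_{N,\bar\alpha}(|x-\bar x_1|)<|x-\bar x_1|$ for every $x\neq\bar x_1$. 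This is incompatible with the segment, and it is exactly where the positivity $\alpha_j>0$ (hence $g_n\ge0$) is used.

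\textbf{Further points.} Your derivation of $\nabla u_R\neq0$ off $\cP_{m_0}$ from Corollary~\ref{cor 2.1} and Hopf's lemma is invalid: ruling out interior extrema does not rule out saddle points. In fact this part of (ii) is false. Take $m_0=2$, $p_2=-p_1$, $\alpha_1=\alpha_2$; uniqueness forces $u_R(-x)=u_R(x)$, so $\nabla u_R(0)=0$ while $0\notin\cP_{m_0}$. The paper's proof does not address this claim either.

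In (iii), the strong maximum principle gives strict inequality only on $B_{R_1}(0)\setminus\cP_{m_0}$. At the poles, compare $u_{R_2}$ with the shifted function $u_{R_1}+\min_{\partial B_{R_1}(0)}u_{R_2}$ instead.

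The remaining steps are sound and somewhat cleaner than the paper's corresponding arguments: the bound for $g_n^*$ using only its radial symmetry and total mass, minimality via weak-$*$ lower semicontinuity of the convex area term, and uniqueness by testing the weak formulation with $w-u$.
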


 \begin{remark}
 The domain $B_R(0)$ in (\ref{eq 3.1}) could be replaced by $\Omega$,
 which satisfies $B_{\theta_0 R_0}(0)\subset \Omega$.  
 
 \end{remark}
 
 \subsection{Approximation}
 
Let $\eta_0:[0,+\infty)$ be an $C^2$, non-increasing function such that 
 $$\eta_0(s)=1\ \ {\rm for}\ s\in[0,1],\quad\  \eta_0(s)>0\ \ {\rm for}\ s\in(1,2), \quad\ \eta_0(s)=0\ \ {\rm for}\ s\in[2,+\infty). $$
Given $n\in \N$, let  
$$\eta_n(x)=\frac{c_N  n^N}{\displaystyle\int_0^2 \eta_0(s)s^{N-1}ds}  \eta_0(n |x|)\quad {\rm for\ any}\ \, x\in\R^N,    $$
where $c_N=\frac{1}{|\partial B_1(0)|}$.

Observe that $\eta_n$ is radially symmetric, non-increasing and $C^2$ function and 
$$\lim_{n\to +\infty} \eta_n\to \delta_0\quad \ \text{ in the sense of distribution.} $$
Let 
\begin{equation}\label{souce-n}
g_n(x) := \sum^{m_0}_{j=1} \alpha_j \eta_n(x-p_j) \quad {\rm for\ any}\ \, x\in\R^N, 
 \end{equation}
then $\{g_n\}_{n\in\N}\in C^2(\R^2)$ is  a sequence of smooth nonnegative functions  such that 
$${\rm supp}(g_n)\subset\, \bigcup_{j=1,\cdots, m_0} B_{\frac2{n}}(p_j)   $$
and
 $$ g_n\to \sum^{m_0}_{j=1} \alpha_j \delta_{p_j}\ \text{ in the sense of distribution}\ {\rm as}\ \, n\to +\infty. $$

 To show the existence of solution of (\ref{eq 3.1}),   we need to consider the  approximation problem
\begin{equation}\label{eq 3.1-n}
 \left\{
\begin{array}{lll}
\displaystyle\cM_0 u = g_n\quad
 & {\rm in}\  \   B_R(0), 
\\[2mm]
 \phantom{-\ \,  }
 \displaystyle u=0 \quad
 & {\rm on}\  \, \R^N\setminus  B_R(0).    
 \end{array}
 \right.
 \end{equation}

\begin{lemma}\label{lm 3.1}

Let $N\geq2$,  $R>R_0$ and $g_n$ be defined in (\ref{souce-n}), then  problem (\ref{eq 3.1-n}) has a unique classical solution $u_{n, R}>0$ in $B_{R}(0)$.  
Moreover, we have 

$(i)$ $|\nabla u_{n, R}|<1$ in $B_{R}(0)$ and 
$$u_{n, R} \geq \max_{j=1,\cdot,m_0}u_{n, R, j}\ \ {\rm in} \ B_R(0), \qquad  \max_{x\in B_R(0)}u_{n, R}(x)\leq v_{n, R}(0),$$
where $v_{n, R}$ is the unique solution of 
 \begin{equation}\label{eq 3.1-n-ral}
 \left\{
\begin{array}{lll}
\displaystyle\cM_0 u = \alpha_0\eta_n\quad
 & {\rm in}\  \   B_R(0), 
\\[2mm]
 \phantom{-\ \,  }
 \displaystyle u=0 \quad
 & {\rm on}\  \, \partial B_R(0)  
 \end{array}
 \right.
 \end{equation}
 and
$u_{n, R, j}$ is the unique solution of 
 \begin{equation}\label{eq 3.1-n-j}
 \left\{
\begin{array}{lll}
\displaystyle\cM_0 u = \alpha_j\eta_n(\cdot-p_j)\quad
 & {\rm in}\  \   B_{R_j}(p_j), 
\\[2mm]
 \phantom{-\ \,  }
 \displaystyle u=0 \quad
 & {\rm on}\  \, \partial B_{R_j}(p_j).
 \end{array}
 \right.
 \end{equation}

$(ii)$ For $R_2>R_1>R_0$ and any $n\in\N$,  there holds 
$$u_{n, R_2}>u_{n, R_1}\ \ {\rm in}\ B_{R_1}(0).   $$

$(iii)$ There exists $\theta=\theta(n,R)\in (0,1)$ such that 
$$|\nabla u_{n, R}|\leq \theta. $$

$(iv)$  There holds 
   \begin{align}\label{eee-1}
 \int_{B_R(0)}   \frac{\nabla u_{n, R}\cdot \nabla  \phi}{\sqrt{1-|\nabla u_{n, R}|^2}}  \, dx=  \int_{B_R(0)}  g_n(x) \phi(x)dx \quad {\rm for\ any}\ \, \phi\in C^{0,1}_0(B_R(0)). 
    \end{align}
\end{lemma}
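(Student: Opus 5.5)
Existence, uniqueness and regularity come first, from the results quoted in Section 2. The source $g_n$ is $C^2$, nonnegative and bounded by $\Lambda_0=\alpha_0\|\eta_n\|_{L^\infty}$, and $B_R(0)$ is smooth. So Lemma \ref{lm 2.1-reg}, applied with $g\equiv0$ (hence $\theta_0$ arbitrary), gives a strictly spacelike solution $u_{n,R}\in C^{2,\gamma}(\overline{B_R(0)})$. It also gives $\theta=\theta(n,R)\in(0,1)$ with $|\nabla u_{n,R}|\le 1-\theta$, which is item $(iii)$ after renaming $\theta$, and in particular $|\nabla u_{n,R}|<1$. Lemma \ref{lm 2.1-Existence} (uniqueness of weak solutions) identifies this with the unique weak solution. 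Since the problem is uniformly elliptic for this $u$, classical regularity upgrades it to a classical solution.

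Positivity comes from comparison with $0$. We have $\cM_0u_{n,R}=g_n\ge0=\cM_00$ and equal boundary data, so Lemma \ref{lm cp-cla} gives $u_{n,R}\ge0$. Strict positivity follows because $\cM_0u_{n,R}\ge0$ forbids an interior minimum of value $0$. For this I will use the strong maximum principle for the linearized uniformly elliptic operator $a_{ij}D_{ij}$, which applies since $\cM_0 u=-a_{ij}D_{ij}u\ge0$ with bounded gradient. Non-triviality holds because $g_n\not\equiv0$.

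Item $(iv)$ is obtained by integrating by parts against $\phi\in C^1_0$ and then approximating $\phi\in C^{0,1}_0(B_R(0))$ by mollification. This is legitimate because $\nabla u_{n,R}/\sqrt{1-|\nabla u_{n,R}|^2}$ is bounded by $(iii)$ and $\nabla\phi_\epsilon\to\nabla\phi$ in $L^1$ with uniform bounds.

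For the lower bound in $(i)$ I compare on $B_{R_j}(p_j)$, with $R_j=R-|p_j|$ chosen so that $B_{R_j}(p_j)\subset B_R(0)$. There $g_n\ge\alpha_j\eta_n(\cdot-p_j)$ and, on $\partial B_{R_j}(p_j)$, $u_{n,R}\ge0=u_{n,R,j}$. Lemma \ref{lm cp-cla} then yields $u_{n,R}\ge u_{n,R,j}$ in $B_{R_j}(p_j)$. Outside this ball I extend $u_{n,R,j}$ by zero, where the inequality is trivial.

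Item $(ii)$ is the same argument. On $B_{R_1}(0)$ both functions solve $\cM_0u=g_n$, and on $\partial B_{R_1}(0)$ we have $u_{n,R_2}>0=u_{n,R_1}$ by positivity. Lemma \ref{lm cp-cla} gives $u_{n,R_2}\ge u_{n,R_1}+\min_{\partial B_{R_1}}u_{n,R_2}$, and the last term is strictly positive, so the inequality is strict.

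The upper bound $\max u_{n,R}\le v_{n,R}(0)$ is the main obstacle, and I use Lemma \ref{lm 2.2-rea}. The rearrangement $g_n^*$ is radial and decreasing with the same distribution as $g_n$. I first claim $g_n^*\le\alpha_0\eta_n$ pointwise. To see this, note that for each level $t$, $|\{g_n>t\}|\le\sum_j|\{\alpha_j\eta_n>t\}|$, since $g_n\le\sum_j$ of these and, for $n$ large, the supports $B_{2/n}(p_j)$ are disjoint. The sets $\{\alpha_j\eta_n>t\}$ are balls $B_{r_j(t)}$ with $r_j(t)\le r_0(t)$, the radius for $\alpha_0\eta_n$. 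The inequality $\sum_j r_j^N\le r_0^N$ is not automatic from this alone, and it needs checking against the profile of $\eta_0$. If it fails, I would instead choose $\eta_0$ (or argue for $n$ large) so that $g_n^*$ is dominated by a radial source $\tilde g\ge g_n^*$ with $\int\tilde g=\alpha_0\int\eta_n$, and compare with its solution. Granting the claim, Lemma \ref{lm 2.2-rea} gives $u_{n,R}^*\le u_{g_n^*}$. Lemma \ref{lm cp} then gives $u_{g_n^*}\le v_{n,R}$, since the sources are ordered and the boundary data agree. Hence $\max u_{n,R}=u^*_{n,R}(0)\le v_{n,R}(0)$.
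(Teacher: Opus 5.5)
Everything except the upper bound in $(i)$ is sound and follows essentially the paper's route: Bartnik--Simon existence, regularity and uniqueness, then comparison on $B_{R_j}(p_j)$ and on $B_{R_1}(0)$. Your use of the additive constant in Lemma \ref{lm cp-cla} even gives the strict inequality in $(ii)$, which the paper only asserts.

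The gap is the bound $\max u_{n,R}\le v_{n,R}(0)$. Your claim $g_n^*\le\alpha_0\eta_n$ pointwise is false as soon as $m_0\ge2$, whatever the profile $\eta_0$.
\begin{itemize}
\item Rearrangement preserves the $L^1$ norm, so $\int g_n^*=\int g_n=\alpha_0\int\eta_n$. A pointwise inequality between nonnegative functions with equal integrals forces equality a.e.
\item But $g_n^*\neq\alpha_0\eta_n$. The set $\{g_n>0\}=\bigcup_j B_{2/n}(p_j)$ has measure strictly larger than $|B_{2/n}|$. Hence $g_n^*>0$ at points outside $\overline{B_{2/n}(0)}$, where $\alpha_0\eta_n=0$. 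In your notation, $\sum_j r_j(t)^N>r_0(t)^N$ for small $t>0$.
\item Your fallback collapses for the same reason: a radial $\tilde g\ge g_n^*$ with $\int\tilde g=\alpha_0\int\eta_n$ must equal $g_n^*$.
\end{itemize}
So Lemma \ref{lm cp}, which needs pointwise ordered sources, cannot close this step. The paper's proof gets past it only by asserting $g_n^*=\alpha_0\eta_n$. That identity is false for the same reason, so the paper does not supply the step either.

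What you need is a comparison of masses on balls rather than of densities. For a radial source $f$ the radial solution is explicit, as in the proof of Lemma \ref{lm 3.2}:
\[
u_f(x)=\int_{|x|}^R h_f(r)\big(r^{2(N-1)}+h_f(r)^2\big)^{-1/2}\,dr,\qquad h_f(r)=\int_0^r f(\tau)\tau^{N-1}d\tau=c_N\int_{B_r(0)}f .
\]
Since $s\mapsto s/\sqrt{a+s^2}$ is increasing on $[0,\infty)$, you get $u_{f_1}\le u_{f_2}$ whenever $\int_{B_r}f_1\le\int_{B_r}f_2$ for all $r$.

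This majorization does hold here. First, $\int_{B_r}g_n^*=\sup\{\int_E g_n:\ |E|\le|B_r|\}$. For each such $E$, the bathtub principle applied to each symmetric decreasing bump gives $\int_E g_n=\sum_j\alpha_j\int_E\eta_n(\cdot-p_j)\le\sum_j\alpha_j\int_{B_r}\eta_n=\alpha_0\int_{B_r}\eta_n$. Therefore $u_{g_n^*}\le v_{n,R}$. Combined with Lemma \ref{lm 2.2-rea}, this yields $\max u_{n,R}=u_{n,R}^*(0)\le u_{g_n^*}(0)\le v_{n,R}(0)$. It holds for every $n$ and does not need the supports to be disjoint.
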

 \noindent{\bf Proof.  }  {\it 1. Existence: } The existence follows by \cite[Theorem 4.1]{BS82} or \cite[Corollary 4.3]{BS82}. In fact,  the solution $u_{n, R}$ is the maximizer of the energy functional 
 $$\cI_{n, R}(w)= \int_{B_R(0)}\Big( \sqrt{1-|\nabla w|^2} +w g_n\Big)dx \quad {for\ }\, w\in \bX_0(B_R(0)). 
$$ 
 
 Note that  $|\nabla u_{n, Rn}|<1$ in $B_{R}(0)$ follows by Lemma \ref{lm 2.1-reg}
 and $u_{n, R}$ is a classical solution of  (\ref{eq 3.1-n}). \smallskip

Similarly, we can obtain   classical solutions  $v_{n, R}, $ $u_{n, R, j}$ of 
 (\ref{eq 3.1-n-ral}) and (\ref{eq 3.1-n-j}) respectively.\smallskip

 {\it 2. Uniqueness: }   
 The uniqueness follows by  \cite[Proposition 1.1]{BS82}. \smallskip

 {\it 3. Bounds: } $(i)$  Since $g_n=\sum_{i=1}^{m_0}\alpha_i \eta_n(x-p_i)\geq \alpha_j \eta_n(x-p_j)$, then   follows by (the comparison principle) Lemma \ref{lm cp} that 
$$u_{n, R} \geq  u_{n, R, j}\quad {\rm for\ any}\  j=1,\cdots,m_0.$$

 Now we show $\max_{x\in B_R(0)}u_{n, R}(x)\leq v_{n, R}(0)$. In fact, we see that the rearrangement of $u_{n, R}$, denote $u^*_{n, R}$, by Lemma \ref{lm 2.2-rea}, which is a sub-solution of  
 \begin{equation}\label{eq 3.1-n*}
 \left\{
\begin{array}{lll}
\displaystyle\cM_0 u = g_n^*\quad
 & {\rm in}\  \   B_R(0), 
\\[2mm]
 \phantom{-\ \,  }
 \displaystyle u=0 \quad
 & {\rm on}\  \, \partial B_R(0),    
 \end{array}
 \right.
 \end{equation}
where $g_n^*$ is the re-arrangement of $g_n$. 

Since $g_n=\sum_{j=1}^{m_0}\alpha_j \eta_n(x-p_j)$ and $\alpha_0=\sum_{j=0}^{m_0}\alpha_j$,
then $g_n^*=\alpha_0\eta_n$ and  $v_{n, R}$ is the solution of problem (\ref{eq 3.1-n*}), which is 
radially symmetric, decreasing with respect to $|x|$. 

Then by Lemma \ref{lm cp}, we have that 
$$u^*_{n, R}\leq  v_{n, R}\quad {\rm in}\ B_R(0). $$
Particularly,
$$\max_{x\in B_R(0)}u_{n, R}(x)=u^*_{n, R}(0)\leq v_{n, R}(0).  $$

$(ii)$ Note that $u_{n, R_2}$ verifies that 
$$\left\{
\begin{array}{lll}
\displaystyle\cM_0 u_{n, R_2} = g_n \quad
 & {\rm in}\  \   B_{R_1}(0), 
\\[2mm]
 \phantom{- - }
 \displaystyle u_{n, R_2}>0 \quad
 & {\rm on}\  \, \partial B_{R_1}(0),    
 \end{array}
 \right.
$$
then again by Lemma \ref{lm cp}, we have that 
 \begin{equation}\label{com sol-R}
u_{n, R_1}\leq  u_{n, R_2}\quad {\rm in}\ B_{R_1}(0). 
 \end{equation}

$(iii)$ We apply \cite[Theorem 3.6]{BS82} to obtain that there is
$\theta\in(0,1)$ depending on $n, R$ such that 
$$|\nabla u_{n, R}|\leq \theta\quad {\rm in}\ \, B_R(0).$$

$(iv)$ Since $|\nabla u_{n, R}|$ is away from 1 uniformly, then from the equation (\ref{eq 3.1-n}),
we derive (\ref{eee-1}). 
  \hfill$\Box$\medskip

 \begin{lemma}\label{lm 3.2}  
 Let $\alpha>0, \, N \geq 2$ and $v_{\alpha,n,R}$ be the radial unique solution of 
 \begin{equation}\label{eq 3.1-n-rala}
 \left\{
\begin{array}{lll}
\displaystyle\cM_0 u = \alpha \eta_n\quad
 & {\rm in}\  \   B_R(0), 
\\[2mm]
 \phantom{-\ \,  }
 \displaystyle u=0 \quad
 &{\rm on}\  \, \partial B_R(0).  
 \end{array}
 \right.
 \end{equation}

 $(i)$ When $N\geq 3$, we have for any $R>R_0$, 
 $$ -\frac2n- c_N \alpha   R^{2-N} <v_{\alpha,n,R}(0)-\Phi_{N,\alpha }(0)<0 $$ 
 and for $R_0<|x|<R$
 $$ v_{\alpha,n,R}(0) >\int_{|x|}^{R} \frac{  c_N \alpha    }{\sqrt{r^{2(N-1)}+ ( c_N \alpha  )^2}}dr. $$ 
 
 $(ii)$ When $N=2$, 
 then $v_{\alpha,n,R}>0$ in $B_R(0)$ and for any $R>2$ and $x\in B_R(0)$,
  \begin{equation}\label{com sol-R00}
\min\big\{ \Phi_{2,\alpha}(x), \Phi_{2,\alpha}(\frac2n)\big\}  \leq v_{\alpha,n,R}(x) + \Phi_{2,\alpha}(R) \leq \Phi_{2,\alpha}(x)  .
 \end{equation}

\end{lemma}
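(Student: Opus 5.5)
The plan is to reduce everything to the explicit radial ODE and compare the flux of $v_{\alpha,n,R}$ with the flux of $\Phi_{N,\alpha}$ pointwise in $r$.

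\emph{Radial reduction.} By Lemma \ref{lm 3.1} the problem (\ref{eq 3.1-n-rala}) has a unique classical solution with $|\nabla v_{\alpha,n,R}|<1$. Since $\eta_n$ is radial and the problem is rotation invariant, this solution is radial; write $v(r)=v_{\alpha,n,R}(x)$ with $r=|x|$. Integrating the equation over $B_r(0)$, exactly as in the proof of Proposition \ref{pr 2.10-N}, gives
$$\frac{r^{N-1}v'(r)}{\sqrt{1-v'(r)^2}}=-m(r),\qquad m(r):=\alpha\int_0^r\eta_n(s)s^{N-1}\,ds .$$
By the normalization of $\eta_n$ we have $\int_{\R^N}\eta_n\,dx=1$. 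Hence $m$ is nondecreasing, $0<m(r)<c_N\alpha$ for $0<r<\frac2n$, and $m(r)=c_N\alpha$ for $r\ge \frac2n$. Solving for $v'$ yields
$$v'(r)=-\frac{m(r)}{\sqrt{r^{2(N-1)}+m(r)^2}} .$$
Because $t\mapsto t/\sqrt{a+t^2}$ is increasing, this gives $|v'(r)|<|\Phi_{N,\alpha}'(r)|$ on $(0,\frac2n)$ and $|v'(r)|=|\Phi_{N,\alpha}'(r)|$ for $r\ge\frac2n$. For $N=2$ the same holds with $c_N\alpha$ replaced by $\frac{\alpha}{2\pi}$ and $\Phi_{2,\alpha}$ from Proposition \ref{pr 2.10}. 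In particular $v$ is strictly decreasing, and since $v(R)=0$, $v>0$ in $B_R(0)$; this is the positivity claim in $(ii)$.

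\emph{Case $N\ge3$.} We have $v(0)=\int_0^R|v'|\,dr$, while $\Phi_{N,\alpha}(0)=\int_0^\infty|\Phi_{N,\alpha}'|\,dr$.

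The strict inequality on $(0,\frac2n)$, together with dropping the positive tail $\int_R^\infty$, gives $v(0)<\Phi_{N,\alpha}(0)$.

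For the lower bound, use $v(0)\ge\int_{2/n}^R|\Phi_{N,\alpha}'|\,dr$. This equals $\Phi_{N,\alpha}(0)-\int_0^{2/n}|\Phi_{N,\alpha}'|\,dr-\Phi_{N,\alpha}(R)$. Now $|\Phi_{N,\alpha}'|<1$ bounds the middle term by $\frac2n$. For the last term,
$$\Phi_{N,\alpha}(R)\le c_N\alpha\int_R^\infty s^{1-N}\,ds=\frac{c_N\alpha}{N-2}R^{2-N}\le c_N\alpha R^{2-N},$$
with strictness coming from $s^{2(N-1)}+c_N^2\alpha^2>s^{2(N-1)}$.

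For the second inequality, take $R_0<|x|<R$. Since $|x|>R_0\ge1\ge\frac2n$, on $[|x|,R]$ we have $|v'|=|\Phi_{N,\alpha}'|$. Hence $v(0)>v(x)=\int_{|x|}^R\frac{c_N\alpha}{\sqrt{r^{2(N-1)}+(c_N\alpha)^2}}\,dr$, where the first inequality is strict because $v$ is strictly decreasing. The only point to watch is the case $n=1$, where $\frac2n=2$ may exceed $|x|$. There I would either restrict to $n\ge2$ or use $R_0$ large, which is harmless for the later limit $n\to\infty$.

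\emph{Case $N=2$.} Write $v(x)=\int_{|x|}^R|v'|\,dr$ and $\Phi_{2,\alpha}(x)-\Phi_{2,\alpha}(R)=\int_{|x|}^R|\Phi_{2,\alpha}'|\,dr$.

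The pointwise comparison $|v'|\le|\Phi_{2,\alpha}'|$ gives $v(x)+\Phi_{2,\alpha}(R)\le\Phi_{2,\alpha}(x)$, which is the upper bound in (\ref{com sol-R00}).

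For the lower bound, if $|x|\ge\frac2n$ the two integrands coincide, so equality holds. If $|x|<\frac2n$, monotonicity of $v$ gives $v(x)\ge v(\frac2n)=\Phi_{2,\alpha}(\frac2n)-\Phi_{2,\alpha}(R)$. Combining the two cases produces the minimum on the left of (\ref{com sol-R00}).

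The only real obstacle is justifying that the solution is radial and obeys the first integral near $r=0$. Uniqueness from Lemma \ref{lm 3.1} handles the radial symmetry. For the first integral, $v\in C^2$ with $\nabla v(0)=0$, so the flux vanishes at the origin and there is no additional Dirac contribution there. The rest is elementary monotonicity of integrands.
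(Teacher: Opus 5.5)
Your proposal is correct and follows essentially the paper's argument. You use the flux identity $r^{N-1}v'/\sqrt{1-v'^2}=-m(r)$ (the paper's $h_\alpha$), with $m<c_N\alpha$ on $(0,\frac2n)$ and $m=c_N\alpha$ afterwards, and then compare $|v'|$ pointwise with $|\Phi_{N,\alpha}'|$ (resp.\ $|\Phi_{2,\alpha}'|$) in each estimate. Your remark that the second inequality in $(i)$ needs $|x|\ge\frac2n$ (e.g.\ $n\ge 2$ or $R_0\ge2$) is a genuine point the paper passes over, and routing the strict inequality through $v(0)>v(x)$ is the right way to obtain the bound as stated; the paper's proof instead asserts a strict bound for $v_{\alpha,n,R}(x)$, which is actually an equality once $|x|\ge\frac2n$.
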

\noindent{\bf Proof. }  It follows by the directional computation that 
\begin{align*}
v_{\alpha,n,R}(0)=\int_0^R\sqrt{\frac{ h_{\alpha }(r) ^2}{r^{2(N-1)}+ h_{\alpha }(r) ^2}}\, dr,
\end{align*}
where $h_{\alpha }(r)=\int_0^r \alpha  \eta_n(\tau) \tau^{N-1}d\tau$ and we used the fact that  $v_{\alpha,n,R}(x)=0$ for $x\in\partial B_R(0)$. 
Note that for $r>\frac{2}{n}$
$$h_{\alpha}(r)\left\{
\begin{array}{lll}
 =c_N \alpha  \quad
 & {\rm for}\  \   r\geq \frac2n, 
\\[2.5mm]
<c_N  \alpha  \quad
 & {\rm for}\  \,  r\in[0,\frac2n),
 \end{array}
 \right.  
  $$
then we see that 
$$ \sqrt{\frac{ h_{\alpha }(r) ^2}{r^{2(N-1)}+ h_{\alpha }(r) ^2}}= \sqrt{\frac{ ( c_N \alpha  ) ^2}{r^{2(N-1)}+( c_N \alpha  )^2}}\quad {\rm for}\  r\geq \frac2n$$
and
$$\sqrt{\frac{ h_{\alpha }(r) ^2}{r^{2(N-1)}+ h_{\alpha }(r) ^2}}< \sqrt{\frac{ ( c_N \alpha  )  ^2}{r^{2(N-1)}+ ( c_N \alpha  ) ^2}}\quad {\rm for}\   r\in[0, \frac2n).$$ 
 So when $N\geq 3$, 
$$v_{\alpha,n,R}(0)<\int_0^{R} \frac{ ( c_N \alpha )  }{\sqrt{r^{2(N-1)}+ ( c_N \alpha  )^2}}dr<\int_0^{+\infty} \frac{ ( c_N \alpha  )  }{\sqrt{r^{2(N-1)}+ ( c_N \alpha  )^2}}dr=\Phi_{N,\alpha }(0)  $$ 
and
\begin{align*}
v_{\alpha,n,R}(0)&>\int_{\frac 2n}^{R} \frac{ ( c_N \alpha  )  }{\sqrt{r^{2(N-1)}+ ( c_N \alpha  )^2}}dr
\\[1mm]&> \int_0^{+\infty} \frac{ ( c_N \alpha  )  }{\sqrt{r^{2(N-1)}+ ( c_N \alpha  )^2}}dr-\frac2n-( c_N \alpha  ) R^{2-N} \\[1mm]&=\Phi_{N,\alpha }(0)-\frac2n-( c_N \alpha ) R^{2-N}.  
\end{align*}
Furthermore, we have that for $R_0<|x|<R$
\begin{align*}
v_{\alpha,n,R}(x)&>\int_{|x|}^{R} \frac{ ( c_N \alpha  )  }{\sqrt{r^{2(N-1)}+ ( c_N \alpha  )^2}}dr.  
\end{align*}

When $N=2$, since $v_{\alpha,n,R}$ is radial symmetric, then we have the upper bound:  
\begin{align*}
v_{\alpha,n,R}(x)=\int_{|x|}^R \frac{  h_{\alpha}(r)     }{\sqrt{r^{2}+ h_{\alpha}(r)^2}}dr  \leq \int_{|x|}^R \frac{  c_2 \alpha     }{\sqrt{r^{2}+ ( c_2 \alpha  )^2}}dr =
\Phi_{2,\alpha}(x)-\Phi_{2,\alpha}(R).
\end{align*}
Furthermore, we can get the lower bound:   for $|x|>\frac2n$, 
\begin{align*}
v_{\alpha,n,R}(x)&=\int_{|x|}^R \frac{  h_{\alpha}(r)     }{\sqrt{r^{2}+ h_{\alpha}(r)^2}}dr
=\Phi_{2,\alpha}(x)-\Phi_{2,\alpha}(R)
\end{align*}
and for $|x|\leq \frac2n$
\begin{align*}
v_{\alpha,n,R}(x)&> \int_{\frac2n}^R \frac{  h_{\alpha}(r)     }{\sqrt{r^{2}+ h_{\alpha}(r)^2}}dr
=\Phi_{2,\alpha}(\frac2n)-\Phi_{2,\alpha}(R). 
\end{align*}
We complete the proof.   \hfill$\Box$\medskip

 \subsection{Multiple singularities on Balls}

 \noindent{\bf Proof of Theorem \ref{pr 2.1}. }   
  {\bf Existence: }
From  Lemma \ref{lm 3.1}, we have that when $N\geq 3$,
\begin{equation}\label{ine -3.1}
0<u_{n, R}(x)\leq \Phi_{N,\alpha_0}(0)\quad {\rm for}\ \, x\in  B_R(0)
\end{equation}
and for $N=2$, 
\begin{equation}\label{ine -3.1-2}
0<u_{n, R}(x)\leq \frac{\alpha_0}{2\pi} \Big( \ln \Big(  R+ \sqrt{ \big(\frac{\alpha_0}{2\pi}\big)^2+R^2}\, \Big)-\ln  \big(\frac{\alpha_0}{2\pi}\big) \Big)\quad {\rm for}\ \, x\in  B_R(0). 
\end{equation}
 So we can choose $R>R_0$ large  such that 
 $$u_{n, R}(x)<\Phi_{N,\alpha_0}(0)\ \, {\rm for}\ N\geq3\quad {\rm and}\quad u_{n, R}(x)<\frac14 R\ \,  \text{when $N=2$}. $$

 {\bf Claim 1:\ }  {\it  There exist a subsequence, still use the notation  $u_{n, R}$,  and $u_{R}$ such that 
 $$|\nabla u_R|\leq 1\quad{\rm a.e.\ in}\ \bar B_R(0) $$  
and 
$$u_{n, R}\to u_R\quad\text{in $B_R(0)$ and in } \, C^{0,\gamma}(B_R(0))\quad {\rm as}\ \, n\to+\infty.   $$}
In fact, by  (\ref{ine -3.1}) and $|\nabla u_{n, R}|\leq \theta_n<1$, then for any $\gamma\in(0,1)$, the Arzel-Ascoli theorem
there is a subsequence $u_{n_k, R}$ and $u_R$ such that 
$$u_{n_k, R}\to u_R\quad\text{uniformly in $B_R(0)$ and in } \, C^{0,\gamma}(B_R(0))\quad {\rm as}\ \, k\to+\infty.   $$
Fix  $x,y\in\R^N$, $0<|x-y|<1$,  and for any $\epsilon>0$ and
 we have that if $k$ large enough such that $|u_{n_k, R}(x)-u_R(x)|, \,  |u_{n_k, R}(y)-u_R(y)|\leq \epsilon|x-y|$
\begin{align*}
\frac{|u_R(x)-u_R(y)|}{|x-y|}&\leq \frac{|u_{n_k, R}(x)-u_R(x)|}{|x-y|}+\frac{|u_{n_k, R}(x)-u_{n_k, R}(y)|}{|x-y|}+\frac{|u_{R,n_k}(y)-u_R(y)|}{|x-y|}
\\[1mm]&\leq  2\epsilon+\theta_{n_k} <2\epsilon+1 .
\end{align*}
 By the arbitrary of  $\epsilon>0$,  we derive that 
$$\frac{|u_R(x)-u_R(y)|}{|x-y|}\leq 1, $$
which, by Rademacher's theorem, implies that $|\nabla u_R|\leq 1$ a.e. in $B_R(0)$. \medskip

As a result,   we have that 
\begin{equation}\label{ine -3.2} 
0<u_{R}(x)\leq \Phi_{N,\alpha_0}(0)\ \, {\rm for}\ N\geq3\quad {\rm and}\quad 0<u_{R}(x)\leq \frac14 R\ \,  \text{for $N=2$}.  
\end{equation}
We write  
\begin{equation}\label{eq 3.1-ind-1}
\cI_{R}(w):= \int_{B_R(0)}\Big( \sqrt{1-|\nabla w|^2}+\sum^{m_0}_{j=1}\alpha_j w(p_j)\Big)\, dx\;  (=|B_R(0)| - \cJ_{N, R}(w))  \quad {for\ }\, w\in \bX_0(B_{R}(0))
 \end{equation} 
with 
$$\bX_0(B_{R}(0)):=\Big\{v\in C^{0,1}(\bar B_{R}(0)):\,  v= 0\ {\rm on}\ \partial B_{R}(0),\ \, |\nabla v|\leq 1\ \ {\rm a.e.\ in\ } B_{R}(0)\Big\}.  $$

Moreover,  for any $\epsilon\in(0,\frac14\min\{|p_i-p_j|,\ i\not=j\}]$ and $R>R_0$, let $\displaystyle  \cO_{\epsilon}=B_{R}(0)\setminus \cup_{j=1}^{m_0} B_{\epsilon}(p_j)$, 
\begin{equation}\label{eq 3.1-ind-1}
\cI_{R,\epsilon}(w)= \int_{\cO_\epsilon}\sqrt{1-|\nabla w|^2}  \, dx   \quad {for\ }\, w\in \bX_{u_R}(\cO_\epsilon)
 \end{equation} 
with 
$$\bX_{u_R}(\cO_\epsilon):=\Big\{v\in C^{0,1}(\R^N):\,  v= u_R\ {\rm on}\ \partial \cO_\epsilon,\ \, |\nabla v|\leq 1\ \ {\rm a.e.\ in\ }\R^N\Big\}.  $$
Then $u_R$ is weakly spacelike and it follows by \cite[Lemma 1.3]{BS82}  that $u_R$   achieves the maximizer of $\cI_{R,\epsilon}$ over $\cO_\epsilon$.  \medskip

  {\bf Claim 2:\ }      {\it For any $\sigma\in(0,\sigma_0]$, there exists $ \theta_\sigma\in (0,1)$  such that  
$$|\nabla u_R|\leq \theta_\sigma\quad{\rm in}\ \ B_R(0)\setminus \bigcup^{m_0}_{j=1} B_{\sigma}(p_j). $$}

 Let 
\begin{equation}\label{bou spcl-1}
\cK_s=\big\{\overline{xy}\subset  \cO_\epsilon:\, x,y\in\partial\cO_\epsilon, x\not=y, |u_R(x)-u_R(y)|=|x-y|    \big\}.
\end{equation}
 Our aim is to show $\cK_s=\emptyset$.

\smallskip

If not,  we choose $x_1,x_2\in \partial\cO_\epsilon$ such that  $|u_R(x_1)-u_R(x_2)|=|x_1-x_2|$.
$$\bL_{x_1x_2}= \big\{x_t: \text{for $t$ belongs a maximal interval of $\R$ such that } x_t    \in   B_R\setminus \cP_{m_0}\     \big\} , $$
where $x_t=x_1+t(x_2-x_1)$. 
Let $\bar x_,\bar x_2$ be the ends points of $\bL_{x_1x_2}$,   then  either $\bL_{x_1x_2}$ could be extended to cross the boundary $\partial B_R(0)$ twice, i.e. $\bar x_1,\bar x_2\in \partial B_R(0)$  or $\overline{\bL}_{x_1x_2}$ cross the boundary $\partial B_R(0)$ once i.e. 
  $\bar x_1\in \partial B_R(0), \bar x_2\in \cP_{m_0}$  or  $\bL_{x_1x_2}$ stops by two points in $\cP_{m_0}$ i.e. $\bar x_1,\bar x_2\in \cP_{m_0}$. 

We apply \cite[Theorem 3.2]{BS82} to obtain that 
$$
u_R(x_t)=u_R(x_1)+t|x_1-x_2|\quad {\rm for\ all}\  x_t\in \overline{\bL}_{x_1x_2}.
$$
In particular, we have 
 \begin{equation}\label{ind-1-t}
|u_R(\bar x_1)-u_R(\bar x_2)|= |\bar x_1-\bar x_2|.
 \end{equation}

 If $\bar x_1,\bar x_2\in \partial B_R(0)$, then 
 $$|u_R(\bar x_1)-u_R (\bar x_2)|=0<|\bar x_1-\bar x_2|,  $$
 which contradicts (\ref{ind-1-t}). 
 
  If $\bar x_1\in \partial B_R(0), \bar x_2\in \cP_{m_0}$ and we can set 
  $$\bar x_1 \in \bL_{x_1x_2}\cap \partial B_R(0)\quad{\rm and}\quad \bar x_2 \in \cP_{m_0},  $$
 then  $u_R(\bar x_1) =0$ and 
 $|\bar x_1-\bar x_2| \geq R-R_0$. 
So  for $N\geq 3$, 
 $$|u_R(\bar x_1)-u_R (\bar x_2)|=|u_R (\bar x_2)|\leq \Phi_{N,\alpha_0}(0)<|\bar x_1-\bar x_2| $$
 and 
  for $N=2$, 
 $$|u_R(\bar x_1)-u_R (\bar x_2)|=|u_R (\bar x_2)|\leq  \frac14 R<R-R_0\leq |\bar x_1-\bar x_2|$$
 then we get a contradiction with (\ref{ind-1-t}). 
 
 If   $\bar x_1,\bar x_2\in \cP_{m_0}$, we can assume that 
 $$
u_R(\bar x_1)=u_R(\bar x_2)+|\bar x_1-\bar x_2|\quad {\rm for\ all}\;  x_t\in \overline{\bL}_{x_1x_2}.
$$
Let 
$$w_{\alpha}(x)=\Phi_{N, \alpha}(x-\bar x_1)-\Phi_{N, \alpha}(0)+u_R(\bar x_1),\quad x\in B_R(0), $$
then $w_\alpha(\bar x_1)=u_R(\bar x_1)$ and there exist  $\bar \alpha\geq \alpha_j$ such that 
$$w_{\bar \alpha}\leq -1\quad {\rm on}\ \partial B_R(0). $$
Let
$$w_{\bar\alpha,n}(x)=\Phi_{N, \bar\alpha}(x-\bar x_1)-\Phi_{N, \bar\alpha}(0)+u_{n, R}(\bar x_1)\quad{\rm for}\ \, x\in B_R(0), $$
then $w_{\bar\alpha,n}(\bar x_1)=u_{n, R}(\bar x_1)$ and 
$$\lim_{n\to+\infty}w_{\bar\alpha,n}(x)=w_{\bar\alpha}(x)\quad {\rm for}\ \, x\in \overline{B_R(0)} $$
and there exist  $n_0>1$ such that for all $n \geq n_0$, 
$$w_{\bar \alpha,n}<0\quad {\rm on}\ \partial B_R(0). $$

By the  comparison principle, we have that 
$$u_{R,n}\geq  w_{\bar \alpha,n}\quad {\rm in}\ B_R(0), \;\;  \forall n \geq n_0, $$
which implies that 
$$u_{R}\geq  w_{\bar \alpha}\quad {\rm in}\ B_R(0)  $$
and
$$ w_{\bar \alpha}(\bar x_1)-w_{\bar \alpha}(\bar x_2)\geq u_R(\bar x_1)-u_R(\bar x_2)= |\bar x_1-\bar x_2|,  $$
 which contradicts the fact  that    $|\nabla \Phi_{N,\alpha}|<1$ in $\R^N\setminus\{0\}$.  \smallskip

As a consequence, we obtain $\cK_s=\emptyset$ and it follows by \cite[Theorem 4.1, Corollary 4.2]{BS82} 
that $u_R\in C^1(\cO_\epsilon)$ is strictly spacelike in $\cO_\epsilon$ and  
there exists $\theta_\epsilon\in[0,1)$ such that 
 \begin{equation}\label{grandient-1-R}
 |\nabla u_R|\leq \theta_\epsilon\quad {\rm in}\ \, \overline \cO_{2\epsilon}.   
 \end{equation}

Next we show the qualitative properties of $u_R$. 
\smallskip

{\bf Part 1: }   we show that $u_R$ is a weak solution of problem (\ref{eq 3.1}) and a classical solution of (\ref{eq 3.1-cla-R}). \smallskip

Indeed, since $|\nabla u_{n, R}|<1$ and $u_{n, R}=0$ on $\partial B_R(0)$, then $u_{n, R}<R$ in $B_R(0)$. 
 Particularly, we take $\phi=u_{n, R}$ in (\ref{eee-1}) to derive that
 \begin{equation}  
 \int_{B_R(0)} \frac{|\nabla u_{n, R}|^2   }{\sqrt{1-|\nabla u_{n, R}|^2}} dx =\int_{B_R(0)} u_{n, R}  g_n dx  \leq R \int_{B_R(0)}g_n dx=R \sum^{m_0}_{j=1}\alpha_j. \label{bound-1}
    \end{equation}
    

Firstly, we show the uniformly bound that 
 \begin{align*} 
  \int_{B_R(0)} \frac{|\nabla u_{n, R} |   }{\sqrt{1-|\nabla u_{n, R}|^2}} dx &\leq   
2   \int_{B_R(0)\cap \{|\nabla u_{n, R}|\geq \frac12\}} \frac{|\nabla u_{n, R}|^2   }{\sqrt{1-|\nabla u_{n, R}|^2}} dx
 \\&\qquad +   \int_{B_R(0)\cap \{|\nabla u_{n, R}|<\frac12\}} \frac{|\nabla u_{n, R}|   }{\sqrt{1-|\nabla u_{n, R}|^2}} dx
  \\[1mm]& \leq  \Big(2R\sum^{m_0}_{j=1}\alpha_j+\frac{\sqrt{3}}3|B_R(0)|\Big),
  \end{align*}
where we used the bound (\ref{bound-1}) and $\frac{|\nabla u_{n, R}|   }{\sqrt{1-|\nabla u_{n, R}|^2}}\leq \frac{\sqrt{3}}3$ for $|\nabla u_{n, R}|\leq\frac12$. 

For any $\varphi\in C^{0,1}(B_R(0))$ such that $\varphi(x)=\varphi(p_j)$ for $x\in B_\epsilon(p_j)$ for any $j=1,\cdots, m_0$
and $\epsilon>0$ small, then ${\rm supp}(\nabla \varphi)\subset B_R(0)\setminus \bigcup^{m_0}_{j=1} B_\epsilon(p_j)$,  
 \begin{align*} 
 \int_{B_R(0)\setminus\big( \bigcup^{m_0}_{j=1} B_\epsilon(p_j)\big)} \frac{ \nabla u_{n, R} \cdot \nabla \varphi   }{\sqrt{1-|\nabla u_{n, R}|^2}} dx& =  \int_{B_R(0)} \frac{ \nabla u_{n, R} \cdot \nabla \varphi   }{\sqrt{1-|\nabla u_{n, R}|^2}} dx 
 \\[1mm]&=\int_{B_R(0)}g_n \varphi dx
 \\[1mm]& \to \sum^{m_0}_{j=1} \alpha_j \varphi(p_j)
=  \int_{B_R(0)} \frac{ \nabla u_{R} \cdot \nabla \varphi   }{\sqrt{1-|\nabla u_{R}|^2}} dx
 \\[1mm]&= \int_{B_R(0)\setminus\big( \bigcup^{m_0}_{j=1} B_\epsilon(p_j)\big)} \frac{ \nabla u_{R} \cdot \nabla \varphi   }{\sqrt{1-|\nabla u_{R}|^2}} dx, 
  \end{align*}
  that is
 \begin{align} \label{con-1-R}
  \frac{ \nabla u_{n, R}    }{\sqrt{1-|\nabla u_{n, R}|^2}} \to\frac{ \nabla u_{R}    }{\sqrt{1-|\nabla u_{R}|^2}} \quad{\rm weakly\ in\ }L^1\big(B_R\setminus\big( \bigcup^{m_0}_{j=1} B_\epsilon(p_j)\big) \big)^N, 
  \end{align}
then by the upper semicontinuity of the area integral
\begin{align*} 
\int_{B_R(0)\setminus\big( \bigcup^{m_0}_{j=1} B_\epsilon(p_j)\big)} \frac{|\nabla u_{R} |   }{\sqrt{1-|\nabla u_{R}|^2}} dx&\leq   \liminf_{n\to+\infty}  \int_{B_R(0)\setminus\big( \bigcup^{m_0}_{j=1} B_\epsilon(p_j)\big)} \frac{|\nabla u_{n, R} |   }{\sqrt{1-|\nabla u_{n, R}|^2}} dx 
\\[1mm]&  \leq  \Big(2R\sum^{m_0}_{j=1}\alpha_j+\frac{\sqrt{3}}3|B_R(0)|\Big).
  \end{align*}
which,  by the arbitrary of $\epsilon>0$,  implies that 
\begin{align*} 
\int_{B_R(0)} \frac{|\nabla u_{R} |   }{\sqrt{1-|\nabla u_{R}|^2}} dx  \leq  \Big(2R\sum^{m_0}_{j=1}\alpha_j+\frac{\sqrt{3}}3|B_R(0)|\Big).
  \end{align*}

  Thus, we obtain that $u_R\in \bX_R(B_R(0))$,
  where
   $$\bX_R(B_R(0))=\Big\{w\in C^{0,1}_0(B_R(0)):\, |\nabla w|<1\ {\rm\ a.e.\ in}\ B_R(0)\setminus \cP_{m_0},\ \, \frac{|\nabla w|}{\sqrt{1-|\nabla w|^2}}  \in L^1(B_R(0))\Big\}. $$
   Moreover, from (\ref{con-1-R}), we get that for any $\epsilon>0$ small, 
 $$\int_{\cO_\epsilon}  \frac{ \nabla u_{R} \cdot \nabla \varphi   }{\sqrt{1-|\nabla u_{R}|^2}}dx=0\quad \text{  for any $\varphi\in C^{0,1}$ with ${\rm supp}(\varphi)\subset \cO_\epsilon$. } $$
 By (\ref{grandient-1-R}) and  \cite[Theorem 3.6]{BS82},   $u_R\in C^{2,\gamma}(\cO_{2\epsilon})$, by the arbitrary of $\epsilon$, we get that  
$u_R$ verifies the equation (\ref{eq 3.1-cla-R}) in the classical sense.

Now we take  with  ${\rm supp}( \xi)\subset  B_R(0)\setminus\cP_{m_0}$
and 
   $$\int_{B_R(0)}  \frac{\nabla u_R\cdot\nabla \xi}{\sqrt{1-|\nabla u_R|^2}} \,dx =0\quad \text{  for any $\xi\in C^{0,1}_c (B_R\setminus\cP_{m_0})$.} $$ 
   Now we apply Proposition \ref{cr 2.1-iso} to obtain that $u_R$ is a weak solution 
\begin{equation}\label{eq 1.1-R-w-}
 \left\{
\begin{array}{lll}
\displaystyle \cM_0 u= \sum_{j=1}^{m_0} k_{p_j}  \delta_{p_j}   \quad
  {\rm in}\  \, \cD'\big(B_R(0)\big), 
\\[4mm]
 \phantom{    }
 \displaystyle  u(x)=0 \quad {\rm on}\ \partial B_R(0)       
 \end{array}
 \right.
 \end{equation}
for some $k_{p_j}\in \R$ with $j \in \{1,2, \cdots, m_0 \}$.  That is, 
\begin{equation}\label{eq 1.1-R-w-sense}
\int_{B_R(0)}  \frac{\nabla u_R\cdot\nabla \xi}{\sqrt{1-|\nabla u_R|^2}} \,dx =\sum_{j=1}^{m_0}  k_{p_j}  \xi(p_j),\quad\ \ \forall \xi\in C^{0,1}_0 (B_R).
 \end{equation}
 
Now we need to prove $k_{p_j} =\alpha_j$ for any $j=1,\cdots, m_0$. Take $\xi_0\in C^1_0(B_R(0))$
$$\displaystyle \xi_0(x)=\sum_{j=1}^{m_0} b_j 1_{B_{r_0}(p_j)}(x)\quad{\rm for}\ \,  x\in \bigcup_{j=1}^{m_0} B_r(p_j), $$
where $b_j\in\R$ and $r_0=\frac1{16}\min\big\{|p_j-p_{j'}|:\, j\not=j'\big\}$. 

Since $\nabla \xi_0=0$ in $\displaystyle \bigcup_{j=1}^{m_0} B_{r_0}(p_j)$, then for $n$ large, we have that 
supp$\displaystyle  (\xi_0)\subset B_R(0)\setminus \bigcup_{j=1}^{m_0} B_{r_0}(p_j)$, 
\begin{align*} 
\displaystyle \sum^{m_0}_{j=1}k_{p_j} b_j=  \int_{B_R(0)} \frac{\nabla u_{R}\cdot\nabla \xi_0  }{\sqrt{1-|\nabla u_{R}|^2}} dx
&\displaystyle = \int_{B_R(0)\setminus \bigcup_{j=1}^{m_0} B_{r_0}(p_j)} \frac{\nabla u_{R}\cdot\nabla \xi_0  }{\sqrt{1-|\nabla u_{R}|^2}} dx
 \\&\displaystyle =\lim_{n\to+\infty} 
  \int_{B_R(0)\setminus \bigcup_{j=1}^{m_0} B_{r_0}(p_j)} \frac{\nabla u_{n, R}\cdot\nabla \xi_0  }{\sqrt{1-|\nabla u_{n, R}|^2}} dx
  \\&\displaystyle =\lim_{n\to+\infty} 
  \int_{B_R(0)} \frac{\nabla u_{n, R}\cdot\nabla \xi_0  }{\sqrt{1-|\nabla u_{n, R}|^2}} dx
  \\&= \lim_{n\to+\infty}   \int_{B_R(0)} \xi_0  g_n dx 
    \\&= \sum^{m_0}_{j=1}\alpha_jb_j, 
    \end{align*} 
which implies that for any $b_j\in\R$,  $j=1,\cdots,m_0$, 
 $$\sum^{m_0}_{j=1}k_{p_j} b_j= \sum^{m_0}_{j=1}\alpha_jb_j.  $$   
Then $$k_{p_j} =\alpha_j\quad\text{ for  } j=1,\cdots,m_0$$ and 
$u_R$ is a weak solution (\ref{eq 1.1-R-w-}). \smallskip

{\bf Part 2: }   we show that $u_R$ is the unique minimizer of the energy functional (\ref{eq 3.1-do 2}).

Indeed, fix $w\in \bX_R(B_R(0))$ and define
$$\cO_+=\Big\{x\in B_R(0):\, w(x)> u_{R}(x) \Big\},\quad\ \cO_{k,n}=\Big\{x\in B_R(0):\, w(x)-\frac2k> u_{n, R}(x) \Big\}\ \ {\rm with}\ \, k\in\N.  $$
 Since $u_{n, R}\to u_R$  in  $ C_0^{0,1}(B_R(0))$ as $n\to+\infty$, then 
   there exists $n_k\geq k$ such  that    
   $$\sup_{B_R(0)}|u_{n, R}- u_R | \leq \frac1k\quad\text{for $n\geq n_k$,}$$
which implies that $\cO_{k,n}\subset \cO_+$.  Observe that $\bigcup_{k \geq 1}\cO_{k, n_k}= \cO_+$. 

Note that $w(x)-\frac2k=u_{n_k, R}$ on $\partial\cO_{k,n_k}$ and $u_{n_k, R}$ maximizes $\cI_{R,k}$, where 
$$\cI_{R,k}(v):= \int_{\cO_{k,n_k}} \sqrt{1-|\nabla v|^2}\, dx +\sum_{j=1}^{m_0}\alpha_j \big(v(p_j)-\frac{2}{k} \big)1_{\cO_{k,n_k}}(p_j)\quad {for\ }\, w\in \bX_{u_{R,n_k}}(\cO_{k,n_k}),  $$
and
$$\bX_{u_{n_k, R}}(\cO_{k,n_k}):=\Big\{v\in C^{0,1}(\cO_{k,n_k}):\, v=u_{n_k, R} \ {\rm on}\ \partial \cO_{k,n_k},\ |\nabla v|\leq 1\ a.e. \; {\rm in\ }\cO_{k,n_k}\Big\}.  $$ 
 Thus, we have that 
 \begin{align*}
 &\quad \int_{\cO_{k,n_k}} \sqrt{1-|\nabla w|^2}\, dx +\sum_{j=1}^{m_0} \alpha_j w(p_j) 1_{\cO_{k,n_k}}(p_j) 
 \\[1mm]& \leq 
 \int_{\cO_{k,n_k}} \sqrt{1-|\nabla u_{n_k, R}|^2}\, dx +\sum_{j=1}^{m_0}\alpha_j  u_{n_k, R}(p_j)   1_{\cO_{k,n_k}}(p_j) 
  -\frac2k \sum_{j=1}^{m_0}\alpha_j   1_{\cO_{k,n_k}}(p_j) 
  \\[1mm]& \leq 
 \int_{\cO_{k,n_k}} \sqrt{1-|\nabla u_{n_k, R}|^2}\, dx +\sum_{j=1}^{m_0}\alpha_j  u_{n_k, R}(p_j)   1_{\cO_{k,n_k}}(p_j) 
  -\frac2k  \alpha_0.  
  \end{align*}
 Then by using the upper semicontinuity of the area integral, we derive that 
 \begin{align*}
 &\quad\ \int_{\cO_+} \sqrt{1-|\nabla w|^2}\, dx +\sum_{j=1}^{m_0} \alpha_j w(p_j) 1_{\cO_+}(p_j) 
 \\[1mm]& \leq \limsup_{k\to+\infty} \Big(
 \int_{\cO_{k,n_k}} \sqrt{1-|\nabla u_{n_k, R}|^2}\, dx +\sum_{j=1}^{m_0}\alpha_j  u_{n_k, R}(p_j)   1_{\cO_{k,n_k}}(p_j) 
  -\frac2k  \alpha_0 \Big)
  \\[1mm]&\leq  \int_{\cO_+} \sqrt{1-|\nabla u_{R}|^2}\, dx +\sum_{j=1}^{m_0}\alpha_j  u_{R}(p_j)  1_{\cO_+}(p_j).
  \end{align*}
A similar argument applied to $\cO_-=\Big\{x\in B_R(0):\, w(x)< u_{R}(x) \Big\}$ shows that  
 \begin{align*}
  \int_{\cO_-} \sqrt{1-|\nabla w|^2}\, dx +\sum_{j=1}^{m_0} \alpha_j w(p_j) 1_{\cO_-}(p_j)  \leq  \int_{\cO_-} \sqrt{1-|\nabla u_{R}|^2}\, dx +\sum_{j=1}^{m_0}\alpha_j  u_{R}(p_j)  1_{\cO_-}(p_j). 
  \end{align*}
As a consequence, we derive that 
$$\cI_{R}(w)\leq \cI_{R}(u_{R}). $$
Therefore, $u_R$ is the maximizer of $\cI_{R}$. Since $\cJ_{N,R}(u)=-\cI_{R}(u) + |B_R(0)|$,
then $u_R$ is the minimizer of $\cJ_{N,R}$.

\smallskip

{\bf Part 3: }  By Proposition \ref{pr 2.10-N} for $N \geq 3$, and the same arguments, including the uniqueness, show that 
 $u_{R,j}=\Phi_{\alpha_j}(\cdot-p_j)-\Phi_{\alpha_1}(R_j)$ is a weak solution   of (\ref{eq 3.1-j}) 
 and $v_R= =\Phi_{N, \alpha_0}(\cdot)-\Phi_{N, \alpha_0}(R)$ is a weak   solution of   
$$
 \left\{
\begin{array}{lll}
\displaystyle  \cM_0 u=  \alpha_0\delta_{0}\quad
 & {\rm in}\  \   B_{R}(0), 
\\[2mm]
 \phantom{ -\ \,   }
 \displaystyle u=0 \quad
 & {\rm on}\  \, \partial B_{R}(0) .
 \end{array}
 \right.
$$
 
  Moreover,  we have 
$$u_{R,j}\leq u_R,\qquad \max_{x\in B_{R}(0)}  u_R(x)\leq v_R(0)\quad {\rm in}\ B_R(0). $$

Finally, it follows by Lemma \ref{lm 3.1} $(ii)$  that $u_{R_2} \geq  u_{R_1}$ in $B_{R_1}(0)$ by (\ref{com sol-R}).    \hfill$\Box$\medskip

\begin{corollary}\label{cr 3.1-mont-1}
Let $N\geq 3$, 
$$\cP_{m_1}\subset \cP_{m_2}\subset B_{\frac12 R_0}\quad \text{ with $m_2\geq m_1$}$$
 and 
$$0<\alpha_{1,j}\leq\alpha_{2,j}\quad {\rm for}\ \, j=1,\cdots m_1,\quad   \alpha_{2,j}>0\quad {\rm for}\ j>m_1\ \ {\rm if}\ \, m_2>m_1. $$
Let $u_{i}$ with $i=1,2$ be the solutions, respectively, of
\begin{equation}\label{eq 3.1-com}
 \left\{
\begin{array}{lll}
\displaystyle  \cM_0 u= \sum^{m_i}_{j=1} \alpha_{i,j} \delta_{p_j}\quad
 & {\rm in}\  \   B_R(0), 
\\[4mm]
 \phantom{ -\ \,   }
 \displaystyle u=0 \quad
 & {\rm on}\  \, \partial B_R(0),    
 \end{array}
 \right.
 \end{equation}
 where $R>R_0$. 
 Then $u_2\geq u_1$ in $B_R(0)$. 

\end{corollary}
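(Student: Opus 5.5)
The plan is to compare the two weak solutions through their smooth approximations, where the comparison principle for regular sources is already available. Take the mollifiers $\eta_n$ of Section 3.1 and set
$$g_{i,n}(x)=\sum_{j=1}^{m_i}\alpha_{i,j}\,\eta_n(x-p_j),\qquad i=1,2.$$
By the hypotheses $\alpha_{1,j}\le\alpha_{2,j}$ for $j\le m_1$, $\alpha_{2,j}>0$ for $j>m_1$, and $\eta_n\ge0$, we get $g_{2,n}\ge g_{1,n}\ge 0$ pointwise in $B_R(0)$.

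Let $u_{i,n}$ denote the classical solution of (\ref{eq 3.1-n}) with source $g_{i,n}$; it exists by Lemma \ref{lm 3.1}. By item 1 of that proof, $u_{i,n}$ is the maximizer of
$$\cI_{w,H}(v)=\int_{B_R(0)}\big(\sqrt{1-|\nabla v|^2}+H v\big)\,dx$$
with $H=g_{i,n}$ and boundary data $w=0$. Both boundary data vanish, and $\int g_{2,n}\xi\,dx\ge\int g_{1,n}\xi\,dx$ for every $\xi\ge0$. Lemma \ref{lm cp} therefore gives $u_{2,n}\ge u_{1,n}$ in $B_R(0)$ for every $n$.

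Next we pass to the limit $n\to+\infty$, following Claim 1 in the proof of Theorem \ref{pr 2.1}. The functions $u_{i,n}$ are uniformly Lipschitz with constant $1$ and uniformly bounded by (\ref{ine -3.1}). So along a common subsequence, obtained by a diagonal choice for $i=1,2$, we have $u_{i,n}\to \bar u_i$ uniformly in $\overline{B_R(0)}$, and the pointwise inequality gives $\bar u_2\ge\bar u_1$.

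The key step is identifying $\bar u_i$ with $u_i$. The proof of Theorem \ref{pr 2.1} shows that any such limit is a weak solution of (\ref{eq 3.1-com}) and is the minimizer of $\cJ_{N,R}$. The uniqueness asserted in Theorem \ref{pr 2.1} then forces $\bar u_i=u_i$; in fact the whole sequence converges, though this is not needed.

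There is a technical point here. Theorem \ref{pr 2.1} is stated for $R\ge\theta_0R_0$, and its Claim 2 needs a smallness condition such as $\Phi_{N,\alpha_0}(0)<R-R_0$, in order to exclude lightlike segments that reach $\partial B_R(0)$. I would interpret the corollary under the same standing assumption on $R$, enlarging $R$ with respect to the larger total mass $\sum_j\alpha_{2,j}$, so that Theorem \ref{pr 2.1} applies to both problems at once.

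The main obstacle is therefore the uniqueness of the weak solution, needed to identify the limit. If one prefers not to rely on it, there is an alternative: show directly that $u_1$ and $u_2$, as maximizers of the respective functionals $\cI_R$ with the point-mass terms, are ordered. This would use the $\cO_\pm$ truncation argument of Part 2 of the proof of Theorem \ref{pr 2.1}, applied on the set $\{u_1>u_2\}$ with the same strict-uniqueness contradiction as in Lemma \ref{lm cp}, now with point masses.
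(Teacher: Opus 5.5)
Your proposal is correct and is the argument the paper intends. The paper's one-line proof (``follows by the construction and uniqueness'') means exactly what you wrote: order the approximating solutions $u_{i,n}$ for the ordered smooth sources via Lemma \ref{lm cp}, pass to the limit, and use uniqueness to identify the limits with $u_i$. Your observation that $R$ must be large enough, relative to the larger total mass, for Theorem \ref{pr 2.1} to apply is a fair reading of a hypothesis the paper leaves implicit.
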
 
 \noindent{\bf Proof. } It follows by the construction and uniqueness of solution to (\ref{eq 3.1-com}). \hfill$\Box$\medskip

 \begin{remark}
  By the equality,  $k_{p_j} =\alpha_j$, we can observe that 
   for any $\phi\in C_0^{0,1}(B_R(0))$
 \begin{align*}
 \int_{B_R(0)} \frac{\nabla u_{R} \cdot \nabla \phi  }{\sqrt{1-|\nabla u_{R}|^2}} dx &= \sum^{m_0}_{j=1}\alpha_j\phi(p_j)
=\lim_{n\to+\infty} \int_{B_R(0)} \phi g_n dx
 \\[1mm]&=\lim_{n\to+\infty}\int_{B_R(0)} \frac{\nabla u_{n, R} \cdot \nabla \phi  }{\sqrt{1-|\nabla u_{n, R}|^2}} dx. 
     \end{align*} 
 \end{remark}

     \begin{proposition}\label{max point}
  Under the assumptions of Proposition \ref{pr 2.1},  
 then there exists $p_j \in \cP_{m_0}$ such that 
 $$u_R(p_j)=\max_{z\in B_R(0)} u_R(z).$$
   
     \end{proposition}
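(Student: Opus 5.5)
The plan is to combine the positivity and continuity of $u_R$ with the strong maximum principle for $\cM_0$ away from the singular set, namely Corollary \ref{cor 2.1} and Hopf's Lemma \ref{lm hopf}.

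First I settle where the maximum lies. By Theorem \ref{pr 2.1}, $u_R\in C^{0,1}(\overline{B_R(0)})$, $u_R>0$ in $B_R(0)$ and $u_R=0$ on $\partial B_R(0)$. So $\max_{\overline{B_R(0)}}u_R$ is positive and is attained at some interior point $z_0\in B_R(0)$. Suppose, for contradiction, that $z_0\notin\cP_{m_0}$. Then $z_0$ lies in the open set $B_R(0)\setminus\cP_{m_0}$. There $u_R\in C^{2,\gamma}_{\loc}$, $|\nabla u_R|<1$ by Theorem \ref{pr 2.1}$(ii)$, and $\cM_0u_R=0$ classically. Fix a small ball $B_\rho(z_0)\subset B_R(0)\setminus\cP_{m_0}$. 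On its closure, Claim 2 gives $|\nabla u_R|\le\theta<1$. Hence $\cM_0$ is uniformly elliptic there, and $z_0$ is an interior local maximum of a classical solution of $\cM_0u=0$. Corollary \ref{cor 2.1} forbids such an interior local maximum, which is the desired contradiction.

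A more robust route, if the corollary is read only as excluding strict extrema, is to use the strong maximum principle for the linear uniformly elliptic operator $a_{ij}(\nabla u_R)D_{ij}$, as in \cite[Theorem 3.5]{GT83}, on the connected open set $B_R(0)\setminus\cP_{m_0}$. (It is connected because $\cP_{m_0}$ is finite and $N\ge2$.) The operator is uniformly elliptic on each compact subset by Claim 2. An interior maximum would make $u_R$ constant on a neighbourhood of $z_0$. The set where $u_R=\max u_R$ is then both open and closed in $B_R(0)\setminus\cP_{m_0}$, so $u_R$ is constant on all of it. By continuity $u_R$ would then be constant on $\overline{B_R(0)}$, hence $u_R\equiv0$, contradicting $u_R>0$.

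Therefore every maximum point lies in $\cP_{m_0}$, that is, $z_0=p_j$ for some $j$.

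The main obstacle is justifying that a local-maximum argument can be applied at $z_0$. This requires $C^2$ regularity and uniform ellipticity near $z_0$. Both come only from Claim 2 in the proof of Theorem \ref{pr 2.1}, through the gradient bound $\theta_\sigma$ away from $\cP_{m_0}$, so I rely on that step. The connectedness argument in the second paragraph is what handles flat (non-strict) maxima.
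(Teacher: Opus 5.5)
Your proof is correct and is essentially the paper's argument. The paper also takes a maximum point $x_0\notin\cP_{m_0}$ and uses uniform ellipticity of $a_{ij}(\nabla u_R)$ near $x_0$. It then applies Hopf's lemma in a ball inside $\{u_R<u_R(x_0)\}$ that touches the maximum set at $x_0$, contradicting $\nabla u_R(x_0)=0$; this is exactly the mechanism behind the strong maximum principle you invoke.

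Your open--closed argument on the connected set $B_R(0)\setminus\cP_{m_0}$, combined with $u_R=0$ on $\partial B_R(0)$, handles flat maxima more transparently than the paper's choice of a boundary point of the level set. It also yields the slightly stronger conclusion that every maximum point lies in $\cP_{m_0}$. (The local gradient bound near $z_0$ also follows directly from continuity of $\nabla u_R$ and $|\nabla u_R|<1$ off $\cP_{m_0}$, so Claim 2 is not really needed.)
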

 \noindent{\bf Proof. } If not, we assume that $x_0\in B_R(0)$ such that 
 $$u_R(x_0)=\max_{z\in B_R(0)} u_R(z)>\max\big\{ u_R(p_j):\, j=1,\cdots, m_0\big\}.$$
Moreover, 
 we can choose $x_0$ such that  for some $\bar x_0\in B_{R}(0)\setminus \big(\cP_{m_0}\cup\{x_0\}\big)$,   
 $$u_R(x_0)>u(x)\quad {\rm for}\ x\in B_{r}(\bar x_0),$$
  where 
 $B_{2r}(\bar x_0)\cap \cP_{m_0}=\emptyset$ with $r=|x_0-\bar x_0|$.  
In fact, if $\{x\in  B_R(0): u(x) =u_R(x_0)\}^o$ the interior set is not empty, we can choose  
$x_0=\partial \{x\in   B_R(0): u(x) =u_R(x_0)\}$.

Then $u_R$ is $C^2$ at $B_{r}(\bar x_0)$ and 
  \begin{align}\label{eqq1}
  \nabla u_R(x_0)=0.
   \end{align}
     Since $\cM_0 u=0$ in $B_{r}(\bar x_0)$, 
 then $$|\nabla u_R(x)|\leq \theta<1 \text{ for $x\in \bar B_{r}(\bar x_0)$ } 
$$
and  
$$   -\sum_{i,j} a_{i,j} D_{ij}u(x)=-\cM_0u_R(x)=0\ \text{ for $x\in \bar B_{r}(\bar x_0)$,} $$
where 
$$a_{i,j}=\frac{\delta_{ij}}{(1-|\nabla u(x)|^2)^{\frac12}}+\frac{D_iu(x)D_ju(x)}{(1-|\nabla u(x)|^2)^{\frac32}}.  $$
Note that 
  \begin{align*}
   \sum_{i,j}a_{i,j} \xi_i\xi_j&=\frac{|\xi|^2}{(1-|\nabla u(x)|^2)^{\frac12}}+\frac{(\xi\cdot Du(x))^2}{(1-|\nabla u(x)|^2)^{\frac32}}
   \geq \frac{1}{(1-\theta^2)^{\frac12}}|\xi|^2.
     \end{align*}
Then we are able to apply  Hopf's Lemma  in \cite[Chapter 3]{GT83} to obtain that 
 $D_{\nu}u_R(x_0)>0$, where $\nu:=\frac{x_0-\bar x_0}{|x_0-\bar x_0|}$ is the normal vector pointing outside of $B_{r}(\bar x_0)$. 
That contradicts (\ref{eqq1}).\hfill$\Box$ \medskip

   \begin{corollary}\label{max point-n}
  Under the assumptions of Proposition \ref{pr 2.1},  let $u_{n,R}$ be the solution of (\ref{eq 3.1-n})
 then there exists $p \in {\rm supp}(g_n)$ such that 
 $$u_{n,R}(p)=\max_{z\in B_R(0)} u_R(z).$$
   
     \end{corollary}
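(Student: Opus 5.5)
The corollary says that $u_{n,R}$ attains its maximum over $B_R(0)$ at some point of ${\rm supp}(g_n)$; the right-hand side should read $\max u_{n,R}$. The plan is to adapt the Hopf-lemma argument of Proposition \ref{max point}, with the finite set $\cP_{m_0}$ replaced by the compact set $K_n:={\rm supp}(g_n)\subset \bigcup_j \overline{B_{2/n}(p_j)}$. The advantage over Proposition \ref{max point} is that $u_{n,R}$ is a classical solution on the whole ball. By Lemma \ref{lm 3.1}$(iii)$ it satisfies $|\nabla u_{n,R}|\le\theta(n,R)<1$ in $B_R(0)$, so $\cM_0$ is uniformly elliptic along $u_{n,R}$ everywhere, with the coefficients $a_{ij}$ computed in Lemma \ref{lm hopf}.

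First, the maximum is interior. Since $u_{n,R}>0$ in $B_R(0)$ and $u_{n,R}=0$ on $\partial B_R(0)$, the maximum $M$ is attained at some $x_0\in B_R(0)$. Suppose for contradiction that $u_{n,R}<M$ on $K_n$. On the open set $\Omega_n:=B_R(0)\setminus K_n$ we have $\cM_0u_{n,R}=g_n=0$, so $u_{n,R}$ solves the homogeneous uniformly elliptic equation $\sum a_{ij}(\nabla u_{n,R})D_{ij}u_{n,R}=0$ there.

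Second, apply the strong maximum principle. I would use \cite[Theorem 3.5]{GT83} on the connected component $U$ of $\Omega_n$ containing $x_0$. To make it applicable, view $u_{n,R}$ as a solution of the linear equation $\sum a_{ij}(x)D_{ij}w=0$ with frozen continuous coefficients $a_{ij}(x):=a_{ij}(\nabla u_{n,R}(x))$, which are uniformly elliptic by the gradient bound. The interior maximum $M$ then forces $u_{n,R}\equiv M$ on $U$.

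Third, derive the contradiction. The boundary $\partial U$ is contained in $\partial B_R(0)\cup K_n$. If $\partial U$ meets $\partial B_R(0)$, continuity gives $M=0$, contradicting positivity. Otherwise $\partial U\subset K_n$, and continuity gives $u_{n,R}=M$ at some point of $K_n$, contradicting our assumption. Hence $\max u_{n,R}$ is attained on $K_n$.

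The main obstacle is the second step: justifying the strong maximum principle for the quasilinear operator. Freezing the coefficients as above handles it. An alternative is to mimic Proposition \ref{max point} and Corollary \ref{cor 2.1}: choose an interior ball touching the level set $\{u_{n,R}=M\}$ in a region where $u_{n,R}<M$, then use Hopf's lemma (Lemma \ref{lm hopf}) to contradict $\nabla u_{n,R}(x_0)=0$. I expect the frozen-coefficient route to be cleaner because it avoids the ball-selection details.
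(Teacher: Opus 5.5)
Your argument is correct and is essentially the paper's. The paper leaves this corollary unproved as the analogue of Proposition \ref{max point}, whose Hopf-lemma argument on an interior ball touching the maximum set is exactly what underlies the strong maximum principle you invoke through frozen coefficients on a component of $B_R(0)\setminus{\rm supp}(g_n)$. Your component formulation is slightly cleaner, since it handles a possibly disconnected complement of ${\rm supp}(g_n)$ without any ball selection. You are also right that the right-hand side should read $\max u_{n,R}$, which is how the paper uses the corollary later.
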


  \setcounter{equation}{0}
 \section{Solution with multiple light-cone singularites }

 \subsection{ Positive Dirac masses in   $\R^N$ with $N\geq 3$}
 
 For $N \geq 3$, we first consider the approximation problem associated with the equation featuring multiple Dirac mass sources $\sum_{j=1}^{m_0} \alpha_j \delta_{p_j}$:
\begin{equation}\label{eq 3.1-N}
\left\{
\begin{array}{lll}
\displaystyle \cM_0 u = g_n \quad  \text{in } \, \R^N, \\[3mm]
\displaystyle \lim_{|x|\to+\infty} u(x) = 0,
\end{array}
\right.
\end{equation}
where $n \in \mathbb{N} $ and $g_n$ is defined in (\ref{souce-n}). Let 
$u_{n,j}$ be the unique solution of 
 \begin{equation}\label{eq 3.1-N-j}
 \left\{
\begin{array}{lll}
\displaystyle\  \cM_0 u=  g_{n,j}\quad\ 
   {\rm in}\  \   \R^N, 
\\[4mm]
 \phantom{     }
 \displaystyle \lim_{|x|\to+\infty}u(x)=0     
 \end{array}
 \right.
 \end{equation}
which is radially symmetric with respect to $r=|x-p_j|$.
 and  where
 \begin{equation}\label{souce-n-j}
g_{n,j}(x)=  \alpha_j \eta_n(x-p_j) \quad {\rm for\ any}\ \, x\in\R^N.  
 \end{equation}


 \begin{proposition}\label{pr 2.1-n-w}
 Let $N\geq 3$ and $g_n$ be defined in (\ref{souce-n}) with $n\in\N$, then Eq.(\ref{eq 3.1-N})   has a unique classical solutions $u_{n}\in C^{2,\gamma}(\R^N)$ with $\gamma\in(0,1)$. 
 Moreover,  we have 

 $(i)$ There exists $T_0\geq 1$ such that 
\begin{equation}\label{uppp-1}
0<u_{n,j}\leq u_n\leq \min\{\Phi_{N,\alpha_0}(0), \Phi_{N,T_0\alpha_0}(x)\}\quad {\rm for}\ \, x\in \R^N, \; j \in \{1, 2, \cdots, m_0\}.
\end{equation}

 $(ii)$  There exits $\theta_n \in(0,1)$ such that 
 $$|\nabla u_n|,\,\, |\nabla u_{n,j}|\leq \theta_n\quad{\rm in}\ \ \R^N. $$

 $(iii)$  There holds
   \begin{align}\label{eee-2}
 \int_{\R^N}   \frac{\nabla u_{n}\cdot \nabla  \phi}{\sqrt{1-|\nabla u_{n}|^2}}  \, dx=  \int_{\R^N}  g_n(x) \phi(x)dx \quad {\rm for}\ \, \phi\in C^{0,1}_c(\R^N) .
    \end{align}
 

 \end{proposition}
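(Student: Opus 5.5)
We obtain $u_n$ as the monotone limit of the solutions $u_{n,R}$ of (\ref{eq 3.1-n}) as $R\to+\infty$. By Lemma \ref{lm 3.1}$(ii)$, $R\mapsto u_{n,R}$ is increasing on each fixed ball. By Lemma \ref{lm 3.1}$(i)$ and Lemma \ref{lm 3.2}$(i)$ it is bounded: $0<u_{n,R}\le v_{\alpha_0,n,R}(0)<\Phi_{N,\alpha_0}(0)$. Hence $u_{n,R}\uparrow u_n$ pointwise, and since $|\nabla u_{n,R}|\le1$, the convergence is locally uniform.

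The same construction with the single source $g_{n,j}$ on the balls $B_{R_j}(p_j)$ produces the radial solutions $u_{n,j}$. These are explicit: with $h(r)=\int_0^r\alpha_j\eta_n(\tau)\tau^{N-1}d\tau\le c_N\alpha_j$,
$$u_{n,j}(x)=\int_{|x-p_j|}^\infty \frac{h(r)}{\sqrt{r^{2(N-1)}+h(r)^2}}\,dr,$$
so $|\nabla u_{n,j}|\le \sup_r h/\sqrt{r^{2(N-1)}+h^2}<1$. The bound $u_{n,j}\le u_n$ passes from $u_{n,R,j}\le u_{n,R}$ in the limit.

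For the upper bound by $\Phi_{N,T_0\alpha_0}$, first fix $T_0$ so that $\Phi_{N,T_0\alpha_0}\ge\Phi_{N,\alpha_0}(0)$ on $B_{R_0}$. This is possible by Proposition \ref{pr 2.10-N}, since $\Phi_{N,\alpha}\to+\infty$ locally uniformly as $\alpha\to\infty$. Outside $B_{R_0}$ the source vanishes once $n\ge 4/R_0$. There we compare $u_{n,R}$ with $\Phi_{N,T_0\alpha_0}$ on the annulus $B_R\setminus B_{R_0}$ using Lemma \ref{lm cp}: on the inner sphere $u_{n,R}\le\Phi_{N,\alpha_0}(0)\le\Phi_{N,T_0\alpha_0}$, and on the outer sphere $u_{n,R}=0<\Phi_{N,T_0\alpha_0}$. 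Letting $R\to\infty$ gives (\ref{uppp-1}). The lower bound $u_n\ge u_{n,j}>0$ then forces $u_n\to0$ at infinity.

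The main obstacle is $(ii)$, a uniform bound $|\nabla u_n|\le\theta_n<1$ on all of $\R^N$, since Lemma \ref{lm 3.1}$(iii)$ gives $\theta(n,R)$ depending on $R$. The plan has three parts.
\begin{itemize}
\item On a fixed large ball $B_{2R_1}$, rule out light-like segments for the limit $u_n$ in the spirit of Claim 2 in the proof of Theorem \ref{pr 2.1}. The source $g_n$ is bounded, so there are no singular endpoints. Any light-like segment would extend, by \cite[Theorem 3.2]{BS82}, until it meets $\partial B_R$ or leaves every ball. But $0<u_n\le\Phi_{N,\alpha_0}(0)$ is bounded, while $u_n$ changes by exactly the length along such a segment, which is impossible for an unbounded line. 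Hence $u_n$ is strictly spacelike and \cite[Theorem 4.1]{BS82} gives a gradient bound $<1$ on compacts.
\item Outside $B_{R_1}$, $u_n$ solves $\cM_0u=0$ in an exterior domain. Proposition \ref{pr grad-ext-2.1} then gives $|\nabla u_n|\le\theta<1$ there, together with $\nabla u_n\to\vec a$; the decay $u_n\to0$ forces $\vec a=0$.
\item Combining the two regions yields $\theta_n$.
\end{itemize}

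With $(ii)$ in hand, the equation is uniformly elliptic. Lemma \ref{lm 2.1-reg-int} and Schauder theory (the source $g_n\in C^2$) give $u_n\in C^{2,\gamma}$, and the local $C^{2,\gamma}$ bounds for $u_{n,R}$ let us pass to the limit in the equation. Identity (\ref{eee-2}) follows by integrating by parts against $\phi\in C^{0,1}_c$, since $\frac{\nabla u_n}{\sqrt{1-|\nabla u_n|^2}}$ is bounded and $C^1$.

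Uniqueness holds among solutions vanishing at infinity. Given two such solutions $u,\tilde u$ and $\epsilon>0$, choose $R$ with $|u|,|\tilde u|<\epsilon$ on $\partial B_R$. Lemma \ref{lm cp-cla} on $B_R$ gives $|u-\tilde u|\le2\epsilon$ there, and letting $\epsilon\to0$ gives $u=\tilde u$.
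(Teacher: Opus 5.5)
Your proposal is correct and follows the paper's skeleton: the monotone limit of $u_{n,R}$ as $R\to+\infty$, the barrier $\Phi_{N,T_0\alpha_0}$ on the exterior annulus, and Proposition~\ref{pr grad-ext-2.1} for the gradient bound near infinity. The real difference is the local gradient bound.

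\textbf{Local gradient bound.} The paper applies \cite[Theorem 3.6]{BS82} directly on the bounded superlevel sets $\{u_n>\rho\}$, treating $u_n-\rho$ as a Dirichlet solution with zero data there. This tacitly requires regularity of those sets, which is not available at that stage. You argue differently:
\begin{enumerate}
\item You use that $u_n$ maximizes the functional on every ball (\cite[Lemma 1.3]{BS82}).
\item You exclude light rays via \cite[Theorem 3.2]{BS82} and the bound $0<u_n\le\Phi_{N,\alpha_0}(0)$: any light ray would have to be an entire line, along which $u_n$ would be unbounded.
\item You then invoke \cite[Theorem 4.1]{BS82}.
\end{enumerate}
This needs only balls, so it is the more robust argument. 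It is the Claim~2 mechanism from the proof of Theorem~\ref{pr 2.1}, simplified by the absence of singular endpoints.

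\textbf{Other differences.} You obtain $|\nabla u_{n,j}|\le\theta_n$ from the explicit radial formula. You derive (\ref{eee-2}) from the classical equation by the divergence theorem, rather than by letting $R\to+\infty$ in the weak form. The paper's limit passage would need convergence of the fluxes $\frac{\nabla u_{n,R}}{\sqrt{1-|\nabla u_{n,R}|^2}}$, which it does not justify. You also supply the uniqueness argument that the paper omits.

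\textbf{Points to tighten.}
\begin{itemize}
\item The phrase ``the local $C^{2,\gamma}$ bounds for $u_{n,R}$'' presupposes interior gradient bounds for $u_{n,R}$ that are uniform in $R$, and you have not proved these. You can obtain them from the Bartnik--Simon interior estimate, since $0<u_{n,R}\le\Phi_{N,\alpha_0}(0)$ makes compact sets uniformly spacelike-separated from distant spheres.
\item Alternatively, bypass those bounds entirely. Since $u_n$ maximizes on each ball and is locally strictly spacelike, the first variation gives the weak equation, and Schauder theory with the smooth source $g_n$ finishes the argument. Lemma~\ref{lm 2.1-reg-int} as stated covers only zero source, so cite the general quasilinear theory here.
\item The decay $u_n\to0$ comes from the upper barrier $\Phi_{N,T_0\alpha_0}$; the lower bound $u_{n,j}$ gives only positivity.
\end{itemize}
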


 \noindent{\bf Proof. } {\bf Part 1:} {\it Existence. } Since $u_{n,R}\in C^{0,1}_0(B_R(0))$, we do the zero extension in $\R^N$.  
It follows by Lemma \ref{lm 3.1} and Lemma \ref{lm 3.2} that the mappings  $R\to u_{n,R}$, $R\to u_{n, R, j}$  are increasing and bounded by $\Phi_{N,\alpha_0}(0)$, together with the fact that $|\nabla u_{n,R}|, |\nabla u_{n, R, j}|<1$ in $\R^N$, then there exist $u_n, u_{n,j}\in C^{0,1}_{\loc}(\R^N)$  such that for $\gamma\in(0,1)$
$$u_{n,R}\to u_n,\quad u_{n, R, j}\to u_{n,j}\ \  {\rm in}\ C^{0,\gamma}_{\loc}(\R^N)\ \, {\rm as}\ \, R\to+\infty.  $$

 {\bf Part 2:} {\it  we show $\displaystyle \lim_{|x|\to+\infty}u_n(x)=0$ and $\displaystyle \lim_{|x|\to+\infty}u_{n,j}(x)=0.$}  Since $u_n>0$ in $\R^N$,
 so we only have to   construct a sup solution to control $u_{n,R}$. Since $u_{n,R}<\Phi_{N,\alpha_0}(0)$ in $\R^N$,
 then there exists $T_0>1$ such that 
 $$\Phi_{N,T_0\alpha_0}(x)\geq  \Phi_{N,\alpha_0}(0)\quad {\rm for}\ \, |x|=R_0. $$
 Note that 
 $$\cM_0u_{n,R}=\cM_0 \Phi_{N,T_0\alpha_0}=0\quad {\rm in}\ \R^N\setminus B_{R_0}(0),$$
 then by comparison principle, we have that for any $R>R_0$
 $$v_{\alpha_j,n,R}(\cdot-p_j)\leq u_{n,R}\leq \Phi_{N,T_0\alpha_0}\quad {\rm in}\ \  \R^N\setminus B_{R_0}(0),$$
 which implies that 
 $$u_{n} \leq  \Phi_{N,T_0\alpha_0}\quad {\rm in}\ \  \R^N. $$
and  from Lemma \ref{lm 3.2}, we have that for $R_0<|x|<+\infty$
 $$u_{n} >u_{n,j} \quad {\rm in}\ \, \R^N$$
 and for $|x|>R_0$
 $$u_{n,j}(x)>\int_{|x|}^{\infty} \frac{ ( c_N^{-1}\alpha  )  }{\sqrt{r^{2(N-1)}+ ( c_N^{-1}\alpha  )^2}}dr.   $$

 Thus, $$u_{n,j}\leq u_{n}(x) \leq \min\big\{\Phi_{  \alpha_0,N}(0), \Phi_{N,T_0\alpha_0}(x)\big\}\quad {\rm for}\ \, x\in \R^N$$
 and
 $\displaystyle \limsup_{|x|\to+\infty}u_n(x)|x|^{N-2}\leq c_N T_0\alpha_0$.

  {\bf Part 3:} {\it  $u_n$ is a classical solution and   $|\nabla u_n|\leq \theta_n$ in $\R^N$ for some $\theta_n\in(0,1)$.  }
  For any $\varrho>R_0$, recall  that 
  $u_{n,R}\to u_n$ in  $C^{0,1} (\bar B_\varrho)(0)) \quad{\rm as}\ \, R\to+\infty$, 
  then it follows by \cite[Lemma 1.3]{BS82} that 
  $u_n$ is weakly spacelike and with respect to its own boundary values, solves the variational problem with mean curvature $g_n$, i.e. 
  $u_{n}$ is the maximizer of the energy functional 
 $$\cI_{n,\varrho}(w)= \int_{B_\varrho(0)}\Big(\sqrt{1-|\nabla w|^2} +w(x)g_n\Big)dx \quad {\rm for\ }\, w\in \bX_n(B_\varrho(0)), $$ 
 and it is also the minimizer of the energy functional 
  $$\cJ_{n,\varrho}(w)= \int_{B_\varrho(0)}\Big(\big(1-\sqrt{1-|\nabla w|^2}\big) -w(x)g_n\Big)dx \quad {\rm for\ }\, w\in \bX_n(B_\varrho(0)), $$ 
 where  
 $$\bX_n(B_\varrho(0)):=\big\{v\in C^{0,1}(B_\varrho(0)):\, v=u_n\ {\rm on}\ \partial B_R(0),\ |\nabla v|\leq 1\  {\rm a.e.\ in\ }\, B_\varrho(0)\big\}.  $$

 Set $\cQ_{n,\rho}=\{x\in \R^N:\ u_n(x)>\rho\}$ for $\rho\in(0, u_{n,j}(p_j))$, then $\cQ_{n,\rho}$ is bounded and
$$\bigcup_{\rho>0} \cQ_{n,\rho}=\R^N. $$
 it follows  from \cite[Theorem 3.6]{BS82}, for any $\rho>0$, there exists $\theta_\rho\in(0,1)$, such that 
 $|\nabla u_n|\leq \theta_1$ in $\cQ_{n,\rho}$. 
 Then $u_n$ is a classical solution of Eq.(\ref{eq 3.1-N}) by Lemma \ref{lm 2.1-reg-int}. 
 By the decay of $u_n$ at infinity, it follows  from Proposition \ref{pr grad-ext-2.1} that there exists $\theta_1\in(0,1)$ such that 
 $|\nabla u_n|\leq \theta_1$  in $\R^N\setminus B_{R_0}(0)$.
 As consequence, for some $\theta_n\in(0,1)$ $|\nabla u_n|\leq \theta_n$  in $\R^N$. 
 \smallskip
 
 For any $\varphi\in C_c^1(\R^N)$, there exists $n_0\geq R_0$ such that ${\rm supp}(\varphi)\subset B_{n_0}(0)$
 and for any $R\geq n_0$, there holds by (\ref{eee-1})
  \begin{align*} 
 \int_{\R^N}   \frac{\nabla u_{n,R}\cdot \nabla  \phi}{\sqrt{1-|\nabla u_{n,R}|^2}}  \, dx&= \int_{B_R(0)}   \frac{\nabla u_{n,R}\cdot \nabla  \phi}{\sqrt{1-|\nabla u_{n,R}|^2}}  \, dx
 \\[1mm]&=   \int_{B_R(0)}   g_n(x) \phi(x)dx=
   \int_{\R^N}  g_n(x) \phi(x)dx,
    \end{align*}
then  passing to the limit as $R\to+\infty$, we obtain (\ref{eee-2}). 
   \hfill$\Box$\medskip
 

\vskip2mm
   
 Now we are ready to prove Theorem \ref{teo 1-fund3}. 
 \vskip3mm

\noindent{\bf Proof of Theorem \ref{teo 1-fund3}. } 
{\bf Existence: }
For $N\geq 3$, from Proposition \ref{pr 2.1-n-w}, let $u_n$ be the solutions of (\ref{eq 3.1-N}),
$|\nabla u_n|<1$ in $\R^N$ and 
 \begin{equation} \label{uppp-1--}
0<u_{n,j}\leq u_n\leq \min\{\Phi_{N,\alpha_0}(0), \Phi_{N,T_0\alpha_0}(x)\}\quad {\rm for}\ \, x\in \R^N, \; \forall \; j \in \{1, \cdots, m_0\}.
\end{equation}
then  there is $u_\infty\in   C^{0,1}_{\loc}(\R^N)$ such that for $\gamma\in(0,1)$
\begin{align}\label{conv-13}
 u_{n}\to u_{\infty}\quad{\rm in}\ C^{0,\gamma}_{\loc}(\R^N) \quad {\rm as}\ \, n\to+\infty.   
   \end{align}
   As the proof of {\bf Claim 1} in Theorem   \ref {pr 2.1}, we have that 
$|\nabla u_\infty |\leq 1\;\; {\rm a.e.\ in}\ \, \R^N.$

  On the other hand, by Theorem \ref{pr 2.1}, we have 
  \begin{align}\label{conv-13}
 u_{n, R}\to u_{R}\quad{\rm in}\ C^{0,\gamma}_{\loc}(\R^N) \quad {\rm as}\ \, n\to+\infty.   
   \end{align}
  Since $u_n\geq u_{n,R}$ in $B_R(0)$ for any $R>\theta_0 R_0$,  then 
  $u_\infty\geq u_R$ in $ B_R(0).  $


 By the bound  (\ref{uppp-1--}), we have that  for $j=1,\cdots,m_0$ and any $R>\theta_0 R_0$
 \begin{align}\label{uni-2-N}
u_R(x)  \leq  u_\infty(x)\leq \Phi_{N,\bar\alpha}(x)\quad {\rm for\ all}\  x\in \R^N, 
\end{align}
where $\bar \alpha=T_0\alpha_0$. 
 Therefore, $u_\infty$ is positive and decays to $0$ at infinity. 
  
   For $\rho>0$, denote
  $$\cQ_\rho=\{x\in\R^N:\, u_\infty(x)>\rho\}, $$
then there is some $\rho_0\in \big(0,\frac14\min\{\Phi_{N,\alpha_j}(0),  |p_i-p_j|:\, j=1,\cdots,m_0,\ \,  i\not=j\}\big]$ such that  for $\rho\in(0,\rho_0)$
$$B_{\theta_0R_0}(0)\subset  \cQ_\rho.$$
Observe that we have $\bigcup_{\rho\in(0,\rho_0)} \cQ_\rho=\R^N. $
Let $\displaystyle  \cO_{\rho}=\cQ_\rho\setminus \cup_{j=1}^{m_0} B_{\rho}(p_j)$ and
\begin{equation}\label{eq 3.1-ind-1}
\cI_{\rho}(w)= \int_{\cO_\rho}\sqrt{1-|\nabla w|^2}  \, dx   \quad {for\ }\, w\in \bX_0(\cO_\rho)
 \end{equation} 
with 
$$\bX_0(\cO_\rho):=\Big\{v\in C^{0,1}(\bar \cO_\rho):\,  v= u_\infty\ {\rm on}\ \partial \cO_\rho,\ \, |\nabla v|\leq 1\ \ {\rm a.e.\ in\ }\cO_\rho\Big\}.  $$
Then $u_\infty$ is weakly spacelike and achieves the maximizer of $\cI_{\rho}$. \medskip

 Next    {\it for any $\sigma\in(0,\sigma_0]$, there exists $\theta=\theta(\sigma)\in (0,1)$  such that  
$$|\nabla u_\infty |\leq \theta_\sigma\quad{\rm in}\ \ \cO_{\rho,m} ,$$
where $\cO_{\rho,m}=\cQ_{\rho,m}\setminus \bigcup^{m_0}_{j=1} B_{\sigma}(p_j) $  with  $\cQ_{\rho,m}$ being the component   containing $B_{\theta_0R_0}(0)$.   }
 \smallskip

 Let 
 $$\cK_s=\big\{\overline{xy}\subset  \cO_{\rho,m}:\, x,y\in\partial\cO_{\rho,m}, \; x\not=y,\ \,  |u_\infty(x)-u_\infty(y)|=|x-y|    \big\}.  $$
 Our aim is to show $\cK_s=\emptyset$.

\smallskip

If not,  we choose $x_1,x_2\in \partial\cO_{\rho,m}$ such that  $|u_\infty(x_1)-u_\infty(x_2)|=|x_1-x_2|$.
$$\bL_{x_1x_2}= \big\{x_t: \text{for $t$ belongs a maximal interval of $\R$ such that } x_t    \in   B_R\setminus \cP_{m_0}\,     \big\} , $$
where $x_t=x_1+t(x_2-x_1)$. 
Let $\bar x_,\bar x_2$ be the ends points of $\bL_{x_1x_2}$,   then  either $\bL_{x_1x_2}$ could be extended to cross the boundary $\partial  \cQ_{\rho,m}$ twice, i.e. $\bar x_1,\bar x_2\in \partial \cQ_{\rho,m}$  or $\overline{\bL}_{x_1x_2}$ cross the boundary $\partial \cQ_{\rho,m}$ once i.e. 
  $\bar x_1\in \partial \cQ_{\rho,m}, \bar x_2\in \cP_{m_0}$  or  $\bL_{x_1x_2}$ stops by two point in $\cP_{m_0}$ i.e. $\bar x_,\bar x_2\in \cP_{m_0}$. 

We apply \cite[Theorem 3.2]{BS82} to obtain that 
$$
u_\infty(x_t)=u_\infty(x_1)+t|x_1-x_2|\quad {\rm for\ all}\  x_t\in \overline{\bL}_{x_1x_2}.
$$
 Particularly, we have that 
 \begin{equation}\label{ind-1-t-N}
|u_\infty(\bar x_1)-u_\infty(\bar x_2)|= |\bar x_1-\bar x_2|.
 \end{equation}

 If $\bar x_1,\bar x_2\in \partial \cQ_{\rho,m}$, then 
 $$|u_\infty(\bar x_1)-u_\infty (\bar x_2)|=0<|\bar x_1-\bar x_2|,  $$
 which contradicts (\ref{ind-1-t-N}). 
 
  If $\bar x_1\in \partial \cQ_{\rho,m}, \bar x_2\in \cP_{m_0}$ and we can set 
  $$\bar x_1 \in \bL_{x_1x_2}\cap \partial \cQ_{\rho,m} \quad{\rm and}\quad \bar x_2 \in \cP_{m_0},  $$
 then  $u_\infty(\bar x_1) =\rho<u_\infty(p_j)$ for $p_j\in\cP_{m_0}$ and for $\rho>0$ small 
 $$|\bar x_1-\bar x_2| \geq \max\big\{(\theta_0-\frac12) R_0, \Phi_{N,\alpha_0}(0) \big\}$$ 
and 
 $$|u_\infty(\bar x_1)-u_\infty (\bar x_2)|< u_\infty(\bar x_2)   \leq \Phi_{N,\alpha_0}(0)\leq |\bar x_1-\bar x_2|, $$
 then we get contradictions with (\ref{ind-1-t-N}). 
 
 If   $\bar x_1,\bar x_2\in \cP_{m_0}$, we can assume that 
 $$
u_\infty(\bar x_1)=u_\infty(\bar x_2)+|\bar x_1-\bar x_2|\quad {\rm for\ all}\  x_t\in \overline{\bL}_{x_1x_2}.
$$
Recall that 
$$w_\alpha(x)=\Phi_{N, \alpha}(x-\bar x_1)-\Phi_{N, \alpha}(0)+u_\infty(\bar x_1),\quad x\in \cQ_{\rho,m}, $$
then $w_\alpha(\bar x_1)=u_\infty(\bar x_1)$ and there exist  $\bar \alpha\geq \alpha_j$ such that 
$$w_{\bar \alpha}\leq -1\quad {\rm on}\ \partial \cQ_{\rho,m}. $$
By the same proof in {\bf Claim 2}, we have that 
$u_\infty \geq  w_{\bar \alpha}\; {\rm in}\ \cQ_{\rho,m}, $
which implies that 
$$ w_{\bar \alpha}(\bar x_1)-w_{\bar \alpha}(\bar x_2)\geq u_\infty(\bar x_1)-u_\infty(\bar x_2)= |\bar x_1-\bar x_2|,  $$
 which  contradicts the fact  that    $|\nabla \Phi_{N,\alpha}|<1$ for $\R^N\setminus\{0\}$. \smallskip

Let 
$$w_{\bar\alpha,n}(x)=\Phi_{N, \bar\alpha}(x-\bar x_1)-\Phi_{N, \bar\alpha}(0)+u_{n}(\bar x_1),\quad x\in B_R(0), $$
then $w_{\bar\alpha,n}(\bar x_1)=u_{n}(\bar x_1)$ and    
$$\lim_{n\to+\infty}w_{\bar\alpha,n}(x)=w_{\bar\alpha}(x)\quad {\rm for}\ \, x\in \R^N $$
and there exist  $n_0>1$ and $\bar R>R_0$  such that  
$w_{\bar \alpha,n}<0\; {\rm on}\ \R^N\setminus  B_{\bar R} (0). $

By the comparison principle, we have that $u_{n}\geq  w_{\bar \alpha,n}\ \,  {\rm in}\ B_{\bar R}(0),  $
which implies that 
$$u_{\infty}\geq  w_{\bar \alpha}\quad {\rm in}\ B_{\bar R}(0)  $$
and
$$ w_{\bar \alpha}(\bar x_1)-w_{\bar \alpha}(\bar x_2)\geq u_\infty(\bar x_1)-u_\infty(\bar x_2)= |\bar x_1-\bar x_2|,  $$
 which contradicts the fact  that    $|\nabla \Phi_{N,\alpha}|<1$ for $\R^N\setminus\{0\}$.

As a consequence, we obtain $\cK_s=\emptyset$ and it follows by \cite[Theorem 4.1, Corollary 4.2]{BS82} 
that $u_\infty\in C^1(\cQ_{\rho,m})$ is strictly spacelike in $\cQ_{\rho,m}$ and  
there exists $\theta_\epsilon\in[0,1)$ such that 
 \begin{equation}\label{grandient-1-R-N}
 |\nabla u_\infty|\leq \theta_\epsilon\quad {\rm in}\ \, \overline \cQ_{\rho,m}.   
 \end{equation}

Now we show the qualitative properties of  $u_\infty$.

\vskip2mm
{\bf Part 1: }   we show that $u_\infty$ is a classical solution of 
 \begin{equation}\label{eq 3.1-cla-R-N}
 \left\{
\begin{array}{lll}
\displaystyle\ \,  \cM_0 u= 0\ \ 
  {\rm in}\  \, \R^N \setminus\cP_{m_0}, 
\\[2mm]
 \phantom{   }
 \displaystyle\lim_{|x|\to+\infty} u(x)=0  
 \end{array}
 \right.
 \end{equation}
and $u_\infty$ is a weak solution of the following problem:
 \begin{equation}\label{eq 3.1-ral-N}
 \left\{
\begin{array}{lll}
\displaystyle  \cM_0 u= \sum_{j=1}^{m_0} \alpha_j\delta_{p_j}\ \  & {\rm in}\  \,   \cD'(\R^N), 
\\[3mm]
 \phantom{    }
 \displaystyle\lim_{|x|\to+\infty} u(x)=0.     
 \end{array}
 \right.
 \end{equation}

Fix   $\rho>0$, denote
  $$\cQ_{n,\rho}=\{x\in\R^N:\, u_n(x)>\rho\},$$
then there is some  $\rho_1\in \big(0,\frac14\min\{\Phi_{N,\alpha_j}(0),  |p_i-p_j|:\, j=1,\cdots,m_0,\ \,  i\not=j\}\big]$ such that  for $\rho\in(0,\rho_1)$, there holds
$$B_{\bar R}(0)\subset  \cQ_{n,\rho}.$$
 
 Let $w_{n,\rho}=u_{n}-\rho$, then it is a solution of 
\begin{equation}\label{eq 3.1-N-rho-1}
 \left\{
\begin{array}{lll}
\displaystyle  \cM_0 w_{n,\rho}= g_n\quad
 & {\rm in}\  \   \cQ_{n,\rho}, 
\\[4mm]
 \phantom{ -\ \,   }
 \displaystyle w_{n,\rho}=0 \quad
 & {\rm on}\  \, \R^N\setminus  \cQ_{n,\rho}.    
 \end{array}
 \right.
 \end{equation}
 Taking the test function $\phi=w_{n,\rho}$ in (\ref{eee-1}),  we derive that
 \begin{equation}  
 \int_{\cQ_{R,\rho}} \frac{|\nabla w_{n,\rho}|^2   }{\sqrt{1-|\nabla w_{n,\rho}|^2}} dx= \int_{\R^N}g_n w_{n,\rho}dx  \leq \Phi_{N,\alpha_0}(0)\alpha_0. \label{bound-1-N}
    \end{equation}
    

Firstly, we show the uniformly bound that 
 \begin{align*} 
  \int_{B_{\bar R}(0)} \frac{|\nabla u_{n} |   }{\sqrt{1-|\nabla u_{n}|^2}} dx &\leq   \int_{\cQ_{n,\rho}}  \frac{|\nabla w_{n,\rho} |   }{\sqrt{1-|\nabla w_{n,\rho}|^2}} dx 
   \\[1mm]& =   2   \int_{\cQ_{n,\rho}\cap \{|\nabla w_{n,\rho}|\geq \frac12\}} \frac{|\nabla w_{n,\rho}|^2   }{\sqrt{1-|\nabla w_{n,\rho}|^2}} dx
 \\&\qquad +   \int_{\cQ_{n,\rho} \cap \{|\nabla  w_{n,\rho}|<\frac12\}} \frac{|\nabla w_{n,\rho}|   }{\sqrt{1-|\nabla w_{n,\rho}|^2}} dx
  \\[1mm]& \leq   \Big(2 \Phi_{N,\alpha_0}(0)\alpha_0 +\frac{\sqrt{3}}3 |\cQ_{n,\rho} |\Big). 
  \end{align*}

For any nonnegative $\varphi\in C^{0,1}_c(B_{\bar R}(0))$ such that $\varphi(x)=\varphi(p_j)$ for $x\in B_\epsilon(p_j)$ for any $j=1,\cdots, m_0$
and $\epsilon>0$ small, then ${\rm supp}(\nabla \varphi)\subset B_{\bar R}(0)\setminus \bigcup^{m_0}_{j=1} B_\epsilon(p_j)$ 
 \begin{align*} 
 \lim_{n\to+\infty} 
 \int_{B_{\bar R}(0) \setminus\big( \bigcup^{m_0}_{j=1} B_\epsilon(p_j)\big)} \frac{ \nabla u_{n} \cdot \nabla \varphi   }{\sqrt{1-|\nabla u_{n}|^2}} dx& =  \lim_{n\to+\infty}  \int_{\R^N} \frac{ \nabla u_{n} \cdot \nabla \varphi   }{\sqrt{1-|\nabla u_{n}|^2}} dx 
  \\[1mm]& = \sum^{m_0}_{j=1} \alpha_j \varphi(p_j)
 \\[1mm]&=  \int_{\R^N} \frac{ \nabla u_{\infty} \cdot \nabla \varphi   }{\sqrt{1-|\nabla u_{\infty}|^2}} dx
 \\[1mm]&= \int_{B_{\bar R}(0)\setminus\big( \bigcup^{m_0}_{j=1} B_\epsilon(p_j)\big)} \frac{ \nabla u_{\infty} \cdot \nabla \varphi   }{\sqrt{1-|\nabla u_{\infty}|^2}} dx, 
  \end{align*}
 that is
 \begin{align} \label{con-1-R-N}
  \frac{ \nabla u_{n}    }{\sqrt{1-|\nabla u_{n}|^2}} \to\frac{ \nabla u_{\infty}    }{\sqrt{1-|\nabla u_{\infty}|^2}} \quad{\rm weakly\ in\ }L^1 \big(B_{\bar R}(0)\setminus\big( \bigcup^{m_0}_{j=1} B_\epsilon(p_j)\big) , \; \R^N \big), 
  \end{align}
then by the upper semicontinuity of the area integral
\begin{align*} 
\int_{B_{\bar R}(0)\setminus\big( \bigcup^{m_0}_{j=1} B_\epsilon(p_j)\big)} \frac{|\nabla u_{\infty} |   }{\sqrt{1-|\nabla u_{\infty}|^2}} dx&\leq   \liminf_{n\to+\infty}  \int_{B_{\bar R}(0)\setminus\big( \bigcup^{m_0}_{j=1} B_\epsilon(p_j)\big)} \frac{|\nabla u_{n} |   }{\sqrt{1-|\nabla u_{n}|^2}} dx 
\\[1mm]&  \leq   \Big(2 \Phi_{N,\alpha_0}(0)\alpha_0 +\frac{\sqrt{3}}3 |\cQ_{R,\rho} |\Big),
  \end{align*}
which,  by the arbitrary of $\epsilon>0$,  implies that 
\begin{align*} 
\int_{B_{\bar R}(0)} \frac{|\nabla u_{\infty} |   }{\sqrt{1-|\nabla u_{\infty}|^2}} dx  \leq   \Big(2 \Phi_{N,\alpha_0}(0)\alpha_0 +\frac{\sqrt{3}}3 |\cQ_{R,\rho} |\Big).
  \end{align*}
As a consequence, by the arbitrary of $\bar R>\theta_0R_0$, we have that  
$$\frac{|\nabla u_{\infty} |   }{\sqrt{1-|\nabla u_{\infty}|^2}}\in L^1_{\loc} (\R^N)$$ 
and we obtain that $u_\infty \in \tilde\bX_\infty(\R^N)$,
  where
   $$\tilde\bX_\infty(\R^N) =\Big\{w\in C^{0,1}_c(\R^N):\, |\nabla w|<1\ {\rm\ in}\ \R^N \setminus \cP_{m_0},\ \, \frac{|\nabla w|}{\sqrt{1-|\nabla w|^2}}  \in L^1_{\loc}(\R^N)\Big\}. $$
   Moreover, from (\ref{con-1-R-N}), we get that for any $\epsilon>0$ small, 
 $$\int_{\cO_\epsilon}  \frac{ \nabla u_{\infty} \cdot \nabla \varphi   }{\sqrt{1-|\nabla u_{\infty}|^2}}dx=0\quad \text{  for any $\varphi\in C^{0,1}_c(\R^N)$ with ${\rm supp}(\varphi)\subset \cO_\epsilon:=\R^N\setminus \bigcup_{j=1}^{m_0}B_\epsilon(p_j)$. } $$
 By (\ref{grandient-1-R-N}) and  \cite[Theorem 3.6]{BS82},   $u_\infty\in C^{2,\gamma}_{\loc}(\cO_{2\epsilon})$, by the arbitrary of $\epsilon$, we get that  
$u_\infty$ verifies the equation (\ref{eq 3.1-cla-R-N}) in the classical sense.

\vskip2mm
Now we take $\xi\in C^{0,1}_c(\R^N)$  with  ${\rm supp}( \xi)\subset   \R^N \setminus\cP_{m_0}$
and 
   $$\int_{\R^N}  \frac{\nabla u_\infty\cdot\nabla \xi}{\sqrt{1-|\nabla u_\infty|^2}} \,dx =0\quad \text{  for any $\xi\in C^{0,1}_c (\R^N\setminus\cP_{m_0})$.} $$ 
   Now we apply Proposition \ref{cr 2.1-iso} to obtain that $u_\infty$ is a weak solution 
\begin{equation}\label{eq 1.1-R-w-N}
  \cM_0 u= \sum_{j=1}^{m_0} k_{p_j}  \delta_{p_j}   \quad
  {\rm in}\  \, \cD'\big(\R^N\big)
  \end{equation}
for some $k_{p_j}\in \R$.  That is, 
\begin{equation}
\int_{ \R^N }  \frac{\nabla u_\infty\cdot\nabla \xi}{\sqrt{1-|\nabla u_\infty|^2}} \,dx =\sum_{j=1}^{m_0}  k_{p_j}  \xi(p_j),\quad\, \forall \xi\in C^{0,1}_c (\R^N).
 \end{equation}
 
Finally we need to prove $k_{p_j} =\alpha_j$ for any $j=1,\cdots,m_0$. This is very similar to the part in the proof of 
Theorem \ref{pr 2.1} for $u_R$. We omit it. 
Therefore we conclud that $u_\infty$ is a weak solution (\ref{eq 1.1-R-w-N}). \smallskip


{\bf Asymptotic behavior at poles: }  At the Dirac poles with positive multiplicities, it follows by \cite[Theorem 1.5]{E86} (also see \cite[Theorem 1.4]{K12} and \cite[Theorem 1.6]{BAP}) that 
$  u_\infty$ is light-cone singular at $\cP_{m_0}$ with  the behavior
$$\lim_{|x-p_j|\to0^+}| \nabla  u_\infty(x)|=1.  $$
Moreover, the  vertex of the cone is upwards, i.e.  $p_j$ isn't a local minimal point of $  u_\infty$.  \smallskip

{\bf Asymptotic behavior at infinity:  } {\it Lower bound:}  Note that for any $R>R_0$, 
$$u_\infty(x)\geq u_{R}\geq u_{R,j}  $$
and 
$$u_{R,j}\to \Phi_{N,\alpha_j}(\cdot-p_j)\quad{\rm in}\ \R^N, $$
thus, for any $j=1,\cdots,m_0$, 
\begin{align}\label{bh 1-3} 
u_\infty(x)\geq \Phi_{N,\alpha_j}(\cdot-p_j)\quad{\rm in}\ \R^N. 
 \end{align} 
 
  From (\ref{bb1-2}), we have that 
the solution $ u_{\infty}$  has the behavior 
\begin{align}\label{bh 1-13} 
 u_{\infty}(x)= c - \frac1{|\partial B_1(0)|} (1-|\vec{a}|) {\rm Res}[ u_{\infty}]  |x|^{2-N} +O(|x|^{1-N})\quad {\rm as}\ \, |x|\to+\infty, 
 \end{align} 
where   by (\ref{bh 1-3}),  $\vec{a}=0$ and  $c\geq 0$. 

Recall that  
\begin{align*}
0&=\int_{B_R(0)\setminus \big(\cup^{m_0}_{j=1} B_\epsilon(p_j)\big)} \cM_0  u_{\infty}(x) dx
\\&=\int_{\partial B_R(0)} \frac{\nabla  u_\infty (x)}{\sqrt{1-|\nabla  u_\infty(x)|^2}}\cdot \frac{x}{R}dH_{N-1}(x)+
\sum^{m_0}_{j=1}\int_{\partial B_\epsilon(p_j)} \frac{\nabla u_\infty (x)}{\sqrt{1-|\nabla  u_\infty(x)|^2}}\cdot \frac{x-p_j}{|x-p_j|}dH_{N-1}(x)
\\&\to {\rm Res}(  u_{\infty})+\alpha_0\quad {\rm as}\ \epsilon\to0^+,
 \end{align*}
 that is 
 \begin{align*}
 {\rm Res}[  u_{\infty}]= - \alpha_0 =  - \sum^{m_0}_{j=1}\alpha_j.
 \end{align*} 
Thus, it follows by (\ref{bh 1-13}) that 
\begin{align}\label{bh 1-23} 
  u_{\infty}(x)= c+ \frac{\alpha_0}{|\partial B_1(0)|}|x|^{2-N}  +O\big(|x|^{1-N} \big)\quad {\rm as}\ \, |x|\to+\infty, 
 \end{align} 
where  $c\geq0$.  Note that  $u_{\infty}-c$ decays as $|x|^{2-N}$ at infinity, and  it   is  a  solution of (\ref{eq 1.1-fund3}). 
 By the maximum principle, we can obtain that $u_{\infty}-c$ is positive and 
 $u_{\infty}-c\geq  u_R$ in $B_R(0)$ for any $R>0$, which together 
 with $u_R\to u_{\infty}$ in $C^{0,1}_{\loc}(\R^N)\cap C^2_{\loc}(\R^N\setminus\cP_{m_0})$ as $R\to+\infty$, 
  implies that $c=0$ and (\ref{bh 1-23}) reduces to 
  \begin{align}\label{bh 1-23++} 
  u_{\infty}(x)=  \frac{\alpha_0}{|\partial B_1(0)|}|x|^{2-N}  +O\big(|x|^{1-N} \big)\quad {\rm as}\ \, |x|\to+\infty. 
 \end{align} \smallskip

 {\bf Uniqueness: } Let $\bar u$ be another solution satisfying
 $\bar u(x)\to 0$ as $|x|\to +\infty$. Then we can show $\bar u+\epsilon, \bar u-\epsilon$ will be a super and sub solutions    respectively, of
 \begin{equation} 
 \left\{
\begin{array}{lll}
\displaystyle  \cM_0 u= \sum^{m_0}_{j=1} \alpha_j \delta_{p_j}\quad
 & {\rm in}\  \   B_R(0), 
\\[4mm]
 \phantom{ -\ \,   }
 \displaystyle u=\bar u\pm \epsilon \quad
 & {\rm on}\  \, \partial B_R(0).    
 \end{array}
 \right.
 \end{equation}
 Note that $\bar u\pm\epsilon$ maximizes 
$$\cI_{B_R(0)}(u) :=\int_{B_R(0)}\sqrt{1-|\nabla u|^2}dx+\sum^{m_0}_{j=1}\alpha_ju(p_j)$$
with respect to the boundary value $ (\bar u\pm\epsilon)\big|_{\partial B_R(0)}$. 
 Since $\bar u+\epsilon>u_\infty$, $\bar u-\epsilon<u_\infty$ on $\R^N\setminus B_R(0)$ for $R>R_0$ large enough, 
  then by Lemma \ref{lm cp}, we derive that 
 $$\bar u-\epsilon\leq u_\infty=\bar u+\epsilon\quad {\rm in}\  \R^N. $$
 By the arbitrary of $\epsilon>0$, we derive that $u_\infty=\bar u$ and the uniqueness follows.  \smallskip

 {\bf Maximizer of $\cJ_\infty$: } Since $\sum^{m_0}_{j=1}\alpha_j\delta_{p_j}\in (C^{0,1}(\R^N))^*$,  then it follows by \cite[Theorem 1.3]{BAP} that 
 the energy functional  
$$
\cJ_{\infty}(w):= \int_{\R^N}\big(1- \sqrt{1-|\nabla w|^2}\big)\, dx -  \sum^{m_0}_{j=1}\alpha_jw(p_j)  \quad {for\ }\, w\in \bX_\infty (\R^N),   
$$
has a unique minimizer, where recall that 
$$\bX_\infty (\R^N)=\{v\in C^{0,1}(\R^N): |\nabla v|\leq 1\ \, {\rm a.e.\ in}\ \R^N, \ \,  \int_{\R^N}\big(1- \sqrt{1-|\nabla w|^2}\big)dx<+\infty \big\}. $$
 
Note that $u_\infty$ is approximating by $u_{n}$ in $C^{0,\gamma}$, and 
by \cite[Lemma 1.3]{BS82},  $u_n$ is the critical point of 
$$\cJ_{N,n}(w):= \int_{\R^N}\big(1- \sqrt{1-|\nabla w|^2}\big)dx -\int_{\R^N} g_nwdx  \quad {for\ }\, w\in \bX_0 (\R^N). $$  
Since $u_{n}\to u_\infty$ in $C^{0,\gamma}(\R^N)$  as $n\to +\infty$,
 then  $u_\infty$ is the unique maximizer of  $\cJ_{\infty}$. Therefore, Theorem \ref{teo 1-fund3} is proved by setting $u_\infty (x)= 
u_{N, \alpha_0}(x), \; \forall x \in \R^N$. 
  \hfill$\Box$\medskip

\begin{corollary}\label{cr 3.1-mont}
Let $N\geq 3$ and
$$\cP_{m_1}\subset \cP_{m_2} \quad \text{ with $m_2\geq m_1$}$$
 and 
$$0<\alpha_{1,j}\leq\alpha_{2,j}\quad {\rm for}\ \, j=1,\cdots m_1,\quad  \alpha_{2,j}>0\quad {\rm for}\ j>m_1\ \ {\rm if}\ \, m_2>m_1. $$
Let $u_{i}$ with $i=1,2$ be the solutions, respectively, of
\begin{equation}\label{eq 3.1-com-1}
 \left\{
\begin{array}{lll}
\displaystyle\  \cM_0 u= \sum^{m_i}_{j=1} \alpha_{i,j} \delta_{p_j}\quad
 & {\rm in}\  \   \R^N, 
\\[4mm]
 \phantom{ ,   }
 \displaystyle \lim_{|x|\to+\infty}u(x)=0.    
 \end{array}
 \right.
 \end{equation}
 Then $u_2\geq u_1$ in $\R^N$. 

\end{corollary}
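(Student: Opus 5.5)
The plan is to reduce the comparison in $\R^N$ to the bounded-domain comparison in Corollary \ref{cr 3.1-mont-1}, and then pass to the limit $R\to+\infty$ along the approximation scheme used to build the solutions in Theorem \ref{teo 1-fund3}.

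For $i=1,2$ and $R\geq\theta_0R_0$, let $u_{i,R}$ be the unique weak solution of (\ref{eq 3.1-com}) in $B_R(0)$ given by Theorem \ref{pr 2.1}, with sources $\sum_{j\le m_i}\alpha_{i,j}\delta_{p_j}$. Corollary \ref{cr 3.1-mont-1} gives $u_{2,R}\geq u_{1,R}$ in $B_R(0)$. If one prefers to see this directly, it also follows at the level of smooth approximations. Set $g_{i,n}=\sum_{j\le m_i}\alpha_{i,j}\eta_n(\cdot-p_j)$. Then $g_{2,n}\geq g_{1,n}$ pointwise, because $\alpha_{2,j}\geq\alpha_{1,j}$ and the extra terms are nonnegative. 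Lemma \ref{lm cp} then yields $u_{2,n,R}\geq u_{1,n,R}$, and letting $n\to+\infty$ (Claim 1 in the proof of Theorem \ref{pr 2.1}) preserves the inequality.

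Next, pass $R\to+\infty$. By Theorem \ref{pr 2.1}(iii), $R\mapsto u_{i,R}$ is increasing. By (\ref{e 3.1}), it is uniformly bounded by $\Phi_{N,\alpha_{i,0}}(0)$, and it is $1$-Lipschitz. Hence $u_{i,R}$ converges locally uniformly to some limit $\tilde u_i$, and the pointwise ordering survives, giving $\tilde u_2\geq\tilde u_1$ in $\R^N$.

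The remaining step, which I expect to be the main point, is to identify $\tilde u_i$ with the solution $u_i$ of (\ref{eq 3.1-com-1}). First, $u_i\geq u_{i,R}$ in $B_R(0)$, which is shown in the proof of Theorem \ref{teo 1-fund3}. Letting $R\to+\infty$ gives $\tilde u_i\leq u_i$. For the reverse inequality, there are two routes.

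\begin{itemize}
\item \emph{Uniqueness route.} The argument in the proof of Theorem \ref{teo 1-fund3} shows that the monotone limit $\tilde u_i$ decays at infinity and is a weak solution with the same Dirac data. That argument compares $u_\infty-c$ with $u_R$ to force $c=0$, which is exactly where $u_R\to u_\infty$ was used. Uniqueness in Theorem \ref{teo 1-fund3} then gives $\tilde u_i=u_i$.
\item \emph{Comparison route.} Since $u_i\to0$ at infinity, for each $\epsilon>0$ we have $u_i-\epsilon\leq 0=u_{i,R}$ on $\partial B_R(0)$ once $R$ is large. Both $u_i-\epsilon$ and $u_{i,R}$ maximize $\cI_{B_R(0)}$ with their own boundary values, as in the uniqueness part of Theorem \ref{teo 1-fund3}. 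Lemma \ref{lm cp} therefore gives $u_i-\epsilon\leq u_{i,R}\leq\tilde u_i$, and letting $\epsilon\to0$ yields $u_i\leq\tilde u_i$.
\end{itemize}

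I would use the second route, since it avoids re-deriving the asymptotics. Combining the steps gives $u_2=\tilde u_2\geq\tilde u_1=u_1$ in $\R^N$.
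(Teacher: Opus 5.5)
Your proposal is correct and follows essentially the paper's one-line argument (``by the construction and uniqueness''): order the approximating solutions by the comparison principle, pass to the limit, and identify the limit with the unique decaying solution. The differences are cosmetic. You pass through the bounded-domain Dirac solutions $u_{i,R}$ and let $R\to+\infty$, rather than following the paper's construction $u_{n,R}\to u_n\to u_\infty$. Your ``comparison route'' for $u_i\le\tilde u_i$ is the paper's uniqueness argument written out explicitly, and it suffices.
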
 
 \noindent{\bf Proof. } It follows by the construction and uniqueness of solution to (\ref{eq 3.1-com-1}). \hfill$\Box$\medskip

  \subsection{Positive Dirac masses in   $\R^2$}

When $N=2$, we first consider  the approximation problem 
\begin{equation}\label{eq 3.1-N-2}
\left\{
\begin{array}{lll}
\displaystyle \cM_0 u = g_n \quad  \text{in } \, \R^2, \\[3mm]
\displaystyle \max_{x\in\R^2} u(x) = 0,
\end{array}
\right.
\end{equation}
where $n \in \mathbb{N}$ and $g_n$ is defined in (\ref{souce-n}).


 \begin{proposition}\label{pr 2.1-n-w-2}
 Let    $g_n$ be defined in (\ref{souce-n})  with $n\in\N$, then Eq.(\ref{eq 3.1-N-2})  has a unique classical solution  $\tilde u_{n}\in C^{2,\gamma}(\R^2 )$ with $\gamma\in(0,1)$. 
 Moreover,  we have

 $(i)$ There hols
\begin{equation}\label{uppp-1-2}
\Phi_{2, \alpha_0}(x)-R_0  \leq \tilde u_n(x)\leq  \Phi_{2, \alpha_0}(x)+R_0 \quad {\rm for}\ \, x\in \R^2. 
\end{equation}
 
 $(ii)$  There exits $\theta_n \in(0,1)$ such that 
 $$|\nabla \tilde u_n| \leq \theta_n\quad{\rm in}\ \ \R^2.  $$

 $(iii)$  We have
   \begin{align}\label{eee-2-2}
 \int_{\R^2}   \frac{\nabla \tilde u_{n}\cdot \nabla  \phi}{\sqrt{1-|\nabla \tilde u_{n}|^2}}  \, dx=  \int_{\R^2}  g_n(x) \phi(x)dx \quad {\rm for}\ \, \phi\in C^{0,1}_c(\R^2).  
    \end{align}
 \end{proposition}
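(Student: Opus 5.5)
The natural route is to mirror the proof of Proposition \ref{pr 2.1-n-w}, replacing the decay normalization by a normalization of the maximum. For $R>R_0$ large, let $u_{n,R}$ be the classical solution of (\ref{eq 3.1-n}) from Lemma \ref{lm 3.1}, and set $\tilde u_{n,R}=u_{n,R}-\max_{B_R(0)}u_{n,R}$. By Corollary \ref{max point-n} this maximum is attained at some point of ${\rm supp}(g_n)\subset B_{R_0/2+2/n}(0)$. So $\tilde u_{n,R}$ vanishes at a point of a fixed compact set. Since $|\nabla \tilde u_{n,R}|<1$, the family is locally uniformly bounded and $1$-Lipschitz.

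Arzel\`a--Ascoli then gives a subsequence $R_k\to+\infty$ and a limit $\tilde u_n$ with $\tilde u_{n,R_k}\to\tilde u_n$ in $C^{0,\gamma}_{\loc}(\R^2)$ and $\max\tilde u_n=0$. The two-sided bound (\ref{uppp-1-2}) will come from comparison with radial solutions.

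\emph{Upper bound.} Compare $u_{n,R}$ with the radial solution $v_{\alpha_0,n,R}$ using the rearrangement step in Lemma \ref{lm 3.1}(i). Then use (\ref{com sol-R00}) to write $v_{\alpha_0,n,R}+\Phi_{2,\alpha_0}(R)$ as $\Phi_{2,\alpha_0}$ up to the error near the origin. This controls the maximum.

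\emph{Lower bound.} Outside $B_{R_0}(0)$, both $u_{n,R}$ and translates $\Phi_{2,\alpha_0}(\cdot)+c$ solve $\cM_0u=0$. Apply Lemma \ref{lm cp} on the annulus $B_R(0)\setminus B_{R_0}(0)$, with the constant fixed by the boundary values on $\partial B_R(0)$ (where $u_{n,R}=0$). Along $\partial B_{R_0}$, use the $1$-Lipschitz property, so the oscillation over $\overline{B_{R_0}}$ is at most $2R_0$ after recentring. Subtracting the maximum, this gives $|\tilde u_{n,R}-\Phi_{2,\alpha_0}|\le R_0$ up to adjusting constants. Inside $B_{R_0}$ the same estimate follows directly from the Lipschitz bound, because $\Phi_{2,\alpha_0}(0)=0$ and $|\nabla\Phi_{2,\alpha_0}|\le 1$. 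I expect the bookkeeping of additive constants here to be the main obstacle. The key point is that $\Phi_{2,\alpha_0}(R)-\Phi_{2,\alpha_0}(R-R_0)\to0$, so translating the singular point by at most $R_0$ costs at most $R_0$ in the limit.

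\emph{Gradient bound $(ii)$.} On each ball $B_\varrho$, $\tilde u_n$ is the local uniform limit of maximizers of $\cI_{n,\varrho}$. By \cite[Lemma 1.3]{BS82}, it is weakly spacelike and solves the variational problem with mean curvature $g_n$, which is bounded and smooth. Lightlike segments are excluded as in Claim 2 of Theorem \ref{pr 2.1}. Indeed, any such segment would extend to an entire line (there are no Dirac points, since $g_n$ is bounded). Along a line, $\tilde u_n$ would grow linearly with slope $1$ in one direction, contradicting the sublinear bound (\ref{uppp-1-2}), since $\Phi_{2,\alpha_0}$ grows like $-\frac{\alpha_0}{2\pi}\ln|x|$.

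Hence \cite[Theorem 3.6]{BS82} gives interior strict spacelikeness on compact sets. Outside $B_{R_0}$, Proposition \ref{pr grad-ext-2.1} gives a uniform $\theta<1$. Combining the two yields $\theta_n$, and Lemma \ref{lm 2.1-reg-int} gives $C^{2,\gamma}$ regularity.

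\emph{Weak formulation $(iii)$.} Identity (\ref{eee-2-2}) follows by passing to the limit $R_k\to\infty$ in (\ref{eee-1}). Because $|\nabla u_{n,R}|\le\theta$ on compacts uniformly in large $R$, the $C^{2,\gamma}$ estimates give $C^1_{\loc}$ convergence, so the nonlinear flux converges. By density, test functions in $C^{0,1}_c$ are allowed.

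\emph{Uniqueness.} Let $w$ be two solutions, each satisfying (\ref{uppp-1-2}). By Theorem \ref{theo 2.1-inf} with ${\rm Res}=-\alpha_0$, both are $-\frac{\alpha_0}{2\pi}\ln|x|+c_i+o(1)$ (the tilt $\vec a$ is zero by the sublinear bound). Comparison via Lemma \ref{lm cp} on large balls, with an $\epsilon$-shift as in the uniqueness part of Theorem \ref{teo 1-fund3}, gives that they differ by the constant $c_1-c_2$. The normalization $\max=0$ then forces them to coincide.
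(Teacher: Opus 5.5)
\noindent There is a genuine gap in the upper half of $(i)$, namely $\tilde u_n\le \Phi_{2,\alpha_0}+R_0$. After normalisation, $\tilde u_{n,R}=-M_R$ on $\partial B_R(0)$, where $M_R=\max u_{n,R}$. The comparison on the annulus $B_R(0)\setminus\overline{B_{R_0}(0)}$ only transports boundary information, so this upper bound amounts to a \emph{lower} bound $M_R\ge -\Phi_{2,\alpha_0}(R)-C$ with $C$ independent of $R$. Nothing in your proposal provides it. Your ``upper bound'' (rearrangement) step gives the opposite inequality $M_R\le v_{\alpha_0,n,R}(0)\le-\Phi_{2,\alpha_0}(R)$, which only yields the \emph{lower} half of $(i)$ on $\partial B_R(0)$. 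The oscillation bound on $\partial B_{R_0}(0)$ only relates the values there to $M_R$ and cannot pin $M_R$ down. Comparison from below with (translated) radial solutions is available only with mass $\alpha_j$, via $g_n\ge\alpha_j\eta_n(\cdot-p_j)$, i.e.\ $u_{n,R}\ge u_{n,R,j}$. That gives only $\tilde u_n\le\Phi_{2,\alpha_j}(\cdot-p_j)+C$. Since $\Phi_{2,\alpha}(r)=-c\,\mathrm{arcsinh}(r/c)$ with $c=\alpha/2\pi$ is decreasing in $\alpha$, this is strictly weaker than the claim, by about $\frac{\alpha_0-\alpha_j}{2\pi}\ln|x|$ at infinity. (The paper's proof, which compares with $u_{n,R,j}$, likewise only reaches $\Phi_{2,\alpha_j}(x-p_j)+R_0$.)

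\noindent The missing idea is a flux argument at the outer boundary. For $n$ large, $g_n=0$ outside $B_{R_0}(0)$. Fix $\beta<\alpha_0$ and set $w_\beta=\Phi_{2,\beta}-\Phi_{2,\beta}(R)$. Suppose $M_R\le w_\beta(R_0)$. Then Lemma~\ref{lm cp-cla} on the annulus gives $u_{n,R}\le w_\beta$ there, with equality on $\partial B_R(0)$, so $0\le-\partial_r u_{n,R}\le-\partial_r w_\beta$ on $\partial B_R(0)$. Since $u_{n,R}\equiv0$ on $\partial B_R(0)$, we have $|\nabla u_{n,R}|=|\partial_r u_{n,R}|$ there. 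Integrating the equation over $B_R(0)$ and using that $t\mapsto t/\sqrt{1-t^2}$ is increasing,
\[
\alpha_0=\int_{\partial B_R(0)}\frac{-\partial_r u_{n,R}}{\sqrt{1-|\partial_r u_{n,R}|^2}}\,dH_1\le\int_{\partial B_R(0)}\frac{-\partial_r w_\beta}{\sqrt{1-|\partial_r w_\beta|^2}}\,dH_1=\beta,
\]
a contradiction. Letting $\beta\uparrow\alpha_0$ gives $M_R\ge\Phi_{2,\alpha_0}(R_0)-\Phi_{2,\alpha_0}(R)\ge-R_0-\Phi_{2,\alpha_0}(R)$, i.e.\ $\tilde u_{n,R}\le\Phi_{2,\alpha_0}(R)+R_0$ on $\partial B_R(0)$. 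Together with $\tilde u_{n,R}\le0\le\Phi_{2,\alpha_0}+R_0$ on $\overline{B_{R_0}(0)}$, the annulus comparison gives the upper bound uniformly in $R$.

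\noindent The remaining parts do not depend on this bound and are sound; the lower half of $(i)$ holds up to the precise value of the constant. Ruling out an entire lightlike line in $(ii)$ only needs $\tilde u_n\le0$. Your uniqueness argument (residue $-\alpha_0$, $\vec a=0$ from boundedness above, comparison giving $w_1=w_2+c_1-c_2$, then $\max=0$) is correct, and in fact supplies a step the paper's proof omits. For $(iii)$, the gradient bound for $u_{n,R}$ uniform in $R$ that you invoke is not established. It is simpler to integrate the classical equation $\cM_0\tilde u_n=g_n$ by parts once $(ii)$ and the $C^{2,\gamma}$ regularity are known.
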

 
  \noindent{\bf Proof. } Recall that $u_{n,R}$ is the unique solution of (\ref{eq 3.1-n}). 
 Let  
$$\tilde u_{n,R}=u_{n,R}-\max_{x\in B_R(0)}u_{n,R}(x)\quad {\rm in}\ \, \R^2  $$
and we claim that   
\begin{align}\label{uni-1-2}
\Phi_{2,\alpha_0}(x)-R_0 \leq  \tilde u_{n,R}(x) \leq \Phi_{2,\alpha_j}(x)+R_0\quad {\rm for\ all}\  x\in B_R(0)
\end{align}
for any $j=1,\cdots,m_0$. 
 
 \smallskip
 
 In fact, it follows by the comparison principle that 
    $$ v_{\alpha_j,n, R}(x-p_j)\leq u_{n,R}(x)\leq  v_{\alpha_0,n, R}(x)\quad{\rm for}\ x\in B_R(0)$$ 
    for $j=1,\cdots,m_0$,
    where  by  Lemma \ref{lm 3.2} for $x\in B_R(0)$,
   $$  \min \big\{\Phi_{2,\alpha}(x), \; \Phi_{2,\alpha}(\frac2n)\big\}  \leq v_{\alpha,n,R}(x) +\Phi_{2,\alpha}(R)\leq \Phi_{2,\alpha}(x). $$  
For any $R > R_0$, by Corollary \ref{max point-n}, there exits $p\in {\rm supp}(g_n)\subset B_{R_0}(0)$    such that 
$$ u_{n,R}(p ) =\max_{x\in B_R(0)} u_{n,R}(x). $$
Let 
$$\tilde u_{n,R}(x)=u_{n,R}(x)-u_{n,R}(p) \quad {\rm for}\ \, x\in B_R(0). $$

 For the upper bound, since $|\nabla u_{n,R}|< 1$ in $B_R(0) $ and ${\rm supp}(g_n)\subset B_{\frac12R_0}(0)$,
 then  for $j=1,\cdots,m_0$
 $$u_{n,R}(x)-u_{n,R}(p)\leq \Phi_{2,\alpha_j}(x-p)+R_0\quad {\rm for}\ \, x\in \bar B_{R_0}(0)$$
 and for $x\in \partial B_{R}(0)$, 
 $$\tilde u_{n,R}(x)  \leq -u_{n,R}(p ) \leq  -\Phi_{2,\alpha_j}(R_j) +|p|\leq -\Phi_{2,\alpha_j}(R) +R_0. $$
 Since 
 $$\cM_0\tilde u_{n,R}=0=\cM_0\big(\Phi_{2,\alpha_j}(x-p_j)+R_0\big) \quad{\rm in}\ \, B_R(0)\setminus\bar B_{R_0}(0), $$
 then the comparison principle implies that 
\begin{align}\label{uni-3-2}
\tilde u_{n,R}\leq \Phi_{2,\alpha_j}(x-p_j)+R_0\quad {\rm in}\ \,  B_R(0). 
\end{align}
 
 For the lower bound, since $|\nabla u_{n,R}|< 1$ in $B_R(0) $,
 then 
 $$u_{n,R}(x)-u_{n,R}(0)\geq -R_0\geq \Phi_{2,\alpha_0}(x)-R_0\quad {\rm for}\ \, x\in \bar B_{R_0}(0)$$
 and for $x\in \partial B_{R}(0)$, 
 $$\tilde u_{n,R}(x)=-u_{n,R}(p_{\bar j}) \geq   \Phi_{2,\alpha_0}(R)  \geq  \Phi_{2,\alpha_0}(R) -R_0.$$
 Since 
 $$\cM_0\tilde u_{n,R}=0=\cM_0\big(\Phi_{2,\alpha_0}(x)-R_0\big) \quad{\rm in}\ \, B_R(0)\setminus\bar B_{R_0}(0), $$
again by the comparison principle, we obtain 
\begin{align}\label{uni-4-2}
\tilde u_{n,R}\geq \Phi_{2,\alpha_0}(x)-R_0\quad {\rm in}\ \,  B_R(0). 
\end{align}
 The  bound in (\ref{uni-1-2}) follows by (\ref{uni-3-2}) and (\ref{uni-4-2}) directly.  \smallskip

 {\bf Part 1:} {\it Existence. }  By (\ref{uni-1-2}) and the fact that $|\nabla  \tilde u_{n,R}| <1$ in $\R^2$, then there exist $ \tilde u_n \in C^{0,1}_{\loc}(\R^2)$  such that for $\gamma\in(0,1)$
$$ \tilde u_{n,R}\to \tilde  u_n \,  \  {\rm in}\ C^{0,\gamma}_{\loc}(\R^2)\ \, {\rm as}\ \, R\to+\infty.  $$
 and 
 \begin{align}\label{uni-1-2-}
\Phi_{2,\alpha_0}(x)-R_0 \leq  \tilde u_{n}(x) \leq \Phi_{2,\alpha_j}(x)+R_0\quad {\rm for\ all}\  x\in B_R(0), 
 \end{align}
 which means $\tilde u_n(x)\to-\infty$ as $|x|\to+\infty$. \smallskip

  {\bf Part 2:} {\it  $\tilde u_n$ is a classical solution and   $|\nabla  \tilde u_n|\leq \theta_n$ in $\R^2$ for some $\theta_n\in(0,1)$.  }
  For any $\varrho>R_0$, recall  that 
  $ \tilde u_{n,R}\to  \tilde u_n$ in  $C^{0,1} (\bar B_\varrho)(0)) \; {\rm as}\ \, R\to+\infty$, 
  then it follows by \cite[Lemma 1.3]{BS82} that 
  $ \tilde u_n$ is weakly spacelike and with respect to its own boundary values, solves the variational problem with mean curvature $g_n$, i.e. 
  $\tilde u_n$ is the maximizer of the energy functional 
 $$\cI_{n,\varrho}(w)= \int_{B_\varrho(0)}\Big(\sqrt{1-|\nabla w|^2} +w(x)g_n\Big)dx \quad {\rm for\ }\, w\in \bX_{\tilde u_n}(B_\varrho(0)), $$ 
 where  
 $$\bX_{\tilde u_n}(B_\varrho(0)):=\big\{v\in C^{0,1}(\bar B_\varrho(0)):\, v=\tilde u_n\ {\rm on}\ \partial B_R(0),\ |\nabla v|\leq 1\  {\rm a.e.\ in\ }\, B_\varrho(0)\big\}.  $$

 Set $\tilde \cQ_{n,\rho}=\{x\in \R^2:\ u_n(x)>\rho\}$ for $\rho<\Phi_{2,\alpha_0}(R_0)$,  then $\cQ_{n,\rho}$ is bounded and
$$\bigcup_{\rho>-\infty} \cQ_{n,\rho}=\R^2. $$
 It follows  from \cite[Theorem 3.6]{BS82}, for any $\rho>0$, there exists $\theta_\rho\in(0,1)$, such that 
 $|\nabla u_n|\leq \theta_1$ in $\tilde\cQ_{n,\rho}$. 
 Then $u_n$ is a classical solution of Eq.(\ref{eq 3.1-N}) by Lemma \ref{lm 2.1-reg-int}. 
 By the decay of $\tilde u_n(x)\to-\infty$ as $|x|\to \infty$, it follows  from Proposition \ref{pr grad-ext-2.1} that there exists $\theta_1\in(0,1)$ such that 
 $|\nabla \tilde u_n|\leq \theta_1$  in $\R^N\setminus B_{R_0}(0)$.
 As consequence, for some $\theta_n\in(0,1)$ $|\nabla \tilde u_n|\leq \theta_n$  in $\R^2$
    \smallskip
 
 For any $\varphi\in C_c^1(\R^2)$, there exists $n_0\geq R_0$ such that ${\rm supp}(\varphi)\subset B_{n_0}(0)$
 and for any $R\geq n_0$, there holds by (\ref{eee-1})
  \begin{align*} 
 \int_{\R^2}   \frac{\nabla \tilde u_{n,R}\cdot \nabla  \phi}{\sqrt{1-|\nabla \tilde u_{n,R}|^2}}  \, dx&= \int_{B_R(0)}   \frac{\nabla\tilde u_{n,R}\cdot \nabla  \phi}{\sqrt{1-|\nabla u_{n,R}|^2}}  \, dx
 \\[1mm]&=   \int_{B_R(0)}   g_n(x) \phi(x)dx=
   \int_{\R^2}  g_n(x) \phi(x)dx,
    \end{align*}
then  passing to the limit as $R\to+\infty$, we obtain (\ref{eee-2-2}).
   \hfill$\Box$\medskip
 

 \vskip2mm
Now we prove Theorem \ref{teo 1-fund}.
\vskip2mm

\noindent{\bf Proof of Theorem \ref{teo 1-fund}. } 
{\bf Existence: }  From Proposition \ref{pr 2.1-n-w-2}, let $\tilde u_n$ be the solutions of (\ref{eq 3.1-N-2}), 
$|\nabla \tilde u_n|<1$ in $\R^2$ and 
 \begin{equation} \label{uppp-1--2}
\Phi_{2, \alpha_0}(x)-R_0  \leq \tilde u_n(x)\leq  \Phi_{2, \alpha_0}(x)+R_0  \quad {\rm for}\ \, x\in \R^2, 
\end{equation}
then  there is $\tilde u_\infty\in   C^{0,\gamma}_{\loc}(\R^2)$ such that for $\gamma\in(0,1)$
\begin{align}\label{conv-13--2}
\tilde  u_{n}\to \tilde  u_{\infty}\quad{\rm in}\ C^{0,\gamma}_{\loc}(\R^2) \quad {\rm as}\ \, n\to+\infty.   
   \end{align}
 So we derive that 
 \begin{equation} \label{uppp-1--3}
\Phi_{2, \alpha_0}(x)-R_0  \leq \tilde u_\infty(x)\leq  \Phi_{2, \alpha_0}(x)+R_0  \quad {\rm for}\ \, x\in \R^2.
\end{equation}

   As the proof of {\bf Claim 1} in Theorem   \ref {pr 2.1}, we have that $\tilde u_\infty\in   C^{0,1}_{\loc}(\R^2)$,  and 
$$|\nabla \tilde  u_\infty |\leq 1\quad{\rm a.e.\ in}\ \, \R^2.$$

By Lemma \ref{max point}, there exists $p_{\bar j}$ for some $\bar j\in\{1,\cdots,m_0\}$ and a sequence $R_n$ such that $R_n\to+\infty$
and $u_{R_n}$ has maximum point at $p_{\bar j}$, i.e.
$$ u_{R_n}(p_{\bar j})=\max_{x\in B_{R}(0)}  u_R(x).  $$

Let
$$\tilde u_R(x)=u_R(x)-u_{R_n}(p_{\bar j}). $$
Since $\tilde u_n\geq \tilde u_{n,R}$ in $B_R(0)$ for any $R>\theta_0 R_0$,  then 
  $\tilde u_\infty\geq \tilde u_R$ in $ B_R(0).  $

Next we claim that   
   $$|\nabla \tilde u_{\infty}|< 1 \quad {\rm  in}\ \R^2\setminus\cP_{m_0}.  $$ 
 For $\rho<-1$, $-\rho$ large enough, denote
  $$\cQ_\rho=\{x\in\R^2:\, \tilde u_\infty(x)>\rho\}, $$
then by the bound (\ref{uppp-1--3}), $\bigcup_{\rho\in(-\infty,\rho_1)} \cQ_\rho=\R^2 $
and for any $R>\theta_0R_0$,  there is $\rho_1<-u_{R}(p_{\bar j})$ such that  for $\rho\in(-\infty,\rho_1)$
$$B_{\theta_0R_0}(0)\subset  \cQ_\rho.$$

Reset that  $\displaystyle  \cO_{\rho}=\cQ_\rho\setminus \cup_{j=1}^{m_0} B_{\frac1{-\rho}}(p_j)$ and 
\begin{equation}\label{eq 3.1-ind-1}
\cI_{\rho}(w)= \int_{\cO_\rho}\sqrt{1-|\nabla w|^2}  \, dx   \quad {for\ }\, w\in \bX_0(\cO_\rho)
 \end{equation} 
with 
$$\bX_0(\cO_\rho):=\Big\{v\in C^{0,1}(\bar \cO_\rho):\,  v=\tilde u_\infty\ {\rm on}\ \partial \cO_\rho,\ \, |\nabla v|\leq 1\ \ {\rm a.e.\ in\ }\cO_\rho\Big\}.  $$
Then $\tilde u_\infty$ is weakly spacelike and achieves the maximizer of $\cI_{\rho}$. \medskip

 Next    {\it for any $\sigma\in(-\infty,\sigma_0]$, there exists $\theta=\theta(\sigma)\in (0,1)$  such that  
$$|\nabla \tilde u_\infty |\leq \theta_\sigma\quad{\rm in}\ \ \cO_{\rho,m} ,$$
where $\cO_{\rho,m}=\cQ_{\rho,m}\setminus \bigcup^{m_0}_{j=1} B_{\sigma}(p_j) $  with  $\cQ_{\rho,m}$ being the component   containing $B_{\theta_0R_0}(0)$.   }
 \smallskip

 Let 
 $$\cK_s=\big\{\overline{xy}\subset  \cO_{\rho,m}:\, x,y\in\partial\cO_{\rho,m},\ \, x\not=y,\ \,  |\tilde u_\infty(x)-\tilde u_\infty(y)|=|x-y|    \big\}.  $$
 Our aim is to show $\cK_s=\emptyset$.

\smallskip

If not,  we choose $x_1,x_2\in \partial\cO_{\rho,m}$ such that  $|\tilde u_R(x_1)-\tilde u_R(x_2)|=|x_1-x_2|$.
$$\bL_{x_1x_2}= \big\{x_t: \text{for $t$ belongs a maximal interval of $\R$ such that } x_t    \in   B_R\setminus \cP_{m_0}   \big\} , $$
where $x_t=x_1+t(x_2-x_1)$. 
Let $\bar x_1,\bar x_2$ be the ends points of $\bL_{x_1x_2}$,   then  either $\bL_{x_1x_2}$ could be extended to cross the boundary $\partial  \cQ_{\rho,m}$ twice, i.e. $\bar x_1,\bar x_2\in \partial \cQ_{\rho,m}$  or $\overline{\bL}_{x_1x_2}$ cross the boundary $\partial \cQ_{\rho,m}$ once i.e. 
  $\bar x_1\in \partial \cQ_{\rho,m}, \bar x_2\in \cP_{m_0}$  or  $\bL_{x_1x_2}$ stops by two point in $\cP_{m_0}$ i.e. $\bar x_1,\bar x_2\in \cP_{m_0}$. 

We apply \cite[Theorem 3.2]{BS82} to obtain that 
$$
\tilde u_\infty(x_t)=\tilde u_\infty(x_1)+t|x_1-x_2|\quad {\rm for\ all}\  x_t\in \overline{\bL}_{x_1x_2},
$$
 Particularly, we have that 
 \begin{equation}\label{ind-1-t-2-2}
|\tilde u_\infty(\bar x_1)-\tilde u_\infty(\bar x_2)|= |\bar x_1-\bar x_2|.
 \end{equation}

 If $\bar x_1,\bar x_2\in \partial \cQ_{\rho,m}$, then 
 $$|\tilde u_\infty(\bar x_1)-\tilde u_\infty (\bar x_2)|=0<|\bar x_1-\bar x_2|,  $$
 which contradicts (\ref{ind-1-t-2-2}). 
 
  If $\bar x_1\in \partial \cQ_{\rho,m}, \bar x_2\in \cP_{m_0}$ and we can set 
  $$\bar x_1 \in \bL_{x_1x_2}\cap \partial \cQ_{\rho,m} \quad{\rm and}\quad \bar x_2 \in \cP_{m_0},  $$
 then  $\tilde u_\infty(\bar x_1) =\rho \ll \tilde  u_\infty(p_{\bar j})$,
  and by (\ref{uppp-1--3}), 
  $$\rho\geq -\frac{\alpha_0}{2\pi}\ln |\bar x_1|-C_0, $$
  where $C_0\geq 0$ is independent of $\rho$.  
Thus, we obtain that 
\begin{align*} 
|\bar x_1|-R_0 \leq |\bar x_1-\bar x_2|  &= |\tilde u_\infty(\bar x_1)-\tilde u_\infty (\bar x_2)|
  \leq  \frac{\alpha_0}{2\pi}\ln |\bar x_1| + C_0,   
\end{align*}
 then we get contradictions if $|\bar x_1|$ is large enough, which is equivalent $- \rho >1$ large enough. 
 
 If   $\bar x_1,\, \bar x_2\in \cP_{m_0}$, we have that 
 $$
\tilde u_\infty(\bar x_1)=\tilde u_\infty(\bar x_2)+|\bar x_1-\bar x_2|\quad {\rm for\ all}\  x_t\in \overline{\bL}_{x_1x_2}.
$$
Let
$$\tilde w_\alpha(x)=\Phi_{2, \alpha}(x-\bar x_1) +\tilde u_\infty(\bar x_1),\quad x\in \cQ_{\rho,m}, $$
then $w_\alpha(\bar x_1)=\tilde u_\infty(\bar x_1)$ and there exist  $\bar \alpha\geq \alpha_0$ such that 
$$w_{\bar \alpha}\leq 2\rho\quad {\rm on}\ \partial \cQ_{\rho,m}. $$
By the comparison principle, we have that 
$$\tilde u_\infty \geq  w_{\bar \alpha}\quad {\rm in}\ \cQ_{\rho,m}, $$
which implies that 
$$ w_{\bar \alpha}(\bar x_1)-w_{\bar \alpha}(\bar x_2)\geq \tilde u_\infty(\bar x_1)-\tilde u_\infty(\bar x_2)= |\bar x_1-\bar x_2|,  $$
 which is impossible. \smallskip

As a consequence, we obtain that  $\cK_s=\emptyset$ and it follows by \cite[Corollary 4.2]{BS82} 
that $\tilde u_\infty$ is strictly spacelike in $\cQ_{\rho,m}$ and  
there exists $\theta_\epsilon\in[0,1)$ such that 
 \begin{equation}\label{grandient-1-R-2}
 |\nabla \tilde u_\infty|\leq \theta_\epsilon\quad {\rm in}\ \, \overline \cQ_{\rho,m}.   
 \end{equation}
 and then 
 $\tilde u_\infty$ is a classical solution of 
 \begin{equation}\label{eq 3.1-cla-R-2}
 \left\{
\begin{array}{lll}
\displaystyle\ \,  \cM_0 u= 0\ \ 
  {\rm in}\  \, \R^2 \setminus\cP_{m_0}, 
\\[2mm]
 \phantom{   }
 \displaystyle\lim_{|x|\to+\infty} u(x)=-\infty.  
 \end{array}
 \right.
 \end{equation}

Now we start to prove some qualitative properties of $\tilde u_\infty$. The nmain steps are the same as for case $N \geq 3$. 

{\bf Part 1: }   We show that   $\tilde u_\infty$ is a weak solution of the following problem:
 \begin{equation}\label{eq 3.1-ral-2}
 \left\{
\begin{array}{lll}
\displaystyle  \cM_0 u= \sum_{j=1}^{m_0} \alpha_j\delta_{p_j}\quad \    {\rm in}\  \,   \cD'(\R^2), 
\\[3mm]
 \phantom{    }
 \displaystyle\lim_{|x|\to+\infty} u(x)=-\infty.     
 \end{array}
 \right.
 \end{equation}

Fix $\bar R>\theta_0R_0$ and for  $\rho<0$, denote
  $$\cQ_{n,\rho}=\{x\in\R^2:\, \tilde u_n(x)>\rho\}, $$
then for $R>\bar R$,  there is $\rho_2\leq -u_{R_n}(p_{\bar j})$ such that  for $\rho\in(0,\rho_2)$
$$B_{\bar R}(0)\subset  \cQ_{n,\rho}.$$
 
 Let $w_{n,\rho}=\tilde u_{n}-\rho$, then it is the solution of 
\begin{equation}\label{eq 3.1-N-rho-1}
 \left\{
\begin{array}{lll}
\displaystyle  \cM_0 w_{n,\rho}= \sum^{m_i}_{j=1} \alpha_{i,j} \delta_{p_j}\quad
 & {\rm in}\  \   \cQ_{n,\rho}, 
\\[4mm]
 \phantom{ -\ \,   }
 \displaystyle w_{R,\rho}=0 \quad
 & {\rm on}\  \, \partial \cQ_{n,\rho}.   
 \end{array}
 \right.
 \end{equation}
 Taking the test function $\phi=w_{n,\rho}$ in (\ref{eee-1}) to derive that
 \begin{equation}  
 \int_{\cQ_{n,\rho}} \frac{|\nabla w_{n,\rho}|^2   }{\sqrt{1-|\nabla w_{n,\rho}|^2}} dx=  \sum^{m_0}_{j=1}\alpha_jw_{n,\rho}(p_j)\leq -\Phi_{2,\alpha_0}(\bar r)\alpha_0, \label{bound-1-2}
    \end{equation}
where $\bar r>\bar R$ such that  $\cQ_{n,\rho}\subset B_{\bar r}(0)$.

Firstly, we show the uniformly bound that 
 \begin{align*} 
  \int_{B_{\bar R}(0)} \frac{|\nabla \tilde u_{n} |   }{\sqrt{1-|\nabla \tilde u_{n}|^2}} dx &\leq   \int_{\cQ_{n,\rho}}  \frac{|\nabla w_{n,\rho} |   }{\sqrt{1-|\nabla w_{n,\rho}|^2}} dx 
   \\[1mm]& =   2   \int_{\cQ_{n,\rho}\cap \{|\nabla w_{n,\rho}|\geq \frac12\}} \frac{|\nabla w_{n,\rho}|^2   }{\sqrt{1-|\nabla w_{n,\rho}|^2}} dx
 \\&\qquad +   \int_{\cQ_{n,\rho} \cap \{|\nabla  w_{n,\rho}|<\frac12\}} \frac{|\nabla w_{n,\rho}|   }{\sqrt{1-|\nabla w_{n,\rho}|^2}} dx
  \\[1mm]& \leq   \Big(-2 \Phi_{2,\alpha_0}(\bar r)\alpha_0 +\frac{\sqrt{3}}3 |\cQ_{R,\rho} |\Big). 
  \end{align*}

For any $\varphi\in C^{0,1}_c(B_{\bar R}(0))$ such that $\varphi(x)=\varphi(p_j)$ for $x\in B_\epsilon(p_j)$ for any $j=1,\cdots, m_0$
and $\epsilon>0$ small, then ${\rm supp}(\nabla \varphi)\subset B_{\bar R}(0)\setminus \bigcup^{m_0}_{j=1} B_\epsilon(p_j)$ 
 \begin{align*} 
 \int_{B_{\bar R}(0) \setminus\big( \bigcup^{m_0}_{j=1} B_\epsilon(p_j)\big)} \frac{ \nabla \tilde u_{n} \cdot \nabla \varphi   }{\sqrt{1-|\nabla \tilde u_{n}|^2}} dx& =  \int_{B_R(0)} \frac{ \nabla \tilde u_{n} \cdot \nabla \varphi   }{\sqrt{1-|\nabla \tilde u_{n}|^2}} dx 
 = \sum^{m_0}_{j=1} \alpha_j \varphi(p_j)
 \\[1mm]&=  \int_{\R^2} \frac{ \nabla \tilde u_{\infty} \cdot \nabla \varphi   }{\sqrt{1-|\nabla \tilde  u_{\infty}|^2}} dx
 \\[1mm]&= \int_{B_{\bar R}(0)\setminus\big( \bigcup^{m_0}_{j=1} B_\epsilon(p_j)\big)} \frac{ \nabla \tilde u_{\infty} \cdot \nabla \varphi   }{\sqrt{1-|\nabla\tilde u_{\infty}|^2}} dx
  \end{align*}
and we have that as $n\to+\infty$
 \begin{align} \label{con-1-R-N}
  \frac{ \nabla \tilde u_{n}    }{\sqrt{1-|\nabla \tilde u_{n}|^2}} \to\frac{ \nabla \tilde u_{\infty}    }{\sqrt{1-|\nabla \tilde u_{\infty}|^2}} \quad{\rm weakly\ in\ }\big( L^1 \big(B_{\bar R}(0)\setminus\big( \bigcup^{m_0}_{j=1} B_\epsilon(p_j)\big), \R^2\big), 
  \end{align}
then by the upper semicontinuity of the area integral
\begin{align*} 
\int_{B_{\bar R}(0)\setminus\big( \bigcup^{m_0}_{j=1} B_\epsilon(p_j)\big)} \frac{|\nabla \tilde u_{\infty} |   }{\sqrt{1-|\nabla \tilde u_{\infty}|^2}} dx&\leq   \liminf_{n\to+\infty}  \int_{B_{\bar R}(0)\setminus\big( \bigcup^{m_0}_{j=1} B_\epsilon(p_j)\big)} \frac{|\nabla \tilde u_{n} |   }{\sqrt{1-|\nabla \tilde u_{n}|^2}} dx 
\\[1mm]&  \leq    \Big(-2 \Phi_{2,\alpha_0}(\bar r)\alpha_0 +\frac{\sqrt{3}}3 |\cQ_{R,\rho} |\Big),
  \end{align*}
which,  by the arbitrary of $\epsilon>0$,  implies that 
\begin{align*} 
\int_{B_{\bar R}(0)} \frac{|\nabla\tilde  u_{\infty} |   }{\sqrt{1-|\nabla\tilde  u_{\infty}|^2}} dx  \leq    \Big(-2 \Phi_{2,\alpha_0}(\bar r)\alpha_0 +\frac{\sqrt{3}}3 |\cQ_{R,\rho} |\Big).
  \end{align*}
As a consequence, by the arbitrary of $\bar R>\theta_0R_0$, we have that  
$$\frac{|\nabla \tilde u_{\infty} |   }{\sqrt{1-|\nabla\tilde u_{\infty}|^2}}\in L^1_{\loc} (\R^2)$$ 
and we obtain that $\tilde  u_\infty \in \bX_\infty(\R^2)$,
  where
   $$\bX_\infty(\R^2) =\Big\{w\in C^{0,1}(\R^2):\, |\nabla w|<1\ {\rm\ in}\ \R^2 \setminus \cP_{m_0},\ \, \frac{|\nabla w|}{\sqrt{1-|\nabla w|^2}}  \in L^1_{\loc}(\R^2)\Big\}. $$
   Moreover, from (\ref{con-1-R-N}), we get that for any $\epsilon>0$ small, 
 $$\int_{\cO_\epsilon}  \frac{ \nabla  \tilde u_{\infty} \cdot \nabla \varphi   }{\sqrt{1-|\nabla  \tilde u_{\infty}|^2}}dx=0\quad \text{  for any $\varphi\in C^{0,1}_c(\R^2)$ with ${\rm supp}(\varphi)\subset \cO_\epsilon:=\R^2\setminus \bigcup_{j=1}^{m_0}B_\epsilon(p_j)$. } $$
 By (\ref{grandient-1-R-2}) and  \cite[Theorem 3.6]{BS82},   $\tilde u_\infty\in C^{2,\gamma}_{\loc}(\cO_{2\epsilon})$, by the arbitrary of $\epsilon$, we get that  
$\tilde u_\infty$ verifies the equation (\ref{eq 3.1-cla-R-2}) in the classical sense.

Now we take $\xi\in C^{0,1}_c(\R^2)$  with  ${\rm supp}( \xi)\subset   \R^2 \setminus\cP_{m_0}$
and 
   $$\int_{\R^2}  \frac{\nabla \tilde u_\infty\cdot\nabla \xi}{\sqrt{1-|\nabla \tilde u_\infty|^2}} \,dx =0\quad \text{  for any $\xi\in C^{0,1}_c (\R^2\setminus\cP_{m_0})$.} $$ 
   Now we apply Proposition \ref{cr 2.1-iso} to obtain that $\tilde u_\infty$ is a weak solution 
\begin{equation}\label{eq 1.1-R-w-2}
  \cM_0 u= \sum_{j=1}^{m_0} k_{p_j}  \delta_{p_j}   \quad
  {\rm in}\  \, \cD'\big(B_R(0)\big)
  \end{equation}
for some $k_{p_j}\in \R$.  That is, 
\begin{equation}
\int_{ \R^2 }  \frac{\nabla \tilde u_\infty\cdot\nabla \xi}{\sqrt{1-|\nabla \tilde u_\infty|^2}} \,dx =\sum_{j=1}^{m_0}  k_{p_j}  \xi(p_j),\quad\, \forall \xi\in C^{0,1}_c (\R^2).
 \end{equation}
 

Again we can prove that $$k_{p_j} =\alpha_j\quad\text{ for  } j=1,\cdots,m_0$$ and 
$u_\infty$ is a weak solution (\ref{eq 1.1-R-w-N}). \smallskip

{\bf Asymptotic behavior at poles: }  At the Dirac poles with positive multiplicities, we can obtain that 
$\tilde u_\infty$ is light-cone singular at $\cP_{m_0}$ with  the behavior
$$\lim_{|x-p_j|\to0^+}|\nabla \tilde u_\infty(x)|=1.  $$
Moreover, the  vertex of the cone is upwards, i.e.  $p_j$ isn't a local minimal point of $\tilde u_\infty$.  \smallskip

{\bf Asymptotic behavior at infinity: }  Recall  that 
 $${\rm Res}[\tilde u_{\infty}]=\int_{\partial B_R} \frac{\nabla \tilde u_\infty (x)}{\sqrt{1-|\nabla \tilde u_\infty(x)|^2}}\cdot \frac{x}{R}dH_{1}(x). $$
 
Since $\tilde u_\infty\leq 0$ in $\R^2$, then  we have that $\vec{a}=0$. 
Next we compute the residue:  for $R>R_0$ and $\epsilon\in(0,\frac12\min\{|p_j-p_{j'}|,\ j\not=j'\})$ 
\begin{align*}
0&=\int_{B_R(0)\setminus \big(\cup^{m_0}_{j=1} B_\epsilon(p_j)\big)} \cM_0 \tilde u_{\infty}(x) dx
\\&=\int_{\partial B_R(0)} \frac{\nabla \tilde u_\infty (x)}{\sqrt{1-|\nabla \tilde u_\infty(x)|^2}}\cdot \frac{x}{R}dH_{1}(x)-
\sum^{m_0}_{j=1}\int_{\partial B_\epsilon(p_j)} \frac{\nabla \tilde u_\infty (x)}{\sqrt{1-|\nabla \tilde u_\infty(x)|^2}}\cdot \frac{x-p_j}{|x-p_j|}dH_{1}(x)
\\&\to   {\rm Res}[\tilde u_{\infty}]+\alpha_0\quad {\rm as}\ \, \epsilon\to0^+, 
 \end{align*} 
which implies that 
$${\rm Res}[\tilde u_{\infty}]=- \alpha_0.$$

Thus, it follows by  (\ref{bb1-1})    that 
\begin{align}\label{bh 1-2} 
 \tilde u_{\infty}(x)=  -\frac{\alpha_0}{2\pi} \ln |x|   +c+o(1)\quad {\rm as}\ \, |x|\to+\infty. 
 \end{align}

 {\bf Uniqueness: } If Eq.(\ref{eq 1.1-fund}) has  two solution 
 such that 
  $$w_i(x)=  -\frac{\alpha_0}{2\pi} \ln |x|   +c+o(1)\quad {\rm as}\ |x|\to+\infty,$$
 then for any $\epsilon>0$
 $w_1\pm \epsilon$ is a solution Eq.(\ref{eq 1.1-fund}) and by comparison principle, 
$ w_1-\epsilon\leq w_2\leq w_1+\epsilon$ in $\R^2$.  The arbitrary of $\epsilon>0$ implies that  
$w_1\leq w_2\leq w_1$ in $\R^2$. The uniqueness follows and this finishes the proof of Theorem \ref{teo 1-fund}.    \hfill$\Box$\medskip

\section{Extension models}

\subsection{Model with positive and negative Dirac masses}

Reset
$$g_n=g_{n,+}-g_{n,-}$$
where
$$
g_{n,+}(x)= \sum^{m_1}_{j=1} \alpha_j \eta_n(x-p_j), \quad g_{n,-}(x)= \sum^{m_0}_{j=m_1+1} \beta_j \eta_n(x-p_j) \quad {\rm for}\ \, x\in\R^N, 
$$
then $\{g_n\}_{n\in\N}, \{g_{n,\pm}\}_{n\in\N}\in C^2(\R^2)$ are   sequences of smooth functions  such that 
$${\rm supp}(g_n)\subset\, \bigcup_{j=1,\cdots, m_0} B_{\frac2{n}}(p_j),   $$
 $$\lim_{n\to +\infty} g_{n,+}= \sum^{m_1}_{j=1} \alpha_j \delta_{p_j},\quad  \lim_{n\to +\infty} g_{n,-}= \sum^{m_0}_{j=m_1+1} \beta_j \delta_{p_j}$$
and then 
 $$\lim_{n\to +\infty} g_{n}=\sum^{m_1}_{j=1} \alpha_j \delta_{p_j}-\sum^{m_0}_{j=m_1+1} \beta_j \delta_{p_j}\  \text{ in the distributional sense.}   $$

To prove Theorem \ref{teo 1-fund+-},  we need  to consider  the proximation problem 
\begin{equation}\label{eq 3.1+-}
 \left\{
\begin{array}{lll}
\displaystyle  \cM_0 u= g_n \quad
  {\rm in}\  \   \R^N, 
\\[3mm]
 \phantom{    }
 u(x)=0 \quad  {\rm on}\  \   \partial \R^N,
 \end{array}
 \right.
 \end{equation}
with $N\geq 3$.

 \begin{proposition}\label{pr 2.1+-}

If $N\geq 3$,    then problem (\ref{eq 3.1+-}) has unique classical solution $u_{n}\in C^{2,\gamma}( \R^N)$ with $\gamma\in(0,1)$.  
 Moreover, 
 
 $(i)$   $u_n$ is   the minimizer  of the energy functional  
  \begin{equation}\label{eq 3.1-do 2+-}
\cJ_{\infty}(w)= \int_{\R^N}\big(1- \sqrt{1-|\nabla w|^2}\big)\, dx - \int_{\R^N}g_n wdx \quad {for\ }\, w\in \bX_\infty (\R^N),   
 \end{equation} 
 where 
 $$\bX_\infty (\R^N)=\{v\in C^{0,1}(\R^N): |\nabla v|\leq 1\ {\rm a.e.\ in}\ \R^N, \ \,  \int_{\R^N}\big(1- \sqrt{1-|\nabla w|^2}\big)dx<+\infty \big\}. $$
 
 $(ii)$ There holds $-u_{n,-}\leq u_n\leq u_{n,+}$ in $ \R^N$, where $u_{n,\pm}$ are the positive solutions of 
\begin{equation}\label{eq 3.1+-iso+}
 \left\{
\begin{array}{lll}
\displaystyle \  \cM_0 u=\pm g_{n,\pm} \quad
 & {\rm in}\  \   \R^N, 
\\[4mm]
 \phantom{    }
 \displaystyle \lim_{|x|\to+\infty}u(x)=0.
 \end{array}
 \right.
 \end{equation}

 \end{proposition}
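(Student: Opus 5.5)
The plan is to reproduce the exhaustion scheme of Lemma \ref{lm 3.1} and Proposition \ref{pr 2.1-n-w}, replacing the monotonicity in $R$ (no longer available, since $g_n$ changes sign) by two-sided barriers. For $R>R_0$ let $u_{n,R}$ be the unique classical solution of $\cM_0u=g_n$ in $B_R(0)$ with $u=0$ on $\partial B_R(0)$. It exists by Lemma \ref{lm 2.1-Existence} and Lemma \ref{lm 2.1-reg}, because $g_n\in C^2$ is bounded and the boundary datum is zero. It is also the maximizer of $\int_{B_R}(\sqrt{1-|\nabla w|^2}+g_nw)\,dx$ over $\bX_0(B_R(0))$.

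Let $u_{n,\pm,R}$ be the solutions in $B_R(0)$ with sources $\pm g_{n,\pm}$ and zero boundary data. Since $-g_{n,-}\le g_n\le g_{n,+}$, Lemma \ref{lm cp} gives
$$-u_{n,-,R}\le u_{n,R}\le u_{n,+,R}\quad\text{in } B_R(0).$$
By the symmetry $u\mapsto -u$ of $\cM_0$, we have $u_{n,-,R}=-(\text{solution with source } g_{n,-})$ up to sign. Both $u_{n,\pm,R}$ are the monotone-in-$R$ positive solutions treated in Proposition \ref{pr 2.1-n-w}. They converge to $u_{n,\pm}$, which are bounded by $\min\{\Phi_{N,\alpha_0}(0),\Phi_{N,T_0\alpha_0}\}$ and by the analogous bound with $\beta_0$, and which decay like $|x|^{2-N}$.

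Hence $|u_{n,R}|\le\max\{\Phi_{N,\alpha_0}(0),\Phi_{N,\beta_0}(0)\}$ uniformly in $R$, and $|\nabla u_{n,R}|<1$. By Arzel\`a--Ascoli, a subsequence $R_k\to\infty$ gives $u_{n,R_k}\to u_n$ in $C^{0,\gamma}_{\loc}$, and passing to the limit yields $-u_{n,-}\le u_n\le u_{n,+}$. This proves $(ii)$ and shows $u_n(x)\to0$ as $|x|\to\infty$. Uniqueness of the limit then follows from the $\epsilon$-shift comparison argument used in the uniqueness part of Theorem \ref{teo 1-fund3}: if $\bar u$ is another decaying solution, then $\bar u\pm\epsilon$ dominates or is dominated by $u_n$ on $\partial B_R$ for large $R$. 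Lemma \ref{lm cp} applies because $g_n$ is a bounded measure. Uniqueness in turn makes the whole family $u_{n,R}$ converge.

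Regularity is the step I expect to be the main obstacle. By \cite[Lemma 1.3]{BS82}, $u_n$ is weakly spacelike and maximizes the area functional with source $g_n$ relative to its own boundary values on every ball $B_\varrho(0)$. To get strict spacelikeness on $B_\varrho$ I would use \cite[Theorem 3.6, Corollary 4.2]{BS82}, which requires excluding light segments with endpoints on $\partial B_\varrho$. Since $g_n$ is bounded, a light segment in the interior would have to extend to the boundary of the domain (\cite[Theorem 3.2]{BS82}). Taking $\varrho$ so large that $|u_n|\le \epsilon_\varrho$ on $\partial B_\varrho$ gives
$$|u_n(x)-u_n(y)|\le 2\epsilon_\varrho<|x-y|$$
for endpoints $x,y\in\partial B_\varrho$ with $|x-y|$ of order $\varrho$. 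For short chords near the boundary, I would instead use the exterior gradient bound of Proposition \ref{pr grad-ext-2.1}, since $\cM_0u_n=0$ outside $B_{R_0}(0)$.

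Having excluded light segments, we get $|\nabla u_n|\le\theta_n<1$ on $B_\varrho$. Combined with the exterior bound, this gives $|\nabla u_n|\le\theta_n$ on all of $\R^N$. Then Lemma \ref{lm 2.1-reg-int} (with a $C^{0,\gamma}$ right-hand side, via the Schauder estimates of \cite{GT83}) yields $u_n\in C^{2,\gamma}(\R^N)$. The weak formulation for $\phi\in C^{0,1}_c$ passes to the limit from the $u_{n,R}$ exactly as in Proposition \ref{pr 2.1-n-w}.

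For $(i)$, finiteness of the energy uses $1-\sqrt{1-t^2}\le t^2/\sqrt{1-\theta_n^2}$ for $t\le\theta_n$ together with $\int|\nabla u_n|^2<\infty$. The latter follows from the $|x|^{1-N}$ gradient decay given by Theorem \ref{theo 2.1-inf} when $N\ge3$. Minimality then comes from convexity of $\cJ_\infty$ and the Euler--Lagrange identity, tested against $w-u_n$ after truncating $w-u_n$ with cutoffs $\chi_R$. The truncation errors vanish as $R\to\infty$ by the decay of $\nabla u_n$ and because $g_n$ has compact support.
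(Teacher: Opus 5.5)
Your scheme is the paper's: Dirichlet approximations on $B_R(0)$, two-sided comparison with the positive-source problems, the bounds $\Phi_{N,\alpha_0}(0)$ and $\Phi_{N,\beta_0}(0)$, a $C^{0,\gamma}_{\loc}$ limit, and \cite[Lemma 1.3]{BS82}. You rightly notice that monotonicity in $R$ is lost. Subsequence extraction plus uniqueness repairs a point the paper covers only by ``as the proof of Proposition~\ref{pr 2.1-n-w}''. (Your inequality $-u_{n,-,R}\le u_{n,R}$ needs $u_{n,-,R}$ to be the positive solution with source $+g_{n,-}$; the statement has the same sign slip.) Two of the steps you add, however, fail as written.

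\textbf{Regularity.} Handling short chords of $\partial B_\varrho(0)$ with Proposition~\ref{pr grad-ext-2.1} is circular. That result concerns classical solutions in an exterior domain, and at this point $u_n$ is only a weakly spacelike maximizer; its classicality outside $B_{R_0}(0)$ is exactly what you are proving. You do not need acausal boundary data at all. Suppose $u_n$ had a light segment through some point $z$, and take any $\varrho>|z|$. Since $u_n$ maximizes on $B_\varrho(0)$ relative to its own boundary values, \cite[Theorem 3.2]{BS82} extends the segment to the full chord of $B_\varrho(0)$ through $z$, along which $u_n$ is affine with slope $1$. That chord has length at least $2\sqrt{\varrho^2-|z|^2}$, while $|u_n|\le M:=\max\{\Phi_{N,\alpha_0}(0),\Phi_{N,\beta_0}(0)\}$; this is impossible once $\varrho^2>|z|^2+M^2$. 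So $u_n$ is spacelike on every ball. The Bartnik--Simon regularity \cite[Theorem 4.1]{BS82} then gives strict spacelikeness and local $C^{2,\gamma}$ bounds, and only afterwards may you invoke Proposition~\ref{pr grad-ext-2.1} for a global $\theta_n<1$. (The paper merely asserts classicality from \cite[Lemma 1.3]{BS82}, which does not give it. Its level-set device from Proposition~\ref{pr 2.1-n-w} does not transfer, since $u_n$ changes sign.)

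\textbf{Minimality.} The truncation error $\int_{\R^N}(w-u_n)\,\nabla u_n\cdot\nabla\chi_R\,(1-|\nabla u_n|^2)^{-1/2}\,dx$ does not vanish for general $w\in\bX_\infty$. The decay $|\nabla u_n|=O(|x|^{1-N})$ against $|\nabla\chi_R|\lesssim R^{-1}$ on an annulus of volume of order $R^N$ only gives $O(\sup_{B_{2R}\setminus B_R}|w-u_n|)$, and nothing in $\bX_\infty$ forces $w\to0$ at infinity. Concretely, $\int_{\R^N}\eta_n\,dx=1$, so $\int_{\R^N}g_n\,dx=\alpha_0-\beta_0$. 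Then $u_n+c\in\bX_\infty$ and $\cJ_\infty(u_n+c)=\cJ_\infty(u_n)-c(\alpha_0-\beta_0)$; for $w=u_n+c$ your error term equals $c(\alpha_0-\beta_0)$ for all large $R$. Hence $(i)$, with $\bX_\infty$ as literally defined, is false whenever $\alpha_0\neq\beta_0$. The same holds for Theorem~\ref{teo 1-fund3}$(c)$, where $\cJ_N(u+c)=\cJ_N(u)-c\alpha_0$. The missing ingredient is the condition $\lim_{|x|\to\infty}w(x)=0$ on competitors, as in \cite{BAP}, whose admissible class lies in $D^{1,2}(\R^N)$. With that condition the error is $O(\sup_{B_{2R}\setminus B_R}|w-u_n|)\to0$, and your convexity argument proves $(i)$. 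Uniqueness of the minimizer then follows from strict convexity of $p\mapsto1-\sqrt{1-|p|^2}$.
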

\noindent{\bf Proof. }  
It follows by \cite[Theorem 4.1]{BS82} or \cite[Corollary 4.3]{BS82} that 
$$
 \left\{
\begin{array}{lll}
\displaystyle  \cM_0 u=g_n\ \  (g_{n,\pm}\ {\rm resp.}) \quad
 & {\rm in}\  \   B_R(0), 
\\[2mm]
 \phantom{ -\ \,   }
 \displaystyle u=0 \quad
 & {\rm on}\  \, \partial B_R(0)  
 \end{array}
 \right.
$$
has a unique classical solution $w_{n,R}$ ($w_{n,R,\pm}$ respectively). Note that $w_{n,R}$ is the maximizer  of 
$$\cI_{R}(w)= \int_{B_R(0)} \Big(\sqrt{1-|\nabla w|^2}\, + g_n w \Big) dx \quad {for\ }\, w\in \bX_R(B_R(0)). $$
Moreover,  by the comparison principle, we have that 
\begin{equation}\label{bound+-iso-n+-0}
-w_{n,R,-}\leq w_{n,R}\leq w_{n,R,+}\quad\text{ in $B_R(0)$.}
 \end{equation} 
Note that when $N\geq 3$, 
\begin{equation}\label{bound+-iso-n+-1}
w_{n,R,+}\leq \Phi_{N,\alpha_0}(0)\quad {\rm and}\quad w_{n,R,-}\leq \Phi_{N,\beta_0}(0). 
 \end{equation} 
 As the proof of Proposition \ref{pr 2.1-n-w},   there is $u_{n}\in C^{0,1}(\R^N)$ such that $|\nabla u_n|\leq 1$ and for some $\gamma\in(0,1)$ 
$$w_{n,R}\to u_n\quad {\rm in}\ C^{0,\gamma}_{\loc}(\R^N) \quad {\rm as}\ \, R\to+\infty. $$
It follows by \cite[Lemma 1.3]{BS82} that $u_n$ is a weak and classical solution of (\ref{eq 3.1+-}) and 
by (\ref{bound+-iso-n+-0})
 \begin{equation}\label{bound+-iso-n+-}
-u_{n,-}\leq u_{n}\leq u_{n,+}\quad\text{ in } \R^N. 
 \end{equation}
We complete the proof.    \hfill$\Box$\medskip


\noindent{\bf Proof of Theorem \ref{teo 1-fund+-}. }  
It follows by Proposition \ref{pr 2.1+-} that (\ref{eq 3.1+-}) has a unique weak solution $u_n$, 
$$-u_{n,-}\leq  u_n\leq u_{n,+} \quad {\rm in}\  \R^N$$
and from the proof of Theorem \ref{teo 1-fund3}, there hold
$$0\leq u_{n,+}\leq \min\{\Phi_{N,\alpha_0}(0), \Phi_{N,T_0\alpha_0}(x)\},\quad  0\leq u_{n,-}\leq \min\{\Phi_{N,\beta_0}(0), \Phi_{N,T_0\beta_0}(x)\}, $$
and the limits as $n\to+\infty$, denoting  $u_{\infty,\alpha_0},\, u_{\infty, \beta_0}$ respectively, 
which  are the positive solutions of 
\begin{equation}\label{eq 3.1+-iso+N}
 \left\{
\begin{array}{lll}
\displaystyle  \cM_0 u=\sum^{m_1}_{j=1}\alpha_j\delta_{p_j} \quad
 & {\rm in}\  \   \R^N, 
\\[4.5mm]
 \phantom{     }
 \displaystyle\lim_{|x|\to+\infty} u(x)=0    
 \end{array}
 \right.
 \end{equation} 
and 
\begin{equation}\label{eq 3.1+-iso-N}
 \left\{
\begin{array}{lll}
\displaystyle  \cM_0 u=  \sum^{m_0}_{j=m_1+1}\beta_j\delta_{p_j} \quad
 & {\rm in}\  \   \R^N, 
\\[4.5mm]
 \phantom{   }
 \displaystyle\lim_{|x|\to+\infty} u(x)=0.       
 \end{array}
 \right.
 \end{equation} 
 Together with the fact that $|\nabla u_R|\leq 1$ a.e., then   there  is $ u_{\infty}\in C^{0,1}_{\loc}(\R^N)$ such that
 $$  |\nabla u_{\infty}|\leq 1\quad {\rm a.e.\ in}\ \ \R^N$$
 and     for $\gamma\in(0,1)$
 $$ u_{n}\to   u_{\infty}\quad{\rm in}\ C^{0,\gamma}_{\loc}(\R^N)\quad {\rm as}\ \, n\to+\infty.   $$
Observe that 
$$ -u_{\infty,\beta_0}\leq u_\infty\leq u_{\infty,\alpha_0}\quad{\rm in}\ \, \R^N$$
and $u_{\infty,\beta_0}$ and $u_{\infty,\alpha_0}$ decays with the rate of $|x|^{2-N}$ at infinity.

 \vskip2mm
Next we show that for any $\rho>0$ small, there exists $\theta_\rho\in(0,1)$ such that 
$$|\nabla u_\infty|<\theta_\rho\quad{\rm  in}\ \, \cO_\rho\setminus \bigcup_{j=1}^{m_0} B_{\rho}(p_j), $$
where $ \cO_\rho=\{x\in\R^N:\, u_\infty(x)>\rho \}\cup \{x\in\R^N:\, u_\infty(x)<-\rho \}$, which is bounded.   
Let 
$$\cK_{0}=\big\{\overline{p_ip_j}: i\not=j,|\tilde u_\infty(p_i)-\tilde u_\infty(p_j)|=|p_i-p_j|\big\}, $$
then we show $\cK_0=\emptyset$. If not, there is a contradiction with  (\ref{con-classic-1}). Now we can show 
$$\cK_{s}:=\big\{\overline{x_1x_2}: x_1,x_2\in \partial \cO_\rho,  x_1\not=x_2,|\tilde u_\infty(x_1)-\tilde u_\infty(x_2)|=|x_1-x_2|\big\}=\emptyset, $$
which leads to our argument by our previous proof.

Next we show $\cup_{\rho>0}\cO_\rho=\R^N\setminus \{x\in\R^N: u_\infty(x)=0\}^o$, where $\{x\in\R^N: u_\infty(x)=0\}^o$
is the set of  the inner points in  $\{x\in\R^N: u_\infty(x)=0\}$.  Obviously, $\cM_0 u_\infty=0$ in $\{x\in\R^N: u_\infty(x)=0\}^o$ in 
the classical sense. This means, $\cP_{m_0}\cap \{x\in\R^N: u_\infty(x)=0\}^o=\emptyset$.

As a consequence,  we can show that $u_\infty$ is a weak solution of 
$$
 \left\{
\begin{array}{lll}
\displaystyle   \cM_0 u = \sum^{m_1}_{j=1}\alpha_j\delta_{p_j} - \sum^{m_0}_{j=m_1+1}\beta_j\delta_{p_j}  \quad
  {\rm in}\  \, \cD'(\R^N), 
\\[5mm]
 \phantom{    }
 \displaystyle \lim_{ |x|\to+\infty}u(x)=0       
 \end{array}
 \right.
$$
and a classical solution of 
$$  \cM_0 u = 0\quad  {\rm in}\  \,  \R^N\setminus \cP_{m_0}. $$
We omit the detailed proof.  
 
 Furthermore, direct computation shows that 
$$\int_{\partial B_R(0)}\frac{\nabla u_R}{\sqrt{1-|\nabla u|^2} } \cdot  \frac{x}{|x|}dH_{1} =\alpha_0-\beta_0,   $$
then 
$$u_{\infty}(x)=\frac1{|\partial B_1(0)|} \big(\alpha_0-\beta_0 \big) |x|^{2-N}   +O(|x|^{1-N})\quad {\rm as}\ |x|\to+\infty.  $$
 The remainder arguments are standard. \hfill$\Box$\medskip

\begin{remark}
The condition (\ref{con-classic-1}) is used to rule out the case that 
$$u(x_1)=u(x_2)+|x_1-x_2|\quad {\rm for}\ \,  x_1\in \cP_{m_1,+}, x_2\in \cP_{m_2,-}$$
which guarantee the regularity $|\nabla u|<1$ in $\R^N\setminus \cP_{m_0}$.  From the proof,  it could be replaced by a shaper condition 
$$ u_{N, \alpha_0}(p)+u_{N, \beta_0}(q) >|p-p|\quad {\rm for}\ \, p\in \cP_{m_1,+},\ q\in \cP_{m_2,-}, $$
where $u_{N, \alpha_0}, u_{N, \beta_0}$ are positive solutions of (\ref{eq 3.1+-iso+N}) and (\ref{eq 3.1+-iso-N}) respectively.

\end{remark}

 \subsection{Model with infinitely many light-cones }
 
  Denote  $\cP_{\infty}$  the set of  the light-cone  singularities 
 \begin{equation}\label{sing set-inf1}
 \cP_{\infty}=\Big\{p_j\in\R^N\!: j\in\N,\ |p_{j}-p_{j'}|>0\ {\rm for}\ j\not=j'\   \Big\}. 
  \end{equation}
 We construct  the Hypersufaces having infinitely many light-cones with vertices  $ \cP_{\infty}$ by considering the equation
 \begin{equation}\label{eq 1.1-fund-inf}
 \left\{
\begin{array}{lll}
\displaystyle\ \, \cM_0 u= \sum^{\infty}_{j=1} \alpha_j \delta_{p_j}\quad
 & {\rm in}\  \,   \R^N, 
\\[4mm]
 \phantom{   }
 \displaystyle \lim_{|x|\to+\infty}u(x)=0,
 \end{array}
 \right.
  \end{equation}
 where $\alpha_j>0$ and $p_j\in \cP_{\infty}$.

 {\it Here a function $u$ is said to be a weak solution of (\ref{eq 1.1-fund-inf}) if 
$u\in C^{0,1}_{\loc}(\R^N)\cap C^2_{\loc}(\R^N\setminus \cP_{\infty})$ such that 
$\frac{|\nabla u|}{\sqrt{1-|\nabla u|^2}}\in L^1_{\loc}(\R^N)$, $\displaystyle \lim_{ |x|\to+\infty}u(x)=0$ and 
 \begin{equation}\label{fun 1 weak form-inf}
\int_{\R^N} \frac{\nabla u(x) \cdot \nabla \phi(x)}{\sqrt{1-|\nabla u(x)|^2}} dx=\sum^{\infty}_{j=1} \alpha_j\phi(p_j)
\quad{\rm for\ any}\ \, \phi\in C^{0,1}_c(\R^N).  
\end{equation}}

 Let us first consider the case in which $\cP_{\infty}$ is bounded  and  has only one cluster point.

 \begin{theorem}\label{teo 1-fund-inf1}
 Let   $N\geq 3$, $\cP_{\infty}\subset B_{\frac{1}{2}R_0}(0)$ be  given in (\ref{sing set-inf1}) satisfy
  $\displaystyle \lim_{j\to+\infty}p_j= {\bf p}$   and 
 $$    \alpha_j>0 \quad{\rm and}\quad   \alpha_\infty=\sum^{\infty}_{j=1} \alpha_j<+\infty, $$
 then  Eq.(\ref{eq 1.1-fund-inf}) has a minimal positive solution $u_{b,\infty}\in C^2(\R^N\setminus \cP_{\infty})\cap C^{0,1}(\R^N)$  satisfying that 
 $\cP_{\infty}$ is the set of  light-cone singularities of $u_{b,\infty}$ and 
 $$  u_{b,\infty}(x)  = c_N \alpha_\infty   |x|^{2-N}+O( |x|^{1-N})\quad {\rm as}\ \, |x|\to+\infty$$ 
where 
$c_N=\frac{\Gamma(\frac{N}{2})}{2\pi^{\frac N2}}=\frac1{|\partial B_1(0)|}$.  Furthermore,  we have

 \smallskip

$(a)$ there exist $\lambda_j\in\R$ with $j=1,\cdots, m_0$ such that 
$$\lim_{|x-p_j|\to0^+}u_{b,\infty}(x)=\lambda_j$$
and
$$ |\lambda_j-\lambda_{j'}|< |  p_j-p_{j'}| \quad{\rm for}\ j\not=j'. $$
 
 
 $(b)$ The function $u_{b,\infty}$ verifies 
 \begin{equation}\label{eq 1.1-fund-cl-inf1}
 \left\{
\begin{array}{lll}
\displaystyle  \cM_0 u =0  \quad
  {\rm in}\  \, \R^N\setminus \overline\cP_{\infty}, 
\\[3mm]
 \phantom{    }
 \displaystyle \lim_{|x|\to+\infty}u(x)=0.  
 \end{array}
 \right.
 \end{equation}
 
 $(c)$  $u_{b,\infty}$ is the minimizer of the functional  
 $$
\cJ_\infty(w)= \int_{\R^N}\big(1- \sqrt{1-|\nabla w|^2}\big)\, dx -\sum_{j=1}^{\infty}\alpha_j w(p_j)  \quad {for\ }\, w\in \bX_\infty(\R^N).
$$
\end{theorem}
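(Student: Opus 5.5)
The plan is to approximate by finitely many masses and pass to the limit, using the monotonicity of Corollary \ref{cr 3.1-mont}. For each $k\in\N$ let $u_k:=u_{N,\alpha_{0,k}}$ be the solution given by Theorem \ref{teo 1-fund3} for the finite set $\{p_1,\dots,p_k\}$ with weights $\alpha_1,\dots,\alpha_k$, where $\alpha_{0,k}=\sum_{j\le k}\alpha_j$.

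\textbf{Uniform bounds and the limit.} By Corollary \ref{cr 3.1-mont}, $u_k$ is nondecreasing in $k$. Theorem \ref{teo 1-fund3} gives
$$\Phi_{N,\alpha_j}(x-p_j)\le u_k(x)\le \Phi_{N,\alpha_{0,k}}(0)\le \Phi_{N,\alpha_\infty}(0).$$
For the decay I will repeat the comparison argument of Proposition \ref{pr 2.1-n-w}, Part 2, outside $B_{R_0}(0)$. This yields $u_k\le \Phi_{N,T_0\alpha_\infty}$ with $T_0$ independent of $k$, since it depends only on $R_0$ and the upper bound $\Phi_{N,\alpha_\infty}(0)$. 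Each $u_k$ is $1$-Lipschitz, so $u_k\uparrow u_{b,\infty}$ locally uniformly. The limit satisfies $|\nabla u_{b,\infty}|\le1$ a.e. and tends to $0$ at infinity.

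\textbf{Minimality.} Let $v>0$ be any solution. Then $v$ is in particular a supersolution for each finite truncation. A comparison on large balls, as in the uniqueness part of Theorem \ref{teo 1-fund3} via Lemma \ref{lm cp}, gives $u_k\le v$ for every $k$, hence $u_{b,\infty}\le v$.

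\textbf{Interior regularity.} Away from $\overline{\cP}_\infty=\cP_\infty\cup\{{\bf p}\}$ I will reproduce the no-light-segment argument of Theorem \ref{teo 1-fund3} on $\cO_\rho=\{u_{b,\infty}>\rho\}\setminus\bigcup_j B_\sigma(p_j)$.
\begin{itemize}
\item Segments with an endpoint on $\{u_{b,\infty}=\rho\}$ are excluded because the total height is at most $\Phi_{N,\alpha_\infty}(0)$.
\item Segments joining two vertices are excluded by comparison with $\Phi_{N,\bar\alpha}(\cdot-\bar x_1)$ shifted, exactly as before.
\end{itemize}
Then \cite[Theorem 4.1, Corollary 4.2]{BS82} gives strict spacelikeness. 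Lemma \ref{lm 2.1-reg-int} then gives $C^{2,\gamma}$ convergence, which proves (b).

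\textbf{Weak formulation.} To identify the weak form, I will first bound the flux. Testing with $(u_k-\rho)_+$ as in the proof of Theorem \ref{teo 1-fund3} gives
$$\int \frac{|\nabla u_k|}{\sqrt{1-|\nabla u_k|^2}}\le 2\Phi_{N,\alpha_\infty}(0)\alpha_\infty+C$$
on compact sets, uniformly in $k$. Next I will pass to the limit in
$$\int\frac{\nabla u_k\cdot\nabla\phi}{\sqrt{1-|\nabla u_k|^2}}=\sum_{j\le k}\alpha_j\phi(p_j).$$
The right side converges because $\sum\alpha_j<\infty$.

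\textbf{Main obstacle.} I expect the main difficulty to be this passage to the limit near the cluster point ${\bf p}$, where smooth convergence is unavailable. Proposition \ref{cr 2.1-iso} does not apply there, since the singular set is not finite. I would handle it using that $\phi$ is Lipschitz. Choose $\phi$ constant on a small ball $B_\delta({\bf p})$, which contains all but finitely many $p_j$. On the complement of that ball and of small balls around the finitely many remaining $p_j$, the convergence is smooth. The Lipschitz modification costs at most $\|\nabla\phi\|_\infty$ times the flux mass near ${\bf p}$, which is controlled by the tail $\sum_{j>J}\alpha_j$ via the divergence theorem on level sets. Letting $\delta\to0$ should then give the weak formulation.

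\textbf{Remaining statements.}
\begin{itemize}
\item The asymptotics at infinity follow from Theorem \ref{theo 2.1-inf} with $\vec a=0$ and ${\rm Res}=-\alpha_\infty$, as in the finite case, and $c=0$ by the decay.
\item For (a), light-cone behaviour at each $p_j$ follows from the local result \cite[Theorem 1.5]{E86}. The strict inequality $|\lambda_j-\lambda_{j'}|<|p_j-p_{j'}|$ comes from the excluded light segments.
\item For (c), the energy bound follows by passing $\cJ$-minimality of $u_k$ to the limit using lower semicontinuity of $\int(1-\sqrt{1-|\nabla w|^2})$ and convergence of $\sum_j\alpha_j w(p_j)$. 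Alternatively it can be cited from \cite[Theorem 1.3]{BAP}, since $\sum\alpha_j\delta_{p_j}\in (C^{0,1})^*$.
\end{itemize}
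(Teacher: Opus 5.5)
Your proposal is correct at the paper's level of detail, but it uses a genuinely different approximation scheme.

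\textbf{How your route differs from the paper's.} The paper does not build on the finite-mass solutions of Theorem \ref{teo 1-fund3}. It truncates and mollifies in one step, solving $\cM_0u=\tilde g_n$ with $\tilde g_n=\sum_{j\le n}\alpha_j\eta_n(\cdot-p_j)$ (Lemma \ref{lm uniform b-N-inf}). It then extracts a $C^{0,\gamma}_{\loc}$-convergent subsequence of these classical, globally strictly spacelike solutions. Finally it says that the finite-case machinery (no light segments, flux bound, weak convergence, identification of the masses, energy) carries over.

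Because the mollification scale changes with $n$, that sequence is not monotone. So the paper gets convergence only along subsequences, and the claimed minimality would need a separate comparison argument, which it leaves implicit. Its ``as shown before'' identification also goes through Proposition \ref{cr 2.1-iso}, which is formulated for a finite set of isolated points and does not directly cover the accumulation point ${\bf p}$.

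Your truncations $u_k$ are monotone by Corollary \ref{cr 3.1-mont}, which gives you several things:
\begin{itemize}
\item the whole sequence converges;
\item minimality is a one-line comparison;
\item (c) passes to the limit cleanly, by lower semicontinuity of the area term and dominated convergence in $\sum_j\alpha_ju_k(p_j)$;
\item the cluster point is handled explicitly, by test functions constant near ${\bf p}$, instead of the structure theorem.
\end{itemize}
The price is that you use the full content of Theorem \ref{teo 1-fund3} for every truncation: the weak formulation with Lipschitz test functions, and strict spacelikeness off finitely many points. The paper's smooth approximants make testing and energy identities immediate.

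\textbf{Two small corrections.}

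First, the flux mass $\int_{B_{2\delta}({\bf p})}|\nabla u|(1-|\nabla u|^2)^{-1/2}\,dx$ is not controlled by the tail $\sum_{j>J}\alpha_j$, because flux generated by the other masses also crosses that ball. What you actually need is only that this quantity tends to $0$ as $\delta\to0$, and there are two ways to get it.
\begin{itemize}
\item Show $|\nabla u|(1-|\nabla u|^2)^{-1/2}\in L^1_{\loc}$ by Fatou. This uses your uniform bound together with pointwise convergence of $\nabla u_k$ off the countable set $\overline{\cP}_\infty$.
\item Get a bound uniform in $k$ from the coarea formula. On $B_{2\delta}({\bf p})$, $u_k$ oscillates by at most $4\delta$, and for a.e.\ $t$ the level set $\{u_k=t\}$ carries flux at most $\alpha_\infty$. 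This gives the bound $8\delta\alpha_\infty+|B_{2\delta}|/\sqrt3$.
\end{itemize}

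Second, your list of light-segment endpoint cases omits segments ending at ${\bf p}$. This is harmless: the cone comparison at the higher endpoint $\bar x_1$ never uses that $\bar x_1$ carries mass. The set $\{w_{\bar\alpha,k}>u_k\}$ is open and avoids both $\bar x_1$ and $\partial B_R$. On it, $w_{\bar\alpha,k}$ has zero source while $u_k$ has nonnegative source, so the set is empty. The same argument therefore excludes segments whose higher endpoint is ${\bf p}$.
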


In order to prove Theorem \ref{teo 1-fund-inf1}, we need the following auxilary  lemma. 

  \begin{lemma}\label{lm uniform b-N-inf}
Assume that  $N\geq 3$ and  $n\in\N$
and 
$$\tilde g_n(x)= \sum^n_{j=1} \alpha_j \eta_n(x-p_j)\quad {\rm for}\ \, x\in\R^N,  $$
where $p_j\in\cP_\infty\subset B_{\frac12 R_0}(0)$.

    Let $u_{n}$ be the unique classical solution of
\begin{equation}\label{eq 3.1-inf-n}
 \left\{
\begin{array}{lll}
\displaystyle  \cM_0 u=\tilde  g_n\quad\ 
   {\rm in}\  \,   \R^N, 
\\[3mm]
 \phantom{    }
 \displaystyle \lim_{|x|\to+\infty}u(x)=0.   
 \end{array}
 \right.
 \end{equation}
 If 
 $$\alpha_\infty:=\displaystyle \sum_{j=1}^\infty \alpha_j<+\infty, $$ 
 then 
  there exists $\bar \alpha\geq \alpha_0$ independent of $n, R$ such that 
\begin{align}\label{uni-0-N}
u_{n}(x)\leq    \min\{\Phi_{N,\alpha_\infty}(0), \Phi_{N,T_0\alpha_\infty}(x)\}\quad {\rm for\ all}\  x\in \R^N
\end{align}
and for any $j=1,\cdots,n $, there holds
\begin{align}\label{uni-1-N-inf}
u_{n}(x)\geq c\alpha_j (1+|x|)^{2-N}\quad {\rm for\ all}\  x\in \R^N, 
\end{align}
where
 $R_j=R-|p_j|$ and $c>0$.
 \end{lemma}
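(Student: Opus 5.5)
The plan is to reproduce, with uniform constants, the comparison argument that gave (\ref{uppp-1}) in Proposition \ref{pr 2.1-n-w}, noting that every constant there depended on the data only through $\alpha_0$ and $R_0$. Here $\alpha_0$ is replaced by the partial sum $\alpha_{0,n}:=\sum_{j=1}^n\alpha_j\le\alpha_\infty$, and all points $p_j$ lie in the fixed ball $B_{\frac12R_0}(0)$. Existence and uniqueness of the classical solution $u_n$ of (\ref{eq 3.1-inf-n}) come from Proposition \ref{pr 2.1-n-w} applied to the finite configuration $\{p_1,\dots,p_n\}$ with source $\tilde g_n$. For each fixed $n$, $\tilde g_n$ is exactly a source of the form (\ref{souce-n}) with $m_0=n$. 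For $n$ large the supports $B_{2/n}(p_j)$ may overlap, but nothing in the argument requires them to be disjoint.

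\textbf{Upper bound.} I work first on balls, with $u_{n,R}$ the solution of $\cM_0u=\tilde g_n$ in $B_R(0)$ with zero boundary data.
\begin{itemize}
\item By the rearrangement Lemma \ref{lm 2.2-rea}, $u_{n,R}^*\le v_{n,R}$, where $v_{n,R}$ solves the radial problem with source $\tilde g_n^*$.
\item The rearrangement satisfies $\int\tilde g_n^*=\int\tilde g_n=\alpha_{0,n}$ and is supported in a ball of radius at most $2/n$. The point to check is that the radial formula in Lemma \ref{lm 3.2} only used that the source is radial, nonincreasing and of total mass $\alpha$. With $h(r)\le c_N\alpha_{0,n}$ it gives $v_{n,R}(0)<\Phi_{N,\alpha_{0,n}}(0)$.
\item Since $\Phi_{N,\alpha}(0)$ is increasing in $\alpha$, this yields $\max u_{n,R}\le\Phi_{N,\alpha_\infty}(0)$.
\item Choose $T_0\ge1$ with $\Phi_{N,T_0\alpha_\infty}(R_0)\ge\Phi_{N,\alpha_\infty}(0)$. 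This choice depends only on $\alpha_\infty$ and $R_0$, and is possible by Proposition \ref{pr 2.10-N}, since $\Phi_{N,\alpha}\to+\infty$ locally uniformly as $\alpha\to\infty$.
\item Compare $u_{n,R}$ with $\Phi_{N,T_0\alpha_\infty}$ on the annulus $B_R\setminus \bar B_{R_0}$, where both are $\cM_0$-harmonic. Lemma \ref{lm cp-cla} (or Lemma \ref{lm cp}) gives $u_{n,R}\le\Phi_{N,T_0\alpha_\infty}$ there. On $B_{R_0}$ the bound holds trivially because $\Phi_{N,T_0\alpha_\infty}\ge\Phi_{N,\alpha_\infty}(0)$ on that ball.
\item Letting $R\to\infty$ (monotone convergence in $R$, Lemma \ref{lm 3.1}(ii)) gives (\ref{uni-0-N}).
\end{itemize}
The $\bar\alpha$ in the statement can be taken to be $T_0\alpha_\infty$.

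\textbf{Lower bound.} Since $\tilde g_n\ge\alpha_j\eta_n(\cdot-p_j)$, Lemma \ref{lm cp} gives $u_{n,R}\ge u_{n,R,j}$, where $u_{n,R,j}$ is the radial solution centred at $p_j$ on $B_{R_j}(p_j)$, $R_j=R-|p_j|$. Lemma \ref{lm 3.2}(i) gives, for $|x-p_j|\ge2/n$,
\[
u_{n,R,j}(x)\ge\int_{|x-p_j|}^{R_j}\frac{c_N\alpha_j}{\sqrt{r^{2(N-1)}+(c_N\alpha_j)^2}}\,dr .
\]
Letting $R\to\infty$ gives $u_n(x)\ge\Phi_{N,\alpha_j}(x-p_j)$ for $|x-p_j|\ge 2/n$.

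I now bound $\Phi_{N,\alpha_j}(y)\ge c\,\alpha_j(1+|y|)^{2-N}$ with $c$ uniform in $j$. This follows from $\sqrt{s^{2(N-1)}+(c_N\alpha_j)^2}\le s^{N-1}+c_N\alpha_\infty$, splitting the integral at $s=\max\{|y|,1\}$. The factor $(1+|x-p_j|)^{2-N}$ is then comparable to $(1+|x|)^{2-N}$ because $|p_j|\le R_0/2$.

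Inside $B_{2/n}(p_j)$, I use instead the bound $u_{n,R,j}\ge u_{n,R,j}(\text{boundary of }B_{2/n})$, which holds because the radial solution is decreasing. This gives the same estimate up to the constant, and yields (\ref{uni-1-N-inf}).

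\textbf{Main obstacle.} I expect the delicate step to be the uniformity in $n$ of the rearrangement bound. The rearranged source $\tilde g_n^*$ is no longer exactly $\alpha_{0,n}\eta_n$ when supports overlap, so the identity $g_n^*=\alpha_0\eta_n$ used in Lemma \ref{lm 3.1} fails. I will handle this by rerunning the computation of Lemma \ref{lm 3.2} for a general radial nonincreasing source of mass $\alpha_{0,n}$. That computation only needs $h(r)\le c_N\alpha_{0,n}$ for all $r$, which keeps the bound $\Phi_{N,\alpha_{0,n}}(0)\le\Phi_{N,\alpha_\infty}(0)$ intact.
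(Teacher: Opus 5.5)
Your proposal is correct and follows essentially the same route as the paper's very terse proof. The upper bound is the rearrangement-plus-exterior-comparison argument of Proposition \ref{pr 2.1-n-w} rerun with $\alpha_\infty$ in place of $\alpha_0$, and the lower bound comes from $u_{n,R}\ge u_{n,R,j}$ together with the explicit radial profile of $u_{n,R,j}$.

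Your extra care fills in what the paper omits. Bounding $v_{n,R}(0)$ through the total mass of $\tilde g_n^*$ alone is the right fix, and it is needed even when the supports are disjoint: there one has $\sup\tilde g_n=\max_j\alpha_j\sup\eta_n<\alpha_{0,n}\sup\eta_n$ for $n\ge2$, so $\tilde g_n^*\neq\alpha_{0,n}\eta_n$ and only the mass survives.

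The only detail to adjust is the comparison annulus. When $2/n>R_0/2$, the support of $\tilde g_n$ can leave $B_{R_0}(0)$, so $u_{n,R}$ need not be $\cM_0$-harmonic on $B_R(0)\setminus\bar B_{R_0}(0)$. Choose $T_0$ with $\Phi_{N,T_0\alpha_\infty}(R_0+2)\ge\Phi_{N,\alpha_\infty}(0)$ instead, and compare on $B_R(0)\setminus\bar B_{R_0+2}(0)$.
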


\noindent{\bf Proof. }  
 From the proof of Theorem \ref{teo 1-fund3}, there hold
$$0\leq u_{n,R}(x)\leq \min\{\Phi_{N,\alpha_\infty}(0), \Phi_{N,T_0\alpha_\infty}(x)\},\quad x\in B_R(0), $$
which implies that 
$$0\leq u_{n}\leq \min\{\Phi_{N,\alpha_\infty}(0), \Phi_{N,T_0\alpha_\infty}(x)\}, \quad x\in\R^N.$$
On the other hand, we have that 
$$
 u_{n,R, j}(x) =\int_{|x|}^{R} \frac{ ( c_N^{-1}\alpha_j  )  }{\sqrt{r^{2(N-1)}+ ( c_N^{-1}\alpha_j  )^2}}dr  \quad {\rm for\ all}\  x\in B_R(0), 
$$
where
 $R_j=R-|p_j|$.
 So we derive that 
 $$ u_{n}(x)\geq u_{n,j}\geq c\alpha_j (1+|x|)^{2-N}.  $$
We omit the remainder proof.  \hfill$\Box$\bigskip

\noindent{\bf Proof of Theorem \ref{teo 1-fund-inf1}. }  From  the proof of Theorem \ref{teo 1-fund3}, for the integer $n\geq 1$,   problem (\ref{eq 3.1-inf-n})
has a unique solution  $u_{n}$ 
satisfying 
$$c\alpha_j (1+|x|)^{2-N}\leq u_{n}(x) \leq \min\{\Phi_{N,\alpha_\infty}(0), \Phi_{N,T_0\alpha_\infty}(x)\}.  $$
Then  we can obtain that 
the mapping $n\mapsto u_{n} $  is  bounded by $ \Phi_{N,\bar \alpha }$ in $\R^N$ and $|\nabla u|<1$ in $\R^N$
then  for some $\gamma\in(0,1)$ 
$$
 u_{n}\to u_{\infty}\quad{\rm in}\ C^{0,\gamma}_{\loc}(\R^N)\quad {\rm as}\ \, n\to+\infty .  
$$

 As we shown before,     $u_{\infty}$ is the solution of 
\begin{equation}\label{eq 3.1-inf-infty}
 \left\{
\begin{array}{lll}
\displaystyle\ \, \cM_0 u= \sum^{\infty}_{j=1} \alpha_j \delta_{p_j}\quad
 & {\rm in}\  \,   \R^N, 
\\[4mm]
 \phantom{   }
 \displaystyle \lim_{|x|\to+\infty}u(x)=0. 
 \end{array}
 \right.
 \end{equation}
 The regularity could be shown in $\R^N\setminus\overline{ \cP_{\infty}}$, so $u_\infty$ is a classical solution of (\ref{eq 1.1-fund-cl-inf1}). 
 
  The remainder proof follows the same steps as before and we omit it. \hfill$\Box$

\begin{remark}
From our proof, it is natural to extend our results to the equations with  the settings either where the Dirac points possess finitely many cluster points 
i.e. $\overline\cP_\infty\setminus \cP_\infty$ is finite,  or where the coefficients  of  the Dirac masses change signs, i.e.
$$
 \left\{
\begin{array}{lll}
\displaystyle\ \, \cM_0 u= \sum^{\infty}_{j=1} \alpha_j \delta_{p_j}- \sum^{\infty}_{j=1} \beta_j \delta_{q_j}\quad
 & {\rm in}\  \,   \R^N, 
\\[4mm]
 \phantom{   }
 \displaystyle \lim_{|x|\to+\infty}u(x)=0, 
 \end{array}
 \right.
$$
where $ \alpha_j, \beta_j >0$.

\end{remark}

  \bigskip
{ \footnotesize
\noindent {\bf  Conflicts of interest:} The authors declare that they have no conflicts of interest regarding this work.
\medskip

\noindent {\bf  Data availability:}  This paper has no associated data.\medskip

\noindent{\bf Acknowledgements:}  
 H. Chen is supported by  the National Natural Science Foundation of China, No.  12361043.\smallskip
 
Y. Wang is supported by the National Natural Science Foundation of China, No. 12461041,
 by the Natural Science Foundation of Jiangxi Province,
 Nos. 20232ACB211001 and 20252BAC240158.\smallskip
 
F. Zhou is supported by Science and Technology Commission of Shanghai Municipality, No. 22DZ2229014
and also he National Natural Science Foundation of China, No. 12071189. }

 \end{document}